\documentclass[oneside,a4paper,reqno]{amsart}
\usepackage{pdfsync, enumitem,float}
\usepackage{amsmath,bm}
\usepackage{mathtools}
\usepackage{fullpage}
\usepackage{graphicx}
\usepackage[all,cmtip]{xy}
\usepackage{tikz-cd}
\usepackage{stmaryrd}
\usepackage{mathrsfs}
\usepackage{hyperref}
\usepackage{tikz-cd}
\usepackage{amssymb}
\usepackage{stackrel}
\usepackage{adjustbox}
\usepackage{multicol}
\usepackage{amsmath, amsthm, amscd, amssymb, latexsym, eucal}
\usepackage[all]{xy}
\addtolength{\textwidth}{1cm} \calclayout \makeatletter
\addtolength{\textheight}{1cm} \calclayout \makeatletter
\def\serieslogo@{} \def\@setcopyright{} \makeatother

\usepackage[colorinlistoftodos]{todonotes}

\usepackage{hyperref}
\usepackage{color}
\usepackage{cite}
\usepackage{quiver}


\makeatletter
\renewcommand*\env@matrix[1][c]{\hskip -\arraycolsep
	\let\@ifnextchar\new@ifnextchar
	\array{*\c@MaxMatrixCols #1}}
\makeatother

\usepackage{color}



\numberwithin{equation}{section}
\newtheorem{thm}{Theorem}[section]
\newtheorem*{main-thm}{Theorem}
\newtheorem*{Auslander-thm}{Auslander's Theorem}

\newtheorem{cor}[thm]{Corollary}
\newtheorem{lem}[thm]{Lemma}
\newtheorem{prop}[thm]{Proposition}

\theoremstyle{definition}
\newtheorem{defn}[thm]{Definition}
\newtheorem{rem}[thm]{Remark}
\newtheorem{exmp}[thm]{Example}

\newtheorem*{connot}{Notation and Conventions}


\newcommand\padova{%
\mathrel{\ooalign{\hss{\scalebox{0.5}{$\longleftrightarrow$}}\hss\cr%
\kern0.9ex\raise0.55ex\hbox{\scalebox{0.5}{$\boldsymbol{\bm{\vert}}$}}}}}
\newcommand\hfpadova{%
\mathrel{\ooalign{\hss{\scalebox{0.5}{$\longleftrightarrow$}}\hss\cr%
\kern0.9ex\raise0.55ex\hbox{\scalebox{0.5}{$\boldsymbol{\mathsf{h}\mathsf{f}\ \ }$}}}}}
\newcommand\rpadova{%
\mathrel{\ooalign{\hss{\scalebox{0.5}{$\longrightarrow$}}\hss\cr%
\kern0.9ex\raise0.55ex\hbox{\scalebox{0.5}{$\boldsymbol{\bm{\vert}}$}}}}}
\newcommand\lpadova{%
\mathrel{\ooalign{\hss{\scalebox{0.5}{$\longleftarrow$}}\hss\cr%
\kern0.9ex\raise0.55ex\hbox{\scalebox{0.5}{$\boldsymbol{\bm{\vert}}$}}}}}





\newcommand{\T}{\mathcal T}
\newcommand{\U}{\mathcal U}
\newcommand{\V}{\mathcal V}

\newcommand{\X}{\mathcal X}
\newcommand{\Y}{\mathcal Y}

\newcommand{\Tb}{\mathcal{T}^{\mathsf{b}}}
\newcommand{\Tc}{\mathcal{T}^{\mathsf{c}}}






\DeclareMathOperator{\pd}{\mathsf{pdim}}

\DeclareMathOperator*{\gd}{\mathsf{gl.dim}}

\DeclareMathOperator*{\gld}{\mathsf{gl.dim}}

\DeclareMathOperator*{\Findim}{\mathsf{Fin.dim}}

\DeclareMathOperator*{\Mod}{\mathsf{Mod}-\!}

\DeclareMathOperator*{\smod}{\mathsf{mod}-\!}

\DeclareMathOperator*{\umod}{\underline{\mathsf{mod}}-\!}
\DeclareMathOperator*{\uMod}{\underline{\mathsf{Mod}}-\!}

\DeclareMathOperator*{\inj}{\mathsf{inj}-\!}
\DeclareMathOperator*{\proj}{\mathsf{proj}-\!}
\DeclareMathOperator*{\Gproj}{\mathsf{Gproj}-\!}
\DeclareMathOperator*{\GProj}{\mathsf{GProj}-\!}

\DeclareMathOperator*{\Inj}{\mathsf{Inj}-\!}

\DeclareMathOperator*{\Proj}{\mathsf{Proj}-\!}

\DeclareMathOperator*{\Add}{\mathsf{Add}}
\DeclareMathOperator*{\Product}{\mathsf{Prod}}

\DeclareMathOperator{\Hom}{\mathsf{Hom}}

\usepackage{stackengine}

\DeclareMathOperator*{\?}{\textnormal{\textsf{?}}}




\newsavebox{\proofbox}
\savebox{\proofbox}{\begin{picture}(7,7)%
	\put(0,0){\framebox(7,7){}}\end{picture}}





\usepackage{latexsym}
\usepackage{pstricks}
\usepackage{comment}

\usepackage[greek,english]{babel}

\begin{document}

\title{Intrinsic homological algebra for triangulated categories} 

\author[Kostas]{Panagiotis Kostas}
\address{Department of Mathematics, Aristotle University of Thessaloniki, Thessaloniki 54124, Greece}
\email{pkostasg@math.auth.gr}

\author[Psaroudakis]{Chrysostomos Psaroudakis}
\address{Department of Mathematics, Aristotle University of Thessaloniki, Thessaloniki 54124, Greece}
\email{chpsaroud@math.auth.gr}

\author[Vit\'oria]{Jorge Vit\'oria}
\address{Dipartimento di Matematica “Tullio Levi-Civita”, Università degli Studi di Padova, Torre Archimede, via Trieste 63, 35121 Padova, Italy}
\email{jorge.vitoria@unipd.it}

\subjclass[2020]{18G80, 16E35, 18G20, 16E10, 16E65}
\keywords{compactly generated triangulated category, global dimension, Iwanaga-Gorenstein Artin algebras, finitistic dimension, injective generation, regularity, singularity category, differential graded algebras, compact silting object}
\thanks{\textbf{Acknowledgements:} P.K. and C.P. were supported in the framework of the H.F.R.I. call “Basic Research Financing (Horizontal Support of all Sciences)” under the National Recovery and Resilience Plan “Greece 2.0” funded by the European Union - NextGenerationEU (H.F.R.I. Project Number: 16785). J.V.~was supported by NextGenerationEU under NRRP, Call PRIN 2022  No.~104 of February 2, 2022 of Italian Ministry of University and Research; Project 2022S97PMY \textit{Structures for Quivers, Algebras and Representations (SQUARE)}}

\begin{abstract}
We propose a new framework for the study of homological properties for (compactly generated) triangulated categories such as regularity, finiteness of global or finitistic dimension, gorensteinness or injective generation and the relation between them. Our approach focuses on distinguished, intrinsically defined, subcategories and our main tool is the new notion of \textit{far-away orthogonality}. We observe that these homological properties generalise previously studied properties on derived categories of modules over rings, and we use the generality of our theory to also examine those same attributes for the homotopy category of injectives and the big singularity category (in the sense of Krause) of an Artin algebra, as well as the derived category of a non-positive differential graded algebra. Finally, using our theory we recover and generalise various results in the theory of recollements of triangulated categories. 
\end{abstract}

\maketitle

\setcounter{tocdepth}{1} \tableofcontents

\section{Introduction and Main Results}

There are many ways of measuring the homological complexity of a category. A first, very classical, approach is to establish a dimension function that associates an integer (or, possibly, $\infty$) to certain objects of the category, usually by measuring the length of a suitable resolution by special objects, and then take a supremum or an infimum of that function. This is the case, for example, when defining the global dimension or the finitistic dimension of an abelian category with enough projectives or injectives, by measuring projective or injective dimensions. A second approach for this computation is to try to reduce the class of objects on which to measure this complexity and still obtain enough data on the whole category. A concrete example of this is the well-known fact that the global dimension of the module category over a ring is, in fact, the supremum of projective/injective dimensions of cyclic modules. Alternatively, if the aim is to measure the complexity of the category, the mathematician that is so inclined might prefer to avoid picking individual objects and work instead with intrinsically defined subcategories and the relations between them. This is the approach we take in this paper to study triangulated categories, since they are, almost by default, a unifying framework for the formalisation of homological phenomena. An example of the type of relations that will pop up is the well-known statement that the finiteness of global dimension of a ring can be detected by an equality between a bounded derived category of modules and a bounded homotopy category of projective (or injective) modules.

Indeed, triangulated categories are central in all areas of mathematics using homological algebra, ranging from representation theory to algebraic geometry or algebraic topology. Examples include derived categories of (quasi)coherent sheaves over schemes, derived categories of modules over rings or differential graded modules over differential graded (dg for short) algebras, homotopy categories of relevant additive categories (such as projective or injective objects), singularity categories, homotopy categories of ring spectra, stable categories of group algebras, among others. We seek to redefine, generalise and verify in well-known categories some attributes that express good homological behaviour of triangulated categories. These are \textit{finite global dimension}, \textit{gorensteinness}, \textit{injective generation}, \textit{finiteness of big finitistic dimension} and \textit{regularity}. We do so intrinsically, using subcategories constructed in a way that tries to minimise or eliminate the choice of distinguished objects.

There have been various approaches to importing homological invariants into the context of triangulated categories, most of which defined in terms of a distinguished class of objects that are not intrinsically defined. We list a few examples.
\begin{itemize}[leftmargin=6mm]
\item In \cite{bondal_van-den-bergh} a notion of regularity is introduced, requiring the existence of a strong generator.
\item Similarly, ideas regarding strong generators allow for a definition of small finitistic dimension in \cite{krause}.
\item In \cite{kontsevich} and \cite{toen} a notion of smooth dg category is explored by requiring the perfectness of a certain bimodule.
\item In \cite{jin} and \cite{kuznetsov_shinder} the notion of Gorenstein dg algebra and Gorenstein dg category is discussed via certain subcategories suitably generated by the given data. 
\item In \cite{rickard}, a relation is established between finiteness of finitistic dimension of a finite-dimensional algebra and the generation of the derived category by injective objects.
\end{itemize}
The approaches listed require being given some data about an object or a subcategory to deduce something about the whole category. We propose a top-down approach: knowing the category, produce the subcategories that can tell us something intrinsic about it.

It has long been observed that in the derived category of a ring many subcategories of interest occur intrinsically \cite{koenig, rickard_morita}. First come the perfect complexes, which arise as the subcategory of compact objects. Then, the bounded (above or below) complexes, which can be characterised in terms of the compacts and lastly the bounded (below) complexes of injectives or the bounded (above) complexes of projectives which can be characterised in terms of the bounded complexes. This network of subcategories, and the corresponding analogues in algebraic geometry, has been studied through the new technique of completion with respect to a metric \cite{neeman_metrics}. The appropriate framework for this kind of study is that of (weakly) approximable triangulated categories (see, for example, \cite{CanonacoNeemanStellariHaesemeyer} and \cite{CanonacoNeemanStellari}), which demands a weak approximation data consisting a single compact generator and a t-structure. In this paper, we do not ask our categories to be weakly approximable, taking instead a different approach via what we call \textit{far-away orthogonals}, recovering a concept that has appeared before, namely in \cite{orlov2}. We refer to Sections \ref{section:faraway} and \ref{distinguished subcategories} for the relevant definitions. 


%



For every intrinsically defined subcategory, each one of its far-away orthogonals is also intrinsic, allowing us to define a whole collection of distinguished subcategories that we decorate with suitable superscripts and subscripts inspired by what these subcategories are in the derived category of rings. The following table illustrates this idea in the derived category $\mathsf{D}(R)$ of right modules over a ring $R$.
\begin{table}[H]
    \caption{Summary of subcategories}
      \centering
        \begin{tabular}{cc}
            \hline Subcategory of $\mathcal{T}$ & Subcategory of $\mathsf{D}(R)$ \\ 
            \hline $\mathcal{T}^+$ & $\mathsf{D}^+(R)$ \\ 
            $\mathcal{T}^-$ & $\mathsf{D}^-(R)$ \\ 
            $\mathcal{T}^{\mathsf{b}}$ & $\mathsf{D}^{\mathsf{b}}(R)$ \\ 
            $\mathcal{T}^{\mathsf{b}}_{p}$ & $\mathsf{K}^{\mathsf{b}}(\Proj R)$ \\ 
            $\mathcal{T}^{\mathsf{b}}_i$ & $\mathsf{K}^{\mathsf{b}}(\Inj R)$\\
            $\mathcal{T}^{\mathsf{b}}_c$ & $\mathsf{D}^{\mathsf{b}}_{\smod R}(\Mod R)$ ($R$ Noether algebra) 
        \end{tabular}
        \end{table}

Following this, on the basis of how these subcategories relate to each other, we define the previously mentioned homological attributes, making sure that they adequately generalise the notions existing already in the literature. For example, we will say that a triangulated category $\mathcal{T}$ has finite global dimension if $\mathcal{T}^{\mathsf{b}}=\mathcal{T}^{\mathsf{b}}_p$. Here is another table exemplifying how our properties, which are determined by equalities of distinguished subcategories, are interpreted (in given contexts) in the derived category of right $R$-modules.

\begin{table}[H]
\caption{Homological properties}
      \centering
            \begin{tabular}{ccc}
            \hline Property& Identification & What it means for $\mathsf{D}(R)$ \\ 
         \hline          Finite global dimension & $\mathcal{T}^{\mathsf{b}}=\mathcal{T}^{\mathsf{b}}_p$ & $\gd R<\infty$ \\ 
           Gorenstein & $\mathcal{T}^{\mathsf{b}}_p=\mathcal{T}^{\mathsf{b}}_i$ & $R$ Iwanaga-Gorenstein (if $R$ Artin algebra) \\ 
            Injective generation & $\mathsf{Loc}(\mathcal{T}^{\mathsf{b}}_i)=\mathcal{T}$ & Injectives generate for $R$\\ 
         Finite finitistic dimension &  $(\mathcal{T}^{\mathsf{b}}_i)^{\perp}\cap \mathcal{T}^+=\{0\}$ & $\Findim R<\infty$ (if $R$ Artin algebra) \\
 Regular & $\mathcal{T}^{\mathsf{c}}=\mathcal{T}^{\mathsf{b}}_c$ & $R$ regular (if $R$ commutative noetherian) \\ 
        \end{tabular}
 \end{table}
 
The attributes listed above satisfy the expected causality relations (see Theorem \ref{theorem1}):
\[\begin{xymatrix}{
\gd \T<\infty\ar@{=>}[r]& \T\ {\rm Gorenstein}\ar@{=>}[d]\ar@{=>}[r]&\T\ {\rm generated\ by\ injectives}\ar@{=>}[dl]^{}\\ &\Findim \T<\infty,&
}\end{xymatrix}\]
where the diagonal implication holds if we assume a sort of \textit{noetherianness} of the triangulated category. This is explained in detail following the theorem. 

In this paper we develop the theory behind these homological attributes while, simultaneously, verifying their occurrence in various triangulated categories. In particular we use them to study in detail the following triangulated categories:
\begin{itemize}[leftmargin=6mm]
\item $\mathsf{D}(R)$, the derived category of right $R$-modules, for a ring $R$;
\item $\mathsf{K}(\Inj\Lambda)$, the homotopy category of injective right $\Lambda$-modules, for $\Lambda$ an Artin algebra; 
\item $\mathsf{K}_{\mathsf{ac}}(\Inj \Lambda)$, the homotopy category of acyclic complexes of injective right $\Lambda$-modules, for $\Lambda$ an Artin algebra (often referred to as the \textit{big singularity category of $\Lambda$}, and introduced in \cite{krause2});
\item $\mathsf{D}(\Gamma)$, the derived category of right dg modules over a proper connective dg algebra $\Gamma$.
\end{itemize}

\noindent \textbf{Structure of the paper.} In Section~\ref{section:faraway} we collect preliminary notions and results that are used throughout the paper. In Subsection~\ref{far away orthogonality} we introduce and study far away orthogonal subcategories. In Section~\ref{distinguished subcategories} we introduce the main new intrinsic subcategories of triangulated categories that will be the focus of our study, see Definition~\ref{main_definition}. We immediately proceed to compute these subcategories for the categories listed in the paragraph above (see Proposition~\ref{examples}, Corollary \ref{T^b_p for dg algebras} and Proposition~\ref{examples2}). Along the way, we discuss our distinguished subcategories in the presence of silting objects, and how these silting objects induce bounded t-structures or co-t-structures in suitable subcategories (see Theorem \ref{silting far-away generating}). In Section~\ref{regulartriangcategories}, we set up in Definition~\ref{homological} the properties that we aim to study in the context of triangulated categories: finiteness of global dimension, gorensteinness, generation by injectives, and finiteness of finitistic dimension. We then study their causality relations (see Theorem \ref{theorem1}) and analyse their occurrence in our categories of interest. In Section \ref{Sec5}, we assume that a given triangulated category is $k$-linear over a commutative noetherian ring $k$ in order to define regularity with respect to $k$.  This requires a new subcategory which is intrinsic to the given $k$-linear structure, the bounded finite objects, as well as a quotient category that we call singularity category. Once again, we study how to read off the property of being regular in our categories of interest, and we also compare our definition with other notions of regularity present in the literature. Finally, in Section \ref{app to rec}, we discuss applications of our theory to recollements of triangulated categories. In particular, we generalise results from \cite{angeleri_koenig_liu_yang} concerning restricting recollements of triangulated categories to distinguished subcategories, and we end with a discussion of how our homological properties behave in a recollement, generalising previous work of \cite{angeleri_koenig_liu_yang} about the global dimension, of \cite{cummings} about injective generation and of \cite{chen_xi} about the finitistic dimension.


\begin{connot}
 All subcategories are strict and full. For a subcategory $\X$ of an additive category $\mathcal{A}$, we denote by $\mathsf{Add}(\X)$ (respectively, $\mathsf{Prod}(\X)$) the subcategory formed by the summands of the existing coproducts (respectively, products) of objects in $\X$. For an additive category $\mathcal{A}$ we write $\mathsf{K}(\mathcal{A})$ to be the homotopy category of (cochain) complexes in $\mathcal{A}$, and we consider the following subcategories: $\mathsf{K}^+(\mathcal{A})$ the homotopy category of complexes $X^{\bullet}$ for which $X^n=0$ for $n\ll0$; $\mathsf{K}^-(\mathcal{A})$ for the homotopy category of complexes $X^{\bullet}$ for which $X^n=0$ for $n\gg0$; and $\mathsf{K}^{\mathsf{b}}(\mathcal{A})$ for $\mathsf{K}^+(\mathcal{A})\cap\mathsf{K}^-(\mathcal{A})$. If $\mathcal{A}$ is abelian, we write $\mathsf{D}^{\?}(\mathcal{A})$ for the essential image of $\mathsf{K}^{\?}(\mathcal{A})$ under the quotient of $\mathsf{K}(\mathcal{A})\longrightarrow \mathsf{K}(\mathcal{A})/ \mathsf{K}_{\mathsf{ac}}(\mathcal{A})=\mathsf{D}(\mathcal{A})$, where $\mathsf{K}_{\mathsf{ac}}(\mathcal{A})$ denotes the subcategory of $\mathsf{K}(\mathcal{A})$ formed by the acyclic complexes and $\?$ is one of the symbols $+,-$ or $b$. If $\X$ and $\Y$ are subcategories of $\T$, we denote by $\X\ast\Y$ the subcategory of $\T$ whose objects are those $t$ for which there is $x$ in $\X$, $y$ in $\Y$ and a triangle in $\T$ of the form
\[x\longrightarrow t\longrightarrow y\longrightarrow x[1].\]
We denote by $\mathsf{thick}(\X)$ the smallest thick subcategory (i.e.~smallest subcategory closed under extensions and summands) containing $\X$. Given an object $x$ in a triangulated category $\T$ and an interval $I$ of integers, we consider the subcategories
$$x^{\perp_{I}}\coloneqq\{y\in\T\colon \Hom_\T(x,y[i])=0,\forall i \in I\}$$
where $I$ sometimes is represented by symbols $>0$ or $\leq 0$ with the obvious meaning. The symbol $\perp$ stands simply for $\perp_0$. For a ring $R$, we write $\Mod{R}$ for the category of all right $R$-modules and we consider its subcategories $\Proj{R}$ of projective modules, $\proj{R}$ of finitely generated projective modules, $\Inj R$ of injective modules, and $\smod{R}$ of finitely presented modules. We also use $\mathsf{D}^{\?}(R)$ and $\mathsf{K}^{\?}(R)$ as a short for $\mathsf{D}^{\?}(\Mod{R})$ and $\mathsf{K}^{\?}(\Mod{R})$ for $\?$ in $\{+,-,\mathsf{b}\}$. Unless otherwise stated, $k$ will always denote a commutative noetherian ring. By an Artin algebra (respectively, a Noether algebra) we mean a finitely generated algebra over a commutative artinian (respectively, noetherian) ring.
\end{connot} 

\section{Far-Away Orthogonality}
\label{section:faraway}

In this section we introduce far-away orthogonal subcategories. We start with some preliminaries results that we need in the sequel.

\subsection{Compactly generated triangulated categories}
\label{subsection:prelim}
An object $x$ in a triangulated category $\T$ is said to be \textbf{compact} if $\mathsf{Hom}_\T(x,-)$ commutes with all existing coproducts. Compact objects form a thick subcategory of $\T$, which we denote by $\mathcal{T}^{\mathsf{c}}$. If $\T$ is cocomplete, $\mathcal{T}^\mathsf{c}$ is skeletally small and $(\Tc)^\perp=0$, then we say that $\T$ is \textbf{compactly generated}.

\begin{exmp}\label{first example}
Here are some examples of compactly generated triangulated categories.
\begin{enumerate}
\item If $\Lambda$ is a self-injective Artin algebra, then the stable module category $\uMod\Lambda$ is compactly generated, and its subcategory of compact objects is the subcategory obtained as the stabilisation of finitely presented modules $\umod \Lambda$, see for instance \cite[Lemma 3.9]{beligiannis2}.
\item By \cite[Proposition 6.4]{bokstedt_neeman}, the derived category $\mathsf{D}(R)$ of the category of modules over an arbitrary ring $R$ is compactly generated, and its subcategory of compact objects is $\mathsf{K}^{\mathsf{b}}(\proj{R})$. 
\item More generally, the derived category $\mathsf{D}(\Gamma)$ of a differential graded algebra $\Gamma$ is compactly generated, see \cite[Subsection 4.2]{keller0}. The subcategory of compact objects is $\mathsf{thick}(\Gamma)$, which is sometimes denoted by $\mathsf{per}(\Gamma)$.
\item Let $A$ be a noetherian ring. By \cite[Corollary 4.3]{krause2} there is a recollement (see Section \ref{app to rec} for a definition):
\begin{equation}\label{rec}
\begin{tikzcd}
\mathsf{K}_{\mathsf{ac}}(\Inj A) \arrow[rr, "\mathsf{I}"] &  & \mathsf{K}(\Inj A) \arrow[rr, "\mathsf{Q}"] \arrow[ll, "\mathsf{I}_{\rho}", bend left] \arrow[ll, "\mathsf{I}_{\lambda}"', bend right] &  & \mathsf{D}(A) \arrow[ll, "\mathsf{Q}_{\lambda}"', bend right] \arrow[ll, "\mathsf{Q}_{\rho}", bend left]
\end{tikzcd}
\end{equation}
where the functor $\mathsf{Q}$ is just the restriction of the projection functor $\mathsf{K}(A)\longrightarrow \mathsf{D}(A)$ to  $\mathsf{K}(\Inj A)$. By \cite[Proposition 2.3]{krause2}, the homotopy category $\mathsf{K}(\Inj A)$ is compactly generated and 
\[\mathsf{K}(\Inj A)^{\mathsf{c}}\simeq \mathsf{Q}_\rho(\mathsf{D}^{\mathsf{b}}(\smod A)).\]
As a consequence, we get that  $\mathsf{K}_{\mathsf{ac}}(\Inj A)$ (sometimes called the \textit{big singularity category} of $A$), is compactly generated, and that its compact objects are summands of objects in the essential image of $\mathsf{D}^{\mathsf{b}}(\smod A)$ under $\mathsf{I}_{\lambda}\mathsf{Q}_\rho$. This essential image turns out to be, by standard Bousfield localisation arguments, equivalent to the \textit{singularity category of $A$} \cite{buchweitz, orlov} defined as follows:
$$\mathsf{D}^{\mathsf{b}}(\smod A)/\mathsf{D}(A)^{\mathsf{c}}\eqqcolon\mathsf{D}_{\mathsf{sg}}(A).$$
\end{enumerate}
\end{exmp}

A useful tool in compactly generated triangulated categories is the existence of duals of compact objects. Recall that given a compact object $t$ of a compactly generated triangulated category $\mathcal{T}$, there is a unique object of $\mathcal{T}$, up to isomorphism, representing the functor 
\[
\mathsf{Hom}_{\mathbb{Z}}(\mathsf{Hom}_{\mathcal{T}}(t,-),\mathbb{Q}/\mathbb{Z})\colon \mathcal{T}^{\mathsf{op}}\rightarrow \mathsf{Ab},
\]
which we denote by $t^*$. This object is often called the \textbf{Brown-Comenetz dual of $t$}. Similary, if $\X$ is a subcategory of $\T^c$, we denote by $\X^{\ast}$ the subcategory of $\T$ whose objects are those $x^\ast$ for $x$ in $\X$. Note that, should the category $\T$ be endowed with a $k$-linear structure over a commutative noetherian ring $k$, it is natural to consider replacing $\mathsf{Hom}_\mathbb{Z}(-,-)$ with $\mathsf{Hom}_k(-,-)$ and $\mathbb{Q}/\mathbb{Z}$ with an injective cogenerator in $\Mod{k}$ in the formula above. In particular, when $k$ is a field, one often uses $k$ as a substitute for both $\mathbb{Z}$ and $\mathbb{Q}/\mathbb{Z}$.

\begin{rem}\label{Nak}
Brown-Comenetz duals occur quite naturally in representation theory. The following follows from the work of Keller in \cite{keller0} (we refer also to \cite[Subsection 1.2]{jin}). Let $k$ be a field and suppose that $\Gamma$ is a differential graded $k$-algebra (dg $k$-algebra, or dg algebra over $k$, for short). The functor
$$\nu\colon\mathsf{D}(\Gamma)\rightarrow \mathsf{D}(\Gamma)\ \ \ \ \ \ t\mapsto t\otimes^{\mathbb{L}}_\Gamma \mathbb{R}\mathsf{Hom}_k(\Gamma,k)$$
is called the \textbf{Nakayama functor} and it can be shown to send a compact object $t$ in $\mathsf{D}(A)$ to its Brown-Comenetz dual. The isomorphism inherent to the Brown-Comenetz duality is known, in this setting, as the \textit{Auslander-Reiten formula}.  We say that $\Gamma$ is \textbf{proper} if $\mathsf{dim}_k\oplus_{n\in\mathbb{Z}}H^n(\Gamma)<\infty$. In the case of a proper dg $k$-algebra, $\nu$ restricts to an equivalence of subcategories of $\mathsf{D}(\Gamma)$ of the form $\mathsf{D}(\Gamma)^{\mathsf{c}}=\mathsf{thick}(\Gamma)\rightarrow \mathsf{thick}( \nu \Gamma)$. 
\end{rem}

Finally, we want to write a brief reminder on t-structures and co-t-structures. 

\begin{defn}
A pair of subcategories $(\U,\V)$ in $\T$ is said to be a \textbf{torsion pair} if $\U^\perp=\V$, ${}^\perp \V=\U$ and $\U\ast\V=\T$. Such a torsion pair is said to be a \textbf{t-structure} if $\U[1]\subseteq \U$ or a \textbf{co-t-structure} if $\U[-1]\subseteq \U$. 
\end{defn}

It is well-known (see \cite{beilinson_bernstein_deligne}) that if $(\U,\V)$ is a t-structure then $\mathcal{H}\coloneqq\U\cap\V[1]$ is an abelian category, called the \textbf{heart} of $(\U,\V)$. If $(\U,\V)$ is a co-t-structure, the intersection $\U\cap\V[-1]$ is called the \textbf{co-heart} of $(\U,\V)$, and in general this is only an additive category. 

\subsection{Broadness}

In this subsection we introduce some nomenclature that will be useful later on.

\begin{defn}
We say that a thick subcategory $\mathcal{S}$ of a triangulated category $\mathcal{T}$ is 
\begin{itemize}
\item[(i)] \textbf{$\prod$-broad} (respectively, \textbf{$\Sigma$-broad})  if it is closed under existing self-products (respectively, self-coproducts), i.e. for every object $x$ in $\mathcal{S}$ and any set $I$, the product $x^{I}$ (respectively, the coproduct $x^{(I)}$) lies in $\mathcal{S}$; 
\item[(ii)] \textbf{broad} if it is both $\Sigma$-broad and $\prod$-broad.
\end{itemize}
\end{defn}

Note that any localising (respectively, colocalising) subcategory of a cocomplete (respectively, complete) triangulated category is $\Sigma$-broad (respectively, $\prod$-broad), but the converse is clearly not true - as the following examples show.

\begin{exmp}
If we consider $\mathcal{T}=\mathsf{D}(R)$ for some ring $R$, then we have that 
\begin{itemize}
\item[(i)] $\mathsf{K}^{\mathsf{b}}(\Proj{R})$ and $\mathsf{K}^-(\Proj{R})$ are $\Sigma$-broad subcategories;
\item[(ii)] $\mathsf{K}^{\mathsf{b}}(\Inj R)$ and $\mathsf{K}^+(\Inj R)$ are $\prod$-broad subcategories;
\item[(iii)] $\mathsf{D}^{\mathsf{b}}(R)$, $\mathsf{D}^+(R)$ and $\mathsf{D}^-(R)$ are broad subcategories.
\end{itemize}
Note, however, that none of the above subcategories of $\mathsf{D}(R)$ are neither localising nor colocalising.
\end{exmp}

\begin{lem}\label{broad generation}
Let $\mathcal{X}$ be a set of objects in a triangulated category $\mathcal{T}$. 
\begin{enumerate}
\item[\textnormal{(i)}] If $\T$ is cocomplete, then $\mathsf{thick}(\cup_{x\in\mathcal{X}}\mathsf{Add}(x))$ is the smallest $\Sigma$-broad subcategory of $\mathcal{T}$ containing $\mathcal{X}$, and we denote it by $\mathsf{broad}_{\Sigma}(\mathcal{X})$.
\item[\textnormal{(ii)}] If $\T$ is complete, then $\mathsf{thick}(\cup_{x\in\mathcal{X}}\mathsf{Prod}(x))$ is the smallest $\prod$-broad subcategory of $\mathcal{T}$ containing $\mathcal{X}$, and we denote it by $\mathsf{broad}_{\prod}(\mathcal{X})$.
\end{enumerate}
\end{lem}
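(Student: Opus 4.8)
The statement is essentially a closure argument: one must show that the described category is $\Sigma$-broad (respectively $\prod$-broad), contains $\mathcal{X}$, and is contained in every $\Sigma$-broad (respectively $\prod$-broad) subcategory containing $\mathcal{X}$. I will only discuss (i), since (ii) follows by the dual argument, replacing coproducts by products, $\mathsf{Add}$ by $\mathsf{Prod}$, and cocompleteness by completeness. Write $\mathcal{S}\coloneqq\mathsf{thick}(\cup_{x\in\mathcal{X}}\mathsf{Add}(x))$.

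\textbf{Step 1: $\mathcal{S}$ contains $\mathcal{X}$ and is thick.} Each $x\in\mathcal{X}$ lies in $\mathsf{Add}(x)$ (as a trivial coproduct, or summand thereof), hence in $\mathcal{S}$; and $\mathcal{S}$ is thick by construction. This is immediate and not the hard part.

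\textbf{Step 2: minimality.} Let $\mathcal{S}'$ be any $\Sigma$-broad subcategory of $\mathcal{T}$ containing $\mathcal{X}$. For each $x\in\mathcal{X}$, any existing coproduct $x^{(I)}$ lies in $\mathcal{S}'$ since $\mathcal{S}'$ is $\Sigma$-broad, and any summand of such a coproduct lies in $\mathcal{S}'$ since $\mathcal{S}'$ is thick (hence closed under summands); thus $\mathsf{Add}(x)\subseteq \mathcal{S}'$. Therefore $\cup_{x\in\mathcal{X}}\mathsf{Add}(x)\subseteq \mathcal{S}'$, and since $\mathcal{S}'$ is a thick subcategory containing this union, minimality of $\mathsf{thick}(-)$ gives $\mathcal{S}=\mathsf{thick}(\cup_{x\in\mathcal{X}}\mathsf{Add}(x))\subseteq \mathcal{S}'$. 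This step is also routine.

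\textbf{Step 3: $\mathcal{S}$ is $\Sigma$-broad — the main point.} It remains to verify that $\mathcal{S}$ is itself closed under existing self-coproducts. The natural approach is a ``generation by stages'' induction: every object of $\mathcal{S}$ is built from $\cup_{x\in\mathcal{X}}\mathsf{Add}(x)$ by finitely many operations of (de)suspension, cones, and passing to summands. Concretely, set $\mathcal{S}_0\coloneqq\cup_{x\in\mathcal{X}}\mathsf{Add}(x)\cup\{0\}$ and let $\mathcal{S}_{n+1}$ be the closure of $\mathcal{S}_n$ under shifts, extensions (i.e. $\mathcal{S}_n\ast\mathcal{S}_n$) and direct summands, so that $\mathcal{S}=\cup_{n\geq 0}\mathcal{S}_n$. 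I would prove by induction on $n$ that each $\mathcal{S}_n$ is closed under existing self-coproducts. The base case $n=0$ holds because a coproduct of copies of an object in $\mathsf{Add}(x)$ is again in $\mathsf{Add}(x)$ (a coproduct of summands of coproducts of $x$ is a summand of a coproduct of $x$). For the inductive step, the key observations are: coproducts commute with shifts; coproducts of triangles are triangles in a cocomplete triangulated category, so if $s\in\mathcal{S}_n\ast\mathcal{S}_n$ via a triangle $a\to s\to b\to a[1]$ with $a,b\in\mathcal{S}_n$, then $s^{(I)}$ sits in a triangle $a^{(I)}\to s^{(I)}\to b^{(I)}\to a^{(I)}[1]$ with $a^{(I)},b^{(I)}\in\mathcal{S}_n$ by the inductive hypothesis, so $s^{(I)}\in\mathcal{S}_n\ast\mathcal{S}_n\subseteq\mathcal{S}_{n+1}$; and if $s$ is a summand of some $s'\in\mathcal{S}_n$-closure-under-extensions-and-shifts with complement $s''$, then $s^{(I)}$ is a summand of $(s')^{(I)}$, which lies in the appropriate stage by the previous points. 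The one genuinely delicate issue is bookkeeping: one must be careful that the coproduct $x^{(I)}$ in question is assumed to \emph{exist} in $\mathcal{T}$, and track that all the auxiliary coproducts ($a^{(I)}$, $b^{(I)}$, $(s')^{(I)}$) then also exist — this is where cocompleteness of $\mathcal{T}$ is used, making all coproducts available. Assembling the stages, $\mathcal{S}=\cup_n\mathcal{S}_n$ is closed under existing self-coproducts (a self-coproduct of an object in some $\mathcal{S}_n$ stays in $\mathcal{S}_n$), and it is thick, hence $\Sigma$-broad. Combined with Steps 1 and 2, this identifies $\mathcal{S}$ as $\mathsf{broad}_\Sigma(\mathcal{X})$, completing the proof of (i); part (ii) is dual.
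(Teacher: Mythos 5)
Your proposal is correct and follows essentially the same route as the paper: the paper's proof also reduces to showing that $\mathsf{thick}(\cup_{x\in\mathcal{X}}\mathsf{Add}(x))$ is closed under self-coproducts, using that every object there is a summand of an object with a finite filtration by shifts of objects of $\cup_{x\in\mathcal{X}}\mathsf{Add}(x)$ and that coproducts preserve triangles and split monomorphisms. Your stage-wise induction is just a more explicit bookkeeping of that same filtration argument, so nothing essential differs.
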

\begin{proof}
Any $\Sigma$-broad subcategory containing $x$ must contain $\cup_{x\in\mathcal{X}}\mathsf{Add}(x)$ and, hence, $\mathsf{thick}(\cup_{x\in\mathcal{X}}\mathsf{Add}(x))$. It then suffices to show that the latter is a $\Sigma$-broad subcategory. It is thick by construction, and so it remains to show that for any object $y$ in $\mathsf{thick}(\cup_{x\in\mathcal{X}}\mathsf{Add}(x))$, $y^{(I)}$ is also in $\mathsf{thick}(\cup_{x\in\mathcal{X}}\mathsf{Add}(x))$ for any set $I$. Now $y$ is a summand of an object that admits a finite filtration by objects in appropriate shifts of $\cup_{x\in\mathcal{X}}\mathsf{Add}(x)$. Since coproducts preserve triangles and splits monomorphisms in $\mathcal{T}$, $y^{(I)}$ is a summand of another object admiting such a filtration, thus showing that it lies in $\mathsf{thick}(\cup_{x\in\mathcal{X}}\mathsf{Add}(x))$. Assertion (ii) can be proved dually.
\end{proof}

\begin{exmp}
Let $R$ be a ring, $E$ an injective cogenerator in $\Mod{R}$ and $\mathcal{T}=\mathsf{D}(R)$. It is easy to see that $\mathsf{K}^{\mathsf{b}}(\Proj{R})=\mathsf{broad}_{\Sigma}(R)$ and that $\mathsf{K}^{\mathsf{b}}(\Inj R)=\mathsf{broad}_{\prod}(E)$. If $R$ is right noetherian, then every injective right $R$-module is a coproduct of indecomposable injective modules and, in particular, we have $\mathsf{Add}(E)=\mathsf{Prod}(E)$. Thus, if $R$ is right noetherian, we have that $\mathsf{K}^{\mathsf{b}}(\Inj R)=\mathsf{broad}_{\prod}(E)=\mathsf{broad}_{\Sigma}(E)$ is a broad subcategory. Dually, if $R$ is artinian, then the subcategory of projective right $R$-modules is closed under products (by Chase's theorem) and, therefore, $\mathsf{K}^{\mathsf{b}}(\Proj{R})$ is broad.
\end{exmp}

An easy application of the lemma above gives us the following useful observation.

\begin{cor}\label{broad Db}
Let $\Lambda$ be an Artin algebra and $\mathcal{S}_\Lambda$ its subcategory of simple objects. Then we have 
$$\mathsf{D}^{\mathsf{b}}(\Mod\Lambda)=\mathsf{broad}_\Sigma(\mathsf{D}^{\mathsf{b}}(\smod\Lambda))=\mathsf{broad}_\Sigma(\smod\Lambda)=\mathsf{broad}_\Sigma(\mathcal{S}_\Lambda).$$
\end{cor}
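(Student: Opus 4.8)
The plan is to build the chain of equalities from right to left, using Lemma~\ref{broad generation}(i) as the engine for each step. Since $\Lambda$ is an Artin algebra, $\smod\Lambda$ has only finitely many (up to isomorphism) simple objects, say $S_1,\dots,S_n$, and every object of $\smod\Lambda$ has finite length. Hence each finitely presented module is obtained from the $S_i$ by finitely many extensions, which shows $\smod\Lambda\subseteq\mathsf{thick}(\mathcal{S}_\Lambda)$. Interpreting a bounded complex in $\mathsf{D}^{\mathsf{b}}(\smod\Lambda)$ via its finite stupid filtration by (shifts of) its finitely presented cohomology-free components, one gets $\mathsf{D}^{\mathsf{b}}(\smod\Lambda)\subseteq\mathsf{thick}(\smod\Lambda)$. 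Putting these inclusions together with the trivial reverse ones and applying $\mathsf{thick}(\cup_{x}\mathsf{Add}(x))$-closure, one obtains
\[\mathsf{broad}_\Sigma(\mathcal{S}_\Lambda)=\mathsf{broad}_\Sigma(\smod\Lambda)=\mathsf{broad}_\Sigma(\mathsf{D}^{\mathsf{b}}(\smod\Lambda)),\]
because $\mathsf{broad}_\Sigma$ is monotone and idempotent, and $\mathsf{broad}_\Sigma(\X)\subseteq\mathsf{broad}_\Sigma(\Y)$ whenever $\X\subseteq\mathsf{thick}(\Y)$.

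The remaining, and genuinely substantive, equality is $\mathsf{D}^{\mathsf{b}}(\Mod\Lambda)=\mathsf{broad}_\Sigma(\mathsf{D}^{\mathsf{b}}(\smod\Lambda))$. The inclusion $\supseteq$ is immediate since $\mathsf{D}^{\mathsf{b}}(\Mod\Lambda)$ is a thick, $\Sigma$-broad (indeed broad) subcategory of $\mathsf{D}(\Lambda)$ containing $\mathsf{D}^{\mathsf{b}}(\smod\Lambda)$. For $\subseteq$, I would argue by induction on the number of nonzero cohomologies of a complex $X^{\bullet}$ in $\mathsf{D}^{\mathsf{b}}(\Mod\Lambda)$: truncating, it suffices to treat a single module $M$ in $\Mod\Lambda$ placed in one degree. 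Now the key input is that every module over an Artin algebra is a filtered colimit of its finitely generated (equivalently finitely presented, since $\Lambda$ is noetherian) submodules, and more usefully that $M$ is a direct summand of a coproduct of finitely presented modules up to a pure-exact issue — but to stay inside $\mathsf{thick}$ one wants something sharper. The cleanest route: $\mathsf{Add}(\Lambda)=\Proj\Lambda\supseteq$ all projectives, so any module $M$ has a projective resolution; since $\gd$ may be infinite this does not terminate, so instead I would use that $M$, being a module over an Artin algebra, embeds in a coproduct of copies of the injective cogenerator $E$ with finitely generated cokernel-type data — again delicate. The safe and standard argument is: $\mathsf{broad}_\Sigma(\smod\Lambda)$ contains $\mathsf{Add}(\smod\Lambda)$, and every $\Lambda$-module $M$ is a directed union of finitely generated submodules $M_\alpha$; choosing a well-ordered exhaustive chain one writes $M$ as the homotopy colimit (telescope) of the $M_\alpha$, but a homotopy colimit lives in the localising subcategory generated by the $M_\alpha$, which is too big.

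So the honest main obstacle is precisely this: $\mathsf{broad}_\Sigma$ only allows \emph{finite} filtrations and summands of \emph{coproducts}, not arbitrary directed colimits, so one cannot naively approximate an infinitely generated module. The resolution I expect the authors use — and which I would adopt — is the observation that over an Artin algebra every module is a \emph{coproduct} of... no; rather, one uses that $\Mod\Lambda$ has a projective generator $\Lambda$ with $\Lambda\in\smod\Lambda$, together with the fact that one is working in the \emph{derived} category: every object of $\mathsf{D}^{\mathsf{b}}(\Mod\Lambda)$ with a single cohomology $M$ fits, via the exact sequence $0\to K\to \Lambda^{(I)}\to M\to 0$, into a triangle in $\mathsf{D}(\Lambda)$ with $\Lambda^{(I)}\in\mathsf{Add}(\Lambda)\subseteq\mathsf{broad}_\Sigma(\smod\Lambda)$; the issue is controlling $K$. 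Since this does not obviously terminate, the right move is to invoke that $\mathsf{broad}_\Sigma(\smod\Lambda)=\mathsf{K}^{\mathsf{b}}(\Proj\Lambda)$ when $\Lambda$ has finite global dimension and more generally to reduce to the simples: a single module $M$ has a (possibly infinite) filtration with semisimple quotients, but its radical series is \emph{finite} because $\Lambda$ is artinian, so $M$ is a finite iterated extension of semisimple modules $M/\mathrm{rad}M,\ \mathrm{rad}M/\mathrm{rad}^2M,\dots$, each of which is a coproduct of the finitely many simples and hence lies in $\mathsf{Add}(\mathcal{S}_\Lambda)\subseteq\mathsf{broad}_\Sigma(\mathcal{S}_\Lambda)$. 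This finite Loewy filtration is the crux: it places every $\Lambda$-module — no finiteness assumption — inside $\mathsf{broad}_\Sigma(\mathcal{S}_\Lambda)$ using only finitely many extensions, and then the cohomological induction finishes the proof. I would therefore structure the writeup as: (1) finite Loewy length $\Rightarrow$ $\Mod\Lambda\subseteq\mathsf{broad}_\Sigma(\mathcal{S}_\Lambda)$; (2) stupid-truncation induction $\Rightarrow$ $\mathsf{D}^{\mathsf{b}}(\Mod\Lambda)\subseteq\mathsf{broad}_\Sigma(\mathcal{S}_\Lambda)$; (3) the trivial inclusions $\mathcal{S}_\Lambda\subseteq\smod\Lambda\subseteq\mathsf{D}^{\mathsf{b}}(\smod\Lambda)\subseteq\mathsf{D}^{\mathsf{b}}(\Mod\Lambda)$ together with $\Sigma$-broadness and thickness of $\mathsf{D}^{\mathsf{b}}(\Mod\Lambda)$ close the loop of equalities.
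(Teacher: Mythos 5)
Your final argument is correct and is essentially the paper's own proof: the finite radical (Loewy) series places every module of $\Mod\Lambda$ in $\mathsf{broad}_\Sigma(\mathcal{S}_\Lambda)$ (using that there are only finitely many simples, so semisimple modules lie in $\mathsf{thick}(\cup_{x\in\mathcal{S}_\Lambda}\Add x)$), truncation induction reduces bounded complexes to single modules, and the trivial inclusions $\mathcal{S}_\Lambda\subseteq\smod\Lambda\subseteq\mathsf{D}^{\mathsf{b}}(\smod\Lambda)\subseteq\mathsf{D}^{\mathsf{b}}(\Mod\Lambda)$ close the chain of equalities. The exploratory dead ends (homotopy colimits, projective resolutions, purity) are harmless since you discard them; the Loewy-length argument you settle on is exactly the paper's.
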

\begin{proof}
Any $\Lambda$-module is a finite extension of semi-simple modules (consider the radical series, see \cite[Proposition 3.1]{auslander_reiten_smalo} and \cite[Corollary 15.18]{anderson_fuller}). Writing $\Add\mathcal{S}_\Lambda$ for the subcategory of semi-simple $\Lambda$-modules, we have 
$$\mathsf{D}^{\mathsf{b}}(\Mod\Lambda)=\mathsf{thick}(\Mod \Lambda) = \mathsf{thick}(\Add\mathcal{S}_\Lambda).$$ 
Now, note that since there are finitely many simple $\Lambda$-modules, in fact we have that 
\[
\mathsf{thick}(\Add\mathcal{S}_\Lambda)=\mathsf{thick}(\cup_{x\in\mathcal{S}_\Lambda}\Add x)=\mathsf{broad}_\Sigma(\mathcal{S}_\Lambda).\qedhere
\]

\end{proof}

\subsection{Far-away orthogonality}\label{far away orthogonality}
We discuss orthogonality in triangulated categories for \textit{sufficiently large} or \textit{sufficiently small} integers, and we observe that those relations give intrinsic descriptions of various subcategories of interest. Given two objects $x$ and $y$ in a triangulated category $\mathcal{T}$ we write 
$$\mathsf{Hom}_{\mathcal{T}}(x,y[\gg])=0\ \ \rm{(respectively,}\ \  \mathsf{Hom}_{\mathcal{T}}(x,y[\ll])=0)$$ 
if there is $N$ in $\mathbb{Z}$ such that for all $n>N$ (respectively for all $n<N$), we have $\mathsf{Hom}_{\mathcal{T}}(x,y[n])=0$. For a class $\mathcal{X}$ of objects in $\mathcal{T}$, we define the following subcategories of $\mathcal{T}$: 
\[
\begin{aligned}
\mathcal{X}^{\perp_{\gg}}&\coloneqq&\{t\in\mathcal{T}: \   \forall x\in\mathcal{X}, \mathsf{Hom}_{\mathcal{T}}(x,t[\gg])=0 \}\ \ \ \ \ 
\mathcal{X}^{\perp_{\ll}}&\coloneqq&\{t\in\mathcal{T}: \  \forall x\in\mathcal{X}, \mathsf{Hom}_{\mathcal{T}}(x,t[\ll])=0 \}\\
^{\perp_\gg}\mathcal{X}&\coloneqq&\{t\in\mathcal{T}: \ \forall x\in\mathcal{X}, \mathsf{Hom}_{\mathcal{T}}(t,x[\gg])=0 \}\ \ \ \ \ 
^{\perp_\ll}\mathcal{X}&\coloneqq&\{t\in\mathcal{T}: \ \forall x\in\mathcal{X}, \mathsf{Hom}_{\mathcal{T}}(t,x[\ll])=0 \}
\end{aligned}
\]

\[
\begin{aligned}
\mathcal{X}^{\padova}&\coloneqq&\mathcal{X}^{\perp_{\ll}}\cap\mathcal{X}^{\perp_{\gg}}\ \ \ \ \ 
{}^{\padova}\mathcal{X}&\coloneqq& {}^{\perp_\ll}\mathcal{X}\cap {}^{\perp_{\gg}}\mathcal{X}
\end{aligned}
\]
If $\X$ is made of a single object, i.e.~if $\X=\{x\}$, we write $x^{\?}$ instead of $\X^{\?}$, for any symbol $\?$ in $\{\perp_\gg,\perp_\ll,\padova\}$.
Before we discuss the relevance of the use of these orthogonals, let us extract some easy structural properties.

\begin{lem} \label{triangulated_subcategories} 
Let $\mathcal{X}$ and $\mathcal{Y}$ be classes of objects in $\mathcal{T}$, and let $\?$ be one of the symbols $\{\perp_{\ll},\perp_\gg, \padova\}$.
\begin{enumerate}
\item The subcategories $\mathcal{X}^{\?}$ are $\prod$-broad and the subcategories ${}^{\?}\mathcal{X}$ are $\Sigma$-broad.
\item $\mathcal{X}\subseteq {}^{\?}(\mathcal{X}^{\?})$ and $\mathcal{X}\subseteq ({}^{\?}\mathcal{X})^{\?}$.
\item $\mathcal{X}^{\?}=({}^{\?}(\mathcal{X}^{\?}))^{\?}$ and ${}^{\?}\mathcal{X}={}^{\?}(({}^{\?}\mathcal{X})^{\?})$
\item $(\mathcal{X}\ast\mathcal{Y})^{\?}=\mathcal{X}^{\?}\cap \mathcal{Y}^{\?}$ and ${}^{\?}(\mathcal{X}\ast\mathcal{Y})={}^{\?}\mathcal{X}\cap {}^{\?}\mathcal{Y}$.
\item[\textnormal{(v)}] If $\mathcal{X}$ is a set of objects, then we have
\begin{enumerate}
\item[\textnormal{(a)}] $\mathcal{X}\subseteq \mathsf{broad}_{\Sigma}(\mathcal{X})\subseteq {}^{\?}(\mathcal{X}^{\?})$ and $\mathcal{X}\subseteq \mathsf{broad}_{\prod}(\mathcal{X})\subseteq ({}^{\?}\mathcal{X})^{\?}$.
\item[\textnormal{(b)}] $\mathcal{X}^{\?}=\mathsf{broad}_{\Sigma}(\mathcal{X})^{\?}=\mathsf{thick}(\mathcal{X})^{\?}$ and ${}^{\?}\mathcal{X}={}^{\?}\mathsf{broad}_{\prod}(\mathcal{X})={}^{\?}\mathsf{thick}(X)$.
\end{enumerate}
\end{enumerate}
\end{lem}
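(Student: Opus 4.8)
The statement is Lemma~\ref{triangulated_subcategories}, collecting the basic structural properties of far-away orthogonals. Almost every item is a routine manipulation, with the parts involving broadness and thick closure being the ones that require genuine (though still elementary) argument. The plan is to treat $\perp_\gg$ and $\perp_\ll$ uniformly, observing that each is an ``interval orthogonal'' $\X^{\perp_I}$ for an interval $I$ that is either eventually all large or all small integers, and that $\X^\padova$ is the intersection of two such, so the properties for $\padova$ follow formally once they are established for the one-sided versions.

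For (i), I would argue that each $\X^\?$ is closed under shifts, extensions, summands, and existing products. Closure under extensions uses the long exact sequence in $\Hom_\T(x,-)$: if $a\to b\to c\to a[1]$ is a triangle with $a,c\in\X^\?$, then for each fixed $x\in\X$ the vanishing of $\Hom_\T(x,a[n])$ and $\Hom_\T(x,c[n])$ for $n$ in the relevant cofinite set forces the vanishing of $\Hom_\T(x,b[n])$ there (intersecting the two cofinite sets of integers keeps a cofinite set). Closure under summands is immediate since $\Hom_\T(x,-)$ is additive, and closure under products uses that $\Hom_\T(x,-)$ is computed componentwise on products; note we only need existing products, which is exactly the $\prod$-broad condition. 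Finally, one checks $\X^\?$ is thick, hence (being also $\prod$-closed) $\prod$-broad. The statements for ${}^\?\X$ are dual, using $\Hom_\T(-,x[n])$ and coproducts; here it is crucial that $\Hom_\T(-,x[n])$ sends coproducts to products, so the relevant vanishing is preserved under existing coproducts.

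For (ii), (iii), (iv) these are the standard ``Galois connection'' identities. Item (ii) is a tautology: $x\in\X$ and $t\in\X^\?$ give the required Hom-vanishing by definition, so $\X\subseteq{}^\?(\X^\?)$, and dually. Item (iii) follows from (ii) by the usual two-line argument ($\X^\?\subseteq({}^\?(\X^\?))^\?$ by applying (ii)'s second inclusion to $\X^\?$ in place of $\X$, and the reverse inclusion by applying ${}^\?(-)$ then $(-)^\?$ to $\X\subseteq{}^\?(\X^\?)$ and using that $\?$ and ${}^\?$ are order-reversing). Item (iv) unwinds the definition of $\X\ast\Y$: given a triangle $x\to t\to y\to x[1]$ with $x\in\X$, $y\in\Y$, the long exact sequence shows $\Hom_\T(z,t[n])$ sits between $\Hom_\T(z,x[n])$ and $\Hom_\T(z,y[n])$, so $t\in\X^\?\cap\Y^\?$ iff $z^{\perp}$-type conditions hold against both; conversely if $z$ kills all sufficiently-far shifts of everything in $\X$ and in $\Y$ then it kills those of every object of $\X\ast\Y$. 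One inclusion of (iv) also needs the observation that $\X\subseteq\X\ast\Y$ and $\Y\subseteq\X\ast\Y$ (using the split triangles with a zero term), giving $(\X\ast\Y)^\?\subseteq\X^\?\cap\Y^\?$ for free.

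For (v), part (a) combines (i) and (ii): by (ii), ${}^\?(\X^\?)$ contains $\X$, and by (i) it is $\Sigma$-broad, hence by Lemma~\ref{broad generation}(i) it contains $\mathsf{broad}_\Sigma(\X)$; dually for the $\prod$ side. Part (b) is then the cleanest consequence: from $\X\subseteq\mathsf{broad}_\Sigma(\X)\subseteq\mathsf{thick}(\X)$ we get, applying the order-reversing $(-)^\?$, the chain $\mathsf{thick}(\X)^\?\subseteq\mathsf{broad}_\Sigma(\X)^\?\subseteq\X^\?$; for the reverse, $\X^\?$ is thick (indeed $\prod$-broad) by (i), so if $t\in\X^\?$ then $\Hom_\T(-,t[n])$-type vanishing against all of $\X$ propagates to all of $\mathsf{thick}(\X)$ by the same extension/summand argument as in (i), giving $\X^\?\subseteq\mathsf{thick}(\X)^\?$, and $\mathsf{broad}_\Sigma(\X)\subseteq\mathsf{thick}(\X)$ squeezes the middle term. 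The dual statement uses that ${}^\?\X$ is $\Sigma$-broad and that a class killing all far shifts into $\X$ also kills them into $\mathsf{thick}(\X)$.

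The only place demanding any care — and it is more bookkeeping than difficulty — is keeping track, in the extension arguments of (i) and (v)(b), that ``vanishing for all $n>N_1$'' intersected with ``vanishing for all $n>N_2$'' is ``vanishing for all $n>\max(N_1,N_2)$'', and the analogous statement with $\ll$; this is what makes $\perp_\gg$ and $\perp_\ll$ behave like ordinary orthogonals despite the quantifier. For $\padova$ one just intersects two such subcategories, and all of (i)–(v) are inherited from the intersection.
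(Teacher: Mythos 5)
Your items (i)–(iv) and (v)(a) are essentially the paper's argument and are fine, but (v)(b) contains a genuine error. You base it on the chain $\mathcal{X}\subseteq\mathsf{broad}_{\Sigma}(\mathcal{X})\subseteq\mathsf{thick}(\mathcal{X})$, whose second inclusion is backwards: $\mathsf{thick}(\mathcal{X})$ is the closure under \emph{finite} operations, while $\mathsf{broad}_{\Sigma}(\mathcal{X})=\mathsf{thick}(\cup_{x\in\mathcal{X}}\Add(x))$ contains arbitrary self-coproducts (for $\mathcal{X}=\{R\}$ in $\mathsf{D}(R)$ these are $\mathsf{K}^{\mathsf{b}}(\proj R)$ and $\mathsf{K}^{\mathsf{b}}(\Proj R)$, respectively). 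Consequently your ``squeeze'' never delivers the inclusion $\mathcal{X}^{\?}\subseteq\mathsf{broad}_{\Sigma}(\mathcal{X})^{\?}$, which is precisely the nontrivial half of (v)(b) — the place where infinite coproducts and the hypothesis that $\mathcal{X}$ is a set actually matter. Your extension/summand propagation only reaches $\mathsf{thick}(\mathcal{X})$, so what you have proved is $\mathcal{X}^{\?}=\mathsf{thick}(\mathcal{X})^{\?}$ together with $\mathsf{broad}_{\Sigma}(\mathcal{X})^{\?}\subseteq\mathcal{X}^{\?}$. To close the gap, argue as the paper does: by (v)(a), $\mathsf{broad}_{\Sigma}(\mathcal{X})\subseteq{}^{\?}(\mathcal{X}^{\?})$, so applying $(-)^{\?}$ and using (iii) gives $\mathcal{X}^{\?}=({}^{\?}(\mathcal{X}^{\?}))^{\?}\subseteq\mathsf{broad}_{\Sigma}(\mathcal{X})^{\?}$; or argue directly: for $t\in\mathcal{X}^{\?}$, the subcategory ${}^{\?}\{t\}$ is $\Sigma$-broad by (i) and contains $\mathcal{X}$, hence contains $\mathsf{broad}_{\Sigma}(\mathcal{X})$ by minimality (Lemma~\ref{broad generation}), so $t\in\mathsf{broad}_{\Sigma}(\mathcal{X})^{\?}$. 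The point is that passing from $\mathcal{X}$ to $\mathsf{broad}_{\Sigma}(\mathcal{X})$ needs closure under self-coproducts (a \emph{uniform} vanishing threshold), not just thickness; dually, ${}^{\?}\mathcal{X}={}^{\?}\mathsf{broad}_{\prod}(\mathcal{X})$ uses that $t^{\?}$ is $\prod$-broad.

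A related imprecision sits in your (i): you assert closure of $\mathcal{X}^{\?}$ under all existing products, and that is false for $\perp_{\gg}$ and $\perp_{\ll}$, because the thresholds of the factors need not be uniform — in $\mathsf{D}(R)$ each $M[-j]$ lies in $R^{\perp_{\gg}}$, yet $\prod_{j\geq 0}M[-j]$ does not. What is true, and all that $\prod$-broadness asks for, is closure under self-products $t^{I}$, where the single threshold of $t$ serves every factor; your componentwise computation of $\Hom_{\mathcal{T}}(x,-)$ proves exactly that and nothing more. This is the same uniformity issue that, unaddressed, caused the failure in (v)(b). (In (iv) you also briefly write the Hom's in the wrong direction, but the intended long-exact-sequence argument is the correct one and matches the paper.)
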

\begin{proof}
  (i)  Subcategories of the form $\mathcal{X}^{\?}$ are clearly closed under shifts and summands. If we consider a triangle 
    $$u\rightarrow v\rightarrow w\rightarrow u[1]$$
    in $\mathcal{T}$ with $u$ and $w$ in $\mathcal{X}^{\?}$ and we apply to it the cohomological functor $\mathsf{Hom}_\mathcal{T}(x,-)$, for $x$ an object of $\mathcal{X}$, using the fact that $\mathcal{X}^{\?}$ is closed under shifts, it follows that $v$ lies in $\mathcal{X}^{\?}$. Finally, $\mathcal{X}^{\?}$ is closed under self-products since $\mathsf{Hom}_\mathcal{T}(x,-)$ commutes with products. The other case is dual.
    
(ii) This follows straight from the definition.
    
    (iii) We show that $\mathcal{X}^{\?}=({}^{\?}(\mathcal{X}^{\?}))^{\?}$; the other equality is dual. By the second inclusion in (i) applied to the class $\mathcal{X}^{\?}$, we have $\mathcal{X}^{\?}\subseteq ({}^{\?}(\mathcal{X}^{\?}))^{\?}$. Applying $(-)^{\?}$ to the first inclusion in (ii) we also get that $({}^{\?}(\mathcal{X}^{\?}))^{\?}\subseteq \mathcal{X}^{\?}$, thus proving the equality.

    (iv) We have $\mathcal{X}\subseteq \mathcal{X}\ast \mathcal{Y}$ and $\mathcal{Y}\subseteq \mathcal{X}\ast \mathcal{Y}$, implying that $(\mathcal{X}\ast \mathcal{Y})^{\?}\subseteq \mathcal{X}^{\?}$ and $(\mathcal{X}\ast \mathcal{Y})^{\?}\subseteq \mathcal{Y}^{\?}$. Consequently, we get $(\mathcal{X}\ast \mathcal{Y})^{\?}\subseteq \mathcal{X}^{\?}\cap \mathcal{Y}^{\?}$. For an object $t$ in $\mathcal{X}\ast \mathcal{Y}$ there is a triangle $x\rightarrow t\rightarrow y\rightarrow x[1]$ with $x$ in $\mathcal{X}$ and $y$ in $\mathcal{Y}$. Then, for an object $s$ of $\mathcal{X}^{\?}\cap \mathcal{Y}^{\?}$, using the long exact sequence 
    \[
    \dots\rightarrow \mathsf{Hom}_{\mathcal{T}}(y,s[n])\rightarrow \mathsf{Hom}_{\mathcal{T}}(t,s[n])\rightarrow \mathsf{Hom}_{\mathcal{T}}(x,s[n])\rightarrow \mathsf{Hom}_{\mathcal{T}}(y,s[n+1])\rightarrow \cdots
    \]
    we infer that $s$ lies in $t^{\?}$ for every $t$ in $\mathcal{X}\ast \mathcal{Y}$, i.e.~$s$ lies in $(\mathcal{X}\ast \mathcal{Y})^{\?}$. 
    
    (v)(a) These inclusions follow straight from the definition and from (i) and (ii).
    
    (v)(b) We show that $\mathcal{X}^{\?}=\mathsf{broad}_{\Sigma}(\mathcal{X})^{\?}=\mathsf{thick}(\mathcal{X})^{\?}$; the other statement is dual. Since, by (v)(a), we have $\mathsf{broad}_{\Sigma}(\mathcal{X})\subseteq {}^{\?}(\mathcal{X}^{\?})$, we get the equality $\mathcal{X}^{\?}=({}^{\?}(\mathcal{X}^{\?}))^{\?}\subseteq \mathsf{broad}_{\Sigma}(\mathcal{X})^{\?}$, by (iii). For the converse inclusion, observe that since $\mathcal{X}\subseteq \mathsf{broad}_{\Sigma}(\mathcal{X})$, we have that $\mathsf{broad}_{\Sigma}(\mathcal{X})^{\?}\subseteq \mathcal{X}^{\?}$. This proves the first equality. Now observe that clearly $\mathsf{thick}(\mathcal{X})^{\?}\subseteq \mathcal{X}^{\?}$ and, furthermore, since every object in $\mathsf{thick}(\mathcal{X})$ is a summand of an object admitting a finite filtration by shifts of objects of $\mathcal{X}$, it follows that any object in $\mathcal{X}^{\?}$ must also lie in $\mathsf{thick}(\mathcal{X})^{\?}$, thus concluding that $\mathcal{X}^{\?}=\mathsf{thick}(\mathcal{X})^{\?}$. 
\end{proof}

\begin{exmp}\label{exmp artin}
Let $\Lambda$ be an Artin algebra. Recall that from Corollary \ref{broad Db} 
$$\mathsf{broad}_\Sigma(\smod\Lambda)=\mathsf{thick}(\Mod\Lambda)=\mathsf{D}^\mathsf{b}(\Mod\Lambda).$$ 
It then follows from Lemma \ref{triangulated_subcategories}(v)(b) that $(\smod\Lambda)^{\?}=(\Mod\Lambda)^{\?}$ for any symbol $\?$ in $\{\perp_\ll,\perp_\gg,\padova\}$.
\end{exmp}

%

It will be useful in the following to describe how far-away orthogonality behaves under adjoint pairs and fully faithful functors.

\begin{lem}\label{adjoints and far-away orthogonality} \label{fully_faithful_functor}
    Let $\mathsf{F}$ be a triangle functor $\mathsf{F}\colon\mathcal{D}\rightarrow \mathcal{T}$ and let $\mathcal{X}$ and $\mathcal{Y}$ be classes of objects in $\mathcal{D}$ and $\mathcal{T}$ respectively such that $\mathsf{F}(\X)\subseteq \Y$. For every $\?$ in $\{\perp_{\ll},\perp_\gg, \padova\}$, the following statements hold\textnormal{:} 
    \begin{enumerate}
        \item if $\mathsf{F}$ admits a right adjoint $\mathsf{G}$, then $\mathsf{G}(\mathcal{Y}^{\?})\subseteq \mathcal{X}^{\?}$;
        \item if $\mathsf{F}$ admits a left adjoint $\mathsf{G}$, then $\mathsf{G}({}^{\?}\mathcal{Y})\subseteq {}^{\?}\mathcal{X}$;
        \item if $\mathsf{F}$ is fully faithful and $\mathsf{F}(x)$ lies in $\mathcal{Y}^{\?}$ for some $x$ in $\mathcal{D}$, then $x$ lies in $\mathcal{X}^{\?}$;
        \item if $\mathsf{F}$ is fully faithful and $\mathsf{F}(x)$ lies in $^{\?}\mathcal{Y}$ for some $x$ in $\mathcal{D}$, then $x$ lies in $^{\?}\mathcal{X}$.
    \end{enumerate}
\end{lem}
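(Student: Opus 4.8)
The plan is to prove all four statements by unwinding the definitions of the far-away orthogonals and using the adjunction (or fully faithfulness) isomorphisms together with the hypothesis $\mathsf{F}(\X)\subseteq\Y$. The only slightly delicate point is that the condition $\Hom_{\mathcal{T}}(y,t[\gg])=0$ involves a quantifier over large integers, so I must make sure that the bound $N$ produced by the adjunction argument works uniformly; but since the adjunction isomorphism is natural in the shift, each individual vanishing transfers and the bound is literally the same, so this is harmless.

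First I would treat (1). Let $t$ lie in $\Y^{\?}$ and pick any $x$ in $\X$. Then $\mathsf{F}(x)$ lies in $\Y$ by hypothesis, so by the definition of $\Y^{\?}$ we have $\Hom_{\mathcal{T}}(\mathsf{F}(x),t[n])=0$ for all $n$ in the relevant range (all $n>N$, all $n<N$, or the union, depending on whether $\?$ is $\perp_\gg$, $\perp_\ll$ or $\padova$). Since $\mathsf{F}$ is a triangle functor it commutes with shifts, so $t[n]=t[n]$ and the adjunction isomorphism $\Hom_{\mathcal{T}}(\mathsf{F}(x),t[n])\cong\Hom_{\mathcal{D}}(x,\mathsf{G}(t[n]))\cong\Hom_{\mathcal{D}}(x,\mathsf{G}(t)[n])$ (the last isomorphism because right adjoints of triangle functors are triangle functors, hence commute with shifts) gives $\Hom_{\mathcal{D}}(x,\mathsf{G}(t)[n])=0$ on the same range. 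As $x$ in $\X$ was arbitrary, $\mathsf{G}(t)$ lies in $\X^{\?}$, which is what we wanted. Statement (2) is entirely dual: for $t$ in $^{\?}\Y$ and $x$ in $\X$, use $\Hom_{\mathcal{D}}(\mathsf{G}(t),x[n])\cong\Hom_{\mathcal{T}}(t,\mathsf{F}(x)[n])$ together with $\mathsf{F}(x)\in\Y$ and the definition of $^{\?}\Y$ to conclude $\mathsf{G}(t)\in{}^{\?}\X$.

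For (3), suppose $\mathsf{F}$ is fully faithful and $\mathsf{F}(x)\in\Y^{\?}$ for some $x$ in $\mathcal{D}$. For any $x'$ in $\X$, fully faithfulness of the triangle functor $\mathsf{F}$ gives $\Hom_{\mathcal{D}}(x',x[n])\cong\Hom_{\mathcal{T}}(\mathsf{F}(x'),\mathsf{F}(x)[n])$ for every $n$; since $\mathsf{F}(x')\in\Y$ and $\mathsf{F}(x)\in\Y^{\?}$, the right-hand side vanishes on the appropriate range of integers, hence so does the left, and therefore $x\in\X^{\?}$. Statement (4) is the dual: $\Hom_{\mathcal{D}}(x,x'[n])\cong\Hom_{\mathcal{T}}(\mathsf{F}(x),\mathsf{F}(x')[n])$ vanishes for $\mathsf{F}(x')\in\Y$ and $\mathsf{F}(x)\in{}^{\?}\Y$, so $x\in{}^{\?}\X$. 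In all four cases the three choices of $\?$ are handled simultaneously because each only changes the set of integers $n$ over which vanishing is asserted, and the isomorphisms used are valid for every single $n$; so there is really no additional obstacle beyond bookkeeping the direction of the arrows.
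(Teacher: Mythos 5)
Your proposal is correct and follows essentially the same route as the paper: (i) and (ii) by the adjunction isomorphisms, (iii) and (iv) by the fully faithfulness isomorphism $\Hom_{\mathcal{D}}(x',x[n])\cong\Hom_{\mathcal{T}}(\mathsf{F}(x'),\mathsf{F}(x)[n])$, with the three symbols $\?$ handled uniformly since only the range of $n$ changes. (Your appeal to right adjoints of triangle functors commuting with shifts is fine, and could even be bypassed by rewriting $\Hom_{\mathcal{D}}(x,\mathsf{G}(t)[n])\cong\Hom_{\mathcal{D}}(x[-n],\mathsf{G}(t))\cong\Hom_{\mathcal{T}}(\mathsf{F}(x)[-n],t)$, using only that $\mathsf{F}$ is a triangle functor.)
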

\begin{proof}
Statements (i) and (ii) follow directly by the definitions and the adjunctions considered. To prove (iii), assume that $\mathsf{F}(x)$ lies in $\mathcal{Y}^{\?}$ for some $x$ and some $\?$ in $\{\perp_{\gg},\perp_{\ll},\padova\}$. Consider an object $x'$ in $\mathcal{X}$. Since $\mathsf{F}$ is fully faithful, it follows that 
    \[
    \mathsf{Hom}_{\mathcal{D}}(x',x[n])\cong \mathsf{Hom}_{\mathcal{T}}(\mathsf{F}(x'),\mathsf{F}(x)[n])
    \]
for all $n$ in $\mathbb{Z}$. Since $\mathsf{F}(x')$ lies in $\mathcal{Y}$ and $\mathsf{F}(x)$ lies in $\mathcal{Y}^{\?}$, it follows in particular that the right hand side vanishes for appropriate values of $n$ depending on $\?$. In each case it follows that $x$ lies in $x'^{\?}$, as wanted. Assertion (iv) follows similarly.
\end{proof}

Sometimes far-away orthogonal subcategories will come in pairs, as we shall see in the following sections.

\begin{defn}
Given a triangulated category $\mathcal{T}$, a pair $(\mathcal{X},\mathcal{Y})$ of subcategories is called a \textbf{far-away pair} if $\mathcal{X}^{\padova}=\mathcal{Y}$ and $^{\padova}\mathcal{Y}=\mathcal{X}$. Further, a triple $(\mathcal{X},\mathcal{Y},\mathcal{Z})$ of subcategories of $\mathcal{T}$ is called a \textbf{far-away triple} if $(\mathcal{X},\mathcal{Y})$ and $(\mathcal{Y},\mathcal{Z})$ are far-away pairs.
\end{defn}

\begin{rem}\label{generation} It follows directly from Lemma \ref{triangulated_subcategories}(iii) that, for any class of objects $\mathcal{X}$, we can build two far-away pairs
\begin{enumerate}
\item[\textnormal{(i)}] the far-away pair generated by $\mathcal{X}$ given by $({}^{\padova}(\mathcal{X}^{\padova}),\mathcal{X}^{\padova})$, and 
\item[\textnormal{(ii)}] the far-away pair cogenerated by $\mathcal{X}$ given by $({}^{\padova}\mathcal{X},({}^{\padova}\mathcal{X})^{\padova})$. 
\end{enumerate}
Note that by Lemma \ref{triangulated_subcategories}(ii), we have $\X\subseteq {}^{\padova}(\mathcal{X}^{\padova})$ and $\X\subseteq ({}^{\padova}\mathcal{X})^{\padova}$.
\end{rem}

Examples of far-away pairs will be evident after the definitions at the start of the following section.

\section{Distinguished Subcategories}\label{Section 3}
\label{distinguished subcategories}

In the following definition we introduce the main subcategories of study in this paper. They are all built starting from the distinguished subcategory of the compact objects in a triangulated category. Their significance is, therefore, of particular relevance for compactly generated triangulated categories.

\begin{defn} \label{main_definition}
    Let $\mathcal{T}$ be a triangulated category. We define the following distinguished subcategories:
    \begin{itemize}
        \item the subcategory of \textbf{bounded above} objects $\mathcal{T}^-\coloneqq(\mathcal{T}^\mathsf{c})^{\perp_{\gg}}$.
        \item the subcategory of \textbf{bounded below} objects $\mathcal{T}^+\coloneqq(\mathcal{T}^{\mathsf{c}})^{\perp_\ll}$.
        \item the subcategory of \textbf{bounded} objects $\mathcal{T}^{\mathsf{b}}\coloneqq(\mathcal{T}^{\mathsf{c}})^{\padova}=\mathcal{T}^+\cap\mathcal{T}^-$.
          \item the subcategory of \textbf{bounded above projective} objects $\mathcal{T}^-_p\coloneqq{}^{\perp_\ll}(\mathcal{T}^{\mathsf{b}})$.
        \item the subcategory of \textbf{bounded below projective} objects $\mathcal{T}^+_p\coloneqq{}^{\perp_\gg}(\mathcal{T}^{\mathsf{b}})$.
        \item the subcategory of \textbf{bounded projective} objects $\mathcal{T}^{\mathsf{b}}_p\coloneqq$$^{\padova}(\mathcal{T}^{\mathsf{b}})=\mathcal{T}^+_p\cap \mathcal{T}^-_p$. 
        \item the subcategory of \textbf{bounded above injective} objects $\mathcal{T}^-_i\coloneqq(\mathcal{T}^{\mathsf{b}})^{\perp_\gg}$.
        \item the subcategory of \textbf{bounded below injective} objects $\mathcal{T}^+_i\coloneqq(\mathcal{T}^{\mathsf{b}})^{\perp_\ll}$.
        \item the subcategory of \textbf{bounded injective} objects $\mathcal{T}^{\mathsf{b}}_i\coloneqq(\mathcal{T}^{\mathsf{b}})^{\padova}=\mathcal{T}^+_i\cap \mathcal{T}^-_i$.
    \end{itemize}
\end{defn}

Note that, for any choice of $\?$ in $\{+,-,\mathsf{b}\}$, the subcategories $(\mathcal{T}^{\mathsf{c}})^{\?}$ are broad, the subcategories $\mathcal{T}_i^{\?}$ are $\prod$-broad, and the subcategories $\mathcal{T}_p^{\?}$ are $\Sigma$-broad. As promised, the following lemma gives us some examples of far-away pairs. 

\begin{lem} \label{common facts of the subcategories}
Let $\mathcal{T}$ be a complete and cocomplete triangulated category. The following statements hold. 
\begin{enumerate}
\item $(\mathcal{T}^{\mathsf{b}}_p,\mathcal{T}^{\mathsf{b}})$ is the far-away pair generated by $\T^{\mathsf{c}}$; in particular we have $\T^{\mathsf{c}}\subseteq \T^{\mathsf{b}}_p$.
\item If $\mathcal{T}$ is compactly generated, then the far-away pair cogenerated by $(\T^{\mathsf{c}})^\ast$ coincides with $(\T^{\mathsf{b}},\T^{\mathsf{b}}_i)$. In particular, we have a far-away triple of the form $(\mathcal{T}^{\mathsf{b}}_p,\mathcal{T}^{\mathsf{b}},\T^{\mathsf{b}}_i)$.
\end{enumerate}
If $\Tc=\mathsf{thick}(M)$ for some $M$, then $M^{\padova}=\Tb={}^{\padova}(M^\ast)$, ${}^{\padova}(M^{\padova})=\Tb_p$ and $({}^{\padova}(M^\ast))^{\padova}=\Tb_i$.
\end{lem}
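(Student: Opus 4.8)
The plan is to prove the statement of Lemma \ref{common facts of the subcategories} in the order it is presented: first the two far-away pair/triple assertions (i) and (ii), and then the final addendum under the hypothesis $\Tc=\mathsf{thick}(M)$.

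For (i), I would start from Remark \ref{generation}(i): the far-away pair generated by $\Tc$ is $({}^{\padova}((\Tc)^{\padova}),(\Tc)^{\padova})$. By Definition \ref{main_definition}, $(\Tc)^{\padova}=\Tb$ and ${}^{\padova}(\Tb)=\Tb_p$, so this pair is exactly $(\Tb_p,\Tb)$; being a far-away pair means $(\Tb_p)^{\padova}=\Tb$ and ${}^{\padova}(\Tb)=\Tb_p$. The inclusion $\Tc\subseteq\Tb_p$ is then Lemma \ref{triangulated_subcategories}(ii), which gives $\X\subseteq{}^{\padova}(\X^{\padova})$ applied to $\X=\Tc$.

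For (ii), I would use the assumption that $\T$ is compactly generated, which is precisely what gives a well-behaved Brown--Comenetz duality: for a compact object $t$, its dual $t^\ast$ represents $\Hom_{\mathbb Z}(\Hom_\T(t,-),\mathbb Q/\mathbb Z)$. The key computation is that for \emph{any} object $y\in\T$ one has $\Hom_\T(y,t^\ast[n])\cong \Hom_{\mathbb Z}(\Hom_\T(t,y[-n]),\mathbb Q/\mathbb Z)$, hence $\Hom_\T(y,t^\ast[n])=0$ if and only if $\Hom_\T(t,y[-n])=0$, because $\mathbb Q/\mathbb Z$ is an injective cogenerator of abelian groups. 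Applying this uniformly over $t\in\Tc$ and over all sufficiently large / sufficiently small $n$, we get that $y\in((\Tc)^\ast)^{\perp_\gg}$ iff $y\in(\Tc)^{\perp_\ll}=\T^+$, and similarly with $\gg$ and $\ll$ interchanged, so ${}^{\padova}$-type conditions against $(\Tc)^\ast$ translate into $\padova$-type conditions against $\Tc$; concretely this yields $(({}^{\padova}((\Tc)^\ast))\text{-side})$, and one reads off that the far-away pair cogenerated by $(\Tc)^\ast$, namely $({}^{\padova}((\Tc)^\ast),({}^{\padova}((\Tc)^\ast))^{\padova})$ from Remark \ref{generation}(ii), has first term $\Tb$. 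Indeed ${}^{\padova}((\Tc)^\ast)=\{y: \Hom_\T(y,t^\ast[\gg])=0=\Hom_\T(y,t^\ast[\ll]),\ \forall t\in\Tc\}=\{y:\Hom_\T(t,y[\ll])=0=\Hom_\T(t,y[\gg]),\ \forall t\in\Tc\}=(\Tc)^{\padova}=\Tb$, so its $\padova$ is $(\Tb)^{\padova}=\Tb_i$ by Definition \ref{main_definition}. Combining with (i), $(\Tb_p,\Tb)$ and $(\Tb,\Tb_i)$ are far-away pairs, which is the definition of the far-away triple $(\Tb_p,\Tb,\Tb_i)$.

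For the final addendum, the point is that when $\Tc=\mathsf{thick}(M)$ the single object $M$ already computes everything, via Lemma \ref{triangulated_subcategories}(v)(b): $(\mathcal X)^{\?}=\mathsf{thick}(\mathcal X)^{\?}$ for $\?\in\{\perp_\ll,\perp_\gg,\padova\}$, applied to $\mathcal X=\{M\}$, gives $M^{\padova}=\mathsf{thick}(M)^{\padova}=(\Tc)^{\padova}=\Tb$. For the identity $M^{\padova}={}^{\padova}(M^\ast)$ I would redo the Brown--Comenetz computation of (ii) at the level of the single object $M$ (and note $\mathsf{thick}(M)^\ast$ is the subcategory generated appropriately, so that the relevant orthogonals only see $M^\ast$); this gives ${}^{\padova}(M^\ast)={}^{\padova}((\Tc)^\ast)=\Tb$ as above. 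Then ${}^{\padova}(M^{\padova})={}^{\padova}(\Tb)=\Tb_p$ and $({}^{\padova}(M^\ast))^{\padova}=(\Tb)^{\padova}=\Tb_i$, again by Definition \ref{main_definition}. The main obstacle I anticipate is being careful with the Brown--Comenetz duality step in (ii): one must check that the isomorphism $\Hom_\T(y,t^\ast[n])\cong \Hom_{\mathbb Z}(\Hom_\T(t,y[-n]),\mathbb Q/\mathbb Z)$ holds for all $y$ (not just compact $y$), which follows from the defining representability of $t^\ast$ together with $t^\ast[n]=(t[-n])^\ast$ and the fact that shifts of compacts are compact, and that the vanishing of a group is detected by the vanishing of its $\mathbb Q/\mathbb Z$-dual — the injective-cogenerator property — so that the equivalence of far-away conditions is genuinely biconditional.
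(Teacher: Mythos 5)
Your proof is correct and takes essentially the same route as the paper: part (i) is read off from the definitions via Remark \ref{generation}, part (ii) is the Brown--Comenetz duality computation $\Hom_\T(y,t^\ast[n])\cong\Hom_{\mathbb Z}(\Hom_\T(t,y[-n]),\mathbb Q/\mathbb Z)$ that the paper dismisses as ``clear'', and the addendum is Lemma \ref{triangulated_subcategories}(v)(b) applied to $M$. The only blemish is a notational slip where you write $y\in((\Tc)^\ast)^{\perp_\gg}$ for what should be the left orthogonal ${}^{\perp_\gg}((\Tc)^\ast)$; the explicit set computation you give immediately afterwards is on the correct side, so nothing is affected.
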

\begin{proof}
(i) This follows by the definitions $\T^{\mathsf{b}}\coloneqq(\T^{\mathsf{c}})^{\padova}$ and $\mathcal{T}^{\mathsf{b}}_p\coloneqq{}^{\padova}((\mathcal{T}^\mathsf{c})^{\padova})$. 

(ii) Suppose that $\T$ is compactly generated. By definition of the Brown-Comenetz duals, it is clear that ${}^{\padova}((\Tc)^\ast)=(\Tc)^{\padova}=\Tb$. By definition we then have that 
$({}^{\padova}((\Tc)^\ast))^{\padova}=\Tb_i$, finishing the proof.

The final statement follows from Lemma \ref{triangulated_subcategories}.
\end{proof}

\subsection{First example: the derived category of a ring} Let us compute some examples of the subcategories of Definition \ref{main_definition} for some familiar triangulated categories. The first example, the case of the derived category of modules over a ring, helps us justify the nomenclature.

\begin{prop}\label{subcategories for the derived category}
\label{examples}
The following table contains a list of some distinguished subcategories for the triangulated category $\T=\mathsf{D}(R)$ for any ring $R$.
\small\begin{table}[h!]
\begin{tabular}{|c|c|c|c|c|c|c|c|c|c|c|}
\hline
$\mathcal{T}$ & $\mathcal{T}^{\mathsf{c}}$ & $\mathcal{T}^-$ & $\mathcal{T}^+$ & $\mathcal{T}^{\mathsf{b}}$ & $\mathcal{T}^-_p$ & $\mathcal{T}^{\mathsf{b}}_p$ & $\mathcal{T}^+_i$ & $\mathcal{T}^{\mathsf{b}}_i$   \\ \hline
$\mathsf{D}(R)$ &
$\mathsf{K}^{\mathsf{b}}(\proj{R})$ & $\mathsf{D}^-(R)$ &  $\mathsf{D}^+(R)$ & $\mathsf{D}^{\mathsf{b}}(R)$ &$\mathsf{K}^-(\Proj{R})$ &$\mathsf{K}^{\mathsf{b}}(\Proj{R})$ &$\mathsf{K}^+(\Inj{R})$ &  $\mathsf{K}^{\mathsf{b}}(\Inj{R})$  \\ \hline
\end{tabular}
\end{table}
\end{prop}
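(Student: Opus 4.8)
The plan is to verify each column of the table separately, exploiting the characterisations of the distinguished subcategories via far-away orthogonality together with the fact that $\mathsf{D}(R)^{\mathsf{c}}=\mathsf{K}^{\mathsf{b}}(\proj R)=\mathsf{thick}(R)$ (Example \ref{first example}(ii)). By Lemma \ref{common facts of the subcategories}, since $\Tc=\mathsf{thick}(R)$, it suffices to compute $R^{\padova}$, ${}^{\padova}(R^{\padova})$ and $({}^{\padova}(R^\ast))^{\padova}$, as well as the one-sided versions $R^{\perp_\ll}$, $R^{\perp_\gg}$, ${}^{\perp_\ll}(\Tb)$ and $(\Tb)^{\perp_\ll}$. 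The key computational input is that for a complex $Y$ in $\mathsf{D}(R)$ one has $\Hom_{\mathsf{D}(R)}(R,Y[n])\cong H^n(Y)$, so $R^{\perp_\ll}$ (respectively $R^{\perp_\gg}$) consists of the complexes with bounded-below (respectively bounded-above) cohomology, that is $\mathsf{D}^+(R)$ (respectively $\mathsf{D}^-(R)$), and their intersection is $\mathsf{D}^{\mathsf{b}}(R)=R^{\padova}=\Tb$; the same conclusion for $(\Tc)^{\perp}$ rather than $R^{\perp}$ follows since $\Tc=\mathsf{thick}(R)$ and Lemma \ref{triangulated_subcategories}(v)(b).

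Next I would compute the projective side. For $\Tb_p={}^{\padova}(\Tb)$ I would first identify ${}^{\perp_\ll}(\Tb)=\mathcal{T}^-_p$ with $\mathsf{K}^-(\Proj R)$: a complex of projectives bounded above clearly lies in ${}^{\perp_\ll}\mathsf{D}^{\mathsf{b}}(R)$ because $\Hom_{\mathsf{D}(R)}(P,Y[n])$ can be computed in the homotopy category when $P$ is a bounded-above complex of projectives, and this vanishes for $n\ll 0$ when $Y$ has bounded cohomology. For the reverse inclusion I would take $t\in{}^{\perp_\ll}(\Tb)$, replace it by a quasi-isomorphic (possibly unbounded) complex of projectives via a projective resolution, truncate, and use the far-away vanishing against the stalk complexes (which are objects of $\Tb$) to control the cohomology in low degrees, concluding that $t$ is isomorphic in $\mathsf{D}(R)$ to a bounded-above complex of projectives. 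Intersecting ${}^{\perp_\ll}(\Tb)$ with ${}^{\perp_\gg}(\Tb)$ — where the latter forces, for a complex of projectives, the cohomology to vanish in high degrees, hence (after a soft truncation argument) boundedness of the complex itself up to homotopy — yields $\Tb_p=\mathsf{K}^{\mathsf{b}}(\Proj R)$. The injective side is handled dually, or more efficiently via the Brown–Comenetz dual description in Lemma \ref{common facts of the subcategories}: one identifies $(\Tb)^{\perp_\ll}=\mathcal{T}^+_i$ with $\mathsf{K}^+(\Inj R)$ using that $\Hom_{\mathsf{D}(R)}(Y,E)$ is computable in the homotopy category when $E$ is a bounded-below complex of injectives, and then $\Tb_i=\mathcal{T}^+_i\cap\mathcal{T}^-_i=\mathsf{K}^{\mathsf{b}}(\Inj R)$.

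The main obstacle I anticipate is the reverse inclusions ${}^{\perp_\ll}(\Tb)\subseteq\mathsf{K}^-(\Proj R)$ and its bounded refinement: a priori an object of ${}^{\perp_\ll}(\Tb)$ is just some complex with no finiteness assumption, and one must genuinely produce a quasi-isomorphism to a (co)homologically controlled complex of projectives. The cleanest route is to test far-away orthogonality against the collection of shifts of cyclic modules, or simply against shifts of stalk complexes $M[0]$ for $M\in\Mod R$, all of which lie in $\Tb=\mathsf{D}^{\mathsf{b}}(R)$; since $\Hom_{\mathsf{D}(R)}(t,M[n])$ computes, up to the usual spectral sequence, the hyper-ext groups, vanishing for $n\ll 0$ pins down the cohomology of $t$ to be bounded below and of finite projective dimension-type behaviour in each degree, allowing a standard truncation of a projective resolution. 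Once this technical heart is in place, the remaining columns follow by symmetry (projective $\leftrightarrow$ injective via Brown–Comenetz duality and Lemma \ref{common facts of the subcategories}) and by the elementary cohomological computations already indicated, so I would present the argument in full for $\mathcal{T}^-_p$ and $\mathcal{T}^{\mathsf{b}}_p$ and then invoke duality for the injective columns.
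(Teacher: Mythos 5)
Your computations of $\T^{-}$, $\T^{+}$, $\Tb$ (testing against $R$ and using Lemma \ref{triangulated_subcategories}(v)(b)), the easy inclusions $\mathsf{K}^{-}(\Proj R)\subseteq \T^-_p$, $\mathsf{K}^{\mathsf{b}}(\Proj R)\subseteq \Tb_p$ via homotopy-category Hom's, and the identification $\T^-_p=\mathsf{D}^-(R)\simeq\mathsf{K}^-(\Proj R)$ by testing against an injective cogenerator are exactly the paper's route (note two directional slips: an object of ${}^{\perp_\ll}(\Tb)$ has cohomology bounded \emph{above}, and ${}^{\perp_\gg}(\Tb)$ bounds it \emph{below}; harmless but should be fixed). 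The genuine gap is at the key inclusion $\Tb_p\subseteq \mathsf{K}^{\mathsf{b}}(\Proj R)$ (and its dual for $\Tb_i$). You argue: far-away orthogonality bounds the cohomology, "hence (after a soft truncation argument) boundedness of the complex itself up to homotopy". This inference is false: a bounded-above complex of projectives with bounded cohomology is in general \emph{not} quasi-isomorphic to a bounded complex of projectives (take the projective resolution of $k$ over $k[x]/(x^2)$); soft truncation replaces the far-left term by a cokernel which need not be projective, and it is precisely the projectivity of that cokernel that has to be extracted from membership in ${}^{\padova}(\Tb)$. Your fallback, that vanishing of $\Hom_{\mathsf{D}(R)}(t,M[n])$ against stalk complexes gives "finite projective dimension-type behaviour in each degree", does not repair this: the individual cohomology modules of an object of $\Tb_p$ need not have finite projective dimension (already $R\xrightarrow{x}R$ over $k[x]/(x^2)$ is perfect with cohomology $k$ in two degrees), so no per-degree argument can work.

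The missing idea is a uniformization: for each $y\in\Tb$ the vanishing bound $N_y$ depends on $y$, whereas to make a truncation of the projective resolution $P$ of $t$ projective you need one integer $k$ with $\Hom_{\mathsf{K}(R)}(P,\mathsf{Coker}(d^{-k-1})[k])=0$, and a priori the bound for the module $\mathsf{Coker}(d^{-k-1})$ could grow with $k$. The paper resolves this by testing against the single module $c=\oplus_{n\in\mathbb{Z}}\mathsf{Coker}(d^{n-1})$ (which lies in $\Tb$), obtaining a uniform bound, and then running a null-homotopy argument: the null-homotopy of the projection $P^{-k}\to\mathsf{Coker}(d^{-k-1})$, combined with exactness at that spot, exhibits $\mathsf{Coker}(d^{-k-1})$ as a direct summand of $P^{-k+1}$, hence projective, so the brutal truncation is a bounded complex of projectives quasi-isomorphic to $t$. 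Without this (or an equivalent device), your sketch does not establish $\Tb_p\subseteq\mathsf{K}^{\mathsf{b}}(\Proj R)$, and the dual coproduct-into-product argument needed for $\Tb_i\subseteq\mathsf{K}^{\mathsf{b}}(\Inj R)$ is likewise absent.
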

\begin{proof}
Let $R$ be an arbitrary ring and fix $\mathcal{T}=\mathsf{D}(R)$. These subcategories of $\mathsf{D}(R)$ were computed in \cite{koenig, rickard_morita} and a detailed account of the respective proofs can be found for example in \cite[Lemma 2.12]{cummings}. We include the arguments for the convenience of the reader. The fact that $\mathcal{T}^{\mathsf{c}}=\mathsf{K}^{\mathsf{b}}(\proj R)$ is well-known and can be found in \cite[Proposition 6.4]{bokstedt_neeman}. Recall that $R^*$, the Brown-Comenetz dual of $R$, is an injective cogenerator of $\Mod{R}$.
\begin{itemize}[leftmargin=*]
\item $\mathcal{T}^{\?}=\mathsf{D}^{\?}(R)$, for any $\?$ in $\{+,-,\mathsf{b}\}$. Since $\mathcal{T}^\mathsf{c}=\mathsf{thick}(R)$, it follows from Lemma \ref{triangulated_subcategories} (v)(b) that $\mathcal{T}^{-}=R^{\perp_{\gg}}$, i.e. $\mathcal{T}^-$ consists of the objects $x$ for which $H^n(x)\cong \mathsf{Hom}_{\mathsf{D}(R)}(R,x[n])\cong 0$ for $n\gg0$. This proves the result for $\?=-$; the case $\?=+$ is shown dually and $\?=\mathsf{b}$ follows by combining the two previous statements.
\item $\mathcal{T}^{-}_{p}=\mathsf{K}^{-}(\Proj{R})$ and $\mathcal{T}^{+}_{i}=\mathsf{K}^+(\Inj{R})$. If $x$ lies in $\mathcal{T}^{-}_{p}$, then for $n\ll0$, we have that $\mathsf{Hom}_{\mathcal{T}}(x,R^*[n])=0$ and, thus, $H^{-n}(x)=0$. Hence, $x$ lies in $\mathsf{D}^{-}(R)$, and this argument is reversible. We get, therefore, that $\mathcal{T}^-_p=\mathsf{D}^{-}(R)\simeq\mathsf{K}^{-}(\Proj R)$. The second equality is shown dually.

\item $\mathcal{T}^{\mathsf{b}}_{p}=\mathsf{K}^{\mathsf{b}}(\Proj{R})$ and $\mathcal{T}^{\mathsf{b}}_i=\mathsf{K}^{\mathsf{b}}(\Inj{R})$. If $x$ lies in $\mathsf{K}^{\mathsf{b}}(\Proj R)$, then for every $y$ in $\mathsf{D}^{\mathsf{b}}(R)$ we have  
$$\mathsf{Hom}_{\mathsf{D}(R)}(x,y[n])\cong \mathsf{Hom}_{\mathsf{K}(R)}(x,y[n])$$ 
and since both $x$ and $y$ are bounded complexes, the latter is $0$ for $|n|$ large enough. This means that $\mathsf{K}^{\mathsf{b}}(\Proj R)\subseteq $$^{\padova}(\mathsf{D}^{\mathsf{b}}(R))=$$^{\padova}(\mathcal{T}^{\mathsf{b}})=\mathcal{T}^{\mathsf{b}}_p$. If, on the other hand, $x$ is an object in $\mathcal{T}^{\mathsf{b}}_p$, then there is $N$ such that $\mathsf{Hom}_{\mathsf{D}(R)}(x,R^*[n])=0$ for $|n|\geq N$. However, since $H^{-n}(x)=0$ if and only if $\mathsf{Hom}_{\mathsf{D}(R)}(x,R^*[n])=0$, we see that $x$ has bounded cohomology. Therefore, there is a bounded above complex $P=(P^i,d^i)$ in $\mathsf{K}^{-}(\Proj R)$ that is quasi-isomorphic to $x$ and has bounded cohomology. We claim that for $n\ll0$, $\mathsf{Coker}(d^{n-1})$ is projective. Indeed, since $P\cong x$ lies in ${}^{\padova}(\mathsf{D}^{\mathsf{b}}(R))$, we have that $\mathsf{Hom}_{\mathsf{D}(R)}(x,c[\gg])\cong\mathsf{Hom}_{\mathsf{K}(R)}(P,c[\gg])=0$ for any $R$-module $c$. Take $c$ to be the coproduct $\oplus_{n\in\mathbb{Z}}\mathsf{Coker}(d^{n-1})$, and let $k_c$ be an integer such that $\mathsf{Hom}_{\mathsf{K}(R)}(P,c[k])=0$ for all $k\geq k_c$. Let now $k$ be an integer such that $k\geq \mathsf{max}\{N,k_c\}$, guaranteeing that for such $k$, we have both $\mathsf{Hom}_{\mathsf{D}(R)}(P,c[k])=0$ and $H^{-k}(P)=0$. Consider the natural map of complexes induced by the projection map $\pi\colon P^{-k}\longrightarrow \mathsf{Coker}(d^{-k-1})$. The fact that this is a null-homotopic map implies the existence of $f\colon P^{-k+1}\longrightarrow \mathsf{Coker}(d^{-k-1})$ such that $f\circ d^{-k}=\pi$, obtaining the diagram below.
        \[
  \begin{tikzcd}
\cdots \arrow[r] & P^{-k-1} \arrow[r, "d^{-k-1}"] \arrow[d] & P^{-k} \arrow[r, "d^{-k}"] \arrow[d, "\pi"]    & P^{-k+1} \arrow[dl, "f"]\arrow[r] \arrow[d]  & \cdots \\
\cdots \arrow[r] & 0 \arrow[r]                        & \mathsf{Coker}(d^{-k-1}) \arrow[r] & 0  \arrow[r]                                 & \cdots \\
\end{tikzcd}
    \]
Now, since $H^{-k}(P)=0$, there is a natural isomorphism $\mathsf{Im}(d^{-k})\cong \mathsf{Coker}(d^{-k-1})$ and, therefore, we may consider a canonical embedding $\epsilon\colon \mathsf{Coker}(d^{k-1})\longrightarrow P^{-k+1}$ such that $\epsilon\circ \pi=d^{-k}$. Observe that $f\circ \epsilon\circ \pi=f\circ d^{-k}=\pi$ and, therefore, $f$ is a split epimorphism and $\mathsf{Coker}(d^{k-1})$ is projective. This shows that the truncated complex
    \[
    \cdots\rightarrow 0\rightarrow \mathsf{Coker}(d^{-k-1})\rightarrow P^{-k+1}\rightarrow P^{-k+2}\rightarrow \cdots
    \]
is a bounded complex of projectives quasi-isomorphic to $x$. The second statement is shown dually. \qedhere
\end{itemize}
\end{proof}


\subsection{Example: compactly generated triangulated categories with silting objects}
We now consider compactly generated triangulated categories with a special type of objects.

\begin{defn}\cite{NSZ}\cite{PV}
    Let $\mathcal{M}$ be an additively closed subcategory of a triangulated category $\mathcal{T}$, i.e.~$\Add{\mathcal{M}}=\mathcal{M}$. We say that $\mathcal{M}$ is \textbf{silting} if $(\mathcal{M}^{\perp_{>0}},\mathcal{M}^{\perp_{\leq 0 }})$ is a t-structure in $\T$.
An object $M$ in $\mathcal{T}$ is \textbf{silting} if $\mathsf{Add}(M)$ is a silting subcategory. Two silting objects $M$ and $N$ are said to be \textbf{equivalent} if $\Add{M}=\Add{N}$.
\end{defn}

\begin{rem}
If $\T$ is a compactly generated triangulated category and $M$ is a silting object of $\T$ lying in $\Tc$ (such $M$ is often referred to as a \textit{compact silting object}), then $\Tc=\mathsf{thick}(M)$, as stated in \cite[Proposition 4.2]{aihara_iyama}, following \cite{neeman2} and \cite{ravenel}.
\end{rem}

\begin{exmp}\label{example compact silting}
A prototypical example of silting object arises in the derived category $\mathsf{D}(\Gamma)$ of a dg algebra $\Gamma$ for which $H^n(\Gamma)=0$ for all $n>0$.
In fact, every algebraic compactly generated triangulated category $\T$ that has a silting object contained in $\Tc$ is of this form (\!\!\cite{keller0}).
\end{exmp}

\begin{exmp}
A non-algebraic example of a compactly generated triangulated category with a compact silting object is the homotopy category of modules over a connective ring spectrum, see \cite{schwede_shipley}. 
\end{exmp}


If $\T$ is a compactly generated triangulated category, there is also a co-t-structure associated to a silting object $M$, namely $({}^{\perp}(M^{\perp_{>0}}),M^{\perp_{>0}})$, with coheart $\Add(M)$, see \cite[Corollary 3.10]{angeleri_marks_vitoria}.

\begin{prop}\label{restrictions of t-structure and co-t-structure}
Let $\T$ be a triangulated category with coproducts and let $M$ be a silting object in $\T$ with associated heart $\mathcal{H}_M=M^{\perp_{\neq 0}}$. Then we have that\textnormal{:}
\begin{enumerate}
\item The t-structure associated to $M$ restricts to a bounded t-structure in $M^{\padova}$, and $M^{\padova}=\mathsf{thick}(\mathcal{H}_M)$.
\item If $\T$ is moreover a compactly generated triangulated category, then the co-t-structure associated to $M$ restricts to a co-t-structure in ${}^{\padova}(M^{\padova})$.
\end{enumerate}
\end{prop}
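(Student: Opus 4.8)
The plan is to prove the two assertions in sequence, heavily relying on the structural properties of far-away orthogonals established in Lemma~\ref{triangulated_subcategories} together with the known behaviour of silting objects.

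For assertion (i), write $(\U,\V)=(M^{\perp_{>0}},M^{\perp_{\leq 0}})$ for the silting t-structure, with heart $\mathcal{H}_M=M^{\perp_{\neq 0}}=\U\cap\V[1]$. First I would record that, by the remark preceding Example~\ref{example compact silting}, $\Tc=\mathsf{thick}(M)$ whenever $M$ is a compact silting object; but here we only assume $M$ is a silting object in a cocomplete $\T$, so the correct target subcategory is $M^{\padova}$ itself rather than $\Tb$ — and by Lemma~\ref{triangulated_subcategories}(v)(b) we have $M^{\padova}=\mathsf{thick}(M)^{\padova}$, so the notation is consistent. To show the t-structure restricts to $M^{\padova}$, one needs: (a) $M^{\padova}$ is closed under the truncation functors $\tau^{\leq n},\tau^{>n}$ of $(\U,\V)$; and (b) the restricted t-structure is bounded, i.e. every object of $M^{\padova}$ lies in $\U[n]\cap\V[-n]$ for $n\gg0$, equivalently has only finitely many nonzero cohomologies $H^i_{\mathcal{H}_M}$. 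For (b), since $M^{\padova}=M^{\perp_\ll}\cap M^{\perp_\gg}$ and $H^i_{\mathcal{H}_M}(x)$ is governed by $\Hom_\T(M,x[i])$ (because $M$ generates the heart), an object $x\in M^{\padova}$ has $\Hom_\T(M,x[i])=0$ for $|i|\gg 0$, hence bounded cohomology; one must then check that bounded cohomology forces $x\in\U[n]\cap\V[-n]$, which is the standard ``way-out'' argument using the t-structure axioms and the triangles relating consecutive truncations. For (a), given $x\in M^{\padova}$ and the truncation triangle $\tau^{\leq n}x\to x\to \tau^{>n}x\to$, once we know $x$ has bounded cohomology both outer terms have bounded cohomology, each is built from finitely many shifts of objects of $\mathcal{H}_M\subseteq M^{\padova}$, and since $M^{\padova}$ is thick (Lemma~\ref{triangulated_subcategories}(i), as $\padova$ of a class) it contains them. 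This simultaneously yields the equality $M^{\padova}=\mathsf{thick}(\mathcal{H}_M)$: the inclusion $\supseteq$ holds since $\mathcal{H}_M\subseteq M^{\padova}$ and the latter is thick, and $\subseteq$ holds because every $x\in M^{\padova}$, having bounded cohomology, is a finite iterated extension of shifts of its $H^i_{\mathcal{H}_M}(x)\in\mathcal{H}_M$.

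For assertion (ii), now assume $\T$ compactly generated and let $(\mathcal{A},\mathcal{B})=({}^{\perp}(M^{\perp_{>0}}),M^{\perp_{>0}})$ be the silting co-t-structure with coheart $\Add(M)$, as quoted from \cite[Corollary 3.10]{angeleri_marks_vitoria}. We want this to restrict to ${}^{\padova}(M^{\padova})=\Tb_p$ (using Lemma~\ref{common facts of the subcategories}, whose hypotheses — $\Tc=\mathsf{thick}(M)$ — hold for a compact silting object, so that ${}^{\padova}(M^{\padova})=\Tb_p$). As before, restriction of a co-t-structure to a thick subcategory $\mathcal{S}$ amounts to showing $\mathcal{S}$ is closed under the co-truncation triangles $a\to s\to b\to a[1]$ with $a\in\mathcal{A},b\in\mathcal{B}$. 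The natural strategy is: for $s\in\Tb_p$, take its co-truncation triangle; since $s\in{}^{\perp_\gg}(M^{\padova})$ and $M\in M^{\padova}$, one shows $b\in\mathcal{B}=M^{\perp_{>0}}$ already lies in $\Tb_p$ by checking the far-away vanishing against $\Tb$ directly (the co-heart $\Add(M)\subseteq\Tc\subseteq\Tb_p$, and $b$ is an ``iterated co-extension'' of shifts of coheart objects in the bounded-below direction), and then $a$ lies in $\Tb_p$ because $\Tb_p$ is thick. The key input is that a co-t-structure with coheart $\Add(M)$ has, for objects in its bounded part, co-Postnikov towers with layers in shifts of $\Add(M)$; one must control which shifts appear using the far-away orthogonality conditions cutting out $\Tb_p$.

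\textbf{Main obstacle.} The delicate point, in both parts, is the passage from ``far-away orthogonality vanishing'' (a condition only for $|i|\gg0$, with no uniform bound a priori) to actual membership in a shifted aisle $\U[n]$ or co-aisle $\mathcal{B}[n]$, and then to genuine boundedness of the associated (co-)Postnikov tower — i.e. showing the tower stabilises after finitely many steps so that $s$ is a \emph{finite} iterated extension of shifts of heart (resp. coheart) objects lying in the subcategory. For the t-structure case this is the standard but somewhat technical ``bounded cohomology $\Rightarrow$ bounded in the t-structure'' argument; for the co-t-structure case it is subtler, since co-t-structures are not determined by a cohomology functor and one must argue more directly with the co-aisle $M^{\perp_{>0}}$ and the generation properties of $M$ as a silting object. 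I expect that establishing the co-t-structure restriction in (ii) — specifically, pinning down that the co-truncation of an object of $\Tb_p$ again lies in $\Tb_p$ — will be where the real work lies, and it will likely use compact generation (hence $\Tc=\mathsf{thick}(M)$ and Lemma~\ref{common facts of the subcategories}) in an essential way, unlike part (i).
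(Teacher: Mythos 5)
Your treatment of part (i) is essentially workable, though more roundabout than necessary: you detour through cohomology objects and a ``bounded cohomology $\Rightarrow$ bounded'' way-out argument (which would require checking non-degeneracy of the silting t-structure), whereas the conditions defining $M^{\padova}$ \emph{are} literally membership in a shifted aisle and a shifted coaisle: $x\in M^{\perp_\gg}\cap M^{\perp_\ll}$ means $x\in M^{\perp_{>a}}\cap M^{\perp_{\leq b}}$ for some $a,b$, and these are exactly the shifts of $M^{\perp_{>0}}$ and $M^{\perp_{\leq 0}}$. With that observation, closure under truncation (two-out-of-three in the truncation triangle, using $M^{\perp_{>0}}\subseteq M^{\perp_\gg}$, $M^{\perp_{\leq 0}}\subseteq M^{\perp_\ll}$), boundedness, and $M^{\padova}=\mathsf{thick}(\mathcal{H}_M)$ all follow at once, which is the paper's route.

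Part (ii) is where there is a genuine gap. First, the proposition does not assume $M$ compact, so your identification ${}^{\padova}(M^{\padova})=\Tb_p$ via $\Tc=\mathsf{thick}(M)$ is unwarranted; compact generation is only needed for the existence of the silting co-t-structure. More seriously, your strategy for showing that the co-truncation pieces of an object of ${}^{\padova}(M^{\padova})$ stay in that subcategory relies on writing the co-aisle part $b\in M^{\perp_{>0}}$ as an iterated (co-)extension of shifts of the coheart $\Add(M)$. Such finite decompositions are not available a priori: an arbitrary object of the co-aisle is not a finite extension of shifts of $\Add(M)$ (in $\mathsf{D}(R)$ with $M=R$, any module of infinite projective dimension lies in $R^{\perp_{>0}}$ but not in $\mathsf{K}^{\mathsf{b}}(\Proj R)$), and knowing this for the co-truncations of objects of ${}^{\padova}(M^{\padova})$ is tantamount to the boundedness of the restricted co-t-structure — which is exactly the harder statement proved later (Theorem \ref{silting far-away generating}) under the stronger hypothesis that $M$ is a compact far-away generator, so your argument is circular. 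It is also unnecessary: the proposition claims only that the co-t-structure restricts, not that the restriction is bounded. The missing idea is the following orthogonality observation. By (i), ${}^{\padova}(M^{\padova})={}^{\padova}\mathcal{H}_M$. Since $\mathcal{H}_M[<0]\subseteq M^{\perp_{\leq 0}}$ and $\Hom(M^{\perp_{>0}},M^{\perp_{\leq 0}})=0$, one has $M^{\perp_{>0}}\subseteq{}^{\perp_\ll}\mathcal{H}_M$; since $\mathcal{H}_M[>0]\subseteq M^{\perp_{>0}}$, one has ${}^{\perp}(M^{\perp_{>0}})\subseteq{}^{\perp_\gg}\mathcal{H}_M$. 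Now for $y\in{}^{\padova}\mathcal{H}_M$ with co-truncation triangle $u\to y\to v\to u[1]$, $u\in{}^{\perp}(M^{\perp_{>0}})$, $v\in M^{\perp_{>0}}$: from the first inclusion $v\in{}^{\perp_\ll}\mathcal{H}_M$, so the triangle gives $u\in{}^{\perp_\ll}\mathcal{H}_M$, and the second inclusion gives $u\in{}^{\perp_\gg}\mathcal{H}_M$; hence $u\in{}^{\padova}\mathcal{H}_M$ and then $v\in{}^{\padova}\mathcal{H}_M$ as well. Without this (or an equivalent) argument, your proposal does not establish (ii).
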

\begin{proof}
(i) Consider the t-structure $(M^{\perp_{>0}},M^{\perp_{\leq 0}})$ associated to $M$, let $y$ be an object in $M^{\padova}$ and consider the truncation triangle
\[
u\rightarrow y\rightarrow v\rightarrow u[1]
\]
with $u$ in $M^{\perp_{>0}}\subseteq M^{\perp_{\gg}}$ and $v$ in $M^{\perp_{\leq 0}}\subseteq M^{\perp_{\ll}}$. Since both $u$ and $y$ both lie in $M^{\perp_{\gg}}$, then so does $v$ and, therefore, $v$ lies in $M^{\padova}$. In turn, this shows that also $u$ lies in $M^{\padova}$, showing therefore that the t-structure restricts to $M^{\padova}$. To check that this restriction is bounded, it suffices to observe that any $x$ in $M^{\padova}$ lies in $M^{\perp_{>a}}\cap M^{\perp_{\leq b}}$ for some $a$ and $b$. Observe furthermore that $\mathcal{H}_M=M^{\perp_{\neq 0}}$ is contained in $M^{\padova}$ and, therefore, the heart of $(M^{\perp_{>0}}\cap M^{\padova},M^{\perp_{\leq 0}}\cap M^{\padova})$ is $\mathcal{H}_M$. Since this t-structure is bounded in $M^{\padova}$ it follows that $M^{\padova}=\mathsf{thick}(\mathcal{H}_M)$. 

(ii) Let $\mathcal{T}$ be compactly generated, and consider the co-t-structure $(^{\perp}(M^{\perp_{>0}}),M^{\perp_{>0}})$ in $\mathcal{T}$ associated to $M$. We will show that it restricts to a co-t-structure in ${}^{\padova}(M^{\padova})$, which by (i) coincides with ${}^{\padova}(\mathcal{H}_{M})$. Let $y$ be an object in $^{\padova}(\mathcal{H}_M)$ and consider the truncation triangle 
\[
u\rightarrow y\rightarrow v\rightarrow u[1]
\]
with $u$ in ${^{\perp}}(M^{\perp_{>0}})$ and $v$ in $M^{\perp_{>0}}$. We have that $\mathsf{Hom}(M^{\perp_{>0}},\mathcal{H}_M[<0])=0$ since $\mathcal{H}_M[<0]$ is contained in the coaisle $M^{\perp_{\leq 0}}$. In particular, we have that $M^{\perp_{>0}}$ is contained in $^{\perp_{\ll}}(\mathcal{H}_M)$. It follows by the triangle above that $u$ belongs in $^{\perp_{\ll}}(\mathcal{H}_M)$. Moreover, we also have that $\mathsf{Hom}(^{\perp}(M^{\perp_{>0}}),\mathcal{H}_M[>0])=0$ since $\mathcal{H}_M[>0]$ is contained in the aisle $M^{\perp_{>0}}$.  This implies that $^{\perp}(M^{\perp_{>0}})$ lies in $^{\perp_{\gg}}(\mathcal{H}_M)$. Therefore $u$ belongs to $^{\padova}(\mathcal{H}_M)$ and by the truncation triangle the same follows for $v$, showing that the given co-t-structure indeed restricts to a co-t-structure of $^{\padova}(\mathcal{H}_M)$. 
\end{proof}

The statement above tells us in particular that the far-away pair generated by a silting object $M$ coincides with the far-away pair cogenerated by its associated heart $\mathcal{H}_M$ and that, moreover, 
$$M^{\padova}=({}^{\padova}\mathcal{H}_M)^{\padova}=\mathsf{thick}(\mathcal{H}_M).$$ 

It is well-known that if $\T$ is compactly generated and $M$ is a compact silting object in $\T$ then $M$ is what is sometimes referred to as a \textit{classical silting object} in $\T^{\mathsf{c}}$, i.e.~$M$ has the property that $\mathsf{Hom}_{\T^{\mathsf{c}}}(M,M[>0])=0$ and $\mathsf{thick}(M)=\T^{\mathsf{c}}$.

\begin{cor} \label{existence of silting compact}
    Let $0\neq \mathcal{T}$ be a compactly generated triangulated category with $\mathcal{T}^{\mathsf{b}}=0$. Then $\mathcal{T}$ has no compact silting object. 
\end{cor}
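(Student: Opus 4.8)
The plan is to argue by contradiction: assume $\mathcal{T}$ has a compact silting object $M$ and deduce that $\mathcal{T}=0$, contradicting the hypothesis $\mathcal{T}\neq 0$.

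The first step is to pin down $\mathcal{T}^{\mathsf{b}}$ from two directions. Since $M$ is a compact silting object we have $\mathcal{T}^{\mathsf{c}}=\mathsf{thick}(M)$, so Lemma~\ref{triangulated_subcategories}(v)(b) gives
\[
\mathcal{T}^{\mathsf{b}}=(\mathcal{T}^{\mathsf{c}})^{\padova}=\mathsf{thick}(M)^{\padova}=M^{\padova}.
\]
On the other hand, Proposition~\ref{restrictions of t-structure and co-t-structure}(i) identifies $M^{\padova}$ with $\mathsf{thick}(\mathcal{H}_M)$, where $\mathcal{H}_M=M^{\perp_{\neq 0}}$ is the heart of the t-structure associated to $M$. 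Combining these with the hypothesis $\mathcal{T}^{\mathsf{b}}=0$ yields $\mathsf{thick}(\mathcal{H}_M)=0$, and hence $\mathcal{H}_M=0$.

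The main step is then to show that a silting object with vanishing heart must itself be zero. For this I would use that a compact silting object is, in particular, a classical silting object of $\mathcal{T}^{\mathsf{c}}$, so that $\mathsf{Hom}_{\mathcal{T}}(M,M[i])=0$ for all $i>0$; this says exactly that $M$ lies in the aisle $M^{\perp_{>0}}$ of its t-structure. Applying the truncation functors to $M$, the cohomology object $H^0(M)$ (which coincides with $\tau^{\geq 0}M$ because $M$ already lies in the aisle) sits in $\mathcal{H}_M=0$, so $H^0(M)=0$; the corresponding truncation triangle then forces $M\cong\tau^{\leq -1}M$, i.e.\ $M$ lies in a strictly negative shift of the aisle. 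Unravelling this gives $\mathsf{Hom}_{\mathcal{T}}(M,M[i])=0$ for every $i\geq 0$, and taking $i=0$ shows that the identity morphism of $M$ vanishes, so $M\cong 0$.

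To conclude, $M\cong 0$ forces $\mathcal{T}^{\mathsf{c}}=\mathsf{thick}(M)=0$; since $\mathcal{T}$ is compactly generated we have $(\mathcal{T}^{\mathsf{c}})^{\perp}=0$, whereas $\mathcal{T}^{\mathsf{c}}=0$ makes $(\mathcal{T}^{\mathsf{c}})^{\perp}=\mathcal{T}$, whence $\mathcal{T}=0$ — the desired contradiction. I expect the only point needing care to be the third paragraph, namely verifying that $H^0(M)=\tau^{\geq 0}M$ really lands in the heart; this rests on the standard fact that the aisle of a t-structure is closed under extensions and positive shifts. Should one wish to avoid this bookkeeping, an alternative is to observe that the silting t-structure is non-degenerate — the total intersection of all shifts of its aisle (equivalently, of its coaisle) equals $\{y\in\mathcal{T}:\mathsf{Hom}_{\mathcal{T}}(M,y[i])=0\text{ for all }i\in\mathbb{Z}\}=(\mathcal{T}^{\mathsf{c}})^{\perp}=0$, since $M$ compactly generates $\mathcal{T}$ — and then to invoke the standard fact that a non-degenerate t-structure with zero heart has zero ambient category.
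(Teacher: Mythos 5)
Your argument is correct, and up to the identification $\mathcal{T}^{\mathsf{b}}=M^{\padova}=\mathsf{thick}(\mathcal{H}_M)$ (via $\mathcal{T}^{\mathsf{c}}=\mathsf{thick}(M)$, Lemma \ref{triangulated_subcategories}(v)(b) and Proposition \ref{restrictions of t-structure and co-t-structure}) it runs parallel to the paper's proof; where you diverge is in how the vanishing of the heart is turned into a contradiction. The paper invokes the theorem of Hoshino--Kato--Miyachi, which identifies $\mathcal{H}_M$ with $\Mod\End_{\mathcal{T}}(M)$ for a compact silting object, so that $\mathcal{H}_M=0$ forces $\End_{\mathcal{T}}(M)=0$ and hence $M=0$, contradicting that $M$ generates $\mathcal{T}\neq 0$. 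You instead stay inside elementary t-structure formalism: since a compact silting object satisfies $\Hom_{\mathcal{T}}(M,M[>0])=0$ (a fact the paper records), $M$ lies in the aisle $M^{\perp_{>0}}$, its truncation to the zero heart vanishes, and the truncation triangle pushes $M$ one step deeper into the aisle, giving $\Hom_{\mathcal{T}}(M,M[i])=0$ for all $i\geq 0$ and thus $\mathrm{id}_M=0$; alternatively, your non-degeneracy argument (the intersections of all shifts of the aisle and of the coaisle both equal $M^{\perp_{\mathbb{Z}}}=(\mathcal{T}^{\mathsf{c}})^{\perp}=0$, so a zero heart forces $\mathcal{T}=0$) is equally valid. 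Your route is self-contained and avoids the external reference, at the cost of a little truncation bookkeeping (the point you flag, that $\tau^{\geq 0}M$ lands in the heart because the aisle is extension-closed, is exactly the standard fact needed and is fine); the paper's route is shorter but leans on a nontrivial cited theorem about silting hearts. Both end with the same use of compact generation to rule out $M=0$.
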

\begin{proof}
    If $\mathcal{T}$ had a compact silting object $M$, then by \cite[Theorem 1.3]{hoshino_kato_miyachi} we would have $\mathcal{H}_M=\Mod\mathsf{End}_{\mathcal{T}}(M)$. However, from Proposition \ref{restrictions of t-structure and co-t-structure} together with the fact that $\mathcal{T}^{\mathsf{b}}=M^{\padova}$, it would follow that $0=\mathcal{T}^{\mathsf{b}}=\mathsf{thick}(\mathcal{H}_M)$, showing that $\mathsf{End}_{\mathcal{T}}(M)=0$, a contradiction with the fact that $M$ is a generator of $0\neq \mathcal{T}$.
\end{proof}

If we assume more about the silting object $M$, we will be able to say more about the far-away pair mentioned above. We introduce the following definition in an arbitrary triangulated category.

\begin{defn} \label{far_away_generated} 
A subcategory $\mathcal{M}$ of a triangulated category $\mathcal{T}$ is said to be a 
\textbf{far-away generating subcategory} for $\mathcal{T}$ if $\mathcal{M}\subseteq \mathcal{M}^{\padova}$ and $\mathsf{thick}(\mathcal{M})=\mathcal{T}$. 
In this case we say that $\mathcal{T}$ is \textbf{far-away generated}.
\end{defn}


\begin{exmp}\label{about silting and tilting1}
If $\mathcal{M}$ is a silting subcategory with the additional property that $\mathsf{Hom}_\T(\mathcal{M},\mathcal{M}[\neq 0])=0$ (such $\mathcal{M}$ are known as \textbf{tilting subcategories}) then trivially we have that $\mathsf{thick}(\mathcal{M})$ is far-away generated. This is the case for $\mathcal{M}=\mathsf{Add}(R)$ in the derived category $\mathsf{D}(R)$, for a ring $R$, showing that $\mathsf{K}^\mathsf{b}(\Proj{R})$ is far-away generated. The same holds for $\mathsf{K}^\mathsf{b}(\proj{R})$ with the far-away generator $R$. More generally, if $\mathcal{M}$ is a silting subcategory of a triangulated category $\mathcal{T}$, then $\mathsf{thick}(\mathcal{M})$ is far-away generated by $\mathcal{M}$ if and only if $\mathcal{M}\subseteq \mathcal{M}^{\perp_{\ll}}$. This is the case for $\mathcal{M}=\Add{\Gamma}$ in the derived category $\mathsf{D}(\Gamma)$, for a non-positively graded dg algebra with finitely many non-zero cohomologies, showing that $\mathsf{broad}_{\Sigma}(\Gamma)=\mathsf{thick}(\Add{\Gamma})$ is far-away generated (see Lemma \ref{broad generation}). \end{exmp}


We will see later (see Corollary \ref{far-away gen db and dsg}) that if $\Lambda$ is an Artin algebra of infinite global dimension, neither $\mathsf{D}^{\mathsf{b}}(\smod{\Lambda})$ nor $\mathsf{D}_{\mathsf{sg}}(\Lambda)$ are far-away generated. Note that it is already well-known that neither of these categories have classical silting objects (see for example \cite[Example 2.5(a)]{aihara_iyama} and \cite[Corollary 3.3]{chen_li_zhang_zhao}). 

Let us collect some structural properties of far-away generated categories.

\begin{lem}\label{far-away and containments}
Let $\mathcal{T}$ be a compactly generated triangulated category, and assume that $\mathcal{T}^\mathsf{c}$ is far-away generated. Then the subcategories $\T^{\mathsf{c}}$, $\T^\mathsf{b}_i$ and $\T^\mathsf{b}_p$ are contained in $\T^{\mathsf{b}}$.
\end{lem}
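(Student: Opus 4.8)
The plan is to show the three containments $\Tc\subseteq\Tb$, $\Tb_p\subseteq\Tb$ and $\Tb_i\subseteq\Tb$ in turn, using the far-away generating subcategory $\mathcal M$ for $\Tc$ as the fundamental input. Since $\mathcal M$ is far-away generating we have $\mathcal M\subseteq\mathcal M^{\padova}$ and $\mathsf{thick}(\mathcal M)=\Tc$. By Lemma~\ref{triangulated_subcategories}(v)(b) (applied to $\mathcal M$, which we may take to be a set, or at least reduce to a skeletally small generating set since $\Tc$ is skeletally small) we get $\mathcal M^{\padova}=\mathsf{thick}(\mathcal M)^{\padova}=(\Tc)^{\padova}=\Tb$. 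Combining with $\mathcal M\subseteq\mathcal M^{\padova}$ gives $\mathcal M\subseteq\Tb$, and since $\Tb$ is thick, $\Tc=\mathsf{thick}(\mathcal M)\subseteq\Tb$. This disposes of the first containment.

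For $\Tb_p\subseteq\Tb$, recall that $\Tb_p={}^{\padova}(\Tb)$. The containment $\Tc\subseteq\Tb$ just proved, together with $\Tc\subseteq\Tb_p$ from Lemma~\ref{common facts of the subcategories}(i), is suggestive, but the cleanest route is: since $\mathcal M\subseteq\mathcal M^{\padova}=\Tb$, applying Lemma~\ref{triangulated_subcategories} we get $\Tb_p={}^{\padova}(\Tb)\subseteq{}^{\padova}(\mathcal M)$; but actually what we want is the reverse-flavoured statement, so instead I would argue as follows. We have $\Tb=\mathcal M^{\padova}$, so $\Tb_p={}^{\padova}(\mathcal M^{\padova})$. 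By Lemma~\ref{triangulated_subcategories}(ii) (the inclusion $\mathcal X\subseteq{}^{\padova}(\mathcal X^{\padova})$ applied with $\mathcal X=\mathcal M$) we only get $\mathcal M\subseteq\Tb_p$, which is not enough. The right move is to use $\mathcal M\subseteq\Tb$ directly: since $\mathcal M\subseteq\Tb$ we have ${}^{\padova}(\Tb)\subseteq{}^{\padova}(\mathcal M)$, and now I must show ${}^{\padova}(\mathcal M)\subseteq\Tb$. For this, observe $\mathsf{thick}(\mathcal M)=\Tc$, so by Lemma~\ref{triangulated_subcategories}(v)(b), ${}^{\padova}(\mathcal M)={}^{\padova}(\mathsf{thick}(\mathcal M))={}^{\padova}(\Tc)$. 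Hence $\Tb_p\subseteq{}^{\padova}(\Tc)$. So it remains to show ${}^{\padova}(\Tc)\subseteq\Tb$, i.e.~${}^{\padova}(\Tc)\subseteq(\Tc)^{\padova}$: an object $t$ with $\Hom_\T(t,x[\gg\!\!,\ll])=0$ for all compact $x$ must satisfy $\Hom_\T(x,t[\gg\!\!,\ll])=0$ for all compact $x$. This is exactly where far-away generation is used once more: since $\mathcal M\subseteq\mathcal M^{\padova}$, for $x$ a shift-and-extension-summand built from $\mathcal M$ the two orthogonality conditions interact, and one checks on generators that $\Hom_\T(x,t[n])$ and $\Hom_\T(t,x[n])$ vanish for $|n|$ large simultaneously because $\mathcal M\subseteq\mathcal M^{\padova}$ forces the ``self far-away orthogonality'' that propagates through $\mathsf{thick}(\mathcal M)$.

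For $\Tb_i\subseteq\Tb$, I would use the far-away triple $(\Tb_p,\Tb,\Tb_i)$ from Lemma~\ref{common facts of the subcategories}(ii), which gives $\Tb_i=(\Tb)^{\padova}={}^{\padova}(\Tb_i)^{\padova}$ and, more usefully, that $\Tb=\mathcal M^{\padova}$ is the ``middle'' of a far-away triple. By Lemma~\ref{common facts of the subcategories}, since $\Tc=\mathsf{thick}(\mathcal M)$ we have $(\Tc)^\ast{}^{\padova}=\Tb$ and $\Tb_i=((\Tc)^\ast{}^{\padova})^{\padova}=(\Tb)^{\padova}$. Now $\Tb_i=(\Tb)^{\padova}\subseteq\mathcal M^{\padova}=\Tb$ follows immediately once we know $\mathcal M\subseteq\Tb$, because $\mathcal X\subseteq\mathcal Y$ implies $\mathcal Y^{\padova}\subseteq\mathcal X^{\padova}$, and we have $\mathcal M\subseteq\Tb$ so $(\Tb)^{\padova}\subseteq\mathcal M^{\padova}=\Tb$. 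Thus $\Tb_i\subseteq\Tb$ falls out cleanly from the first containment.

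The main obstacle is the middle step: verifying ${}^{\padova}(\Tc)\subseteq(\Tc)^{\padova}$, i.e.~that far-away generation makes the one-sided far-away orthogonal of $\Tc$ land inside the two-sided far-away orthogonal. The key technical point is that because $\mathcal M\subseteq\mathcal M^{\padova}$, every object of $\Tc=\mathsf{thick}(\mathcal M)$ is a summand of a finite extension of shifts of objects of $\mathcal M$, and such finite extensions only shift the ``window'' of nonvanishing $\Hom$ by a bounded amount; so an object orthogonal to $\Tc$ on one side far away is forced to be orthogonal on the other side far away too, possibly with a worse but still finite bound. I would isolate this as the crux and carry it out by a dévissage along the filtration defining membership in $\mathsf{thick}(\mathcal M)$, using Lemma~\ref{triangulated_subcategories}(iv) to reduce to a single object of $\mathcal M$ where the hypothesis $\mathcal M\subseteq\mathcal M^{\padova}$ applies directly.
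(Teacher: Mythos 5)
Your first and third containments are correct and essentially follow the paper's route: from $\mathcal{M}\subseteq\mathcal{M}^{\padova}$ and thickness of the orthogonals you get $\Tc\subseteq\Tb$, and then $\T^{\mathsf{b}}_i=(\Tb)^{\padova}\subseteq\mathcal{M}^{\padova}=\Tb$. The gap is in your argument for $\T^{\mathsf{b}}_p\subseteq\Tb$: you reduce it to the claim ${}^{\padova}(\Tc)\subseteq(\Tc)^{\padova}$, but this is strictly stronger than what is needed (the inclusion $\Tc\subseteq\Tb$ only gives $\T^{\mathsf{b}}_p={}^{\padova}(\Tb)\subseteq{}^{\padova}(\Tc)$, so ${}^{\padova}(\Tc)$ is an upper bound for $\T^{\mathsf{b}}_p$, not something you may replace $\T^{\mathsf{b}}_p$ by), and it is in fact false under the hypotheses of the lemma. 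The condition $\mathcal{M}\subseteq\mathcal{M}^{\padova}$ only constrains Hom-groups between objects of $\Tc$; it provides no mechanism for converting vanishing of $\Hom_{\T}(t,x[n])$ for $|n|\gg0$ (maps out of a possibly non-compact $t$) into vanishing of $\Hom_{\T}(x,t[n])$, and the proposed d\'evissage only reduces both conditions to $x\in\mathcal{M}$, where nothing new is learned. Concretely, let $k$ be a field, $R=\prod_{i\in\mathbb{N}}k$ and $I=\bigoplus_{i\in\mathbb{N}}k\subseteq R$. Then $I$ is projective, $\Hom_R(R/I,R)=0$ and the restriction map $\Hom_R(R,R)\rightarrow\Hom_R(I,R)\cong\prod_i e_iR$ is bijective, so $\mathbb{R}\Hom_R(R/I,R)=0$ although $R/I\neq 0$. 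In $\T=\mathsf{D}(R)$ the subcategory $\Tc=\mathsf{K}^{\mathsf{b}}(\proj R)$ is far-away generated by $R$, and for $t=\bigoplus_{n\in\mathbb{Z}}(R/I)[n]$ one has $\Hom_{\T}(t,x[m])=\prod_{n}\Hom_{\T}(R/I,x[m-n])=0$ for every compact $x$ and every $m$, so $t\in{}^{\padova}(\Tc)$; yet $H^n(t)=R/I\neq0$ for all $n$, so $t\notin\mathsf{D}^{\mathsf{b}}(R)=\Tb$. So the step you yourself flag as the crux cannot be carried out.

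The repair is already contained in your own text: your proof of $\T^{\mathsf{b}}_i\subseteq\Tb$ does not use the $\T^{\mathsf{b}}_p$ step, so prove it first and then apply ${}^{\padova}(-)$ to that inclusion. Since $\T$ is compactly generated, $(\Tb,\T^{\mathsf{b}}_i)$ is a far-away pair by Lemma~\ref{common facts of the subcategories}(ii) (this is where compact generation and the Brown--Comenetz duals enter), so ${}^{\padova}(\T^{\mathsf{b}}_i)=\Tb$ and hence $\T^{\mathsf{b}}_p={}^{\padova}(\Tb)\subseteq{}^{\padova}(\T^{\mathsf{b}}_i)=\Tb$. This is exactly how the paper closes the argument, and it avoids ${}^{\padova}(\Tc)$ altogether.
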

\begin{proof}
$\mathcal{T}^\mathsf{c}\subseteq \mathcal{T}^{\mathsf{b}}$: Let $x$ be an object in $\mathcal{T}^\mathsf{c}$ and let $\mathcal{M}$ be a subcategory far-away generating $\mathcal{T}^\mathsf{c}$. We will show that $\T^{\mathsf{c}}\subseteq x^{\padova}$. We have seen that $x^{\padova}$ is a broad (hence, thick) subcategory of $\mathcal{T}$ (see Lemma \ref{triangulated_subcategories}). Thus, we only need to show that $\mathcal{M}$ is contained in $x^{\padova}$ or, equivalently, that $x$ lies in ${}^{\padova}\mathcal{M}$. Note, however, that ${}^{\padova}\mathcal{M}$ is a thick subcategory containing $\mathcal{M}$ and, therefore, it contains $\mathcal{T}^c$. Hence, $x$ lies in ${}^{\padova}\mathcal{M}$ and the result follows.

$\T^\mathsf{b}_i\subseteq \T^{\mathsf{b}}$: This holds by considering right far-away orthogonals of the inclusion $\mathcal{T}^\mathsf{c}\subseteq \mathcal{T}^{\mathsf{b}}$.

$\T^\mathsf{b}_i\subseteq \T^{\mathsf{b}}$: This holds by considering left far-away orthogonals of the inclusion $\T^\mathsf{b}_i\subseteq \T^{\mathsf{b}}$, by Lemma \ref{common facts of the subcategories}.
\end{proof}

This lemma gives us the following diagram of inclusions of distinguished thick subcategories of a compactly generated triangulated category $\mathcal{T}$ for which the subcategory of compact objects $\mathcal{T}^\mathsf{c}$ is far-away generated. In the diagram, we write $\mathbb{BC}(\mathcal{T}^\mathsf{c})$ for the thick subcategory of $\mathcal{T}$ generated by $\{t^*\colon t\in\mathcal{T}^\mathsf{c}\}$.
$$\xymatrix{\mathcal{T}^\mathsf{c}\ar[d]&&\mathbb{BC}(\mathcal{T}^\mathsf{c})\ar[d]\\ \mathcal{T}^{\mathsf{b}}_p\ar[dr]&&\mathcal{T}^{\mathsf{b}}_i\ar[dl]\\&\mathcal{T}^{\mathsf{b}}}$$
If a triangulated category $\T$ has a compact silting object which is simultaneously a far-away generator for $\Tc$, then we can improve Proposition \ref{restrictions of t-structure and co-t-structure} as follows.

\begin{thm}\label{silting far-away generating}
Let $\T$ be a compactly generated triangulated category and $M$ be a compact silting object which far-away generates $\Tc$. Then we have the following far-away triple
\[(\Tb_p,\Tb,\Tb_i)=(\mathsf{broad}_{\Sigma}(\Tc),\mathsf{thick}(\mathcal{H}_M),\mathsf{broad}_{\prod}((\Tc)^\ast))=(\mathsf{thick}(\Add{M}),\mathsf{thick}(\mathcal{H}_M),\mathsf{thick}(\Product{M^\ast}))\]
such that the t-structure induced by $M$ restricts to $\Tb$ as a bounded t-structure, the co-t-structure induced by $M$ restricts to $\Tb_p$ as a bounded co-t-structure and the co-t-structure induced by $M^\ast$ restricts to $\Tb_i$ as a bounded co-t-structure. 
\end{thm}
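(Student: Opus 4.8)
The plan is to prove the three equalities from left to right, extracting the easy identifications first and then treating the two ``outer'' subcategories $\Tb_p$ and $\Tb_i$ by a resolution argument against $M$, respectively against its Brown--Comenetz dual. Throughout I use that, since $M$ is a compact silting object far-away generating $\Tc$, we have $\Tc=\mathsf{thick}(M)$, so by \lemref{common facts of the subcategories} (its last sentence) $\Tb=M^{\padova}={}^{\padova}(M^\ast)$, $\Tb_p={}^{\padova}(M^{\padova})$ and $\Tb_i=({}^{\padova}(M^\ast))^{\padova}$, and by \lemref{triangulated_subcategories}(v)(b) these equal ${}^{\padova}(\mathcal{H}_M)$ and $(\mathcal{H}_M)^{\padova}$ respectively once we know $\Tb=\mathsf{thick}(\mathcal{H}_M)$. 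I would also record at once the easy identifications $\mathsf{broad}_{\Sigma}(\Tc)=\mathsf{broad}_{\Sigma}(M)=\mathsf{thick}(\Add M)$ and $\mathsf{broad}_{\prod}((\Tc)^\ast)=\mathsf{broad}_{\prod}(M^\ast)=\mathsf{thick}(\Product{M^\ast})$: each follows from $\Tc=\mathsf{thick}(M)$, from \lemref{broad generation}, and from the fact that Brown--Comenetz duality is exact and contravariant, so $\{t^\ast:t\in\mathsf{thick}(M)\}\subseteq\mathsf{thick}(M^\ast)$ while the reverse containment is trivial as $M\in\Tc$. Finally, $\Tb=\mathsf{thick}(\mathcal{H}_M)$ together with the statement that the $M$-induced t-structure restricts to a bounded t-structure on $\Tb=M^{\padova}$ is precisely \propref{restrictions of t-structure and co-t-structure}(i).

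\textbf{The equality $\Tb_p=\mathsf{thick}(\Add M)$ and the bounded co-t-structure on $\Tb_p$.} One inclusion is formal: $\mathsf{thick}(\Add M)=\mathsf{broad}_{\Sigma}(\Tc)\subseteq{}^{\padova}((\Tc)^{\padova})=\Tb_p$ by \lemref{triangulated_subcategories}(v)(a). For the reverse inclusion I would argue by a ``coresolution'' of an object $x\in\Tb_p$ by $\Add M$, peeling off cohomologies from the top. By \lemref{far-away and containments} we have $\Tb_p\subseteq\Tb$, so $x$ has finite amplitude for the (restricted, bounded) t-structure, say $x\in\T^{\le b}\cap\T^{\ge a}$. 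Using \cite[Theorem~1.3]{hoshino_kato_miyachi} (as invoked in the proof of \ref{existence of silting compact}) identify $\mathcal{H}_M=\Mod\End_{\T}(M)$ with $M$ corresponding to the projective generator; in particular every object of $\mathcal{H}_M$ is a quotient of a coproduct of copies of $M$, and $\Hom_{\T}(M,z[n])\cong H^n(z)$ for every $z\in\Tb$ (the hyper-Ext spectral sequence degenerates because $M$ lies in the heart and is projective there). Now choose an epimorphism $M^{(I)}\twoheadrightarrow H^b(x)$ in $\mathcal{H}_M$; since the obstruction to lifting it lives in $\Hom_{\T}(M^{(I)}[-b],(\tau^{\le b-1}x)[1])=\Hom_{\T}(M^{(I)},\T^{\le -2})=0$, we obtain $\phi\colon M^{(I)}[-b]\to x$ inducing that epimorphism on $H^b$. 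Its cone $C$ lies in $\Tb_p$ (thickness) and, by the cohomology long exact sequence, in $\T^{\le b-1}\cap\T^{\ge a}$. Iterating reduces the top degree; once we reach a single heart object (concentrated in one degree) the process becomes the construction of its projective resolution by coproducts of $M$, and it terminates exactly when that object has finite projective dimension. Hence $x\in\mathsf{thick}(\Add M)$. Once $\Tb_p=\mathsf{thick}(\Add M)$ is established, the co-t-structure associated to $M$ (with coheart $\Add M$, see \cite[Corollary~3.10]{angeleri_marks_vitoria}), which restricts to $\Tb_p$ by \propref{restrictions of t-structure and co-t-structure}(ii), is automatically \emph{bounded} on $\Tb_p$: each generator $M[n]$ lies in a shift of the coheart, and finite extensions and summands of such stay in a bounded window of shifts of the coheart.

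\textbf{The equality $\Tb_i=\mathsf{thick}(\Product{M^\ast})$ and the bounded co-t-structure on $\Tb_i$.} This is dual, using $M^\ast$ in place of $M$. The inclusion $\mathsf{thick}(\Product{M^\ast})=\mathsf{broad}_{\prod}((\Tc)^\ast)\subseteq({}^{\padova}(\Tc)^\ast)^{\padova}=\Tb_i$ is again \lemref{triangulated_subcategories}(v)(a). For the converse, the crucial preliminary observation is that $M^\ast$ is \emph{bounded}: since $M$ far-away generates $\Tc$ we have $M\in M^{\padova}$, i.e.\ $\Hom_{\T}(M,M[n])=0$ for $|n|\gg0$, hence $H^n(M^\ast)=\du(\Hom_{\T}(M,M[-n]))$ vanishes for $|n|\gg0$, so $M^\ast\in\Tb$ and $H^0(M^\ast)$ is the injective cogenerator of $\mathcal{H}_M$. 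Given $x\in\Tb_i\subseteq\Tb$ with $x\in\T^{\ge a}\cap\T^{\le b}$, pick an embedding $H^a(x)\hookrightarrow H^0((M^\ast)^J)$ in $\mathcal{H}_M$; this corresponds to a map $\tau^{\le a}x\to(M^\ast)^J[-a]$, whose obstruction to extending to $x$ lives in $\Hom_{\T}(\tau^{\ge a+1}x,(M^\ast)^J[-a+1])=\du(H^{a-1}(\tau^{\ge a+1}x))^J=0$. The resulting $\psi\colon x\to(M^\ast)^J[-a]$ is injective on $H^a$, and $C=\mathsf{fib}(\psi)\in\Tb_i\cap\T^{\ge a+1}$, raising the lowest degree. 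Iterating produces an ``injective coresolution'' of $x$ by objects of $\Product{M^\ast}$, and termination gives $x\in\mathsf{thick}(\Product{M^\ast})$; boundedness of the induced co-t-structure on $\Tb_i$ then follows exactly as in the projective case.

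\textbf{The main obstacle.} The formal reductions and the easy inclusions are routine; the genuine difficulty is \emph{termination} of the two (co)resolutions, equivalently the \emph{boundedness} of the restricted (co-)t-structures on $\Tb_p$ and $\Tb_i$. In the projective case this amounts to showing that the syzygy-type objects produced along the way eventually vanish, which must use the full force of $x\in\Tb_p={}^{\padova}(\mathcal{H}_M)$ — far-away orthogonality against \emph{all} of $\Tb$, not merely against the heart object being peeled — to force a uniform bound on the length of the resolution. The injective case is more delicate still, because $M^\ast$ is not concentrated in a single degree of the t-structure (unless $M$ is tilting), so each peeling step can re-inflate the upper amplitude; controlling this and proving termination, plausibly by exhibiting $M^\ast$ as a (pure-injective) cosilting object whose heart is again $\mathcal{H}_M$ and importing the corresponding finiteness, is where the bulk of the work lies.
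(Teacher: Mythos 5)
Your formal reductions (via \lemref{common facts of the subcategories} and \propref{restrictions of t-structure and co-t-structure}) and the easy inclusions $\mathsf{thick}(\Add{M})\subseteq\Tb_p$, $\mathsf{thick}(\Product{M^\ast})\subseteq\Tb_i$ match the paper, but the proposal has a genuine gap exactly where you flag ``the main obstacle'': the converse inclusions $\Tb_p\subseteq\mathsf{thick}(\Add{M})$ and $\Tb_i\subseteq\mathsf{thick}(\Product{M^\ast})$ are reduced to the termination of your peeling procedure, and that termination is never proved — ``it terminates exactly when that object has finite projective dimension'' simply renames the problem, since a heart object lying in $\Tb_p$ having a finite $\Add{M}$-resolution is precisely what has to be shown. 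The paper supplies the missing idea as follows: it suffices to prove that the restricted co-t-structure $(\mathcal{U}\cap\Tb_p,\mathcal{V}\cap\Tb_p)$, with $\mathcal{U}={}^{\perp}(M^{\perp_{>0}})$ and $\mathcal{V}=M^{\perp_{>0}}$, is \emph{bounded}; then $\Tb_p$ equals the thick closure of its co-heart $\Add{M}$ by \cite[Theorem 4.20]{bounded co t structures}, with no explicit resolution needed. Boundedness is proved by contradiction using a product trick: if $x\in\Tb_p$ lay in no shift $(\mathcal{U}\cap\Tb_p)[N]$, then for every $n>0$ there would be $y_n\in\mathcal{V}[n]\cap\Tb_p\subseteq\Tb$, hence (by boundedness of the restricted t-structure) some $a_n\in\mathcal{H}_M$ and $k_n\geq 0$ with $\mathsf{Hom}_\T(x,a_n[n+k_n])\neq 0$; setting $a=\prod_{n>0}a_n$, which again lies in $\mathcal{H}_M$, gives $\mathsf{Hom}_\T(x,a[\gg])\neq 0$, contradicting $x\in{}^{\padova}\mathcal{H}_M$. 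This single uniform-bound argument — assembling the would-be counterexamples into one heart object and testing far-away orthogonality against it — is the ``full force'' your sketch appeals to but does not deliver, and without it neither your projective nor your injective resolution is known to stop.

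Two further remarks. The injective side does not require the cosilting/pure-injectivity machinery you anticipate: once boundedness with co-heart $\Add{M}$ is established for $\Tb_p$, the paper obtains the statement for $\Tb_i$ (co-heart $\Product{M^\ast}$) by the dual argument, using that $\mathcal{H}_M$ is also closed under coproducts because $M$ is compact. Also, your justification ``$M$ lies in the heart and is projective there'' is false for a general silting (non-tilting) object, since $\mathsf{Hom}_\T(M,M[<0])$ need not vanish; the computation $\mathsf{Hom}_\T(M,z[n])\cong\mathsf{Hom}_\T(M,H^n_M(z))$ for $z\in\Tb$ is still correct, but because $\mathcal{H}_M\subseteq M^{\perp_{\neq 0}}$ by definition of the silting t-structure, not because $M$ is a heart object.
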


\begin{proof}
As seen in Example \ref{example compact silting}, since $M$ is a compact generator of $\T$, $\Tc=\mathsf{thick}(M)$ and by Lemma \ref{common facts of the subcategories} it follows that the far-away pair generated by $M$ is $(\Tb_p,\Tb)$ and the far-away pair cogenerated by $M^\ast$ is $(\Tb,\Tb_i)$. By Proposition \ref{restrictions of t-structure and co-t-structure}, since $M$ is silting we have that $\Tb=\mathsf{thick}(\mathcal{H}_M)$, the t-structure induced by $M$ restricts to $\Tb$ as a bounded t-structure and the co-t-structure induced by $M$ restricts to $\Tb_p$. Dual arguments show that also the co-t-structure induced by $M^\ast$ restricts to $\Tb_i$. To prove our theorem it will suffice to show the  restriction to $\Tb_p$ actually yields a bounded co-t-structure with co-heart $\Add{M}$. Indeed, it will follow dually that the restriction to $\Tb_i$ yields a bounded co-t-structure with co-heart $\Product{M^\ast}$. Then the boundedness condition guarantees that $\Tb_p$ (respectively $\Tb_i$) coincides with the thick closure of the co-heart (see \cite[Theorem 4.20]{bounded co t structures}).

Let us fix the notation $\mathcal{U}:={}^\perp(M^{\perp_{>0}})$ and $\mathcal{V}:=M^{\perp_{>0}}$ and consider the co-t-structure $(\mathcal{U}\cap\Tb_p,\mathcal{V}\cap\Tb_p)$ in $\Tb_p$. By Lemma \ref{far-away and containments}, we have that $\Tb_p\subseteq \Tb=\cup_{n\in\mathbb{Z}}\mathcal{V}[n]$. Recall that, if we denote by $H_M^0$ the cohomological functor $\T\longrightarrow \mathcal{H}_M$ associated to the t-structure induced by $M$, since the restriction of this t-structure to $\Tb$ is bounded, an object $y$ of $\Tb$ will have the property that $H^{-n}_M(y):=H^0_M(y[n])\neq 0$ for only finitely many $n$ in $\mathbb{Z}$. This leaves us to prove that 
\[\Tb_p=\cup_{n\in\mathbb{Z}}(\mathcal{U}\cap\Tb_p)[n]=\cup_{n>0}(\mathcal{U}\cap\Tb_p)[n]\]
(where the last equality follows trivially from the fact that $\mathcal{U}\cap \Tb_p$ is cosuspended), i.e. that for any $x$ in $\Tb_p$, there is an integer $N>0$ such that $x$ lies in $(\mathcal{U}\cap \Tb_p)[N]={}^\perp(\mathcal{V}\cap\Tb_p)[N]={}^\perp(\mathcal{V}[N]\cap\Tb_p)$. Suppose that such $N$ does not exist. This means that for each $n>0$ there is an object $y_n$ in $\mathcal{V}[n]\cap\Tb_p$ such that $\mathsf{Hom}_\T(x,y_n)\neq 0$. As noted above, since $y_n$ lies in $\Tb_p$ and hence in $\Tb$, $y_n$ lies in $\mathcal{H}_M[n+k]\ast \cdots \ast \mathcal{H}_M[n]$ for some $k\geq 0$. Thus, for each $n>0$, there is $k_n\geq 0$ such that $\mathsf{Hom}_\T(x,\mathcal{H}_M[n+k_n])\neq 0$. For each $n>0$, let $a_n$ be an object in $\mathcal{H}_M$ such that $\mathsf{Hom}_\T(x,a_n[n+k_n])\neq 0$ and let $a=\prod_{n>0}a_n$ in $\mathcal{H}_M$. Observe that then we have $\mathsf{Hom}_\T(x,a[\gg])\neq 0$, thus contradicting the fact that $x$ lies in $\Tb_p={}^{\padova}\mathcal{H}_M$. Hence, the postulated $N$ must exist, proving that the co-t-structure is indeed bounded.
%
%
%
\end{proof}

Note that some distinguished subcategories for the derived category of a dg algebra behave exactly like in the case of the derived category of modules over a ring.

\begin{rem} \label{dg algebras}
  Let $\Gamma$ be a dg $k$-algebra and consider its derived category $\mathcal{T}=\mathsf{D}(\Gamma)$. Using Lemma \ref{triangulated_subcategories}(v), since $H^0(-)\cong\mathsf{Hom}_{\mathsf{D}(\Gamma)}(\Gamma,-)$, we can see immediately see that: 
    \begin{itemize}
        \item[(i)] $\mathcal{T}^+=\{x\in\mathsf{D}(\Gamma):  H^n(x)=0, \ n\ll0\}$;
        \item[(ii)] $\mathcal{T}^-=\{x\in\mathsf{D}(\Gamma): H^n(x)=0, \ n\gg0\}$;
        \item[(iii)] $\mathcal{T}^{\mathsf{b}}=\{x\in\mathsf{D}(\Gamma): H^n(x)=0, \ |n|\gg0\}$. 
    \end{itemize}
\end{rem}

\begin{rem}\label{approx}
Another setup for the examples discussed above is provided by Neeman in \cite{neeman5}. If $\mathcal{T}$ is a triangulated category and $\tau=(\mathcal{T}^{\leq 0},\mathcal{T}^{\geq 0})$ is a t-structure in $\mathcal{T}$, we may then consider the following triangulated subcategories of $\mathcal{T}$: 
     \[
     \mathcal{T}^{-,\tau}\coloneqq \bigcup_{n>0}\mathcal{T}^{\leq n}, \ \ \mathcal{T}^{+,\tau}\coloneqq \bigcup_{n>0}\mathcal{T}^{\geq -n} \text{  and  } \mathcal{T}^{\mathsf{b},\tau}\coloneqq \mathcal{T}^{-,\tau}\cap \mathcal{T}^{+,\tau}. 
     \]
If $\T$ is a triangulated category with a single compact generator $G$, a particular choice of $\tau$ is the t-structure generated by $G$. When $\mathcal{T}$ is \emph{weakly approximable}, i.e.\ the t-structure generated by $G$ satisfies several ``approximability conditions'' (see \cite[Definition 0.25]{neeman5} for details) then the subcategories defined above are intrinsic and independent of the chosen generator. It follows by \cite[Corollary 3.10]{burke_neeman_pauwels} that the subcategories $\mathcal{T}^{-,\tau}, \mathcal{T}^{+,\tau}$ and $\mathcal{T}^{\mathsf{b},\tau}$ as defined above, are the same as $\mathcal{T}^-,\mathcal{T}^+$ and $\mathcal{T}^{\mathsf{b}}$ respectively, of Definition \ref{main_definition}. 
\end{rem}

We end this subsection by applying Theorem \ref{silting far-away generating} to the case of the derived category of a non-positive dg algebra $\Gamma$, i.e.~a dg algebra $\Gamma$ for which $H^n(\Gamma)=\mathsf{Hom}_{\mathsf{D}(\Gamma)}(\Gamma,\Gamma[n])=0$ for all $n>0$. Such dg algebras are often referred to as \textbf{connective}.

\begin{cor} \label{T^b_p for dg algebras}
Let $\Gamma$ be a connective differential graded algebra over a commutative ring $k$ with $H^n(\Gamma)\neq 0$ for only finitely many integers $n$. Considering $\T$ to be $\mathsf{D}(\Gamma)$, we have that $\Tb_p=\mathsf{broad}_{\Sigma}(\mathsf{per}(\Gamma))=\mathsf{thick}(\Add{\Gamma})$ and $\Tb_i=\mathsf{broad}_{\prod}((\mathsf{per}(\Gamma)^\ast)=\mathsf{thick}(\Product{\Gamma^\ast})$.
\end{cor}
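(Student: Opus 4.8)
The plan is to obtain this corollary as a direct application of Theorem~\ref{silting far-away generating}, taking for the compact silting object the dg algebra $\Gamma$ itself, regarded as an object of $\mathsf{D}(\Gamma)$. So the first step is to recall, via Example~\ref{first example}(3), that $\T=\mathsf{D}(\Gamma)$ is a compactly generated (indeed complete and cocomplete) triangulated category whose subcategory of compact objects is $\Tc=\mathsf{thick}(\Gamma)=\mathsf{per}(\Gamma)$; in particular $M:=\Gamma$ is a compact generator and $\Tc=\mathsf{thick}(M)$.

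Next I would verify the two hypotheses of Theorem~\ref{silting far-away generating}: that $M=\Gamma$ is a silting object of $\T$, and that it far-away generates $\Tc$. For the silting property, connectivity of $\Gamma$ gives $\Hom_{\mathsf{D}(\Gamma)}(\Gamma,\Gamma[n])\cong H^n(\Gamma)=0$ for all $n>0$, which is exactly the prototypical situation recorded in Example~\ref{example compact silting}, so $\Add\Gamma$ is a silting subcategory. For the far-away generating property, I would invoke the criterion from Example~\ref{about silting and tilting1}: a silting subcategory $\mathcal{M}$ far-away generates $\mathsf{thick}(\mathcal{M})$ precisely when $\mathcal{M}\subseteq\mathcal{M}^{\perp_{\ll}}$; here $\mathcal{M}=\Add\Gamma$ and the condition $\Add\Gamma\subseteq(\Add\Gamma)^{\perp_{\ll}}$ reads $H^n(\Gamma)=\Hom_{\mathsf{D}(\Gamma)}(\Gamma,\Gamma[n])=0$ for $n\ll0$, which holds by the standing hypothesis that $H^n(\Gamma)\neq 0$ for only finitely many integers $n$.

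Having checked the hypotheses, the conclusion is then read off from Theorem~\ref{silting far-away generating} with $M=\Gamma$, giving
\[
(\Tb_p,\Tb,\Tb_i)=(\mathsf{broad}_{\Sigma}(\Tc),\mathsf{thick}(\mathcal{H}_\Gamma),\mathsf{broad}_{\prod}((\Tc)^\ast))=(\mathsf{thick}(\Add\Gamma),\mathsf{thick}(\mathcal{H}_\Gamma),\mathsf{thick}(\Product{\Gamma^\ast})).
\]
Substituting $\Tc=\mathsf{per}(\Gamma)$ in the first and third coordinates yields exactly $\Tb_p=\mathsf{broad}_{\Sigma}(\mathsf{per}(\Gamma))=\mathsf{thick}(\Add\Gamma)$ and $\Tb_i=\mathsf{broad}_{\prod}((\mathsf{per}(\Gamma))^\ast)=\mathsf{thick}(\Product{\Gamma^\ast})$, as claimed.

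I do not expect a genuine obstacle here: the statement is a bookkeeping corollary of Theorem~\ref{silting far-away generating}. The only points requiring a moment of care are the two translations via the identification $H^n(\Gamma)\cong\Hom_{\mathsf{D}(\Gamma)}(\Gamma,\Gamma[n])$ --- connectivity becoming $\Hom(\Gamma,\Gamma[>0])=0$ and finiteness of the nonzero cohomologies becoming $\Hom(\Gamma,\Gamma[\ll])=0$ --- and the observation that $\mathsf{per}(\Gamma)=\mathsf{thick}(\Gamma)$, so that $\mathsf{broad}_{\Sigma}(\mathsf{per}(\Gamma))$ agrees with $\mathsf{thick}(\Add\Gamma)$ (and dually for the injective side); the latter identifications are already part of the output of Theorem~\ref{silting far-away generating}.
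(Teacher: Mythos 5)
Your proof is correct and is essentially the paper's own argument: the paper also deduces the corollary directly from Theorem~\ref{silting far-away generating}, observing that connectivity makes $\Gamma$ a compact silting object and that finiteness of the non-zero cohomologies makes $\Gamma$ a far-away generator of $\mathsf{per}(\Gamma)$. The only cosmetic point is that the far-away generating subcategory of $\Tc$ should be taken to be $\{\Gamma\}$ (or $\add\Gamma$) rather than $\Add\Gamma$, but the verification is identical, since $H^n(\Gamma)=0$ for $|n|\gg0$ gives $\Gamma\in\Gamma^{\padova}$ and $\mathsf{thick}(\Gamma)=\mathsf{per}(\Gamma)$.
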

\begin{proof}
This follows immediately from Theorem \ref{silting far-away generating}, taking into account that, under the assumption of having finitely many non-zero cohomologies, $\Gamma$ is a far-away generator for $\mathsf{per}(\Gamma)$.
\end{proof}

\subsection{Example: homotopy categories for Artin algebras and the big singularity category} 
We will be considering the categories $\mathsf{K}(\Inj{\Lambda})$ and $\mathsf{K}(\Proj\Lambda)$ for an Artin algebra $\Lambda$. The \textit{big singularity category subcategory} under consideration will be that of acyclic complexes of injective $\Lambda$-modules $\mathsf{K}_{\mathsf{ac}}(\Inj{\Lambda})$. 
We will need the following statements about $\mathsf{K}(\Inj \Lambda)$.

\begin{lem}\label{some facts about KInj}
Let $A$ be a right noetherian ring and consider the recollement \textnormal{(\ref{rec})}. The following hold.
\begin{enumerate}
\item[\textnormal{(i)}] $\mathsf{K}^+(\Inj A)\subseteq \mathsf{Im}(\mathsf{Q}_\rho)$.
\item[\textnormal{(ii)}] $\mathsf{Hom}_{\mathsf{K}(\Inj A)}(-,x)\cong \mathsf{Hom}_{\mathsf{D}(A)}(\mathsf{Q}(-),\mathsf{Q}(x))$ for any $x$ in $\mathsf{Im}(\mathsf{Q}_\rho)$.
\item[\textnormal{(iii)}] For any $x$ in $\mathsf{K}^{\mathsf{b}}(\proj A)$, $\mathsf{Q}_\lambda(x)=\mathsf{Q}_\rho(x)$.
\item[\textnormal{(iv)}] For any family of objects $(x_i)_{i\in I}$ whose coproduct lies in $\mathsf{D}^+(A)$, we have $\mathsf{Q}_\rho(\coprod_{i\in I} x_i)\cong \coprod_{i\in I}\mathsf{Q}_\rho(x_i)$.
\end{enumerate}
\end{lem}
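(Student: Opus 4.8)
The recollement (\ref{rec}) is the key structural input, and each of the four assertions should follow by unwinding adjunctions together with known facts about $\mathsf{K}(\Inj A)$ for $A$ right noetherian (from \cite{krause2}). The plan is to treat the items in the order (ii), (i), (iii), (iv), since (ii) is the basic adjunction computation that the others lean on.

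For (ii), recall that in the recollement $(\mathsf{Q}_\lambda,\mathsf{Q},\mathsf{Q}_\rho)$ is a sequence of adjoints with $\mathsf{Q}$ exact and $\mathsf{Q}_\rho$ fully faithful. If $x=\mathsf{Q}_\rho(x')$ lies in $\mathsf{Im}(\mathsf{Q}_\rho)$, then for any $y$ in $\mathsf{K}(\Inj A)$ the adjunction gives $\mathsf{Hom}_{\mathsf{K}(\Inj A)}(y,\mathsf{Q}_\rho(x'))\cong \mathsf{Hom}_{\mathsf{D}(A)}(\mathsf{Q}(y),x')$, and since $\mathsf{Q}_\rho$ is fully faithful, $x'\cong \mathsf{Q}\mathsf{Q}_\rho(x')=\mathsf{Q}(x)$; combining these isomorphisms gives the claim. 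For (i), one uses the standard fact (\!\!\cite[Proposition 2.3]{krause2}, or the construction of the recollement via injective resolutions) that $\mathsf{Q}_\rho$ is obtained by taking a K-injective resolution which, restricted to $\mathsf{D}^+(A)$, can be chosen to be a bounded-below complex of injectives; thus the counit $\mathsf{Q}_\rho\mathsf{Q}(z)\to z$ is an isomorphism in $\mathsf{K}(\Inj A)$ whenever $z$ is a bounded-below complex of injectives, since both represent the same object and a bounded-below complex of injectives is already K-injective. Hence $\mathsf{K}^+(\Inj A)\subseteq \mathsf{Im}(\mathsf{Q}_\rho)$. For (iii), note that $\mathsf{Q}_\lambda$ and $\mathsf{Q}_\rho$ agree on $\mathsf{D}(A)^{\mathsf{c}}=\mathsf{K}^{\mathsf{b}}(\proj A)$: a perfect complex admits a K-projective resolution that is a bounded complex of projectives, and for such a complex the natural comparison map between the projective (left adjoint) and injective (right adjoint) lifts into $\mathsf{K}(\Inj A)$ becomes an isomorphism — concretely because $\mathsf{Q}_\lambda(x)$ and $\mathsf{Q}_\rho(x)$ are both compact objects of $\mathsf{K}(\Inj A)$ and the map between them is a $\mathsf{Q}$-isomorphism between objects on which $\mathsf{Q}$ is faithful; alternatively cite the relevant statement from \cite{krause2} or \cite{iyengar_krause}.

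For (iv), the point is that $\mathsf{Q}_\rho$ does not in general commute with coproducts (it is a right adjoint), but it does so on the subclass where the result stays bounded-below. If $(x_i)_{i\in I}$ has $\coprod_i x_i\in \mathsf{D}^+(A)$, then each $x_i$ lies in $\mathsf{D}^+(A)$, so by (i) each $\mathsf{Q}_\rho(x_i)\in \mathsf{K}^+(\Inj A)$, and since $A$ is right noetherian a coproduct of bounded-below complexes of injectives is again a (bounded-below, as the bound is uniform once $\coprod x_i$ is bounded below) complex of injectives — here one uses that over a right noetherian ring coproducts of injectives are injective. The coproduct $\coprod_i \mathsf{Q}_\rho(x_i)$ computed in $\mathsf{K}(\Inj A)$ is then a bounded-below complex of injectives, hence K-injective, hence lies in $\mathsf{Im}(\mathsf{Q}_\rho)$ by (i), and $\mathsf{Q}$ applied to it gives $\coprod_i \mathsf{Q}(\mathsf{Q}_\rho(x_i))=\coprod_i x_i$ since $\mathsf{Q}$ preserves coproducts; by the fully faithfulness of $\mathsf{Q}_\rho$ and the universal property, this identifies $\coprod_i \mathsf{Q}_\rho(x_i)$ with $\mathsf{Q}_\rho(\coprod_i x_i)$.

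\textbf{Main obstacle.} The delicate point is (iv), specifically justifying that the coproduct of the $\mathsf{Q}_\rho(x_i)$, formed inside $\mathsf{K}(\Inj A)$, remains in the essential image of $\mathsf{Q}_\rho$: this rests on the interplay between right-noetherianness (so that termwise coproducts of injectives are injective) and a genuine \emph{uniform} bound below, which is exactly what the hypothesis $\coprod_i x_i\in\mathsf{D}^+(A)$ supplies. I would make sure the argument does not secretly assume $\mathsf{Q}_\rho$ preserves these coproducts but rather derives it; the cleanest route is to exhibit $\coprod_i\mathsf{Q}_\rho(x_i)$ explicitly as K-injective and then invoke (i) and the fact that on $\mathsf{Im}(\mathsf{Q}_\rho)$ the functor $\mathsf{Q}$ is fully faithful with quasi-inverse $\mathsf{Q}_\rho$. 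A secondary subtlety is being careful about which resolutions are functorial versus merely object-wise; for (i) and (iii) it is enough to compare objects up to isomorphism in the homotopy/derived categories, so no functoriality of the chosen resolutions is actually needed.
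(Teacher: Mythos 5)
Your proof is correct and follows essentially the same route as the paper's: (ii) is the adjunction $(\mathsf{Q},\mathsf{Q}_\rho)$ plus $\mathsf{Q}\mathsf{Q}_\rho\cong\mathrm{id}$, (i) is the classical fact that bounded-below complexes of injectives are K-injective, (iii) rests on $\mathsf{Q}_\lambda$ preserving compacts together with Krause's identification of $\mathsf{K}(\Inj A)^{\mathsf{c}}$ with $\mathsf{Q}_\rho(\mathsf{D}^{\mathsf{b}}(\smod A))\subseteq\mathsf{Im}(\mathsf{Q}_\rho)$, and (iv) uses the equivalence $\mathsf{D}^+(A)\simeq\mathsf{K}^+(\Inj A)$, the uniform lower bound coming from $H^n(\coprod_i x_i)=\oplus_i H^n(x_i)$, and right noetherianness to keep termwise coproducts of injectives injective. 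The only wording to tighten is in (iii): what you need is that $\mathsf{Q}$ is \emph{fully} faithful on $\mathsf{Im}(\mathsf{Q}_\rho)$ (which contains the compacts), since a merely faithful functor need not reflect isomorphisms; with that adjustment your comparison-map argument is the same as the paper's unit argument.
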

\begin{proof}
(i) is rather classic (see \cite{krause2}) and (ii) follows directly from adjunction. 

(iii) Observe that since $\mathsf{Q}$ preserves coproducts, $\mathsf{Q}_\lambda$ preserves compacts and, therefore, for any $x$ in $\mathsf{D}(A)^{\mathsf{c}}$, $\mathsf{Q}_\lambda(x)$ lies in $\mathsf{Im}(\mathsf{Q}_\rho)$. Therefore the unit of the adjoint pair $(\mathsf{Q},\mathsf{Q_\rho})$ applied to $\mathsf{Q}_\lambda(x)$ is an isomorphism, thus showing that $\mathsf{Q}_\lambda(x)\cong \mathsf{Q}_\rho(x)$.

(iv) First note that $\mathsf{Q}_\rho$ restricts to an equivalence of categories between $\mathsf{D}^+(A)$ and $\mathsf{K}^+(\Inj A)$. Now, since the coproduct of the family $(x_i)_{i\in. I}$ exists in $\mathsf{D}^+(A)$, and since $A$ is right noetherian, we then conclude that $\mathsf{Q}_\rho(\coprod_{i\in I} x_i)\cong \coprod_{i\in I}\mathsf{Q}_\rho(x_i)$.
\end{proof}

We will calculate these distinguished subcategories for $\mathsf{K}(\Inj{\Lambda})$ and $\mathsf{K}_{\mathsf{ac}}(\Inj{\Lambda})$ for an Artin algebra $\Lambda$. The following lemma will be important for our calculations.

\begin{lem} \label{far away of smod}
If $\Lambda$ is an Artin algebra, then for any symbol $\?$ in $\{\perp_{\gg},\perp_{\ll},\padova\}$, we have that, in $\mathsf{K}(\Inj \Lambda)$,
\begin{align*} \label{star condition}\tag{$*$}
    \mathsf{Q}_{\rho}(\smod \Lambda)^{\?}=\mathsf{Q}_{\rho}(\Mod \Lambda)^{\?}.
\end{align*}

\end{lem}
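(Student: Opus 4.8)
The statement to prove is the identity $\mathsf{Q}_\rho(\smod\Lambda)^{\?}=\mathsf{Q}_\rho(\Mod\Lambda)^{\?}$ in $\mathsf{K}(\Inj\Lambda)$, for each $\?$ in $\{\perp_\gg,\perp_\ll,\padova\}$. One inclusion is free: since $\smod\Lambda\subseteq\Mod\Lambda$, we have $\mathsf{Q}_\rho(\smod\Lambda)\subseteq\mathsf{Q}_\rho(\Mod\Lambda)$, hence $\mathsf{Q}_\rho(\Mod\Lambda)^{\?}\subseteq\mathsf{Q}_\rho(\smod\Lambda)^{\?}$. So the content is the reverse inclusion, and since $\padova$ is the intersection $\perp_\ll\cap\,\perp_\gg$, it suffices to treat the two one-sided cases. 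Because $\mathsf{K}(\Inj\Lambda)$ is compactly generated with $\mathsf{K}(\Inj\Lambda)^{\mathsf c}\simeq\mathsf{Q}_\rho(\mathsf{D}^{\mathsf b}(\smod\Lambda))$ (Example \ref{first example}(4)) and since $\mathsf{Hom}$-vanishing against $x$ is equivalent to $\mathsf{Hom}$-vanishing against $\mathsf{thick}(x)$ by Lemma \ref{triangulated_subcategories}(v)(b), the real task is to pass from finitely presented modules to \emph{arbitrary} modules on the \textbf{left} side of the $\mathsf{Hom}$.

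**The key step.** The plan is to exploit that every $\Lambda$-module is a filtered colimit of finitely presented ones, together with the fact that $\mathsf{Q}_\rho$ restricted to $\mathsf{D}^+(\Lambda)$ is an equivalence onto $\mathsf{K}^+(\Inj\Lambda)$ commuting with the relevant coproducts (Lemma \ref{some facts about KInj}(iv)), and that via Lemma \ref{some facts about KInj}(ii) the $\mathsf{Hom}$-groups $\mathsf{Hom}_{\mathsf{K}(\Inj\Lambda)}(\mathsf{Q}_\rho(M),x)$ can be computed as $\mathsf{Hom}_{\mathsf{D}(\Lambda)}(M,\mathsf{Q}(x))$ whenever $\mathsf{Q}_\rho(M)$ is in the image of $\mathsf{Q}_\rho$ — which it is for $M$ a module, since modules sit in $\mathsf{D}^+(\Lambda)=\mathsf{Im}(\mathsf{Q}_\rho)$. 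Concretely: fix $x\in\mathsf{Q}_\rho(\smod\Lambda)^{\?}$ and an arbitrary module $M=\varinjlim_j M_j$ with each $M_j$ finitely presented. I want to show $\mathsf{Hom}_{\mathsf{D}(\Lambda)}(M,\mathsf{Q}(x)[n])=0$ for $n$ in the appropriate range. Over an Artin algebra, a cleaner route than general colimits is to use Corollary \ref{broad Db}: $\mathsf{D}^{\mathsf b}(\Mod\Lambda)=\mathsf{broad}_\Sigma(\smod\Lambda)$, so $\mathsf{thick}(\Mod\Lambda)=\mathsf{thick}(\cup_{x\in\mathcal S_\Lambda}\Add x)$, and more to the point every module is a finite extension of semisimple modules, each semisimple module being a (possibly infinite) coproduct of the finitely many simples. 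Thus I can reduce $\mathsf{Hom}(M,-)$-vanishing to $\mathsf{Hom}(S^{(I)},-)$-vanishing for $S$ simple, and then to $\mathsf{Hom}(S,-)$-vanishing using that $\mathsf{Hom}_{\mathsf{D}(\Lambda)}(S^{(I)},\mathsf{Q}(x)[n])\cong\prod_I\mathsf{Hom}_{\mathsf{D}(\Lambda)}(S,\mathsf{Q}(x)[n])$ — coproducts in the source turn into products, which vanish iff each factor does. Since $S$ is finitely presented, $\mathsf{Q}_\rho(S)\in\mathsf{Q}_\rho(\smod\Lambda)$, so by hypothesis $\mathsf{Hom}_{\mathsf{K}(\Inj\Lambda)}(\mathsf{Q}_\rho(S),x[n])=0$ in the $\?$-range, and Lemma \ref{some facts about KInj}(ii) identifies this with $\mathsf{Hom}_{\mathsf{D}(\Lambda)}(S,\mathsf{Q}(x)[n])$. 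Running the extension filtration through the long exact sequences (as in Lemma \ref{triangulated_subcategories}(iv)) then gives the vanishing for $M$, hence $x\in\mathsf{Q}_\rho(\Mod\Lambda)^{\?}$.

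**The main obstacle.** The delicate point is bookkeeping the shift ranges: $\perp_\gg$ and $\perp_\ll$ are ``eventual'' vanishing conditions, and when I take an infinite coproduct $S^{(I)}$ or build $M$ by finitely many extensions I need the ``$N$'' from the eventual-vanishing to be uniform across the pieces. For a \emph{finite} extension this is automatic — finitely many integers $N_1,\dots,N_r$ have a max — and for the coproduct $S^{(I)}$ it is automatic because the single simple $S$ already supplies one $N$ that works for $\mathsf{Hom}(S,x[\gg])$, and the product over $I$ of zero groups is zero. So in fact the uniformity is not a genuine problem once one realises there are only finitely many simples and every module is a \emph{finite} iterated extension of coproducts of these — this is exactly what Corollary \ref{broad Db} and the remark preceding it (radical series) buy us, and it is why the argument works cleanly for an Artin algebra rather than a general ring. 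A secondary check worth spelling out is that $\mathsf{Q}_\rho(S^{(I)})\cong\mathsf{Q}_\rho(S)^{(I)}$ inside $\mathsf{K}(\Inj\Lambda)$, which holds by Lemma \ref{some facts about KInj}(iv) since $S^{(I)}$, being a module, lies in $\mathsf{D}^+(\Lambda)$ and $\Lambda$ is (right) noetherian. Everything else is assembling long exact sequences.
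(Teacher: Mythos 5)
Your overall strategy---reduce from arbitrary modules to the finitely many simples via the radical series, use that $\mathsf{Q}_\rho$ commutes with the relevant coproducts (Lemma \ref{some facts about KInj}(iv)), and note that a single bound per simple is automatically uniform because Hom out of a coproduct is a product and there are only finitely many simples---is exactly the content of the paper's proof, which packages it as $\mathsf{broad}_\Sigma(\mathsf{Q}_\rho(\smod\Lambda))=\mathsf{Q}_\rho(\mathsf{broad}_\Sigma(\smod\Lambda))=\mathsf{thick}(\mathsf{Q}_\rho(\Mod\Lambda))$ via Corollary \ref{broad Db} and then invokes Lemma \ref{triangulated_subcategories}(v)(b). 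However, as written your argument has a genuine gap at the point where you transfer the computation to $\mathsf{D}(\Lambda)$. You declare that the goal is to show $\mathsf{Hom}_{\mathsf{D}(\Lambda)}(M,\mathsf{Q}(x)[n])=0$, justified by Lemma \ref{some facts about KInj}(ii) ``whenever $\mathsf{Q}_\rho(M)$ is in the image of $\mathsf{Q}_\rho$''. That lemma is the adjunction $(\mathsf{Q},\mathsf{Q}_\rho)$ and its hypothesis concerns the \emph{target} of the Hom, not the source: it identifies $\mathsf{Hom}_{\mathsf{K}(\Inj\Lambda)}(-,x)$ with $\mathsf{Hom}_{\mathsf{D}(\Lambda)}(\mathsf{Q}(-),\mathsf{Q}(x))$ only for $x\in\mathsf{Im}(\mathsf{Q}_\rho)$. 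Here $x$ is an arbitrary object of $\mathsf{Q}_\rho(\smod\Lambda)^{\?}$, and you cannot assume it lies in $\mathsf{Im}(\mathsf{Q}_\rho)$---deciding which such $x$ do is precisely what Proposition \ref{examples2} is after. The claimed isomorphism is false in general: for $x$ a non-contractible object of $\mathsf{K}_{\mathsf{ac}}(\Inj\Lambda)$ one has $\mathsf{Q}(x)=0$, so the right-hand side vanishes identically, while $\mathsf{Hom}_{\mathsf{K}(\Inj\Lambda)}(\mathsf{Q}_\rho(M),x[n])$ computes homotopy classes of maps from an injective resolution into an acyclic complex of injectives, which need not vanish (think of complete resolutions over a self-injective algebra). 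Consequently, establishing the $\mathsf{D}(\Lambda)$-level vanishing you aim for would not show $x\in\mathsf{Q}_\rho(\Mod\Lambda)^{\?}$, and the reduction as stated does not prove the lemma.

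The repair is easy and brings you back to the paper's argument: never leave $\mathsf{K}(\Inj\Lambda)$. Your hypothesis already gives $\mathsf{Hom}_{\mathsf{K}(\Inj\Lambda)}(\mathsf{Q}_\rho(S),x[n])=0$ in the relevant range for each of the finitely many simples $S$; Lemma \ref{some facts about KInj}(iv) gives $\mathsf{Q}_\rho(S^{(I)})\cong\mathsf{Q}_\rho(S)^{(I)}$, so Hom out of it into $x[n]$ is a product of copies of a vanishing group; and since every module is a finite iterated extension of such coproducts (the radical series behind Corollary \ref{broad Db}), finitely many long exact sequences in $\mathsf{K}(\Inj\Lambda)$ (with the maximum of finitely many bounds, and no shift creep) give $\mathsf{Hom}_{\mathsf{K}(\Inj\Lambda)}(\mathsf{Q}_\rho(M),x[n])=0$ for every module $M$, i.e.\ $x\in\mathsf{Q}_\rho(\Mod\Lambda)^{\?}$. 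This corrected version is, in unpacked form, exactly the paper's proof; also note in passing that your identification of $\mathsf{Im}(\mathsf{Q}_\rho)$ with $\mathsf{D}^+(\Lambda)$ is an overstatement (Lemma \ref{some facts about KInj}(i) only gives the inclusion $\mathsf{K}^+(\Inj\Lambda)\subseteq\mathsf{Im}(\mathsf{Q}_\rho)$), though that is not where the argument breaks.
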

\begin{proof}
Since, by Corollary \ref{broad Db}, $\mathsf{broad}_\Sigma(\mathsf{D}^\mathsf{b}(\smod\Lambda))=\mathsf{D}^\mathsf{b}(\Mod\Lambda)$ and since $\mathsf{Q}_\rho$ preserves all coproducts existing in $\mathsf{D}^{\mathsf{b}}(\Mod\Lambda)$, we have that
$$\mathsf{broad}_\Sigma(\mathsf{Q}_\rho(\smod\Lambda))=\mathsf{Q}_\rho(\mathsf{broad}_\Sigma(\smod\Lambda))=\mathsf{Q}_\rho(\mathsf{D}^\mathsf{b}(\Mod\Lambda))=\mathsf{thick}(\mathsf{Q}_\rho(\Mod\Lambda))$$
By Lemma  \ref{triangulated_subcategories}(v)(b) we then get $\mathsf{Q}_\rho(\smod\Lambda)^{\?}=\mathsf{Q}_\rho(\Mod\Lambda)^{\?}$ for any symbol $\?$ in $\{\perp_\ll,\perp_\gg,\padova\}$.
\end{proof}

\begin{rem}\label{implication}
Observe that for an arbitrary right noetherian ring $A$, we have that if condition $(\ast)$ holds for $A$, then $(\smod A)^{\?}=(\Mod A)^{\?}$ for any symbol $\?$ in $\{\perp_\ll,\perp_\gg,\padova\}$. Indeed, using Lemma \ref{adjoints and far-away orthogonality}(i) with $\mathsf{F}=\mathsf{Q}$, $\mathsf{G}=\mathsf{Q}_\rho$, $\Y=\smod\Lambda$ and $\X=\mathsf{Q}_\rho(\smod A)$ to conclude that $\mathsf{Q}_\rho((\smod A)^{\?})\subseteq \mathsf{Q}_\rho(\smod A)^{\?}$. The latter is then equal to $\mathsf{Q}_\rho(\Mod A)^{\?}$. Now, given $y$ in $(\smod A)^{\?}$, we have that $\mathsf{Q}_\rho(y)$ lies in $\mathsf{Q}_\rho(\Mod A)^{\?}$ and, therefore, using the fact that $\mathsf{Q}_\rho$ is fully faithful, we get that $y$ lies in $(\Mod A)^{\?}$. The other inclusion is clear. Note that for an Artin algebra $\Lambda$ we already knew that $(\smod\Lambda)^{\?}=(\Mod\Lambda)^{\?}$ from Example \ref{exmp artin}.
\end{rem}

\begin{rem} \label{counterexample} 
The equality of Example \ref{exmp artin} does not hold for more general rings and, consequently, by Remark \ref{implication} the equality of Lemma \ref{far away of smod} will also not hold. Indeed, consider a ring $R$ for which every finitely generated module has finite projective dimension and $\gd R=\infty$. A particular example of such a ring was given by Nagata \cite{nagata}, which is in fact commutative noetherian. By the assumption that finitely generated $R$-modules have finite projective dimension, it follows that 
    \[
    (\smod R)^{\padova}\supseteq \Mod R
    \]
    in $\mathsf{D}(R)$. However, since in $\mathsf{D}(R)$ we have the equality 
   \[(\Mod R)^{\padova}=\mathsf{D}^{\mathsf{b}}(\Mod{R})^{\padova}=\mathsf{K}^{\mathsf{b}}(\Inj{R}),\]
   the subcategory $(\Mod R)^{\padova}$ cannot contain $\Mod R$, as it would imply that every $R$-module has finite injective dimension, contradicting the assumption. In other words, we have $(\mathsf{D}^{\mathsf{b}}(\smod R))^{\padova}\neq (\mathsf{D}^{\mathsf{b}}(\Mod R))^{\padova}$. In particular, here we have that $\mathsf{broad}_\Sigma(\smod R)\neq \mathsf{D}^\mathsf{b}(\Mod R)$, showing that also the statement of Corollary \ref{broad Db} fails for $R$.
\end{rem}


\begin{prop} 
\label{examples2}
The table below contains some distinguished subcategories of $\mathsf{K}(\Inj \Lambda)$ and $\mathsf{K}_{\mathsf{ac}}(\Inj \Lambda)$ where $\Lambda$ is an Artin algebra.

    \begin{table}[h!]
\begin{tabular}{|c|c|c|c|c|}

\hline
$\mathcal{T}$  & $\mathsf{K}(\Inj \Lambda)$           & $\mathsf{K}_{\mathsf{ac}}(\Inj \Lambda)$ \\ \hline
$\mathcal{T}^{\mathsf{c}}$   &  $\mathsf{D}^{\mathsf{b}}(\smod \Lambda)$ & $\mathsf{D}_{\mathsf{sg}}(\Lambda)$       \\ \hline
$\mathcal{T}^-$  &   &  $\mathsf{K}^-_{\mathsf{ac}}(\Inj \Lambda)$   \\ \hline
$\mathcal{T}^+$  &  $\mathsf{K}^+(\Inj \Lambda)$ & $\{0\}$                              \\ \hline
$\mathcal{T}^{\mathsf{b}}$  &$\mathsf{K}^{\mathsf{b}}(\Inj \Lambda)$ & $\{0\}$                                \\ \hline 
$\mathcal{T}^-_p$ &  $\{x\colon H^n(x)=0 \text{ for }n\gg0\}$  & $\mathsf{K}_{\mathsf{ac}}(\Inj \Lambda)$ \\ \hline
$\mathcal{T}^+_p$ &  $\{x\colon H^n(x)=0 \text{ for }n\ll0\}$  & $\mathsf{K}_{\mathsf{ac}}(\Inj \Lambda)$ \\ \hline
$\mathcal{T}^{\mathsf{b}}_p$  & $\{x\colon \#\{n\colon H^n(x)\neq 0\}<\infty \}$ & $\mathsf{K}_{\mathsf{ac}}(\Inj \Lambda)$ \\ \hline
$\mathcal{T}^-_i$  & see Lemma \textnormal{\ref{subcategories for IG}} & $\mathsf{K}_{\mathsf{ac}}(\Inj \Lambda)$ \\ \hline
$\mathcal{T}^+_i$  & see Lemma \textnormal{\ref{subcategories for IG}} & $\mathsf{K}_{\mathsf{ac}}(\Inj \Lambda)$ \\ \hline
$\mathcal{T}^{\mathsf{b}}_i$  &  see Lemma \textnormal{\ref{subcategories for IG}} & $\mathsf{K}_{\mathsf{ac}}(\Inj \Lambda)$ \\ \hline
\end{tabular}
\end{table}
\end{prop}
\begin{proof}
    Let us write $\mathcal{T}$ for $\mathsf{K}(\Inj \Lambda)$ and $\mathcal{T}_{\mathsf{ac}}$ for $\mathsf{K}_{\mathsf{ac}}(\Inj \Lambda)$.
    \begin{itemize}[leftmargin=*]
       \item $\mathcal{T}_{\mathsf{ac}}^+=\{0\}$. Let $x=(x^i,d^i)_{i\in\mathbb{Z}}$ be a complex in $\mathcal{T}_{\mathsf{ac}}$. Then we have that 
        \[
        \mathsf{Hom}_{\mathcal{T}_{\mathsf{ac}}}(\mathsf{I}_{\lambda}\mathsf{Q}_{\rho}(\smod \Lambda),x[\ll])\cong \mathsf{Hom}_{\mathcal{T}}(\mathsf{Q}_{\rho}(\smod \Lambda),\mathsf{I}(x)[\ll]).
        \]
      Since $\mathsf{thick}(\mathsf{I}_\lambda \mathsf{Q}_\rho(\smod\Lambda))=\Tc_{\mathsf{ac}}$, we see that $x$ lies in $\mathcal{T}_{\mathsf{ac}}^+$ if and only if $\mathsf{I}(x)$ lies in $\mathsf{Q}_{\rho}(\smod \Lambda)^{\perp_{\ll}}$, which coincides with $\mathsf{Q}_{\rho}(\Mod \Lambda)^{\perp_{\ll}}$ by Lemma \ref{far away of smod}. Assume then that $x$ lies in $\T_{\mathsf{ac}}^+$ and that, therefore, $\mathsf{I}(x)$ lies in $\T^+$. For simplicity we will write $x$ instead of $\mathsf{I}(x)$, thinking of $\mathsf{I}$ as the inclusion of the subcategory $\T_{\mathsf{ac}}$ into $\T$. By \cite[Lemma 2.1]{krause2}, we have an isomorphism 
        \[
        \mathsf{Hom}_{\mathcal{T}}(\mathsf{Q}_{\rho}(\oplus_j\mathsf{Ker}(d^j)),x[n])\cong \mathsf{Hom}_{\mathsf{K}(\Lambda)}(\oplus_j\mathsf{Ker}(d^j),x[n]),
        \]
and thus, since $x$ lies in $\T^{+}$, there is $N$ in $\mathbb{Z}$ such that for all $j$ and $n\leq N$, we have $\mathsf{Hom}_{\mathsf{K}(\Lambda)}(\mathsf{Ker}(d^j),x[n])=0$. In particular, we get $\mathsf{Hom}_{\mathsf{K}(\Lambda)}(\mathsf{Ker}(d^n),x[n])=0$ for all $n\leq N$. For such integers $n\leq N$, consider the morphism of complexes $f\colon \mathsf{Ker}(d^n)\rightarrow x[n]$ induced by the inclusion $f^n\colon\mathsf{Ker}(d^n)\rightarrow x^n$. Since $f$ is null-homotopic, there is $h\colon\mathsf{Ker}(d^n)\rightarrow x^{n-1}$ such that $f^n=d^{n-1}\circ h$. Since $x$ is acyclic, we have $\mathsf{Ker}(d^n)\cong\mathsf{Im}(d^{n-1})$ and therefore $d^{n-1}$ factors as a composition $f^n\circ i^n$, where $i^n\colon x^{n-1}\rightarrow \mathsf{Ker}(d^n)$. Hence, we have $f^n=d^{n-1}\circ h=f^n\circ i^n\circ h$ and since $f^n$ is a monomorphism, we get $i^n\circ h\simeq \mathsf{Id}_{\mathsf{Ker}(d^n)}$. In particular, it follows that $\mathsf{Ker}(d^n_x)$ is injective for all $n\leq N$, showing that $x$ is nullhomotopic. 
        \item $\mathcal{T}^-_{\mathsf{ac}}(\Inj \Lambda)=\mathsf{K}^{-}_{\mathsf{ac}}(\Inj \Lambda)$. By following verbatim the arguments above it follows that given a complex $x$ in $\mathcal{T}^{-}_{\mathsf{ac}}$, it must split in large degrees, implying that $\mathcal{T}^{-}_{\mathsf{ac}}\subseteq \mathsf{K}^{-}_{\mathsf{ac}}(\Inj \Lambda)$. For the inverse inclusion, observe that given $x$ in $\mathsf{K}^{-}_{\mathsf{ac}}(\Inj \Lambda)$, $\mathsf{I}(x)$ lies in $\mathsf{K}^{-}(\Inj \Lambda)$ and, therefore, it lies in  $\mathsf{Q}_{\rho}(\smod \Lambda)^{\perp_{\gg}}$, as wanted.

        \item $\mathcal{T}^{\mathsf{b}}_{\mathsf{ac}}=\{0\}$ follows from the fact that $\mathcal{T}^{\mathsf{b}}_{\mathsf{ac}}\subseteq \mathcal{T}^+_{\mathsf{ac}}$. Then, by considering appropriate far-away orthogonals, it follows that $(\mathcal{T}_{\mathsf{ac}})^{\?}_\alpha=\mathsf{K}_{\mathsf{ac}}(\Inj \Lambda)$ for any $\?$ in $\{+,-,\mathsf{b}\}$ and any $\alpha$ in $\{p,i\}$.
        \item $\mathcal{T}^+=\mathsf{K}^+(\Inj \Lambda)$. First observe that $\mathsf{K}^+(\Inj \Lambda)\subseteq \mathcal{T}^+$, since 
        \[
        \mathsf{Hom}_{\mathcal{T}}(\mathsf{Q}_{\rho}(\smod \Lambda),\mathsf{Q}_{\rho}(x)[\ll])\cong\mathsf{Hom}_{\mathsf{D}(\Lambda)}(\smod \Lambda,x[\ll])
        \]
        and $(\smod \Lambda)^{\perp_{\ll}}$ in $\mathsf{D}(\Lambda)$ contains $\mathsf{K}^+(\Inj \Lambda)$. Now let $x$ lie in $\mathcal{T}^+$ and consider the following triangle 
        \[
        \mathsf{II}_{\rho}(x)\rightarrow x\rightarrow \mathsf{Q}_{\rho}\mathsf{Q}(x)\rightarrow \mathsf{II}_{\rho}(x)[1]. 
        \]
        Since the functor $\mathsf{Q}_{\lambda}$ preserves compact objects, applying Lemma \ref{adjoints and far-away orthogonality} to the adjoint pair $(\mathsf{Q}_\lambda,\mathsf{Q})$ it follows that $\mathsf{Q}(x)$ lies in $\mathsf{D}^{+}( \Lambda)$. Consequently, $\mathsf{Q}_{\rho}\mathsf{Q}(x)$ lies in $\mathsf{K}^{\mathsf{+}}(\Inj \Lambda)\subseteq \mathcal{T}^+$ and from the above triangle it follows that $\mathsf{I}\mathsf{I}_{\rho}(x)$ lies in $\mathcal{T}^+$. We have 
        \[
        \mathsf{Hom}_{\mathcal{T}_{\mathsf{ac}}}(\mathsf{I}_{\lambda}\mathsf{Q}_{\rho}(\smod \Lambda),\mathsf{I}_{\rho}(x)[\ll])=\mathsf{Hom}_{\mathcal{T}}(\mathsf{Q}_{\rho}(\smod \Lambda),\mathsf{I}\mathsf{I}_{\rho}(x)[\ll])
        \]
        from which we infer that $\mathsf{I}_{\rho}(x)$ lies in $\mathcal{T}^{+}_{\mathsf{ac}}=\{0\}$. Therefore, we have $x\cong\mathsf{Q}_{\rho}\mathsf{Q}(x)$, which then lies in $\mathsf{K}^{+}(\Inj \Lambda)$.
        \item $\mathcal{T}^{\mathsf{b}}=\mathsf{K}^{\mathsf{b}}(\Inj \Lambda)$. Since $\mathcal{T}^\mathsf{b}$ is $\prod$-broad, $\mathsf{K}^{\mathsf{b}}(\Inj \Lambda)=\mathsf{broad}_{\prod}(E)$ for an injective cogenerator $E$ of $\Mod\Lambda$ and since $E$ lies in $\T^\mathsf{b}$, we get that $\mathsf{K}^\mathsf{b}(\Inj\Lambda)$ lies in $\T^\mathsf{b}$. For the converse inclusion, let $x$ be an object of $\T^\mathsf{b}\subseteq \T^+=\mathsf{K}^+(\Inj\Lambda)$. Then we have that $x\cong \mathsf{Q}_\rho\mathsf{Q}(x)$. From Lemma \ref{far away of smod} we have
$$\mathcal{T}^{\mathsf{b}}=\mathsf{Q}_\rho(\smod\Lambda)^{\padova}=\mathsf{Q}_\rho(\Mod\Lambda)^{\padova}.$$
Now, for any $y$ in $\Mod\Lambda$, we have
        $$        \mathsf{Hom}_{\mathcal{T}}(\mathsf{Q}_{\rho}(y),x[n])\cong \mathsf{Hom}_{\mathcal{T}}(\mathsf{Q}_\rho(y),\mathsf{Q}_\rho\mathsf{Q}(x)[n])\cong \mathsf{Hom}_{\mathsf{D}(\Lambda)}(y,\mathsf{Q}(x)[n])$$
showing that if $x$ lies in $\mathcal{T}^{\mathsf{b}}$ then $\mathsf{Q}(x)$ lies in the subcategory $(\Mod \Lambda)^{\padova}$ of $\mathsf{D}(\Lambda)$, i.e.~ $\mathsf{Q}(x)$ lies in $\mathsf{K}^{\mathsf{b}}(\Inj \Lambda)$ and, therefore, $x\cong \mathsf{Q}_\rho\mathsf{Q}(x)$ lies in $\mathsf{K}^{\mathsf{b}}(\Inj\Lambda)$ as a subcategory of $\T$.
        \item $\mathcal{T}^-_p=\{x\colon H^n(x)=0 \text{ for }n\gg0\}$. For any $x$ in $\mathsf{K}(\Inj \Lambda)$ and any $y\cong \mathsf{Q}_\rho\mathsf{Q}(y)$ in $\mathcal{T}^{\mathsf{b}}$ we have
        \[
        \mathsf{Hom}_{\mathcal{T}}(x,y[n])\cong \mathsf{Hom}_{\mathcal{T}}(x,\mathsf{Q}_\rho\mathsf{Q}(y)[n])\cong \mathsf{Hom}_{\mathsf{D}(\Lambda)}(\mathsf{Q}(x),\mathsf{Q}(y)[n]).
        \]
        So $x$ lies in $\mathcal{T}^{-}_p$ if and only if $\mathsf{Q}(x)$ lies in $^{\perp_{\ll}}(\mathsf{K}^{\mathsf{b}}(\Inj \Lambda))$ inside $\mathsf{D}(\Lambda)$. However, if $E$ is an injective cogenerator of $\Mod\Lambda$, we know that $H^{-n}(\mathsf{Q}(x))=0$ if and only if $\mathsf{Hom}_{\mathsf{D}(\Lambda)}(\mathsf{Q}(x),E[n])=0$. Hence, we conclude that  $x$ lies in $\mathcal{T}^{-}_p$  if and only if $H^{-n}(\mathsf{Q}(x))=0$ for all $n\ll 0$, i.e. $\mathsf{Q}(x)$ lies in $\mathsf{D}^{-}(\Lambda)$. This means that $x$ lies in $\mathsf{K}_{\mathsf{ac}}(\Inj \Lambda)\ast \mathsf{Q}_{\rho}(\mathsf{D}^-(\Lambda))$, i.e.~$\mathcal{T}^{-}_p$ consists of the complexes in $\mathsf{K}(\Inj \Lambda)$ with bounded above cohomology. 

       \item $\mathcal{T}^+_p=\{x\colon H^n(x)=0 \text{ for }n\ll0\}$. By the same argument as above we see that $x$ lies in $\mathcal{T}^{+}_p$ if and only if $\mathsf{Q}(x)$ lies in $^{\perp_{\gg} }(\mathsf{K}^{\mathsf{b}}(\Inj \Lambda))$ in $\mathsf{D}(\Lambda)$, which coincides with the subcategory of complexes with bounded below cohomology. 
        
        \item $\mathcal{T}^{\mathsf{b}}_p=\{x\colon \#\{n\colon H^n(x)\neq 0\}<\infty \}$. By the same argument as above we see that $x$ lies in $\mathcal{T}^{\mathsf{b}}_p$ if and only if $\mathsf{Q}(x)$ lies in $^{\padova}\mathsf{K}^{\mathsf{b}}(\Inj \Lambda)=\mathsf{D}^{\mathsf{b}}(\Lambda)$ in $\mathsf{D}(\Lambda)$. This means that $\mathcal{T}^{\mathsf{b}}_p $ coincides with $\mathsf{K}_{\mathsf{ac}}(\Inj \Lambda)\ast \mathsf{Q}_\rho(\mathsf{D}^{\mathsf{b}}(\Lambda))$, i.e. with the subcategory of the complexes in $\mathsf{K}(\Inj \Lambda)$ with only finitely many non-zero cohomologies. \qedhere
    \end{itemize}
\end{proof}

As immediate consequences of the proposition above we have the following corollaries. Note that the first of these corollaries, as observed in the paragraph before Lemma \ref{far-away and containments}, is already known (see \cite[Corollary 3.3]{chen_li_zhang_zhao}). 

\begin{cor}
    For an Artin algebra $\Lambda$, the singularity category $\mathsf{D}_{\mathsf{sg}}(\Lambda)$ has no silting object. 
\end{cor}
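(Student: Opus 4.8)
The plan is to deduce the statement from Corollary~\ref{existence of silting compact}, applied to the big singularity category $\mathcal{T}_{\mathsf{ac}}\coloneqq\mathsf{K}_{\mathsf{ac}}(\Inj\Lambda)$, whose subcategory of compact objects is $\mathsf{D}_{\mathsf{sg}}(\Lambda)$. First I would dispose of the trivial case $\mathsf{D}_{\mathsf{sg}}(\Lambda)=0$ (i.e.\ $\Lambda$ of finite global dimension). Assuming $\mathsf{D}_{\mathsf{sg}}(\Lambda)\neq 0$, Example~\ref{first example}(4) gives that $\mathcal{T}_{\mathsf{ac}}$ is compactly generated, while Proposition~\ref{examples2} gives $\mathcal{T}_{\mathsf{ac}}^{\mathsf{c}}\simeq\mathsf{D}_{\mathsf{sg}}(\Lambda)$ — so that $\mathcal{T}_{\mathsf{ac}}\neq 0$ — together with $\mathcal{T}_{\mathsf{ac}}^{\mathsf{b}}=0$. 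Hence Corollary~\ref{existence of silting compact} applies and $\mathcal{T}_{\mathsf{ac}}$ has no compact silting object.

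It then remains to argue by contradiction. A silting object $M$ of $\mathsf{D}_{\mathsf{sg}}(\Lambda)$ — in particular an object with $\mathsf{Hom}(M,M[i])=0$ for all $i>0$ and with $\mathsf{thick}(M)=\mathsf{D}_{\mathsf{sg}}(\Lambda)$ — becomes, viewed inside $\mathcal{T}_{\mathsf{ac}}$, a compact object with no positive self-extensions and with $\mathsf{thick}(M)=\mathcal{T}_{\mathsf{ac}}^{\mathsf{c}}$, so that $\mathsf{Loc}(M)=\mathcal{T}_{\mathsf{ac}}$. By the standard converse of the observation recalled just before Corollary~\ref{existence of silting compact} — a compact object with no positive self-extensions that generates a compactly generated triangulated category is a compact silting object of it, see e.g.\ \cite{aihara_iyama} — this $M$ would be a compact silting object of $\mathcal{T}_{\mathsf{ac}}$, contradicting the previous paragraph.

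I expect this last transfer step to be the only non-formal point: one must know that the pair $(M^{\perp_{>0}},M^{\perp_{\leq 0}})$ formed in $\mathcal{T}_{\mathsf{ac}}$ is a t-structure, which is where the compactness of $M$ and the vanishing of its positive self-extensions enter; everything else is already packaged in Proposition~\ref{examples2}. If one prefers not to cite Corollary~\ref{existence of silting compact}, one can reproduce its argument with $\T=\mathcal{T}_{\mathsf{ac}}$: once $M$ is a compact silting object of $\mathcal{T}_{\mathsf{ac}}$, Proposition~\ref{restrictions of t-structure and co-t-structure} and the last assertion of Lemma~\ref{common facts of the subcategories} give $\mathsf{thick}(\mathcal{H}_M)=M^{\padova}=\mathcal{T}_{\mathsf{ac}}^{\mathsf{b}}=0$, hence $\mathcal{H}_M=0$; but $\mathcal{H}_M=\Mod\mathsf{End}_{\mathcal{T}_{\mathsf{ac}}}(M)$ by \cite{hoshino_kato_miyachi}, so $M=0$, incompatible with $\mathsf{thick}(M)=\mathsf{D}_{\mathsf{sg}}(\Lambda)\neq 0$.
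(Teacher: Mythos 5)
Your proposal is correct and takes essentially the same route as the paper: it deduces the statement from Corollary~\ref{existence of silting compact} applied to $\mathcal{T}=\mathsf{K}_{\mathsf{ac}}(\Inj\Lambda)$, using that $\mathcal{T}^{\mathsf{c}}\simeq\mathsf{D}_{\mathsf{sg}}(\Lambda)$ and $\mathcal{T}^{\mathsf{b}}=0$ by Proposition~\ref{examples2}. The only difference is that you make explicit (via Hoshino--Kato--Miyachi/Aihara--Iyama) the transfer from a silting object of the compacts to a compact silting object of $\mathsf{K}_{\mathsf{ac}}(\Inj\Lambda)$, and you treat the trivial case $\mathsf{D}_{\mathsf{sg}}(\Lambda)=0$ separately; both points are left implicit in the paper's one-line proof.
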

\begin{proof}
    This follows from Corollary \ref{existence of silting compact}, since $\mathsf{D}_{\mathsf{sg}}(\Lambda)$ is the subcategory of compact objects of $\mathcal{T}=\mathsf{K}_{\mathsf{ac}}(\Inj \Lambda)$ and $\mathcal{T}^{\mathsf{b}}=0$. 
\end{proof}

\begin{cor}
For an Artin algebra $\Lambda$, $\mathsf{K}_{\mathsf{ac}}(\Inj\Lambda)$ \textnormal{(}and, consequently, $\mathsf{D}_{\mathsf{sg}}(\Lambda)$\textnormal{)} is intrinsically determined in $\mathsf{K}(\Inj\Lambda)$, i.e.~it does not depend on $\Lambda$.
\end{cor}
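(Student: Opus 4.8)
The plan is to pin down $\mathsf{K}_{\mathsf{ac}}(\Inj\Lambda)$, viewed inside $\mathcal{T}\coloneqq\mathsf{K}(\Inj\Lambda)$, as the intrinsically defined subcategory ${}^{\perp}\mathcal{T}^{\mathsf{b}}$. Since $\mathcal{T}^{\mathsf{b}}=(\mathcal{T}^{\mathsf{c}})^{\padova}$ depends only on the triangulated structure of $\mathcal{T}$ (through the compact objects and the far-away orthogonality of Definition \ref{main_definition}), and the same is true of the operation ${}^{\perp}(-)$, such an identification shows at once that $\mathsf{K}_{\mathsf{ac}}(\Inj\Lambda)$ is preserved by any triangle autoequivalence of $\mathsf{K}(\Inj\Lambda)$, i.e.\ it does not depend on the algebra $\Lambda$ realising this homotopy category. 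I would begin by recalling, from the recollement \textnormal{(\ref{rec})}, that $\mathsf{K}_{\mathsf{ac}}(\Inj\Lambda)=\Ker(\mathsf{Q})$ is precisely the subcategory of acyclic complexes of injectives, and, from Proposition \ref{examples2}, that $\mathcal{T}^{\mathsf{b}}=\mathsf{K}^{\mathsf{b}}(\Inj\Lambda)$; note also that $\mathcal{T}^{\mathsf{b}}$ is closed under shifts (Lemma \ref{triangulated_subcategories}), so that membership in ${}^{\perp}\mathcal{T}^{\mathsf{b}}$ already records the vanishing of all $\mathsf{Hom}$-groups into all shifts of objects of $\mathcal{T}^{\mathsf{b}}$. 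One could equally well work with $\mathcal{T}^{+}=\mathsf{K}^{+}(\Inj\Lambda)$ in place of $\mathcal{T}^{\mathsf{b}}$.

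I would then split the identification into two inclusions. For $\mathsf{K}_{\mathsf{ac}}(\Inj\Lambda)\subseteq{}^{\perp}\mathcal{T}^{\mathsf{b}}$: since $\mathcal{T}^{\mathsf{b}}=\mathsf{K}^{\mathsf{b}}(\Inj\Lambda)\subseteq\mathsf{K}^{+}(\Inj\Lambda)\subseteq\mathsf{Im}(\mathsf{Q}_\rho)$ by Lemma \ref{some facts about KInj}(i), Lemma \ref{some facts about KInj}(ii) provides, for $y\in\mathcal{T}^{\mathsf{b}}$ and any $x$, an isomorphism $\mathsf{Hom}_{\mathcal{T}}(x,y)\cong\mathsf{Hom}_{\mathsf{D}(\Lambda)}(\mathsf{Q}(x),\mathsf{Q}(y))$, which vanishes whenever $x$ is acyclic because then $\mathsf{Q}(x)=0$. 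For the reverse inclusion, I would take $x\in{}^{\perp}\mathcal{T}^{\mathsf{b}}$ and let $E$ be an injective cogenerator of $\Mod\Lambda$: each stalk complex $E[n]$ lies in $\mathsf{K}^{\mathsf{b}}(\Inj\Lambda)=\mathcal{T}^{\mathsf{b}}$, so Lemma \ref{some facts about KInj}(ii) yields $\mathsf{Hom}_{\mathsf{D}(\Lambda)}(\mathsf{Q}(x),E[n])\cong\mathsf{Hom}_{\mathcal{T}}(x,E[n])=0$ for every $n$; since $E$ is injective one has $\mathsf{Hom}_{\mathsf{D}(\Lambda)}(\mathsf{Q}(x),E[n])\cong\mathsf{Hom}_\Lambda(H^{-n}(\mathsf{Q}(x)),E)$, and as $E$ cogenerates $\Mod\Lambda$ this forces $H^{-n}(\mathsf{Q}(x))=0$ for all $n$, hence $\mathsf{Q}(x)=0$ and $x\in\Ker(\mathsf{Q})=\mathsf{K}_{\mathsf{ac}}(\Inj\Lambda)$.

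Granting this identification, the first assertion of the corollary is immediate. For the parenthetical statement on $\mathsf{D}_{\mathsf{sg}}(\Lambda)$, I would invoke Example \ref{first example}(4), where $\mathsf{D}_{\mathsf{sg}}(\Lambda)$ is identified (up to equivalence) with the subcategory of compact objects of $\mathsf{K}_{\mathsf{ac}}(\Inj\Lambda)$; since $\mathsf{K}_{\mathsf{ac}}(\Inj\Lambda)$ has just been described intrinsically inside $\mathsf{K}(\Inj\Lambda)$ and passing to compact objects of a triangulated category is itself an intrinsic construction, $\mathsf{D}_{\mathsf{sg}}(\Lambda)$ is intrinsically determined as well. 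The only step I expect to need genuine care is the reverse inclusion above: the hypothesis $x\in{}^{\perp}\mathcal{T}^{\mathsf{b}}$ is a vanishing statement in the homotopy category $\mathsf{K}(\Inj\Lambda)$ and must be transported to one in $\mathsf{D}(\Lambda)$, which is exactly what Lemma \ref{some facts about KInj}(ii) — essentially the homotopy-injectivity of bounded complexes of injectives — makes painless; after that, testing against the shifts of a single injective cogenerator does the rest.
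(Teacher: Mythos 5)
Your argument is correct, but it takes a genuinely different route from the paper. You pin down $\mathsf{K}_{\mathsf{ac}}(\Inj\Lambda)$ directly as the orthogonal ${}^{\perp}\T^{\mathsf{b}}$ inside $\T=\mathsf{K}(\Inj\Lambda)$, and both inclusions check out: acyclics are left-orthogonal to $\mathsf{K}^{\mathsf{b}}(\Inj\Lambda)\subseteq\mathsf{Im}(\mathsf{Q}_\rho)$ by Lemma \ref{some facts about KInj}(ii), and conversely testing against shifts of an injective cogenerator $E$ (using $\mathsf{Hom}_{\mathsf{D}(\Lambda)}(\mathsf{Q}(x),E[n])\cong\mathsf{Hom}_\Lambda(H^{-n}(\mathsf{Q}(x)),E)$, the same mechanism the paper uses in Proposition \ref{examples}) forces $\mathsf{Q}(x)=0$. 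The paper instead observes that $(\mathsf{K}_{\mathsf{ac}}(\Inj\Lambda),\mathsf{Q}_\rho(\mathsf{D}^{\mathsf{b}}(\Mod\Lambda)))$ is a stable t-structure in $\Tb_p$ and that $\mathsf{Q}_\rho(\mathsf{D}^{\mathsf{b}}(\Mod\Lambda))=\mathsf{broad}_\Sigma(\T^{\mathsf{c}})$ (proof of Lemma \ref{far away of smod}), so that $\mathsf{K}_{\mathsf{ac}}(\Inj\Lambda)\simeq\Tb_p/\mathsf{broad}_\Sigma(\T^{\mathsf{c}})$ and $\mathsf{D}_{\mathsf{sg}}(\Lambda)=(\Tb_p/\mathsf{broad}_\Sigma(\T^{\mathsf{c}}))^{\mathsf{c}}$. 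Both proofs ultimately rest on the Artin-algebra computations of Proposition \ref{examples2} (for you, $\T^{\mathsf{b}}=\mathsf{K}^{\mathsf{b}}(\Inj\Lambda)$; for the paper, the description of $\Tb_p$), so neither is more general; what your version buys is a shorter, more elementary identification of $\mathsf{K}_{\mathsf{ac}}(\Inj\Lambda)$ as an honest intrinsic subcategory of $\T$ given by a single orthogonality condition, while the paper's version additionally exhibits $\mathsf{D}_{\mathsf{sg}}(\Lambda)$ as the compacts of an intrinsic Verdier quotient built from the distinguished subcategories $\Tb_p$ and $\mathsf{broad}_\Sigma(\T^{\mathsf{c}})$, which fits the general framework of the paper. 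One small point to make explicit in your last step: Example \ref{first example}(4) identifies the compacts of $\mathsf{K}_{\mathsf{ac}}(\Inj\Lambda)$ with the idempotent completion of $\mathsf{D}_{\mathsf{sg}}(\Lambda)$, so you should add (as the paper does in Corollary \ref{far-away gen db and dsg}, citing \cite{chen}) that $\mathsf{D}_{\mathsf{sg}}(\Lambda)$ is idempotent-complete for an Artin algebra, whence it really coincides with those compacts.
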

\begin{proof}
Consider $\T=\mathsf{K}(\Inj\Lambda)$. We observe that the pair $(\mathsf{K}_{\mathsf{ac}}(\Inj\Lambda), \mathsf{Q}_\rho(\mathsf{D}^{\mathsf{b}}(\Mod{\Lambda})))$ is a stable t-structure in $\Tb_p$, and from the proof of Lemma \ref{far away of smod} we have that $\mathsf{Q}_{\rho}(\mathsf{D}^{\mathsf{b}}(\Mod{\Lambda}))=\mathsf{broad}_\Sigma(\T^{\mathsf{c}})$. This means that $\mathsf{K}_{\mathsf{ac}}(\Inj\Lambda)$ is a colocalisation of $\Tb_p$ and therefore equivalent to the Verdier quotient $\Tb_p/\mathsf{broad}_\Sigma(\T^{\mathsf{c}})$. Finally note that $\mathsf{D}_{\mathsf{sg}}(\Lambda)$ is the subcategory of compact objects in the latter, i.e.~$\mathsf{D}_{\mathsf{sg}}(\Lambda)=(\Tb_p/\mathsf{broad}_\Sigma(\T^{\mathsf{c}}))^{\mathsf{c}}$.
\end{proof}

\begin{rem}
    Observe now that the computations of Proposition \ref{examples2} do not hold for every noetherian ring. Consider $R$ to be as in Remark \ref{counterexample}. We then claim that for $\mathcal{T}=\mathsf{K}(\Inj R)$, we have
    \[
    \mathcal{T}^{\mathsf{b}}\neq \mathsf{K}^{\mathsf{b}}(\Inj R). 
    \]
    Indeed, we have $\mathsf{K}^{\mathsf{b}}(\Inj R)=(\mathsf{Q}_{\rho}(\Mod R))^{\padova}$ (see the proof of Proposition \ref{examples2}), so if $\mathcal{T}^{\mathsf{b}}=(\mathsf{Q}_{\rho}(\smod R))^{\padova}=(\mathsf{Q}_{\rho}(\Mod R))^{\padova}$ would hold, then we would get $(\smod R)^{\padova}=(\Mod R)^{\padova}$ in $\mathsf{D}(R)$ - and this is not the case.
\end{rem}

As promised earlier the computations of Proposition \ref{examples2} allow us to observe that neither $\mathsf{D}^{\mathsf{b}}(\smod\Lambda)$ nor $\mathsf{D}_{\mathsf{sg}}(\Lambda)$, for an Artin algebra of infinite global dimension, are far-away generated.

\begin{cor}\label{far-away gen db and dsg}
Let $\Lambda$ be an Artin algebra of infinite global dimension. Then $\mathsf{D}^{\mathsf{b}}(\smod\Lambda)$ and $\mathsf{D}_{\mathsf{sg}}(\Lambda)$ are not far-away generated.
\end{cor}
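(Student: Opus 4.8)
The plan is to derive both statements from Lemma~\ref{far-away and containments} together with the computations in Proposition~\ref{examples2}. Recall from Example~\ref{first example}(4) and Proposition~\ref{examples2} that $\mathsf{D}^{\mathsf{b}}(\smod\Lambda)$ and $\mathsf{D}_{\mathsf{sg}}(\Lambda)$ are, up to triangle equivalence, the subcategories of compact objects of the compactly generated triangulated categories $\mathsf{K}(\Inj\Lambda)$ and $\mathsf{K}_{\mathsf{ac}}(\Inj\Lambda)$, respectively. Since being far-away generated is an intrinsic property (it refers only to the triangulated structure and Hom-groups of the category itself, which for a thick-generating family of compact objects agree whether computed inside $\mathcal{T}^{\mathsf{c}}$ or inside $\mathcal{T}$), if $\mathsf{D}^{\mathsf{b}}(\smod\Lambda)$ (resp. $\mathsf{D}_{\mathsf{sg}}(\Lambda)$) were far-away generated, then Lemma~\ref{far-away and containments} would apply with $\mathcal{T}=\mathsf{K}(\Inj\Lambda)$ (resp. $\mathcal{T}=\mathsf{K}_{\mathsf{ac}}(\Inj\Lambda)$) and force $\mathcal{T}^{\mathsf{c}}\subseteq\mathcal{T}^{\mathsf{b}}$. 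I will show this inclusion fails whenever $\gd\Lambda=\infty$.

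For the singularity category, take $\mathcal{T}=\mathsf{K}_{\mathsf{ac}}(\Inj\Lambda)$. By Proposition~\ref{examples2}, $\mathcal{T}^{\mathsf{b}}=0$, so $\mathcal{T}^{\mathsf{c}}\subseteq\mathcal{T}^{\mathsf{b}}$ would force $\mathsf{D}_{\mathsf{sg}}(\Lambda)=0$. But $\mathsf{D}_{\mathsf{sg}}(\Lambda)=\mathsf{D}^{\mathsf{b}}(\smod\Lambda)/\mathsf{K}^{\mathsf{b}}(\proj\Lambda)$, and this quotient vanishes precisely when every finitely generated $\Lambda$-module has finite projective dimension; since over an Artin algebra every such module has a finite filtration by the (finitely many) simple modules, this is in turn equivalent to $\gd\Lambda<\infty$, contrary to hypothesis.

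For $\mathsf{D}^{\mathsf{b}}(\smod\Lambda)$, take $\mathcal{T}=\mathsf{K}(\Inj\Lambda)$; by Proposition~\ref{examples2} one has $\mathcal{T}^{\mathsf{b}}=\mathsf{K}^{\mathsf{b}}(\Inj\Lambda)$, while $\mathcal{T}^{\mathsf{c}}$ is identified, via the recollement \textnormal{(\ref{rec})}, with $\mathsf{Q}_{\rho}(\mathsf{D}^{\mathsf{b}}(\smod\Lambda))$, where $\mathsf{Q}_{\rho}$ sends a finitely generated module to its injective resolution. An injective resolution of a module $M$ lies in $\mathsf{K}^{\mathsf{b}}(\Inj\Lambda)$ exactly when $\id_\Lambda M<\infty$, so $\mathcal{T}^{\mathsf{c}}\subseteq\mathcal{T}^{\mathsf{b}}$ would imply that every finitely generated $\Lambda$-module has finite injective dimension; using again that there are only finitely many simple modules and that $\gd\Lambda=\sup\{\id_\Lambda S:S\text{ simple}\}$, this yields $\gd\Lambda<\infty$. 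Equivalently, and more concretely: since $\gd\Lambda=\infty$, some simple module $S$ has $\id_\Lambda S=\infty$, so $\mathsf{Q}_{\rho}(S)$ belongs to $\mathsf{D}^{\mathsf{b}}(\smod\Lambda)$ but not to $\mathsf{K}^{\mathsf{b}}(\Inj\Lambda)$, giving the desired contradiction.

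There is no serious obstacle here: Lemma~\ref{far-away and containments} and Proposition~\ref{examples2} do the essential work. The one point that needs a little care is the reduction of ``every finitely generated module has finite projective (resp. injective) dimension'' to ``$\gd\Lambda<\infty$'', which rests on the standard fact that homological dimensions over an Artin algebra are controlled by the finitely many simple modules. It is worth noting that the hypothesis $\gd\Lambda=\infty$ is genuinely needed: if $\gd\Lambda<\infty$, then $\mathsf{D}^{\mathsf{b}}(\smod\Lambda)=\mathsf{K}^{\mathsf{b}}(\proj\Lambda)$ is far-away generated by the tilting object $\Lambda$ (cf. Example~\ref{about silting and tilting1}) and $\mathsf{D}_{\mathsf{sg}}(\Lambda)=0$ is trivially far-away generated.
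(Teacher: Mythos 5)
Your proposal is correct and follows essentially the same route as the paper: both deduce the result from Lemma~\ref{far-away and containments} combined with the computations of Proposition~\ref{examples2}, by observing that far-away generation of $\mathcal{T}^{\mathsf{c}}$ would force $\mathcal{T}^{\mathsf{c}}\subseteq\mathcal{T}^{\mathsf{b}}$ for $\mathcal{T}=\mathsf{K}(\Inj\Lambda)$ and $\mathcal{T}=\mathsf{K}_{\mathsf{ac}}(\Inj\Lambda)$, which fails when $\gd\Lambda=\infty$. You merely make explicit two points the paper leaves implicit, namely why the inclusion fails (reduction to simple modules of infinite injective, respectively projective, dimension) and that the identification of $\mathsf{D}_{\mathsf{sg}}(\Lambda)$ with the compacts of $\mathsf{K}_{\mathsf{ac}}(\Inj\Lambda)$ rests on the table of Proposition~\ref{examples2} (the paper additionally records the idempotent-completeness of $\mathsf{D}_{\mathsf{sg}}(\Lambda)$ to justify this).
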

\begin{proof}
This follows from Lemma \ref{far-away and containments} and Proposition \ref{examples2} by noticing that both triangulated categories are the compact objects in a compactly generated triangulated category (namely for $\mathcal{T}=\mathsf{K}(\Inj\Lambda)$ and $\mathcal{T}=\mathsf{K}_{\mathsf{ac}}(\Inj\Lambda)$ respectively) and the inclusion $\mathcal{T}^\mathsf{c}\subseteq \mathcal{T}^{\mathsf{b}}$ is not verified. Note that $\mathsf{D}_{\mathsf{sg}}(\Lambda)$ is idempotent-complete (see \cite[Corollary 2.4]{chen}), therefore indeed coinciding with the compact objects in $\mathsf{K}_{\mathsf{ac}}(\Inj{\Lambda})$.
\end{proof}

The next lemma completes the previous table for Artin algebras with an additional condition. Recall that $\Lambda$ is \textbf{Iwanaga-Gorenstein} if $\mathsf{K}^{\mathsf{b}}(\Proj\Lambda)$ and $\mathsf{K}^{\mathsf{b}}(\Inj\Lambda)$ coincide as subcategories of $\mathsf{D}(\Lambda)$ (see \cite{happel} for details). 

\begin{lem} \label{subcategories for IG}
If $\Lambda$ is an Iwanaga-Gorenstein Artin algebra, for $\mathcal{T}=\mathsf{K}(\Inj \Lambda)$ and $\?$ in $\{-,+,\mathsf{b}\}$, we have
    \[
    \mathcal{T}^{\?}_i=\mathsf{K}_{\mathsf{ac}}(\Inj\Lambda)\ast \mathsf{Q}_{\rho}(\mathsf{D}^{\?}(\Lambda)).
    \]
\end{lem}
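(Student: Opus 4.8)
The plan is to push the computation of the relevant far-away orthogonals from $\mathcal{T}=\mathsf{K}(\Inj\Lambda)$ down to $\mathsf{D}(\Lambda)$ along the quotient functor $\mathsf{Q}$ of the recollement \textnormal{(\ref{rec})}. I will carry out the case $\?=\mathsf{b}$, that is $\mathcal{T}^{\mathsf{b}}_i=(\mathcal{T}^{\mathsf{b}})^{\padova}$; the cases $\?\in\{-,+\}$ are obtained verbatim, replacing $\padova$ throughout by $\perp_{\gg}$ (resp.\ $\perp_{\ll}$) and $\mathsf{D}^{\mathsf{b}}$ by $\mathsf{D}^{-}$ (resp.\ $\mathsf{D}^{+}$). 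By Proposition~\ref{examples2} we already know $\mathcal{T}^{\mathsf{b}}=\mathsf{K}^{\mathsf{b}}(\Inj\Lambda)$. The right-hand side is immediate to rewrite: if $x$ sits in a triangle $a\to x\to b\to a[1]$ with $a\in\mathsf{K}_{\mathsf{ac}}(\Inj\Lambda)=\ker\mathsf{Q}$ and $b\in\mathsf{Q}_{\rho}(\mathsf{D}^{\mathsf{b}}(\Lambda))$, then applying $\mathsf{Q}$ and using $\mathsf{Q}\mathsf{Q}_{\rho}\cong\mathrm{id}$ gives $\mathsf{Q}(x)\in\mathsf{D}^{\mathsf{b}}(\Lambda)$; conversely, if $\mathsf{Q}(x)\in\mathsf{D}^{\mathsf{b}}(\Lambda)$, the canonical triangle $\mathsf{I}\mathsf{I}_{\rho}(x)\to x\to\mathsf{Q}_{\rho}\mathsf{Q}(x)\to\mathsf{I}\mathsf{I}_{\rho}(x)[1]$ puts $x$ into $\mathsf{K}_{\mathsf{ac}}(\Inj\Lambda)\ast\mathsf{Q}_{\rho}(\mathsf{D}^{\mathsf{b}}(\Lambda))$. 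So $\mathsf{K}_{\mathsf{ac}}(\Inj\Lambda)\ast\mathsf{Q}_{\rho}(\mathsf{D}^{\mathsf{b}}(\Lambda))=\{x\in\mathcal{T}:\mathsf{Q}(x)\in\mathsf{D}^{\mathsf{b}}(\Lambda)\}$, and the lemma reduces to the identity
\[\mathsf{K}^{\mathsf{b}}(\Inj\Lambda)^{\padova}=\{x\in\mathcal{T}:\mathsf{Q}(x)\in\mathsf{D}^{\mathsf{b}}(\Lambda)\},\]
the orthogonal being taken in $\mathcal{T}$.

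The key step, and the one where the Iwanaga--Gorenstein hypothesis enters, will be a comparison of Hom-groups: for every $y\in\mathsf{K}^{\mathsf{b}}(\Inj\Lambda)$, every $x\in\mathcal{T}$ and every $n\in\mathbb{Z}$ I claim there is an isomorphism $\mathsf{Hom}_{\mathcal{T}}(y,x[n])\cong\mathsf{Hom}_{\mathsf{D}(\Lambda)}(\mathsf{Q}(y),\mathsf{Q}(x)[n])$. Granting this, and noting that $\{\mathsf{Q}(y):y\in\mathsf{K}^{\mathsf{b}}(\Inj\Lambda)\subseteq\mathcal{T}\}$ is the class of bounded complexes of injectives regarded inside $\mathsf{D}(\Lambda)$, an object $x$ of $\mathcal{T}$ lies in $\mathsf{K}^{\mathsf{b}}(\Inj\Lambda)^{\padova}$ if and only if $\mathsf{Q}(x)$ lies in $\mathsf{K}^{\mathsf{b}}(\Inj\Lambda)^{\padova}$ computed in $\mathsf{D}(\Lambda)$. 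Since $\Lambda$ is Iwanaga--Gorenstein, $\mathsf{K}^{\mathsf{b}}(\Inj\Lambda)=\mathsf{K}^{\mathsf{b}}(\Proj\Lambda)=\mathsf{broad}_{\Sigma}(\Lambda)$ inside $\mathsf{D}(\Lambda)$ (see Lemma~\ref{broad generation}(i) and the example following it), so using Lemma~\ref{triangulated_subcategories}(v)(b) and $\mathsf{D}(\Lambda)^{\mathsf{c}}=\mathsf{thick}(\Lambda)$ (Example~\ref{first example}) we get
\[\mathsf{K}^{\mathsf{b}}(\Inj\Lambda)^{\padova}=\mathsf{broad}_{\Sigma}(\Lambda)^{\padova}=\mathsf{thick}(\Lambda)^{\padova}=(\mathsf{D}(\Lambda)^{\mathsf{c}})^{\padova}=\mathsf{D}^{\mathsf{b}}(\Lambda),\]
the last equality by Definition~\ref{main_definition} and Proposition~\ref{examples}. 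Chaining these identifications gives $\mathcal{T}^{\mathsf{b}}_i=\mathsf{K}^{\mathsf{b}}(\Inj\Lambda)^{\padova}=\{x\in\mathcal{T}:\mathsf{Q}(x)\in\mathsf{D}^{\mathsf{b}}(\Lambda)\}=\mathsf{K}_{\mathsf{ac}}(\Inj\Lambda)\ast\mathsf{Q}_{\rho}(\mathsf{D}^{\mathsf{b}}(\Lambda))$.

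It remains to prove the Hom-isomorphism, which I expect to be the main obstacle. As $\mathsf{K}^{\mathsf{b}}(\Inj\Lambda)\subseteq\mathsf{K}^{+}(\Inj\Lambda)\subseteq\mathsf{Im}(\mathsf{Q}_{\rho})$ by Lemma~\ref{some facts about KInj}(i), every such $y$ satisfies $y\cong\mathsf{Q}_{\rho}\mathsf{Q}(y)$; writing $c=\mathsf{Q}(y)$, it is enough to show $\mathsf{Q}_{\rho}(c)\cong\mathsf{Q}_{\lambda}(c)$, for then $\mathsf{Hom}_{\mathcal{T}}(y,x[n])\cong\mathsf{Hom}_{\mathcal{T}}(\mathsf{Q}_{\lambda}(c),x[n])\cong\mathsf{Hom}_{\mathsf{D}(\Lambda)}(c,\mathsf{Q}(x)[n])$ by the adjunction $\mathsf{Q}_{\lambda}\dashv\mathsf{Q}$. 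Here $c$ is a bounded complex of injectives, hence $c\in\mathsf{K}^{\mathsf{b}}(\Inj\Lambda)=\mathsf{K}^{\mathsf{b}}(\Proj\Lambda)=\mathsf{thick}(\mathsf{Add}(\Lambda))$ inside $\mathsf{D}(\Lambda)$ — this is the one place where being Iwanaga--Gorenstein is genuinely used, to move our complex of injectives into the subcategory on which $\mathsf{Q}_{\lambda}$ and $\mathsf{Q}_{\rho}$ agree. Indeed, consider the natural transformation $\theta\colon\mathsf{Q}_{\lambda}\to\mathsf{Q}_{\rho}$ adjoint under $\mathsf{Q}\dashv\mathsf{Q}_{\rho}$ to the counit $\mathsf{Q}\mathsf{Q}_{\lambda}\to\mathrm{id}$ (invertible because $\mathsf{Q}_{\lambda}$ is fully faithful); then $\mathsf{Q}(\theta_c)$ is invertible, so $\mathrm{cone}(\theta_c)\in\ker\mathsf{Q}=\mathsf{K}_{\mathsf{ac}}(\Inj\Lambda)$, and $\theta_c$ is invertible exactly when this cone is zero. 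Hence $\{c\in\mathsf{D}(\Lambda):\theta_c\text{ is invertible}\}$ is a thick subcategory; it contains $\Lambda$ by Lemma~\ref{some facts about KInj}(iii), and since $\mathsf{Q}_{\lambda}$ preserves coproducts while $\mathsf{Q}_{\rho}$ preserves the coproducts of copies of $\Lambda[0]$ (which lie in $\mathsf{D}^{+}(\Lambda)$) by Lemma~\ref{some facts about KInj}(iv), it contains $\mathsf{Add}(\Lambda)$, and therefore all of $\mathsf{thick}(\mathsf{Add}(\Lambda))$. This yields $\mathsf{Q}_{\rho}(c)\cong\mathsf{Q}_{\lambda}(c)$ and closes the argument. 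Finally, comparing with the computation of $\mathcal{T}^{\?}_p$ in Proposition~\ref{examples2}, one sees that the conclusion is equivalent to $\mathcal{T}^{\?}_i=\mathcal{T}^{\?}_p$ for all $\?\in\{-,+,\mathsf{b}\}$, which is the expected statement that $\mathsf{K}(\Inj\Lambda)$ is ``Gorenstein'' precisely when $\Lambda$ is.
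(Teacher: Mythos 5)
Your proof is correct, and while the overall strategy coincides with the paper's in outline (use the Iwanaga--Gorenstein hypothesis to move bounded complexes of injectives into the part of $\mathsf{D}(\Lambda)$ where $\mathsf{Q}_{\lambda}$ and $\mathsf{Q}_{\rho}$ agree, and then reduce to computing a far-away orthogonal of $\mathsf{K}^{\mathsf{b}}(\Inj\Lambda)$ inside $\mathsf{D}(\Lambda)$), your implementation of the key step differs. The paper first shows that maps from an injective module into an acyclic complex of injectives vanish, by decomposing the injective into indecomposables, which are compact and, by gorensteinness, lie in $\mathsf{K}^{\mathsf{b}}(\proj\Lambda)$ so that Lemma~\ref{some facts about KInj}(iii) applies; it then treats an arbitrary $x\in(\T^{\mathsf{b}})^{\?}$ through the recollement triangle and the adjunction $\mathsf{Q}\dashv\mathsf{Q}_{\rho}$. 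You instead prove a uniform Hom-comparison $\mathsf{Hom}_{\T}(y,x[n])\cong\mathsf{Hom}_{\mathsf{D}(\Lambda)}(\mathsf{Q}(y),\mathsf{Q}(x)[n])$ for all $y\in\mathsf{K}^{\mathsf{b}}(\Inj\Lambda)$ and all $x$, by showing that the canonical transformation $\theta\colon\mathsf{Q}_{\lambda}\to\mathsf{Q}_{\rho}$ is invertible on the thick subcategory $\mathsf{thick}(\Add\Lambda)$, using Lemma~\ref{some facts about KInj}(iii) for $\Lambda$ and (iv) for the coproducts $\Lambda^{(I)}$; this is where you need slightly more than the paper (the thickness of the invertibility locus of $\theta$ and the fact that the isomorphism of (iv) is the canonical comparison map, which holds because $\mathsf{Q}_{\rho}$ restricts to an equivalence onto $\mathsf{K}^{+}(\Inj\Lambda)$, closed under coproducts since $\Lambda$ is noetherian). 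What your route buys is that both inclusions of the lemma drop out simultaneously from the single identity $\mathsf{K}^{\mathsf{b}}(\Inj\Lambda)^{\?}=\mathsf{D}^{\?}(\Lambda)$ in $\mathsf{D}(\Lambda)$ (via Lemma~\ref{triangulated_subcategories}(v)(b) and gorensteinness), whereas the paper only spells out the inclusion $\T^{\?}_i\subseteq\mathsf{K}_{\mathsf{ac}}(\Inj\Lambda)\ast\mathsf{Q}_{\rho}(\mathsf{D}^{\?}(\Lambda))$ and leaves the converse largely implicit. Two cosmetic points: the invertible map $\mathrm{id}\to\mathsf{Q}\mathsf{Q}_{\lambda}$ you use to define $\theta$ is the unit, not the counit, of $\mathsf{Q}_{\lambda}\dashv\mathsf{Q}$; and your closing remark that the statement is equivalent to $\T^{\?}_i=\T^{\?}_p$ is an observation about consequences (cf.\ Lemma~\ref{gorensteinness of K(Inj)}) rather than part of the proof.
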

\begin{proof}
    Let $e$ be an injective $\Lambda$-module. We first show that if $x$ is acyclic, then $\mathsf{Hom}_{\mathsf{K}(\Inj \Lambda)}(e,x)=0$. In fact, since $\Lambda$ is noetherian, $e$ is a direct sum of indecomposable $\Lambda$-modules. Therefore, it suffices to show our claim for $e$ indecomposable injective. Since $\Lambda$ is an Artin algebra, $e$ is then finitely presented and therefore it lies in $\T^c=\mathsf{Q}_\rho(\mathsf{D}^\mathsf{b}(\smod \Lambda))$. Since $\Lambda$ is Iwanaga-Gorenstein, $\mathsf{Q}(e)$ is a finitely presented $\Lambda$-module of finite projective dimension and, thus, it lies in $\mathsf{K}^\mathsf{b}(\proj\Lambda)$.  Hence, we have  by Lemma \ref{some facts about KInj} that $e\cong \mathsf{Q}_{\lambda}\mathsf{Q}(e)$. Write $x=\mathsf{I}(x)$ for an acyclic complex of injectives. Then, as claimed, we have 
    \[
    \mathsf{Hom}_{\mathsf{K}(\Inj \Lambda)}(e,x)= \mathsf{Hom}_{\mathsf{K}(\Inj \Lambda)}(\mathsf{Q}_{\lambda}\mathsf{Q}(e),\mathsf{I}(x))=\mathsf{Hom}_{\mathsf{K}_{\mathsf{ac}}(\Inj \Lambda)}(\mathsf{I}_{\lambda}\mathsf{Q}_{\lambda}\mathsf{Q}(e),x)=0. 
    \]
    Now, since $\mathsf{K}^{\mathsf{b}}(\Inj\Lambda)=\mathsf{broad}_\Sigma(\Lambda^*)$, then $(\T^{\mathsf{b}})^{\?}=(\Lambda^*)^{\?}$ and, clearly, we get $\mathsf{K}_{\mathsf{ac}}(\Inj \Lambda)\subseteq (\mathcal{T}^{\mathsf{b}})^{\?}$ for every $\?$ in $\{\perp_{\ll}, \perp_{\gg},\padova\}$. For an object $x$ in $(\mathcal{T}^{\mathsf{b}})^{\?}$, consider the triangle 
    \[
    \mathsf{II}_{\rho}(x)\rightarrow x\rightarrow \mathsf{Q}_{\rho}\mathsf{Q}(x)\rightarrow \mathsf{II}_{\rho}(x)[1]. 
    \]
    By the above, we know that $\mathsf{II}_{\rho}(x)$ lies in $(\mathcal{T}^{\mathsf{b}})^{\?}$ and so $\mathsf{Q}_{\rho}\mathsf{Q}(x)$ lies in $(\mathcal{T}^{\mathsf{b}})^{\?}$. Given $y$ in $\mathsf{K}^{\mathsf{b}}(\Inj \Lambda)$ we have 
    \[
    \mathsf{Hom}_{\mathsf{D}(\Lambda)}(\mathsf{Q}(y),\mathsf{Q}(x)[n])\cong \mathsf{Hom}_{\mathsf{K}(\Inj \Lambda)}(y,\mathsf{Q}_{\rho}\mathsf{Q}(x)[n])=0
    \]
    for appropriate values of $n$. In other words, $\mathsf{Q}(x)$ belongs to the corresponding far-away orthogonal $\mathsf{K}^{\mathsf{b}}(\Inj \Lambda)^{\?}$ in $\mathsf{D}(\Lambda)$. However, since $\Lambda$ is assumed to be Iwanaga-Gorenstein, we have $\mathsf{K}^{\mathsf{b}}(\Inj \Lambda)=\mathsf{K}^{\mathsf{b}}(\Proj \Lambda)$ and thus $\mathsf{Q}(x)$ lies in $\mathsf{D}^-(\Lambda)$ when $x$ is an object of $\mathcal{T}^-_i$, $\mathsf{Q}(x)$ lies in $\mathsf{D}^+(\Lambda)$ when $x$ is an object of $\mathcal{T}^+_i$ and $\mathsf{Q}(x)$ lies in $\mathsf{D}^{\mathsf{b}}(\Lambda)$ when $x$ is an object of $\mathcal{T}^{\mathsf{b}}_i$. 
\end{proof}

The distinguished subcategories calculated above can be reinterpreted in $\mathsf{K}(\Proj \Lambda)$ for an Artin algebra $\Lambda$. Recall that for an Artin algebra, there is a well-known equivalence between $\Proj\Lambda$ and $\Inj\Lambda$ that extends to an equivalence between $\mathsf{K}(\Proj\Lambda)$ and $\mathsf{K}(\Inj\Lambda)$, given by tensoring with $\Lambda^*$. This is known as the \textit{Nakayama functor} and $\Lambda^*$ plays a role known as \textit{the dualising complex}. To see this translation explicitly, let us first consider a noetherian ring $R$ which admits a dualising complex $D_R$ which, by \cite{iyengar_krause}, yields an equivalence 
\begin{align} \label{krause iyengar}
  T:=  -\otimes_{R}D_R\colon\mathsf{K}(\Proj R)\longrightarrow\mathsf{K}(\Inj R).
\end{align}
A quasi-inverse of $T$ is then given by $\rho\circ \mathsf{Hom}_R(D_R,-)$ where the functor $\rho$ denotes the right adjoint to the inclusion $\mathsf{K}(\Proj R)\hookrightarrow\mathsf{K}(\Mod R)$ (whose existence follows from \cite[Theorem 2.4]{jorgensen2}). 

\begin{lem} \label{restriction of T}
    Let $R$ be a noetherian ring that admits a dualising complex $D_R$. The following hold. 
    \begin{enumerate}
        \item[\textnormal{(i)}] $T$ restricts to an equivalence $\mathsf{K}^+(\Proj R)\simeq \mathsf{K}^+(\Inj R)$. 
        \item[\textnormal{(ii)}] $T$ restricts to an equivalence $\mathsf{K}^{\mathsf{b}}(\Proj R)\simeq \mathsf{K}^{\mathsf{b}}(\Inj R)$. 
        \item[\textnormal{(iii)}] For a complex $x$ that lies in $\mathsf{K}(\Proj R)$, we have $H^n(\mathsf{Hom}_R(x,P))=0$ for every $P\in\Proj R$ and $n\gg 0$ if and only if $H^n(T(x))=0$ for $n\gg 0$. 
        \item[\textnormal{(iv)}] For a complex $x$ that lies in $\mathsf{K}(\Proj R)$, we have $H^n(\mathsf{Hom}_R(x,P))=0$ for every $P\in\Proj R$ and $n\ll 0$ if and only if $H^n(T(x))=0$ for $n\ll 0$. 

        \item[\textnormal{(v)}] For a complex $x$ that lies in $\mathsf{K}(\Proj R)$, we have $H^n(\mathsf{Hom}_R(x,P))=0$ for every $P\in\Proj R$ and $|n|\gg 0$ if and only if $H^n(T(x))=0$ for $|n|\gg 0$. 
    \end{enumerate}
\end{lem}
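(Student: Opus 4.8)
The plan rests on a few facts about the dualising complex $D_R$, which I would record first. As a dualising complex, $D_R$ may be taken to be a bounded complex of injective $R$-modules, with nonzero terms only in degrees $a\le\bullet\le b$, with finitely generated cohomology, finite injective dimension, and with the homothety $R\xrightarrow{\sim}\mathbb{R}\mathsf{Hom}_R(D_R,D_R)$ an isomorphism. By \cite{iyengar_krause} the functor $T$ is a triangle equivalence with quasi-inverse $S:=\rho\circ\mathsf{Hom}_R(D_R,-)$. Two elementary observations will be used throughout: (a) over the noetherian ring $R$, arbitrary direct sums and direct summands of injectives are injective, so for any projective $P$ and any $j$ the module $P\otimes_R D_R^{\,j}$---a summand of a direct sum of copies of $D_R^{\,j}$---is injective; and (b) each of $-\otimes_R D_R$ and $\mathsf{Hom}_R(D_R,-)$ merely shifts and amalgamates a window of $b-a+1$ consecutive degrees, hence preserves the classes of bounded-below and of bounded complexes, and since $D_R$ has bounded cohomology, $\mathbb{R}\mathsf{Hom}_R(D_R,-)$ preserves cohomological boundedness below.

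For (i) and (ii), the inclusions $T(\mathsf{K}^{+}(\Proj R))\subseteq\mathsf{K}^{+}(\Inj R)$ and $T(\mathsf{K}^{\mathsf{b}}(\Proj R))\subseteq\mathsf{K}^{\mathsf{b}}(\Inj R)$ need nothing beyond (a) and (b): if $x$ is a complex of projectives with $x^{n}=0$ for $n\ll0$ (resp. for $|n|\gg0$), then $T(x)=x\otimes_R D_R$ is concentrated in the corresponding degrees with injective terms. For the reverse inclusions I would apply the quasi-inverse $S$: a bounded-below complex of injectives is K-injective, so for such a $y$ one has $\mathsf{Hom}_R(D_R,y)\simeq\mathbb{R}\mathsf{Hom}_R(D_R,y)$, which by (b) is bounded below (and, when $y$ is moreover bounded, has bounded cohomology, using $\mathrm{id}_R D_R<\infty$); applying $\rho$ then lands in $\mathsf{K}^{+}(\Proj R)$, and in the bounded case one uses in addition the homothety to see that $S(y)$ has bounded cohomology. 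Since $T$ and $S$ are mutually quasi-inverse, these one-sided inclusions promote to the asserted equivalences.

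For (iii), (iv) and (v): on either side, vanishing of cohomology for $|n|\gg0$ is equivalent to the conjunction of vanishing for $n\gg0$ and for $n\ll0$, so (v) is formally the conjunction of (iii) and (iv); and (iii) and (iv) are dual, so I would prove (iii). The engine is Grothendieck duality: for $x$ in $\mathsf{K}(\Proj R)$ there are natural isomorphisms in $\mathsf{D}(R)$
\begin{align*}
\mathbb{R}\mathsf{Hom}_R(T(x),D_R)=\mathbb{R}\mathsf{Hom}_R(x\otimes^{\mathbb{L}}_R D_R,D_R)&\cong\mathbb{R}\mathsf{Hom}_R\bigl(x,\mathbb{R}\mathsf{Hom}_R(D_R,D_R)\bigr)\\
&\cong\mathbb{R}\mathsf{Hom}_R(x,R)=\mathsf{Hom}_R(x,R),
\end{align*}
the middle isomorphism being derived tensor--hom adjunction, the next the homothety, and no derived functor being needed on the right since $x$ is a complex of projectives. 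Because $D_R$ has bounded cohomology and finite injective dimension, the contravariant functor $\mathbb{R}\mathsf{Hom}_R(-,D_R)$ interchanges complexes with bounded-above cohomology and complexes with bounded-below cohomology (up to the window $[a,b]$). Hence the vanishing of $H^{n}(T(x))$ on a half-line is equivalent to the vanishing of $H^{n}(\mathsf{Hom}_R(x,R))$ on the opposite half-line, i.e. to the instance $P=R$ of the left-hand condition. It then remains to upgrade the coefficient module from $R$ to an arbitrary projective $P$, which I would obtain by a d\'evissage: resolve $P$ inside a free module and use the resulting long exact sequences together with $\mathrm{id}_R D_R<\infty$ (so that only finitely many syzygies intervene in the relevant range) to reduce the general $P$ to $R$.

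The main obstacle I anticipate is the converse inclusion in (ii)---equivalently, pinning down $\rho$ (hence $S=T^{-1}$) on bounded complexes. One cannot merely intersect the bounded-below statement with a ``bounded-above'' analogue, since $\rho$ of a single module is an unbounded projective resolution; instead one shows that $S(y)$, for $y\in\mathsf{K}^{\mathsf{b}}(\Inj R)$, is a complex of projectives with bounded cohomology---immediate from $\mathbb{R}\mathsf{Hom}_R(D_R,D_R)\simeq R$---and then uses that $R$ admits a dualising complex (so that acyclic complexes of projectives are totally acyclic) to split off and truncate the acyclic tail, leaving a genuinely bounded complex of projectives. A secondary, more routine, difficulty in (iii)--(v) is the bookkeeping of which half-line corresponds to which against the Hom-complex sign conventions, and the precise d\'evissage passing from the coefficient $R$ to all projectives $P$.
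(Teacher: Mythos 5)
Your handling of (i) and (ii) follows the paper's outline (termwise $P\otimes_R D_R^{\,j}$ is injective for the forward inclusions; the quasi-inverse $S=\rho\circ\mathsf{Hom}_R(D_R,-)$ for the converses), but the sentence ``applying $\rho$ then lands in $\mathsf{K}^{+}(\Proj R)$'' is precisely the point that needs proof, and as a general principle it is false: $\rho$ applied to a bounded-below complex (even to a single module) is typically not bounded below, as you yourself observe later. What makes it work is that $\mathsf{Hom}_R(D_R,y)$ is a bounded-below complex of \emph{flat} modules (Hom between injectives over a noetherian ring is flat) together with the fact that flat modules have finite projective dimension over a ring with a dualising complex; this is the content of the citations to \cite{OPS} and \cite{jorgensen} in the paper, and it is missing from your argument. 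More seriously, your mechanism for (ii) does not work: the parenthetical ``acyclic complexes of projectives are totally acyclic'' is false for a general noetherian ring with a dualising complex --- by the main result of \cite{iyengar_krause} it holds exactly when $R$ is Gorenstein --- and in any case one cannot ``split off and truncate the acyclic tail'': a complex of projectives with bounded cohomology sits in a triangle, not a direct sum, with its acyclic part, and discarding a hypothetical acyclic summand changes the object of $\mathsf{K}(\Proj R)$. What is actually needed is to show that the cokernels of $S(y)$ are eventually projective so that a soft truncation is a homotopy equivalence (the argument of \cite{OPS}, in the spirit of the truncation step in the proof of Proposition \ref{subcategories for the derived category}).

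For (iii)--(v) you take a genuinely different route from the paper, but it has two unjustified pivots. First, to pass from vanishing of $H^{n}(\mathsf{Hom}_R(x,R))$ on a half-line to vanishing of $H^{n}(T(x))$ on the opposite half-line you need the \emph{reflection} direction of your interchange claim for $\mathbb{R}\mathsf{Hom}_R(-,D_R)$, and this is not available for complexes with arbitrary, possibly non-finitely generated and unbounded, cohomology: biduality along $D_R$ holds only on complexes with finitely generated cohomology, and nothing in your sketch excludes cancellation for a general $T(x)$. (Over a finite-dimensional algebra the $P=R$ reduction can be rescued by termwise $k$-duality, but the lemma concerns arbitrary noetherian rings with a dualising complex.) Second, the converse implication genuinely requires all projectives, and your d\'evissage (``resolve $P$ inside a free module, finitely many syzygies'') does not address the real difficulty: a projective module has no syzygies to speak of; the issue is that $\mathsf{Hom}_R(x,-)$ does not commute with infinite coproducts, so control of $H^{n}(\mathsf{Hom}_R(x,R))$ gives no control of $H^{n}(\mathsf{Hom}_R(x,R^{(I)}))$, and the bound in the statement is allowed to depend on $P$. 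The paper sidesteps both problems by staying in the homotopy category: it identifies the complexes in $\mathsf{K}(\Inj R)$ with eventually vanishing cohomology with ${^{\perp_{\ll}}(\mathsf{K}^{\mathsf{b}}(\Inj R))}$ (as in the proof of Proposition \ref{examples2}), transports this along the equivalence of (ii) to ${^{\perp_{\ll}}(\Proj R)}$, and then verifies by an explicit null-homotopy computation that lying in ${^{\perp_{\ll}}P}$ for every projective $P$ is literally the left-hand condition of (iii). Unless you can substantiate the reflection step and replace the d\'evissage by an argument handling infinite-rank projectives, your proof of (iii)--(v) is incomplete.
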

\begin{proof}
    (i) Observe that for a projective $R$-module $P$, the module $P\otimes_R D_R$ is injective. From the latter follows that the functor $T$ maps $\mathsf{K}^{+}(\Proj R)$ to $\mathsf{K}^+(\Inj R)$. Let now $x$ be an object in $\mathsf{K}^+(\Inj R)$. Then the complex $\mathsf{Hom}_R(D_R,x)$ consists of flat modules (see for instance \cite[Lemma 2.13]{OPS}). Since flat modules have finite projective dimension (see \cite{jorgensen}), it follows that $\rho(\mathsf{Hom}_R(D_R,x))$ is a bounded below (see \cite[Proposition 2.5]{OPS}). 
    
    (ii) This is similar to (i), see also \cite[Lemma 2.14]{OPS}. 
    
    (iii) Let $\mathcal{V}$ denote the subcategory of $\mathsf{K}(\Inj R)$ that consists with complexes that are acyclic in sufficiently large degrees. Then we have 
    \[
    \mathcal{V}={^{\perp_{\ll}}(\mathsf{K}^{\mathsf{b}}(\Inj R))}\simeq {^{\perp_{\ll}}(\mathsf{K}^{\mathsf{b}}(\Proj R))}={^{\perp_{\ll}}(\Proj R)},
    \]
    where the first equality holds by the proof of Proposition \ref{examples2} (note that we do not need $R$ to be an Artin algebra for this equality) and the equivalence that follows holds by (ii). We will now show that ${^{\perp_{\ll}}(\Proj R)}$ coincides with the wanted subcategory of $\mathsf{K}(\Proj R)$, for which it is enough to show that, for any $P$ in $\Proj R$, and for a complex $x$ in $\mathsf{K}(\Proj R)$, the complex $\mathsf{Hom}_R(x,P)$ is acyclic in large enough degrees if and only if $x$ lies in ${^{\perp_{\ll}}P}$. Write 
$$x=\cdots\rightarrow{P^{i-1}}\xrightarrow{d^{i-1}}P^i\xrightarrow{d^i}P^{i+1}\rightarrow \cdots.$$ 
Note that, for all $i$ in $\mathbb{Z}$, we have $\mathsf{Ker}(\mathsf{Hom}_R(d^{i-1},P))\cong \mathsf{Hom}_{\mathsf{C(R)}}(x,P[-i]).$ Now, we have $H^i(\mathsf{Hom}_R(x,P))=0$ if and only if $\mathsf{Ker}(\mathsf{Hom}_R(d^{i-1},P))=\mathsf{Im}(\mathsf{Hom}_R(d^i,P))$, i.e. if and only if every morphism in $\mathsf{Hom}_{\mathsf{C(R)}}(x,P[-i])$ factors through $d^i$. This is equivalent to say that there is $h^{i+1}\colon P^{i+1}\longrightarrow P$ such that $f=h^{i+1}d^i$. In particular, the complex $\mathsf{Hom}_R(x,P)$ is acyclic in large degrees if and only if every map of complexes $x\longrightarrow P[-i]$ is null-homotopic, for $i\gg0$.


    (iv) and (v) are similar to (iii).
\end{proof}

Taking into account the proposition above, we can translate our computations of distinguished subcategories of $\mathsf{K}(\Inj\Lambda)$ to the corresponding subcategories of $\mathsf{K}(\Proj\Lambda)$.

\begin{cor} \label{subcategories for K(Proj)}
    The following hold for $\mathcal{T}=\mathsf{K}(\Proj \Lambda)$ where $\Lambda$ is an Artin algebra.   
    \begin{itemize}
        \item[\textnormal{(i)}] $\mathcal{T}^+=\mathsf{K}^+(\Proj \Lambda)$. 
        \item[\textnormal{(ii)}] $\mathcal{T}^{\mathsf{b}}=\mathsf{K}^{\mathsf{b}}(\Proj \Lambda)$. 
        \item[\textnormal{(iii)}] $\mathcal{T}^-_p=\{x: H^n(\mathsf{Hom}_R(x,P))=0 \text{ for }n\gg0 \text{ and every }P\in\Proj \Lambda\}$. 
        \item[\textnormal{(iv)}] $\mathcal{T}^+_p=\{x: H^n(\mathsf{Hom}_R(x,P))=0 \text{ for }n\ll0 \text{ and every }P\in\Proj \Lambda\}$.
        \item[\textnormal{(v)}] $\mathcal{T}^{\mathsf{b}}_p=\{x: H^n(\mathsf{Hom}_R(x,P))=0 \text{ for }|n|\gg0 \text{ and every }P\in\Proj \Lambda\}$. 
    \end{itemize}
    In particular, when $\Lambda$ is Iwanaga-Gorenstein, the following hold. 
    \begin{itemize}
       \item[\textnormal{(vi)}] $\mathcal{T}^-_i=\{x: H^n(\mathsf{Hom}_R(x,P))=0 \text{ for }n\gg0 \text{ and every }P\in\Proj \Lambda\}$. 
        \item[\textnormal{(vii)}] $\mathcal{T}^+_i=\{x: H^n(\mathsf{Hom}_R(x,P))=0 \text{ for }n\ll0 \text{ and every }P\in\Proj \Lambda\}$. 
        \item[\textnormal{(viii)}] $\mathcal{T}^{\mathsf{b}}_i=\{x: H^n(\mathsf{Hom}_R(x,P))=0 \text{ for }|n|\gg0 \text{ and every }P\in\Proj \Lambda\}$. 
    \end{itemize}
\end{cor}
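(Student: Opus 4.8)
The plan is to transport the computations of Proposition~\ref{examples2} and Lemma~\ref{subcategories for IG} through the triangle equivalence
\[T=-\otimes_{\Lambda}D_\Lambda\colon\mathsf{K}(\Proj\Lambda)\longrightarrow\mathsf{K}(\Inj\Lambda)\]
of \eqref{krause iyengar}, which is available because an Artin algebra $\Lambda$ is noetherian and $D_\Lambda=\Lambda^*$ is a dualising complex for it. The structural observation that makes this work is that every subcategory in Definition~\ref{main_definition} is obtained from $\mathcal{T}^{\mathsf{c}}$ by iterated far-away orthogonals, and both of these constructions are carried across by a triangle equivalence: since $T$ preserves coproducts it restricts to an equivalence between the subcategories of compact objects, and then, applying Lemma~\ref{adjoints and far-away orthogonality}(iii)--(iv) to $T$ and to a quasi-inverse of $T$, one gets $T(\mathcal{X}^{\?})=T(\mathcal{X})^{\?}$ and $T({}^{\?}\mathcal{X})={}^{\?}T(\mathcal{X})$ for any class $\mathcal{X}$ and any $\?$ in $\{\perp_{\ll},\perp_{\gg},\padova\}$. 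Hence each of $\mathcal{T}^{\?}$, $\mathcal{T}^{\?}_p$, $\mathcal{T}^{\?}_i$ (with $\?$ in $\{+,-,\mathsf{b}\}$) in $\mathsf{K}(\Proj\Lambda)$ is exactly the $T$-preimage of the corresponding subcategory in $\mathsf{K}(\Inj\Lambda)$.

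With this reduction in place the argument is pure bookkeeping. Parts (i) and (ii): Proposition~\ref{examples2} gives $\mathcal{T}^+=\mathsf{K}^+(\Inj\Lambda)$ and $\mathcal{T}^{\mathsf{b}}=\mathsf{K}^{\mathsf{b}}(\Inj\Lambda)$ in $\mathsf{K}(\Inj\Lambda)$, and Lemma~\ref{restriction of T}(i)--(ii) identifies their preimages under $T$ as $\mathsf{K}^+(\Proj\Lambda)$ and $\mathsf{K}^{\mathsf{b}}(\Proj\Lambda)$. Parts (iii)--(v): Proposition~\ref{examples2} describes $\mathcal{T}^-_p$, $\mathcal{T}^+_p$, $\mathcal{T}^{\mathsf{b}}_p$ in $\mathsf{K}(\Inj\Lambda)$ as the complexes whose cohomology vanishes for $n\gg0$, for $n\ll0$, and for $|n|\gg0$ respectively; pulling these back along $T$ and applying Lemma~\ref{restriction of T}(iii)--(v), the vanishing of $H^n(T(x))$ in each range is matched with the vanishing of $H^n(\mathsf{Hom}_\Lambda(x,P))$ for all $P\in\Proj\Lambda$ in the same range, which is precisely the asserted description. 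Parts (vi)--(viii): assuming $\Lambda$ Iwanaga--Gorenstein, Lemma~\ref{subcategories for IG} gives $\mathcal{T}^{\?}_i(\mathsf{K}(\Inj\Lambda))=\mathsf{K}_{\mathsf{ac}}(\Inj\Lambda)\ast\mathsf{Q}_\rho(\mathsf{D}^{\?}(\Lambda))$ for $\?$ in $\{-,+,\mathsf{b}\}$, which is the same subcategory as $\mathcal{T}^{\?}_p(\mathsf{K}(\Inj\Lambda))$ according to the computations in Proposition~\ref{examples2}; taking $T$-preimages and invoking (iii)--(v) then yields (vi)--(viii).

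I do not expect a genuine obstacle here. The only point that deserves a line of justification is that the Artin-algebra hypothesis really places us in the setting of Lemma~\ref{restriction of T} --- that is, that $\Lambda^*$ is a dualising complex in the sense of \cite{iyengar_krause}, so that $T$ is an honest triangle equivalence and parts (iii)--(v) of that lemma apply --- after which the intrinsic nature of far-away orthogonality (Lemma~\ref{adjoints and far-away orthogonality}) does everything else automatically.
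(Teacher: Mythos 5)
Your argument is correct and is essentially the paper's own proof: transport the computations of Proposition~\ref{examples2} and Lemma~\ref{subcategories for IG} across the Nakayama equivalence $T=-\otimes_\Lambda\Lambda^*$ and translate the cohomological conditions via Lemma~\ref{restriction of T}. The extra detail you supply (that far-away orthogonals, and hence all the distinguished subcategories, are carried across by any triangle equivalence) is exactly the content of the paper's one-line remark that the equivalence restricts to all the distinguished subcategories.
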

\begin{proof}
    The equivalence $\mathsf{K}(\Proj \Lambda)\simeq \mathsf{K}(\Inj \Lambda)$ restricts to an equivalence between all the distinguished subcategories of $\mathsf{K}(\Proj \Lambda)$ and $\mathsf{K}(\Inj \Lambda)$ respectively. Thus the result follows from Proposition \ref{examples2} and Lemma \ref{subcategories for IG}, using Lemma \ref{restriction of T}.
\end{proof}

\subsection{Example: homotopy categories beyond Artin algebras}
The computations of the previous section for Artin algebras were possible due to condition (\ref{star condition}) of Lemma \ref{far away of smod}, which itself depends on Corollary \ref{broad Db}. As seen in Example \ref{counterexample}, there are noetherian counterexamples to these statements. In this subsection we will comment on other settings where these two results hold and where, therefore, we might be able to compute the distinguished subcategories of the corresponding homotopy category of injectives/projectives.

\begin{exmp}\label{finite global dimension}
We start with a somewhat dull example. Let $R$ be a \textbf{right noetherian ring with finite right global dimension}. Then we have the equalities
 \[
    \mathsf{D}^{\mathsf{b}}(\Mod R)=\mathsf{K}^{\mathsf{b}}(\Proj R)=\mathsf{broad}_{\Sigma}(\mathsf{K}^{\mathsf{b}}(\proj R))=\mathsf{broad}_{\Sigma}(\mathsf{D}^{\mathsf{b}}(\smod R))
    \]
in $\mathsf{K}(\Inj R)\simeq \mathsf{D}(R)$, and both condition (\ref{star condition}) and Corollary \ref{broad Db} hold for such rings. Of course that, since $\mathsf{K}(\Inj R)\simeq \mathsf{D}(R)$, the distinguished subcategories for $\mathsf{K}(\Inj R)$ are already computed in Proposition \ref{subcategories for the derived category}.
\end{exmp}

\begin{exmp} \label{gorenstein orders}
    
We can verify that condition (\ref{star condition}) holds for any \textbf{Gorenstein $R$-order of finite CM type}, where $R$ is a commutative noetherian complete CM local ring. For such an algebra, say $A$, denote by $\Gproj A$ the category of finitely generated Gorenstein-projective $A$-modules, by $\GProj A$ the category of all Gorenstein projective $A$-modules and by $\mathcal{IG}_{A}$ the class of indecomposable objects in $\Gproj A$. It follows that 
    \[
    \mathsf{D}^{\mathsf{b}}(\Mod A)=\mathsf{thick}(\GProj A)=\mathsf{thick}(\Add (\Gproj A))=\mathsf{thick}(\mathsf{Add}(\mathcal{IG}_A))=\mathsf{thick}(\cup_{x\in \mathcal{IG}_A}\Add x)=\mathsf{broad}_{\Sigma}(\mathcal{IG}_A)
    \]
    where the first equality follows by taking minimal Gorenstein projective resolutions of $A$-modules, the second follows by the fact that every Gorenstein projective module is a direct sum of finitely generated Gorenstein projectives (see \cite[Theorem 1.2]{nakamura}) and the fourth follows from the fact that there are finitely many indecomposable objects among $\Gproj A$, up to isomorphism. In particular, since $\mathsf{thick}(\mathcal{IG}_A)=\mathsf{thick}(\Gproj A)=\mathsf{D}^{\mathsf{b}}(\smod A)$, the analogous of Corollary \ref{broad Db} follows for $A$, i.e. 
    \[
    \mathsf{D}^{\mathsf{b}}(\Mod A)=\mathsf{broad}_{\Sigma}(\mathsf{D}^{\mathsf{b}}(\smod A))
    \]
    as subcategories of $\mathsf{D}(A)$. By arguing as in the proof of Lemma \ref{far away of smod} we see that condition (\ref{star condition}) holds. It follows that the computation of distinguished subcategories for $\mathsf{K}(\Inj A)$ follows exactly the table of Proposition \ref{examples2}.
    \end{exmp}

We change strategy to provide another class of examples, building on results of Aoki \cite{aoki} and Neeman \cite{neeman5, neeman4}. Let us first recall some notation, for which we fix a triangulated category $\mathcal{T}$ that has coproducts. Given a collection of objects $\mathcal{X}$ in $\mathcal{T}$, we denote by $\mathsf{Coprod}(\mathcal{X})$ the subcategory $\mathcal{T}$ that consists of coproducts of objects of $\mathcal{X}$. Then, inductively, we define 
\[
\mathsf{Coprod}_1(\mathcal{X})\coloneqq \mathsf{Coprod}(\mathcal{X}) \text{   and   } \mathsf{Coprod}_n(\mathcal{X})\coloneqq \mathsf{Coprod}_{1}(\mathcal{X})\ast\mathsf{Coprod}_{n-1}(\mathcal{X})
\]
and set $\overline{\langle \mathcal{X}\rangle}_n\coloneqq \mathsf{smd}(\mathsf{Coprod}_n(\mathcal{X}))$, where $\mathsf{smd}$ stands for the closure under summands, see \cite{neeman5} for details. When $\mathcal{X}$ is given by $\{x[i],\  i\in I\}$ for some object $x$ and $I\subseteq \mathbb{Z}$, then we denote $\overline{\langle \mathcal{X}\rangle}_n$ by $\overline{\langle x[I]\rangle}_n$. If $x$ is an object for which there is an integer $n\geq 1$ such that $\overline{\langle x[\mathbb{Z}]\rangle}_n=\mathcal{T}$, then we say that $x$ is a \textbf{strong generator for $\mathcal{T}$} and $\mathcal{T}$ is said to be \textbf{strongly generated by $x$}.


\begin{prop} \label{bounded objects when T admits a strong gen}
    Let $\mathcal{T}$ be a triangulated category with coproducts that has a compact silting object. If $\mathcal{T}$ admits a strong generator $x$ which lies in $\mathcal{T}^{\mathsf{b}}$, then $\mathcal{T}^{\mathsf{b}}=\mathsf{broad}_{\Sigma}(x)$. 
\end{prop}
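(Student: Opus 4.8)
The plan is to prove the two inclusions $\mathcal{T}^{\mathsf{b}}\subseteq\mathsf{broad}_{\Sigma}(x)$ and $\mathsf{broad}_{\Sigma}(x)\subseteq\mathcal{T}^{\mathsf{b}}$ separately. The containment $\mathsf{broad}_{\Sigma}(x)\subseteq\mathcal{T}^{\mathsf{b}}$ is the easy one: since $\mathcal{T}$ has a compact silting object $M$, we have $\mathcal{T}^{\mathsf{c}}=\mathsf{thick}(M)$, and then $\mathcal{T}^{\mathsf{b}}=M^{\padova}$ is $\prod$-broad but more importantly, by Proposition~\ref{restrictions of t-structure and co-t-structure}, $\mathcal{T}^{\mathsf{b}}=\mathsf{thick}(\mathcal{H}_M)$ and in particular it is a thick (hence $\Sigma$-broad along the directions it is actually closed under — careful here) subcategory. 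In fact the cleaner route is: $\mathcal{T}^{\mathsf{b}}$ is a thick subcategory of $\mathcal{T}$ by Lemma~\ref{triangulated_subcategories}(i), so $\mathsf{thick}(x)\subseteq\mathcal{T}^{\mathsf{b}}$. But $\mathcal{T}^{\mathsf{b}}$ need not be $\Sigma$-broad in general (it is only guaranteed $\prod$-broad), so one must use the silting hypothesis: by Proposition~\ref{restrictions of t-structure and co-t-structure}, $\mathcal{T}^{\mathsf{b}}=\mathsf{thick}(\mathcal{H}_M)$, and since the bounded t-structure on $\mathcal{T}^{\mathsf{b}}$ restricted from the silting t-structure has heart $\mathcal{H}_M$ which is closed under coproducts that land in $\mathcal{T}^{\mathsf{b}}$, one deduces $\mathcal{T}^{\mathsf{b}}$ is closed under coproducts of its own objects, i.e.~$\Sigma$-broad. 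Granting that, $\mathsf{broad}_{\Sigma}(x)\subseteq\mathsf{broad}_{\Sigma}(\mathcal{T}^{\mathsf{b}})=\mathcal{T}^{\mathsf{b}}$.

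For the reverse inclusion $\mathcal{T}^{\mathsf{b}}\subseteq\mathsf{broad}_{\Sigma}(x)$, the idea is to use the strong generation to control objects of $\mathcal{T}^{\mathsf{b}}$ by finitely many ``coproduct-extension'' steps out of shifts of $x$. Since $x$ strongly generates $\mathcal{T}$, there is $n\geq 1$ with $\overline{\langle x[\mathbb{Z}]\rangle}_n=\mathcal{T}$; in particular every object $y$ of $\mathcal{T}^{\mathsf{b}}$ lies in $\mathsf{smd}(\mathsf{Coprod}_n(\{x[i]\colon i\in\mathbb{Z}\}))$, i.e.~$y$ is a summand of an object built by $n$ successive extensions of coproducts of shifts of $x$. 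Now here the boundedness of $y$ — and of $x$ — must be leveraged: using the bounded t-structure on $\mathcal{T}^{\mathsf{b}}$ induced by $M$ (Proposition~\ref{restrictions of t-structure and co-t-structure}), $x$ lies in $M^{\perp_{>a}}\cap M^{\perp_{\leq b}}$ for some integers $a\leq b$, and similarly $y$ lies in some bounded ``slab''. The strategy is to show that the relevant extensions and coproducts can be taken inside $\mathcal{T}^{\mathsf{b}}$ and, crucially, only finitely many shifts $x[i]$ are needed to reach $y$: truncating with respect to the bounded t-structure, one argues that any map from $y$ to (or from) a coproduct of far-away shifts $x[i]$ vanishes, so that the coproducts appearing in the filtration of $y$ can be replaced by coproducts of shifts $x[i]$ with $i$ ranging over a fixed finite interval. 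A coproduct of copies of $x[i]$ for $i$ in a finite interval lies in $\mathsf{thick}(\mathsf{Add}(x))=\mathsf{broad}_{\Sigma}(x)$, and since $\mathsf{broad}_{\Sigma}(x)$ is closed under extensions and summands, the $n$-step filtration places $y$ in $\mathsf{broad}_{\Sigma}(x)$.

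The main obstacle, and the step that needs the most care, is exactly this ``finiteness of shifts'' argument: a priori the object witnessing $y\in\overline{\langle x[\mathbb{Z}]\rangle}_n$ involves coproducts indexed over all of $\mathbb{Z}$, and one must cut this down to a finite range using boundedness. The right tool is to note that $y\in\mathcal{T}^{\mathsf{b}}\subseteq M^{\perp_{>N}}\cap M^{\perp_{\leq -N}}$ for some $N$, and that $x$ is likewise cohomologically bounded with respect to the silting t-structure; then, working degree by degree through the $n$ extensions (each introducing one coproduct layer), the homological amplitude grows by at most a bounded amount per step, so after $n$ steps the object built from the far-away tails $x[i]$, $|i|\gg 0$, has no maps to or from $y$ in the relevant degrees, and can be discarded. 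Concretely one writes $y$ as a summand of $z$ with a triangle $w\to z\to c\to w[1]$, $c\in\mathsf{Coprod}_1(x[\mathbb{Z}])$, $w\in\mathsf{Coprod}_{n-1}(x[\mathbb{Z}])$, and induces on $n$; the base case $n=1$ says $y$ is a summand of a coproduct of shifts of $x$, and boundedness of both $y$ and $x$ forces only finitely many shifts to contribute to the summand $y$, hence $y\in\mathsf{thick}(\mathsf{Add}(x))=\mathsf{broad}_{\Sigma}(x)$; the inductive step combines this with the fact that $w$ can be arranged (after truncation) to lie in $\mathcal{T}^{\mathsf{b}}$ as well. Once both inclusions are in hand, the equality $\mathcal{T}^{\mathsf{b}}=\mathsf{broad}_{\Sigma}(x)$ follows.
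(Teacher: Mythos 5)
Your overall strategy coincides with the paper's: get the easy inclusion, then use the bounded t-structure that the compact silting object $M$ induces on $\mathcal{T}^{\mathsf{b}}$ (Proposition \ref{restrictions of t-structure and co-t-structure}) to cut the shifts of $x$ appearing in $\overline{\langle x[\mathbb{Z}]\rangle}_n$ down to a finite interval. But where the paper simply invokes \cite[Lemma 9.4]{neeman5} for exactly this finiteness-of-shifts statement, you try to prove it on the spot, and that is where there is a genuine gap. Your base case $n=1$ can be repaired, but note that the vanishing you use is asymmetric: $\mathsf{Hom}_{\mathcal{T}}(y,x[m])$ need \emph{not} vanish for $m\gg 0$ (think of higher Ext groups between bounded complexes); what is true is that the section $y\to\coprod_i x[m_i]$ has zero component into the far-negative tail while the retraction $\coprod_i x[m_i]\to y$ has zero component out of the far-positive tail, so that $1_y$ factors through the middle, and even this requires the coaisle $M^{\perp_{\leq 0}}$ to be closed under coproducts (which holds because $M$ is compact) in order to handle the infinite coproducts without assuming $y$ compact. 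The inductive step, however, is only asserted: you say that "$w$ can be arranged (after truncation) to lie in $\mathcal{T}^{\mathsf{b}}$", but truncation functors are not triangulated, the intermediate objects $w\in\mathsf{Coprod}_{n-1}(x[\mathbb{Z}])$ and $z$ are genuinely unbounded, and $y$ is only a summand of $z$, so there is no evident triangle with bounded entries to which your induction applies. Controlling the truncations of an $n$-fold extension of unbounded coproducts of shifts of $x$ and showing they stay in $\overline{\langle x[[-d,d]]\rangle}_n$ is precisely the content of Neeman's lemma; without citing it or reproducing its (nontrivial) argument, your proof of $\mathcal{T}^{\mathsf{b}}\subseteq\mathsf{broad}_{\Sigma}(x)$ is incomplete.

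A smaller point on the easy inclusion: your detour through the heart is unnecessary and rests on a wrong claim. $\mathcal{T}^{\mathsf{b}}=(\mathcal{T}^{\mathsf{c}})^{\padova}$ is automatically $\Sigma$-broad (indeed broad), because the orthogonality is tested against compact objects, whose Hom-functors commute with coproducts; this is exactly the remark following Definition \ref{main_definition}. So $x\in\mathcal{T}^{\mathsf{b}}$ immediately gives $\mathsf{broad}_{\Sigma}(x)\subseteq\mathcal{T}^{\mathsf{b}}$, with no appeal to the silting object or to coproduct-closure of $\mathcal{H}_M$.
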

\begin{proof}
 Let $n\geq 1$ be such that $\overline{\langle x[\mathbb{Z}]\rangle}_n=\mathcal{T}$. Since $x$ lies in $\mathcal{T}^{\mathsf{b}}$, it is clear that $\mathsf{broad}_{\Sigma}(x)$ is contained in $\mathcal{T}^{\mathsf{b}}$. Denote by $M$ the compact silting generator of $\mathcal{T}$ and consider an object of $\mathcal{T}^{\mathsf{b}}$, say $t$. We know from Proposition \ref{restrictions of t-structure and co-t-structure} that the t-structure associated to $M$ restricts to a bounded t-structure in $\mathcal{T}^{\mathsf{b}}$. Thus, there are $a,b$ in $\mathbb{Z}$ such that $t$ belongs to $M^{\perp_{> a}}\cap M^{\perp_{\leq b}}$. It follows by the assumption and \cite[Lemma 9.4]{neeman5} that $t$ lies in $\overline{\langle x[[-d,d]]\rangle}_n$ for some $d\geq 1$ (that depends on $a,b$). By noticing that $\overline{\langle x[[-d,d]]\rangle}_n\subseteq \mathsf{broad}_{\Sigma}(x)$, the result follows. 
\end{proof}

The proposition above allows us to derived the final example of this subsection.
\begin{exmp}
Let $R$ be a \textbf{commutative noetherian quasiexcellent ring of finite Krull dimension}. Rather than defining this class of rings, it suffices to say that most commutative noetherian rings appearing \textit{naturally} in algebra and geometry are of this kind. They include, for example, any finitely generated commutative algebra over a Dedekind domain. For such rings, it follows from \cite[Main Theorem]{aoki} and its proof that there exists an object $x$ in $\mathsf{D}^{\mathsf{b}}(\smod R)$ for which $\mathsf{D}(R)=\overline{\langle x[\mathbb{Z}]\rangle}_n$ for some $n\geq 1$. By \cite[Lemma 2.7]{neeman4} (see also \cite[Proposition 9.8]{neeman5}), $x$ is a strong generator of $\mathsf{D}^{\mathsf{b}}(\smod R)$ and in particular, $\mathsf{D}^{\mathsf{b}}(\smod R)=\mathsf{thick}(x)$. Finally, Proposition \ref{bounded objects when T admits a strong gen} allows us to then conclude that $\mathsf{D}^{\mathsf{b}}(\Mod R)=\mathsf{broad}_{\Sigma}(x)$ and therefore the analogue of Corollary \ref{broad Db}, i.e.
    \[
    \mathsf{D}^{\mathsf{b}}(\Mod R)=\mathsf{broad}_{\Sigma}(\mathsf{D}^{\mathsf{b}}(\smod R)).
    \]
We then have that Lemma \ref{far away of smod} and, consequently, the computation of distinguished subcategories of Proposition \ref{examples2} hold for $\mathsf{K}(\Inj R)$. We also refer to \cite[Theorem 7.26 and Theorem 8.12]{manali-rahul} where some of the subcategories of $\mathsf{K}(\Inj R)$ are computed using different techniques. 

    \end{exmp}



\section{Homological properties for triangulated categories}
\label{regulartriangcategories} 

In this section we introduce various homological properties for a compactly generated triangulated category $\mathcal{T}$ which depend solely on the distinguished subcategories we defined in the previous section. This means in particular that all these notions are intrinsic and are, therefore, preserved under triangle equivalences. 

Recall that a ring has finite (right) global dimension if every (right) $R$-module has finite projective dimension or, equivalently, if every (right) $R$-module has finite injective dimension. This is a property that can be read in the derived category: indeed, as mentioned in the introduction, $R$ has finite global dimension if and only if $\mathsf{D}^{\mathsf{b}}(\Mod{R})=\mathsf{K}^{\mathsf{b}}(\Proj{R})$ or, equivalently, if and only if $\mathsf{D}^{\mathsf{b}}(\Mod{R})=\mathsf{K}^{\mathsf{b}}(\Inj{R})$, as subcategories of $\mathsf{D}(R)$. The equivalence of these two statements is completely general, as shown in the following lemma.

\begin{lem} \label{regularity via injectives}
For a compactly generated triangulated category $\mathcal{T}$, we have $\Tb=\Tb_p$ if and only if $\mathcal{T}^{\mathsf{b}}=\mathcal{T}^{\mathsf{b}}_i$. 
\end{lem}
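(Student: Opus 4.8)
The plan is to read off the result from the far-away triple structure established earlier. Since $\mathcal{T}$ is compactly generated it is in particular cocomplete, and by Brown representability it is also complete; hence Lemma~\ref{common facts of the subcategories}(ii) applies and gives that $(\Tb_p,\Tb,\Tb_i)$ is a far-away triple. Unwinding the definition of a far-away triple, this says precisely that both $(\Tb_p,\Tb)$ and $(\Tb,\Tb_i)$ are far-away pairs, so that the four identities
\[
{}^{\padova}(\Tb)=\Tb_p,\qquad (\Tb_p)^{\padova}=\Tb,\qquad (\Tb)^{\padova}=\Tb_i,\qquad {}^{\padova}(\Tb_i)=\Tb
\]
all hold.

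For the forward implication I would assume $\Tb=\Tb_p$ and apply the operation $(-)^{\padova}$ to both sides: the left-hand side becomes $(\Tb)^{\padova}=\Tb_i$ and the right-hand side becomes $(\Tb_p)^{\padova}=\Tb$, whence $\Tb_i=\Tb$. Symmetrically, for the converse I would assume $\Tb=\Tb_i$ and apply ${}^{\padova}(-)$ to both sides, using ${}^{\padova}(\Tb)=\Tb_p$ on the left and ${}^{\padova}(\Tb_i)=\Tb$ on the right, to conclude $\Tb_p=\Tb$.

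There is essentially no serious obstacle here once the far-away triple is available: the only points demanding a moment's care are that Lemma~\ref{common facts of the subcategories} is invoked with the correct hypotheses (completeness and cocompleteness of $\mathcal{T}$, both consequences of compact generation), and that the two orthogonality operations $(-)^{\padova}$ and ${}^{\padova}(-)$ are applied on the correct side so that the pairing identities recorded above can be used. A reader preferring a self-contained argument could instead start from $\Tb=(\Tc)^{\padova}={}^{\padova}((\Tc)^{\ast})$ (Lemma~\ref{common facts of the subcategories}) and chase the definitions of the orthogonal subcategories directly, but the far-away triple makes the proof immediate.
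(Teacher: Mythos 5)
Your proof is correct and follows the paper's argument exactly: the paper also deduces the equivalence immediately from the fact that $(\Tb_p,\Tb,\Tb_i)$ is a far-away triple by Lemma~\ref{common facts of the subcategories}. Your unwinding of the four pairing identities and the application of $(-)^{\padova}$ and ${}^{\padova}(-)$ is just a more explicit writing-out of that same one-line proof.
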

\begin{proof}
This follows from Lemma \ref{common facts of the subcategories}, since $(\Tb_p,\Tb,\Tb_i)$ is a far-away triple.
\end{proof}

Recall that by a \textbf{Gorenstein} ring we mean a ring $R$ for which $\mathsf{K}^{\mathsf{b}}(\Proj{R})=\mathsf{K}^{\mathsf{b}}(\Inj{R})$ as subcategories of $\mathsf{D}(R)$. When $R$ is two-sided noetherian, we revert to the earlier terminology of calling $R$ Iwanaga-Gorenstein (see Section \ref{Section 3}). 
Recall also that according to \cite{rickard}, \textbf{injectives generate for a ring $R$} if $\mathsf{Loc}(\Inj R)=\mathsf{D}(R)$. In case $R$ is a finite-dimensional algebra over a field, if injectives generate for $R$, then $R$ has \textbf{finite finitistic dimension} ($\Findim R<\infty$), i.e.~there is a bound to the projective dimensions of $R$-modules of finite projective dimension (see \cite[Theorem 4.3]{rickard}). These are the four homological concepts we want to discuss: finiteness of the global dimension, gorensteinness, generation by injectives and finiteness of the finitistic dimension. 

\begin{defn} \label{homological}
    We say that a compactly generated triangulated category $\mathcal{T}$ is  
    \begin{enumerate} 
        \item[(i)] of \textbf{finite global dimension} ($\gd \mathcal{T}<\infty$ for short) if $\mathcal{T}^{\mathsf{b}}_p=\mathcal{T}^{\mathsf{b}}$ or, equivalently, if $\Tb=\Tb_i$; 
        \item[(ii)] \textbf{Gorenstein} if $\mathcal{T}^{\mathsf{b}}_p=\mathcal{T}^{\mathsf{b}}_i$;  
        \item[(iii)] \textbf{generated by injectives} if $\mathcal{T}=\mathsf{Loc}(\mathcal{T}^{\mathsf{b}}_i)$; 
        \item[(iv)] of \textbf{finite finitistic dimension} ($\Findim\mathcal{T}<\infty$ for short) if $(\mathcal{T}^{\mathsf{b}}_i)^{\perp}\cap \mathcal{T}^+=\{0\}$.  
    \end{enumerate}
\end{defn}

While the first three definitions have rather clear motivations, we base ourselves in \cite[Theorem 4.4]{rickard} and \cite[Theorem B.1]{krause} to motivate the fourth definition (see Example \ref{bad example} below for a discussion of its adequateness). Let us show that these definitions are good generalisations of the classical concepts in homological algebra.
%
%

\begin{thm}\label{properties}
Let $R$ be a ring. Then we have 
\begin{itemize}
\item $\gd \mathsf{D}(R)<\infty$ if and only if $\gd R<\infty$;
\item $\mathsf{D}(R)$ is Gorenstein if and only if $R$ is Gorenstein;
\item $\mathsf{D}(R)$ is generated by injectives if and only if injectives generate for $R$.
\end{itemize}
If $R=\Lambda$ is an Artin algebra, then we also have
\begin{itemize}
\item $\Findim \mathsf{D(\Lambda)}<\infty$ if and only if $\Findim \Lambda<\infty$;
\item $\gd \mathsf{K}(\Inj \Lambda)<\infty$ if and only if $\gd \Lambda<\infty$;
\item $\gd \mathsf{K}_{\mathsf{ac}}(\Inj\Lambda)<\infty$ if and only if $\mathsf{K}_{\mathsf{ac}}(\Inj\Lambda)=0$;
\item $\mathsf{K}_{\mathsf{ac}}(\Inj\Lambda)$ is always Gorenstein, generated by injectives and $\Findim \mathsf{K}_{\mathsf{ac}}(\Inj\Lambda)<\infty$.

\end{itemize}
\end{thm}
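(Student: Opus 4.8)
The plan is to deduce every equivalence in the statement directly from the explicit identifications of distinguished subcategories already established — Proposition~\ref{examples} for $\mathsf{D}(R)$, Proposition~\ref{examples2} and Lemma~\ref{subcategories for IG} for $\mathsf{K}(\Inj\Lambda)$ and $\mathsf{K}_{\mathsf{ac}}(\Inj\Lambda)$ — together with Definition~\ref{homological}. With those tables in hand, each property becomes an equality (or a $\mathsf{Loc}$/orthogonality condition) between subcategories that are already named, so the only genuinely non-formal input needed is for the finitistic dimension statement over $\mathsf{D}(\Lambda)$, which I will isolate at the end.

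First I would treat the three statements about $\mathsf{D}(R)$. By Proposition~\ref{examples}, in $\mathcal{T}=\mathsf{D}(R)$ we have $\mathcal{T}^{\mathsf{b}}=\mathsf{D}^{\mathsf{b}}(R)$, $\mathcal{T}^{\mathsf{b}}_p=\mathsf{K}^{\mathsf{b}}(\Proj R)$ and $\mathcal{T}^{\mathsf{b}}_i=\mathsf{K}^{\mathsf{b}}(\Inj R)$. Hence $\mathsf{D}(R)$ being Gorenstein is literally the equality $\mathsf{K}^{\mathsf{b}}(\Proj R)=\mathsf{K}^{\mathsf{b}}(\Inj R)$, i.e.\ the definition of a Gorenstein ring recalled just before Definition~\ref{homological}. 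For finite global dimension, $\mathcal{T}^{\mathsf{b}}=\mathcal{T}^{\mathsf{b}}_p$ reads $\mathsf{D}^{\mathsf{b}}(R)=\mathsf{K}^{\mathsf{b}}(\Proj R)$; if $\gd R<\infty$ every module, and hence (by truncation and resolution) every object of $\mathsf{D}^{\mathsf{b}}(R)$, has a bounded projective resolution, while the reverse inclusion is automatic; conversely, if $\mathsf{D}^{\mathsf{b}}(R)=\mathsf{K}^{\mathsf{b}}(\Proj R)$ and $\gd R=\infty$, choosing modules $M_n$ with $\pd M_n\geq n$ (or a single module of infinite projective dimension) the module $\bigoplus_n M_n$ lies in $\mathsf{D}^{\mathsf{b}}(R)$ but has infinite projective dimension, a contradiction. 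For generation by injectives, since $\Inj R\subseteq\mathsf{K}^{\mathsf{b}}(\Inj R)\subseteq\mathsf{thick}(\Inj R)$ one has $\mathsf{Loc}(\mathcal{T}^{\mathsf{b}}_i)=\mathsf{Loc}(\mathsf{K}^{\mathsf{b}}(\Inj R))=\mathsf{Loc}(\Inj R)$, so $\mathsf{D}(R)=\mathsf{Loc}(\mathcal{T}^{\mathsf{b}}_i)$ is exactly Rickard's condition that injectives generate.

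Next I would dispatch the Artin-algebra statements not involving the finitistic dimension of $\mathsf{D}(\Lambda)$. Everything about $\mathsf{K}_{\mathsf{ac}}(\Inj\Lambda)$ is immediate from Proposition~\ref{examples2}: there $\mathcal{T}^{\mathsf{b}}=\mathcal{T}^+=0$ while $\mathcal{T}^{\mathsf{b}}_p=\mathcal{T}^{\mathsf{b}}_i=\mathsf{K}_{\mathsf{ac}}(\Inj\Lambda)$, so $\mathcal{T}^{\mathsf{b}}_p=\mathcal{T}^{\mathsf{b}}$ holds iff $\mathsf{K}_{\mathsf{ac}}(\Inj\Lambda)=0$ (the global dimension claim), and $\mathcal{T}^{\mathsf{b}}_p=\mathcal{T}^{\mathsf{b}}_i$, $\mathsf{Loc}(\mathcal{T}^{\mathsf{b}}_i)=\mathsf{K}_{\mathsf{ac}}(\Inj\Lambda)$, and $(\mathcal{T}^{\mathsf{b}}_i)^{\perp}\cap\mathcal{T}^+=(\mathsf{K}_{\mathsf{ac}}(\Inj\Lambda))^{\perp}\cap 0=0$ all hold unconditionally, yielding the ``always Gorenstein, generated by injectives, finite finitistic dimension'' assertions. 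For $\gd\mathsf{K}(\Inj\Lambda)<\infty$, Proposition~\ref{examples2} gives $\mathcal{T}^{\mathsf{b}}=\mathsf{K}^{\mathsf{b}}(\Inj\Lambda)$ and $\mathcal{T}^{\mathsf{b}}_p=\{x:\#\{n:H^n(x)\neq 0\}<\infty\}=\mathsf{K}_{\mathsf{ac}}(\Inj\Lambda)\ast\mathsf{Q}_\rho(\mathsf{D}^{\mathsf{b}}(\Lambda))$; for the forward direction, the minimal injective coresolution $I_S$ of any simple $\Lambda$-module $S$ lies in $\mathcal{T}^{\mathsf{b}}_p$ and, being in $\mathsf{K}^{+}(\Inj\Lambda)$, satisfies $I_S\cong\mathsf{Q}_\rho\mathsf{Q}(I_S)\cong\mathsf{Q}_\rho(S)$, so $I_S\in\mathsf{K}^{\mathsf{b}}(\Inj\Lambda)$ iff $\id S<\infty$; thus $\mathcal{T}^{\mathsf{b}}_p=\mathcal{T}^{\mathsf{b}}$ forces $\id S<\infty$ for every simple $S$, hence $\gd\Lambda<\infty$. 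Conversely, $\gd\Lambda<\infty$ gives $\mathsf{D}^{\mathsf{b}}(\smod\Lambda)=\mathsf{K}^{\mathsf{b}}(\proj\Lambda)$, so $\mathsf{D}_{\mathsf{sg}}(\Lambda)=0$ and therefore $\mathsf{K}_{\mathsf{ac}}(\Inj\Lambda)=0$ (Example~\ref{first example}(4)); since moreover every object of $\mathsf{D}^{\mathsf{b}}(\Lambda)$ then has finite injective dimension, $\mathcal{T}^{\mathsf{b}}_p=\mathsf{Q}_\rho(\mathsf{D}^{\mathsf{b}}(\Lambda))=\mathsf{K}^{\mathsf{b}}(\Inj\Lambda)=\mathcal{T}^{\mathsf{b}}$.

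Finally, for $\Findim\mathsf{D}(\Lambda)<\infty$ iff $\Findim\Lambda<\infty$: by Proposition~\ref{examples} the class $(\mathcal{T}^{\mathsf{b}}_i)^{\perp}\cap\mathcal{T}^+$ in $\mathsf{D}(\Lambda)$ is $\mathsf{K}^{\mathsf{b}}(\Inj\Lambda)^{\perp}\cap\mathsf{D}^+(\Lambda)$, and since $\mathsf{K}^{\mathsf{b}}(\Inj\Lambda)=\mathsf{broad}_{\Sigma}(E)$ for an injective cogenerator $E$ (Example following Lemma~\ref{broad generation}, $\Lambda$ being noetherian), this is exactly the class of $X\in\mathsf{D}^+(\Lambda)$ with $\mathsf{Hom}_{\mathsf{D}(\Lambda)}(E,X[n])=0$ for all $n\in\mathbb{Z}$. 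The equivalence of the vanishing of this class with $\Findim\Lambda<\infty$ is precisely the content of \cite[Theorem 4.4]{rickard} and \cite[Theorem B.1]{krause}, which I would invoke directly. I expect this last point to be the only real obstacle: it is the one step that does not follow formally from the subcategory computations but genuinely rests on the representation theory of Artin algebras (ghost-lemma type bounds on projective dimensions), and one must be careful that the relevant statement is the ``bounded below'' restricted version, matching the appearance of $\mathcal{T}^+$ in Definition~\ref{homological}, rather than the a priori stronger condition that injectives generate.
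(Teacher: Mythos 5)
Your proposal is correct and follows essentially the same route as the paper: every equivalence is read off the tables of Proposition~\ref{examples}, Proposition~\ref{examples2} (and Lemma~\ref{subcategories for IG}), injective generation for $\mathsf{D}(R)$ is reduced to $\mathsf{Loc}(\mathsf{K}^{\mathsf{b}}(\Inj R))=\mathsf{Loc}(\Inj R)$, and the finitistic-dimension statement for $\mathsf{D}(\Lambda)$ is delegated to \cite[Theorem 4.4]{rickard} and \cite[Theorem B.1]{krause}, exactly as the paper does. The only difference is that you spell out details the paper dismisses as following ``easily from the tables'' (the coproduct argument showing that pointwise finiteness of projective dimension forces $\gd R<\infty$, and the simple-module/injective-coresolution argument for $\gd\mathsf{K}(\Inj\Lambda)<\infty\Leftrightarrow\gd\Lambda<\infty$), and these added details are correct.
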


\begin{proof}
        The statements about global dimension follow easily for all categories from the tables Proposition \ref{examples} and Proposition \ref{examples2}. A similar argument applies for the relation between the gorensteinness of $\mathsf{D}(R)$ and that of $R$ proves our second claim. For the injective generation of $\mathsf{D}(R)$, observe that $\mathsf{K}^{\mathsf{b}}(\Inj R)\subseteq \mathsf{Loc}(\Inj R)$ and thus $\mathsf{Loc}(\mathsf{K}^{\mathsf{b}}(\Inj R))=\mathsf{D}(R)$ if and only if $\mathsf{Loc}(\Inj R)=\mathsf{D}(R)$. The injective generation  property for $\mathsf{K}_{\mathsf{ac}}(\Inj\Lambda)$, when $\Lambda$ is an Artin algebra (over a commutative artinian ring $k$), is trivial. The fact that $\mathsf{D}(\Lambda)$ has finite finitistic dimension if and only $\Findim \Lambda<\infty$, for $\Lambda$ an Artin algebra, follows from \cite[Theorem 4.4(b)]{rickard}, which states that $\Findim R<\infty$ if and only if $\mathsf{Hom}_k(R,k)^{\perp}\cap \mathsf{D}^+=\{0\}$ (see also \cite[Theorem B.1.]{krause}). 
\end{proof}

Regarding the Gorenstein property, the generation by injectives or the finiteness of the finitistic dimension for $\mathsf{K}(\Inj{\Lambda})$ ($\Lambda$ being an Artin algebra), we are only able to discuss them in the cases where we know something about the subcategory $\Tb_i$ (see, for example, Lemma \ref{subcategories for IG}). We begin with the Gorenstein property.

\begin{lem} \label{gorensteinness of K(Inj)}
    Let $\Lambda$ be an Artin algebra and consider $\mathcal{T}=\mathsf{K}(\Inj \Lambda)$. The following hold\textnormal{:} 
    \begin{enumerate}
        \item If $\Lambda$ is Iwanaga-Gorenstein, then $\mathcal{T}$ is Gorenstein. 
        \item If $\mathcal{T}$ is Gorenstein, then $\mathsf{K}^\mathsf{b}(\Inj{\Lambda})\subseteq \mathsf{K}^\mathsf{b}(\Proj{\Lambda})$. 
    \end{enumerate}
    In particular, if the Gorenstein symmetry conjecture holds, then $\Lambda$ is Iwanaga-Gorenstein if and only if $\mathsf{K}(\Inj\Lambda)$ is Gorenstein.
\end{lem}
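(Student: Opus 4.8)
The plan is to prove the two numbered implications and then read off the equivalence. Assertion~(1) comes for free from the computations already made: by Proposition~\ref{examples2} we have $\mathcal{T}^{\mathsf{b}}=\mathsf{K}^{\mathsf{b}}(\Inj\Lambda)$ and $\mathcal{T}^{\mathsf{b}}_p=\mathsf{K}_{\mathsf{ac}}(\Inj\Lambda)\ast\mathsf{Q}_{\rho}(\mathsf{D}^{\mathsf{b}}(\Lambda))$, whereas Lemma~\ref{subcategories for IG} with $\?=\mathsf{b}$ gives, when $\Lambda$ is Iwanaga--Gorenstein, $\mathcal{T}^{\mathsf{b}}_i=\mathsf{K}_{\mathsf{ac}}(\Inj\Lambda)\ast\mathsf{Q}_{\rho}(\mathsf{D}^{\mathsf{b}}(\Lambda))$. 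Hence $\mathcal{T}^{\mathsf{b}}_p=\mathcal{T}^{\mathsf{b}}_i$, i.e.\ $\mathcal{T}$ is Gorenstein in the sense of Definition~\ref{homological}(ii).

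For~(2) I would argue as follows. Assume $\mathcal{T}$ is Gorenstein, so that $\mathcal{T}^{\mathsf{b}}_p=\mathcal{T}^{\mathsf{b}}_i=(\mathcal{T}^{\mathsf{b}})^{\padova}=(\mathsf{K}^{\mathsf{b}}(\Inj\Lambda))^{\padova}$. Let $\Lambda^*$ be the injective cogenerator of $\Mod{\Lambda}$, regarded as a stalk complex in degree $0$, so that $\Lambda^*\in\mathsf{K}^{\mathsf{b}}(\Inj\Lambda)=\mathcal{T}^{\mathsf{b}}$; and let $iN$ be the minimal injective resolution of $N\coloneqq\Lambda/\rad\Lambda$, viewed as a bounded below complex of injectives. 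Since $iN$ has a single non-zero cohomology, Proposition~\ref{examples2} places it in $\mathcal{T}^{\mathsf{b}}_p$. The containment $\mathcal{T}^{\mathsf{b}}_p\subseteq(\mathsf{K}^{\mathsf{b}}(\Inj\Lambda))^{\padova}$ then forces $\mathsf{Hom}_{\mathcal{T}}(\Lambda^*,iN[n])=0$ for $n\gg0$; since $iN[n]\in\mathsf{K}^{+}(\Inj\Lambda)\subseteq\mathsf{Im}(\mathsf{Q}_{\rho})$, Lemma~\ref{some facts about KInj}(ii) identifies this group with $\mathsf{Hom}_{\mathsf{D}(\Lambda)}(\Lambda^*,N[n])=\Ext^{n}_{\Lambda}(\Lambda^*,\Lambda/\rad\Lambda)$. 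Thus $\Ext^{n}_{\Lambda}(\Lambda^*,\Lambda/\rad\Lambda)=0$ for all large $n$, and since $\Lambda^*$ is a finitely generated module over the Artin algebra $\Lambda$, the standard computation of $\Ext^{*}(-,\Lambda/\rad\Lambda)$ from a minimal projective resolution forces $\pd_{\Lambda}\Lambda^*<\infty$, i.e.\ $\Lambda^*\in\mathsf{K}^{\mathsf{b}}(\Proj\Lambda)$. Finally, recall that $\mathsf{K}^{\mathsf{b}}(\Proj\Lambda)=\mathsf{broad}_{\Sigma}(\Lambda)$ is $\Sigma$-broad and, as $\Lambda$ is noetherian, $\mathsf{K}^{\mathsf{b}}(\Inj\Lambda)=\mathsf{broad}_{\Sigma}(\Lambda^*)$ (see the example following Lemma~\ref{broad generation}); since $\mathsf{broad}_{\Sigma}(\Lambda^*)$ is the smallest $\Sigma$-broad subcategory containing $\Lambda^*$, we conclude $\mathsf{K}^{\mathsf{b}}(\Inj\Lambda)\subseteq\mathsf{K}^{\mathsf{b}}(\Proj\Lambda)$ as subcategories of $\mathsf{D}(\Lambda)$.

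For the final equivalence, recall that the Gorenstein symmetry conjecture for $\Lambda$ asserts that $\id{}_{\Lambda}\Lambda<\infty$ if and only if $\id\Lambda_{\Lambda}<\infty$. Via the duality of the Artin algebra one has $\pd_{\Lambda}\Lambda^*=\id{}_{\Lambda}\Lambda$, and in $\mathsf{D}(\Lambda)$ the inclusion $\mathsf{K}^{\mathsf{b}}(\Proj\Lambda)\subseteq\mathsf{K}^{\mathsf{b}}(\Inj\Lambda)$ amounts to $\id\Lambda_{\Lambda}<\infty$ while $\mathsf{K}^{\mathsf{b}}(\Inj\Lambda)\subseteq\mathsf{K}^{\mathsf{b}}(\Proj\Lambda)$ amounts to $\pd_{\Lambda}\Lambda^*<\infty$; thus the conjecture is equivalent to the assertion that these two inclusions are equivalent. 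Granting it: if $\Lambda$ is Iwanaga--Gorenstein then $\mathcal{T}$ is Gorenstein by~(1); conversely, if $\mathcal{T}$ is Gorenstein then~(2) yields $\mathsf{K}^{\mathsf{b}}(\Inj\Lambda)\subseteq\mathsf{K}^{\mathsf{b}}(\Proj\Lambda)$, hence also $\mathsf{K}^{\mathsf{b}}(\Proj\Lambda)\subseteq\mathsf{K}^{\mathsf{b}}(\Inj\Lambda)$, so $\mathsf{K}^{\mathsf{b}}(\Proj\Lambda)=\mathsf{K}^{\mathsf{b}}(\Inj\Lambda)$ and $\Lambda$ is Iwanaga--Gorenstein. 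The one genuinely non-formal step is~(2), and within it the main obstacle is the correct transport of morphisms from $\mathsf{K}(\Inj\Lambda)$ across the recollement~(\ref{rec}) to $\Ext$-groups over $\Lambda$ --- precisely what Lemma~\ref{some facts about KInj}(ii) supplies --- together with the classical fact that $\Ext^{*}_{\Lambda}(M,\Lambda/\rad\Lambda)$ computes $\pd_{\Lambda}M$ for finitely generated $M$; everything else, including the left/right-module translation used in reformulating the conjecture, is routine bookkeeping.
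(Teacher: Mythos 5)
Your proof is correct, and part (2) takes a genuinely different route from the paper's. Part (1) and the final equivalence are essentially identical to the paper: the paper also phrases the Gorenstein symmetry conjecture as the equivalence of the two inclusions between $\mathsf{K}^\mathsf{b}(\Inj{\Lambda})$ and $\mathsf{K}^\mathsf{b}(\Proj{\Lambda})$ and concludes exactly as you do. For (2), the paper argues uniformly: gorensteinness gives $\mathsf{Q}_{\rho}(\mathsf{D}^{\mathsf{b}}(\Lambda))\subseteq \mathcal{T}^{\mathsf{b}}_p=\mathcal{T}^{\mathsf{b}}_i$, so for every $x\in\mathsf{K}^{\mathsf{b}}(\Inj\Lambda)=\mathcal{T}^{\mathsf{b}}$ and every $y\in\mathsf{D}^{\mathsf{b}}(\Lambda)$ the adjunction $(\mathsf{Q},\mathsf{Q}_\rho)$ yields $\mathsf{Hom}_{\mathsf{D}(\Lambda)}(\mathsf{Q}(x),y[n])=0$ for $|n|\gg0$, i.e.\ $\mathsf{K}^{\mathsf{b}}(\Inj\Lambda)\subseteq{}^{\padova}(\mathsf{D}^{\mathsf{b}}(\Lambda))=\mathsf{K}^{\mathsf{b}}(\Proj\Lambda)$ directly, with no choice of test objects. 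You instead test against the two specific objects $\Lambda^*$ and the injective resolution of $\Lambda/\rad\Lambda$, reduce via Lemma \ref{some facts about KInj}(ii) to the classical fact that vanishing of $\Ext^n_\Lambda(-,\Lambda/\rad\Lambda)$ for $n\gg0$ detects finite projective dimension, and then promote $\pd_\Lambda\Lambda^*<\infty$ to the full inclusion using $\Sigma$-broadness of $\mathsf{K}^{\mathsf{b}}(\Proj\Lambda)$ and $\mathsf{K}^{\mathsf{b}}(\Inj\Lambda)=\mathsf{broad}_\Sigma(\Lambda^*)$. This is valid but leans on Artin-algebra-specific input (minimal resolutions, the simple-testing criterion), whereas the paper's argument is purely formal and shorter; your route has the small virtue of making explicit that the single condition $\pd_\Lambda\Lambda^*<\infty$ is what is really being extracted. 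One point to make precise: take $\Lambda^*$ to be the finitely generated injective cogenerator $\Hom_k(\Lambda,E)$ (the $k$-linear dual) rather than the $\mathbb{Z}$-linear Brown--Comenetz dual, which need not be finitely generated; with that choice (or by noting that minimal projective resolutions and the simple-testing criterion work for arbitrary modules over a perfect ring) your $\Ext$ step is unobjectionable.
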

\begin{proof}
The first statement follows from Lemma \ref{subcategories for IG} and from Proposition \ref{examples2} by observing that
\[\mathsf{K}_{\mathsf{ac}}(\Inj\Lambda)\ast \mathsf{Q}_{\rho}(\mathsf{D}^{\mathsf{b}}(\Lambda))=\{x\in\mathsf{K}(\Inj\Lambda)\colon \#\{n\colon H^n(x)\neq 0\}<\infty\}.\]
Regarding (ii), the assumption $\mathcal{T}^{\mathsf{b}}_i=\mathcal{T}^{\mathsf{b}}_p$ shows in particular that $\mathsf{Q}_{\rho}(\mathsf{D}^{\mathsf{b}}(\Lambda))\subseteq \mathcal{T}^{\mathsf{b}}_i$ and therefore for every $x$ in $\mathsf{K}^{\mathsf{b}}(\Inj \Lambda)$ and $y$ in $\mathsf{D}^{\mathsf{b}}(\Lambda)$ we have 
    \[
    \mathsf{Hom}_{\mathsf{D}(\Lambda)}(\mathsf{Q}(x),y[n])=\mathsf{Hom}_{\mathsf{K}(\Inj \Lambda)}(x,\mathsf{Q}_{\rho}(y)[n])=0
    \]
    for $|n|\gg0$. In other words, we have $\mathsf{K}^{\mathsf{b}}(\Inj \Lambda)\subseteq {}^{\padova}(\mathsf{D}^{\mathsf{b}}(\Lambda))=\mathsf{K}^{\mathsf{b}}(\Proj \Lambda)$ in $\mathsf{D}(\Lambda)$ and so injective $A$-modules have finite projective dimension.  For the last assertion, note that  if the Gorenstein symmetry conjecture would hold (i.e.~there is an inclusion $\mathsf{K}^\mathsf{b}(\Inj{\Lambda})\subseteq \mathsf{K}^\mathsf{b}(\Proj{\Lambda})$ if and only if there in an inclusion $\mathsf{K}^\mathsf{b}(\Inj{\Lambda})\supseteq\mathsf{K}^\mathsf{b}(\Proj{\Lambda})$ in $\mathsf{D}(\Lambda)$) then the previous two assertions guarantee the claimed equivalence.\end{proof}


\begin{rem}
It is not reasonable to expect that $\mathsf{K}_{\mathsf{ac}}(\Inj{\Lambda})$ detects the gorensteinness of $\Lambda$. This comes, in part, from the fact that 
gorensteinness is not a singular invariant. Indeed, consider an Iwanaga-Gorenstein Artin algebra $A$ with $\gd A=\infty$, an Artin algebra $B$ with $\gd B<\infty$ and an $A$-$B$-bimodule $M$ such that $\pd_{A} M=\infty$. Then, we have an equivalence of singularity categories
    \[
    \mathsf{D}_{\mathsf{sg}}(\tiny\begin{pmatrix}
        A & M \\ 
        0 & B
    \end{pmatrix})\simeq \normalsize\mathsf{D}_{\mathsf{sg}}(A)
    \]
but the triangular matrix ring on the left-hand side is not Iwanaga-Gorenstein (see \cite[Theorem 3.3 and Theorem 4.1]{chen2}). As we have observed in Theorem \ref{properties} that the category $\mathsf{K}_{\mathsf{ac}}(\Inj{A})$ is always Gorenstein, for an Artin algebra $A$. It is furthermore known that not every Artin algebra is singular equivalent to an Iwanaga-Gorenstein algebra, see \cite{chen}. 
\end{rem}

For a ring $R$, it is well-known that $\gd R<\infty$ implies that $R$ is Gorenstein and if $R$ is Gorenstein then injectives generate for $R$, see \cite[Theorem 8.1]{rickard}. Furthermore, an Iwanaga-Gorenstein Artin algebra always has finite finitistic dimension (see \cite[Proposition 6.10]{auslander_reiten}), while injective generation for Artin algebras implies finiteness of the finitistic dimension \cite[Theorem 4.3]{rickard}. We will prove triangulated versions of these implications. 


\begin{thm} \label{theorem1}
The following implications hold for  a compactly generated triangulated category $\T$
\[\begin{xymatrix}{
\gd \T<\infty\ar@{=>}[r]& \T\ {\rm Gorenstein}\ar@{=>}[d]\ar@{=>}[r]&\T\ {\rm generated\ by\ injectives}\ar@{=>}[dl]^{(**)}\\ &\Findim \T<\infty&
}\end{xymatrix}\]
where the implication $(**)$ holds if  $\mathcal{T}^{\mathsf{b}}_i=\mathsf{broad}_{\Sigma}(S)$ for a set $S$.
%
%
%
%
%
\end{thm}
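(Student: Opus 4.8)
The plan is to establish the three implications separately, each time reducing to a statement about the far-away triple $(\Tb_p,\Tb,\Tb_i)$ and the localising subcategories involved.

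For the implication $\gd\T<\infty\Rightarrow\T$ Gorenstein: by definition $\gd\T<\infty$ means $\Tb_p=\Tb$, and by Lemma \ref{regularity via injectives} this is equivalent to $\Tb=\Tb_i$. Chaining these two equalities gives $\Tb_p=\Tb_i$, which is exactly the Gorenstein condition. This step is immediate.

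For the implication $\T$ Gorenstein $\Rightarrow\Findim\T<\infty$: we must show that if $\Tb_p=\Tb_i$, then $(\Tb_i)^{\perp}\cap\T^+=\{0\}$. First I would rewrite $(\Tb_i)^\perp=(\Tb_p)^\perp$ using the hypothesis. Now recall from Lemma \ref{common facts of the subcategories}(i) that $\T^{\mathsf{c}}\subseteq\Tb_p$, so $(\Tb_p)^\perp\subseteq(\T^{\mathsf{c}})^\perp$. But $(\T^{\mathsf{c}})^\perp=0$ since $\T$ is compactly generated — wait, this is $\perp$ at degree $0$ only, not enough to conclude vanishing. Instead I would argue: if $t$ lies in $(\Tb_p)^{\perp}\cap\T^+$, then in particular $\Hom_\T(x,t)=0$ for all $x$ in $\T^{\mathsf{c}}$, and since $(\Tb_p)^\perp$ is not obviously a far-away orthogonal I should be careful. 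The cleaner route is: $t\in\T^+=(\T^{\mathsf{c}})^{\perp_{\ll}}$, and I want to upgrade this to $t\in(\T^{\mathsf{c}})^{\padova}=\Tb$. To do so, for each compact $x$ consider the hypothesis $\Hom_\T(x,t)=0$; but one needs this for all shifts $x[n]$ with $n\gg0$, which follows because $\T^{\mathsf{c}}$ is closed under shifts and $\Tb_p\supseteq\T^{\mathsf{c}}$, so $(\Tb_p)^\perp$ closed under shifts kills all $\Hom_\T(x,t[n])$ for $x$ compact, $n$ arbitrary. Hence $t\in\T^+\cap(\T^{\mathsf{c}})^{\perp_\gg}=(\T^{\mathsf{c}})^{\padova}=\Tb$. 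Then $t\in\Tb\cap(\Tb_i)^\perp$; but $\Tb=\Tb_i$ (since Gorenstein plus $\Tb_p=\Tb_i$ combined with Lemma \ref{regularity via injectives}... no — Gorenstein does \emph{not} give $\Tb=\Tb_i$). I would instead use $t\in\Tb$ together with $t\in(\Tb_i)^\perp$ and the identity $\Tb={}^{\padova}(\Tb_i)$ hmm — actually $({}^{\padova}(\Tb))^{\padova}=\Tb_i$ and $\Tb\subseteq{}^{\padova}\Tb_i$ by Lemma \ref{triangulated_subcategories}(ii), so $t\in\Tb$ gives $\Hom_\T(t,\Tb_i[\gg])=\Hom_\T(t,\Tb_i[\ll])=0$ — that's the wrong variance. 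The workable argument: since $t\in\Tb$ and $\Tb_p={}^{\padova}\Tb$, apply the Gorenstein equality $\Tb_p=\Tb_i$ to get $t\in\Tb\subseteq\T^+$, and combine $(\Tb_i)^\perp$ with the fact that $\Tb_i$ generates enough of $\Tb$; the honest path is to mimic \cite[Theorem 4.4]{rickard}: use that the far-away triple forces $\Tb\subseteq\mathsf{Loc}(\Tb_i)$ when Gorenstein, reducing to the injective-generation case below. I expect this to be the \textbf{main obstacle} and I would handle it by routing through the implication $(**)$.

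For the implication $(**)$, $\T$ generated by injectives $\Rightarrow\Findim\T<\infty$, under the hypothesis $\Tb_i=\mathsf{broad}_\Sigma(S)$ for a set $S$: suppose $t\in(\Tb_i)^\perp\cap\T^+$. First, as above, upgrade $(\Tb_i)^\perp$ to a vanishing at all shifts: since $\Tb_i$ is closed under shifts, $\Hom_\T(s,t[n])=0$ for all $s\in S$, all $n\in\mathbb{Z}$, hence $\Hom_\T(x,t[n])=0$ for all $x\in\mathsf{broad}_\Sigma(S)=\Tb_i$ and all $n$ (using Lemma \ref{triangulated_subcategories}(v)(b) and closure under shifts). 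Therefore $t\in S^{\perp}$ in the strong sense that $\Hom_\T(\Tb_i,t[\mathbb{Z}])=0$, i.e. $t\in\mathsf{Loc}(\Tb_i)^{\perp}$ — here I use that the right orthogonal to $\Tb_i$ closed under all shifts, coproducts and triangles contains exactly the right orthogonal of $\mathsf{Loc}(\Tb_i)$. Since $\mathsf{Loc}(\Tb_i)=\T$ by hypothesis, we get $t\in\T^\perp=0$. This completes $(**)$, and then the middle vertical implication follows by first proving $\T$ Gorenstein $\Rightarrow\T$ generated by injectives and composing. For that last missing piece: if $\Tb_p=\Tb_i$, then $\mathsf{Loc}(\Tb_i)=\mathsf{Loc}(\Tb_p)\supseteq\mathsf{Loc}(\T^{\mathsf{c}})=\T$ by Lemma \ref{common facts of the subcategories}(i) and compact generation, giving generation by injectives. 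Thus the diagram closes, with the only subtlety being the repeated "upgrade a $\perp_0$ vanishing against a $\Sigma$-broad shift-closed subcategory to a vanishing against its localising closure" move, which is the crux of $(**)$ and the reason for the hypothesis on $\Tb_i$.
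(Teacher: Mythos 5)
Your first implication, the implication ``Gorenstein $\Rightarrow$ generated by injectives'', and your argument for $(**)$ are all correct and essentially the paper's own arguments (for $(**)$ the paper invokes Neeman's torsion-pair theorem for the set $S$, while you run a direct localising-subcategory argument; both reach $(\Tb_i)^{\perp}=\{0\}$). The genuine gap is in the unconditional vertical implication $\T$ Gorenstein $\Rightarrow \Findim\T<\infty$. The direct argument you wrote down and then abandoned is exactly the paper's proof, and it is already complete: from $\Tc\subseteq\Tb_p=\Tb_i$ one gets $(\Tb_i)^{\perp}=(\Tb_p)^{\perp}\subseteq(\Tc)^{\perp}$, and $(\Tc)^{\perp}=\{0\}$ is literally the paper's definition of compact generation. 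Your worry that this is ``$\perp$ at degree $0$ only'' is unfounded: since $\Tc$ (indeed $\Tb_p$) is closed under shifts, the degree-$0$ orthogonal automatically encodes vanishing in all degrees, and in any case only the degree-$0$ statement is needed. In fact, inside your own detour you derive $\Hom_{\T}(x,t[n])=0$ for every compact $x$ and every $n\in\mathbb{Z}$, which by compact generation already forces $t=0$; you fail to notice this, conclude only that $t\in\Tb$, and the argument then derails.

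Your proposed repair, namely to ``route through $(**)$'', does not prove the theorem as stated: $(**)$ carries the extra hypothesis $\Tb_i=\mathsf{broad}_{\Sigma}(S)$ for a set $S$, whereas the vertical arrow is asserted for every compactly generated $\T$. Composing ``Gorenstein $\Rightarrow$ generated by injectives'' with a conditional implication only yields the conditional statement, so as written the unconditional implication is left unproven. (As it happens, your proof of $(**)$ never genuinely uses the set hypothesis --- the shift-upgrade and the observation that $\{y:\Hom_{\T}(y,t[n])=0 \ \forall n\}$ is localising work for $\Tb_i$ directly --- so you could have salvaged the composite by pointing this out; but the intended and far simpler route is the two-line inclusion through $(\Tc)^{\perp}=\{0\}$.)
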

\begin{proof}
$\bullet\ \gd\T<\infty\Rightarrow \T\ {\rm Gorenstein}$: By applying $^{\padova}(-)$ to $\mathcal{T}^{\mathsf{b}}_p=\mathcal{T}^{\mathsf{b}}$ we infer $\mathcal{T}^{\mathsf{b}}=\mathcal{T}^{\mathsf{b}}_i$, see Lemma \ref{common facts of the subcategories}(iii). It follows in particular that $\mathcal{T}^{\mathsf{b}}_p=\mathcal{T}^{\mathsf{b}}_i$, i.e. $\mathcal{T}$ is Gorenstein. \\ 
 $\bullet\ \T\ {\rm Gorenstein}\Rightarrow \T\ {\rm generated\ by\ injectives}$: Simply observe that $\mathsf{Loc}(\mathcal{T}^{\mathsf{b}}_i)=\mathsf{Loc}(\mathcal{T}^{\mathsf{b}}_p)=\mathcal{T}$, where the first equality follows by gorensteinness and the second follows by $\mathcal{T}^{\mathsf{c}}\subseteq \mathcal{T}^{\mathsf{b}}_p$, see Lemma \ref{common facts of the subcategories}(i). \\ 
 $\bullet\ \T\ {\rm Gorenstein}\Rightarrow \Findim\T<\infty$: Since $\mathcal{T}^{\mathsf{c}}\subseteq \mathcal{T}^{\mathsf{b}}_p$ and $\mathcal{T}^{\mathsf{b}}_p=\mathcal{T}^{\mathsf{b}}_i$, it follows that $(\mathcal{T}^{\mathsf{b}}_i)^{\perp}\subseteq (\mathcal{T}^{\mathsf{c}})^{\perp}=\{0\}$ and in particular the intersection $(\mathcal{T}^{\mathsf{b}}_i)^{\perp}\cap \mathcal{T}^+$ is trivial. 

Finally, assume that $\mathcal{T}^{\mathsf{b}}_i=\mathsf{broad}_{\Sigma}(S)$ for some set $S$, and let us prove the implication $(\ast\ast)$. Since $S$ is a set, it follows by \cite[Theorem 2.3]{neeman3} that $(\mathsf{Loc}(S),S^{\perp})$ is a torsion pair. We have $\mathsf{Loc}(S)=\mathsf{Loc}(\mathcal{T}^{\mathsf{b}}_i)$ and $S^{\perp}=(\mathcal{T}^{\mathsf{b}}_i)^{\perp}$.
    Since injectives generate for $\mathcal{T}$, it follows that $(\mathcal{T}^{\mathsf{b}}_i)^{\perp}=\{0\}$. We have proved statements (a) and (b).
\end{proof}

Note that the assumption that $\mathcal{T}^{\mathsf{b}}_i=\mathsf{broad}_{\Sigma}(S)$ for a set $S$ is satisfied, for example, if $\mathcal{T}=\mathsf{D}(R)$ for a right noetherian ring $R$, since in that case $\mathcal{T}^{\mathsf{b}}_i=\mathsf{K}^\mathsf{b}(\Inj{R})=\mathsf{broad}_{\Sigma}(E)$, where $E$ is an injective cogenerator of $R$. The following example shows that this condition can be also found in the context of dg algebras.

\begin{exmp} \label{Prod vs Add}
Let $\T$ be an algebraic compactly generated triangulated category such that $\T^c$ is Krull-Schmidt and Hom-finite over a field $k$. Suppose that $x$ is a compact silting object in $\T$ which far-away generates $\T$. Then by Theorem \ref{silting far-away generating} we have that $\Tb_i=\mathsf{thick}(\mathsf{Prod}(x^\ast))$, where we can consider $x^*$ to be the Brown-Comenetz dual of $x$ with respect to the injective cogenerator $k$ of $\Mod{k}$, i.e.~$\mathsf{Hom}_\T(-,x^\ast)\cong \mathsf{Hom}_{k}(\mathsf{Hom}_\T(x,-),k)$. In particular, note that for any object $c$ of $\T$, we have that $\mathsf{Hom}_\T(c,x^\ast)\cong \mathsf{Hom}_{k}(\mathsf{Hom}_\T(x,c),k)$ is a finite-dimensional vector space over $k$. In particular, $x^\ast$ is an \textbf{endofinite} object of $\T$, i.e.~$\mathsf{Hom}_\T(c,x^\ast)$ has finite length over $\mathsf{End}_\T(x^\ast)$ for all $c$ compact. In particular, $x^\ast$ is then $\Sigma$-pure-injective and its indecomposable subobjects form a closed set of the Ziegler spectrum (see \cite[Theorem 2.13 and Appendix A]{conde_gorsky_marks_zvonareva}). It then follows from \cite[Theorem 4.10]{breaz_rafiliu} that $\mathsf{Prod}(x^\ast)=\mathsf{Add}(x^\ast)$ and that we then have $\Tb_i=\mathsf{thick}(\mathsf{Add}(x^\ast))$.

A concrete setup that realises the assumptions above is when we consider the derived category of a dg-algebra over a field $k$ for which $H^n(\Gamma)=0$ for all $n>0$ and such that the total cohomology $\oplus_{n\in\mathbb{Z}}H^n(\Gamma)$ is finite-dimensional over $k$, i.e~a connective proper dg algebra over $k$. This means that for such a dg algebra $\Gamma$, the implication $(**)$ holds for $\mathsf{D}(\Gamma)$. 
\end{exmp}

\begin{exmp}\label{bad example}
Let $\T=\mathsf{D}(R)$ where $R$ is the ring from Example \ref{counterexample}. Recall that $R$ is a commutative noetherian ring of infinite Krull dimension for which every finitely presented module has finite projective dimension. Being commutative and noetherian, if follows from \cite[Theorem 3.3]{rickard} that $\T$ is generated by injectives and that the implication $(\ast\ast)$ of the theorem above holds, hence showing that $\T$ has finite finitistic dimension. This is an unfortunate conclusion, given that we know that $R$ itself has infinite finitistic dimension. The reason for this mismatch is that the definition proposed by us and inspired by \cite[Theorem 4.4]{rickard} and \cite[Theorem B.1]{krause} is well adapted for Artin algebras (see also Theorem \ref{properties}), but not beyond that. 
\end{exmp}

\section{Regularity for $k$-linear categories}\label{Sec5}

\subsection{The subcategory of bounded finite objects} It is often the case that, as it has already happened, the triangulated categories that we work with are $k$-linear, where $k$ is commutative noetherian ring $k$. This is the case, for example, of the derived category of an algebra or of a scheme over $k$. Given a $k$-linear triangulated category $\mathcal{T}$ and a subcategory $\mathcal{X}$ of $\mathcal{T}$, we consider the following thick subcategories of $\mathcal{T}$\\ 
\[
\begin{aligned}
    \mathcal{X}^{\hfpadova}\coloneqq \{t\in\mathcal{T}: \ \forall x\in\mathcal{X}, \underset{n\in\mathbb{Z}}{\oplus}\mathsf{Hom}_{\mathcal{T}}(x,t[n])\in \smod k \}\ \ \ \ \  
    {^{\hfpadova}}\mathcal{X}\coloneqq \{t\in\mathcal{T}: \ \forall x\in\mathcal{X}, \underset{n\in\mathbb{Z}}{\oplus}\mathsf{Hom}_{\mathcal{T}}(t,x[n]) \in\smod k \}.
\end{aligned}
\]
It is clear by definition that $\mathcal{X}^{\hfpadova}\subseteq \mathcal{X}^{\padova}$ and $^{\hfpadova}\mathcal{X}\subseteq$$ ^{\padova}\mathcal{X}.$
These subcategories have been considered by many other authors under different names and in different setups, see for example \cite{adachi_mizuno_yang} and \cite{kuznetsov_shinder}.
Note that every triangulated category is $\mathbb{Z}$-linear by virtue of being additive. However, in this section we consider other $k$-linear structures as well, so as long as $k$ is commutative noetherian. 

Under the setup above, there is an extra distinguished triangulated category that we can study, which is intrinsic up to the choice of the ring $k$, which often will be clear from the context. 

\begin{defn} \label{bounded finite objects}
The subcategory of \textbf{bounded finite} objects of a $k$-linear triangulated category $\T$ is denoted by $\T^{\mathsf{b}}_c$ and it is defined by setting
$\mathcal{T}^{\mathsf{b}}_c\coloneqq (\mathcal{T}^{\mathsf{c}})^{\hfpadova}.$
\end{defn}

For later use we need the following. 

\begin{lem} \label{properties of hf orthogonals}
    Let $\mathcal{T}$ be a $k$-linear triangulated category and $\mathcal{X}$, $\mathcal{Y}$ classes of objects in $\mathcal{T}$. The following hold. 
    \begin{enumerate}
        \item $(\mathcal{X}*\mathcal{Y})^{\hfpadova}=\mathcal{X}^{\hfpadova}\cap \mathcal{Y}^{\hfpadova}$ and ${^{\hfpadova}(\mathcal{X}*\mathcal{Y})}={^{\hfpadova}}\mathcal{X}\cap {^{\hfpadova}}\mathcal{Y}$. 
        \item $\mathcal{X}^{\hfpadova}=\mathsf{thick}(\mathcal{X})^{\hfpadova}$ and ${^{\hfpadova}}\mathcal{X}={^{\hfpadova}}\mathsf{thick}(\mathcal{X})$. 
    \end{enumerate}
\end{lem}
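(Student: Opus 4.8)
The plan is to mirror, mutatis mutandis, the proofs of the corresponding statements for $(-)^{\padova}$ in Lemma~\ref{triangulated_subcategories}. The key point is that $\mathcal{X}^{\hfpadova}$ is defined by a condition that is stable under the usual triangulated manipulations, and that $\smod k$ is a (Serre, in fact thick) subcategory of $\Mod k$: it is closed under submodules, quotients, and extensions, and hence under the kernels and cokernels appearing in a long exact sequence of $k$-modules.

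For part (i), I would first note that both $\mathcal{X}^{\hfpadova}$ and $\mathcal{Y}^{\hfpadova}$ contain $(\mathcal{X}\ast\mathcal{Y})^{\hfpadova}$, using only that $\mathcal{X}\subseteq\mathcal{X}\ast\mathcal{Y}$ and $\mathcal{Y}\subseteq\mathcal{X}\ast\mathcal{Y}$ (an object of $\mathcal{X}$, resp.\ $\mathcal{Y}$, sits in a triangle with a zero object). For the reverse inclusion, take $t$ in $\mathcal{X}^{\hfpadova}\cap\mathcal{Y}^{\hfpadova}$ and an object $u$ in $\mathcal{X}\ast\mathcal{Y}$ with triangle $x\to u\to y\to x[1]$, $x\in\mathcal{X}$, $y\in\mathcal{Y}$. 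Applying $\Hom_{\mathcal{T}}(-,t[n])$ and summing over $n$, I get an exact sequence of $k$-modules
\[
\bigoplus_{n}\Hom_{\mathcal{T}}(y,t[n])\to \bigoplus_n\Hom_{\mathcal{T}}(u,t[n])\to\bigoplus_n\Hom_{\mathcal{T}}(x,t[n]),
\]
whose outer terms lie in $\smod k$; since $\smod k$ is closed under extensions (and subquotients), the middle term lies in $\smod k$, so $t\in(\mathcal{X}\ast\mathcal{Y})^{\hfpadova}$. The statement for ${}^{\hfpadova}(\mathcal{X}\ast\mathcal{Y})$ is dual, applying $\Hom_{\mathcal{T}}(t,-)$ to the same triangle.

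For part (ii), the inclusion $\mathsf{thick}(\mathcal{X})^{\hfpadova}\subseteq\mathcal{X}^{\hfpadova}$ is immediate from $\mathcal{X}\subseteq\mathsf{thick}(\mathcal{X})$. Conversely, I would show $\mathcal{X}^{\hfpadova}$ is a thick subcategory of $\mathcal{T}$: it is plainly closed under shifts; it is closed under summands because $\smod k$ is; and it is closed under extensions by the same long-exact-sequence argument as above applied to a triangle with outer vertices in $\mathcal{X}^{\hfpadova}$. Hence for a fixed $t\in\mathcal{X}^{\hfpadova}$, the class $\{x\in\mathcal{T}:\ \bigoplus_n\Hom_{\mathcal{T}}(x,t[n])\in\smod k\}$ — which is exactly the set of objects $x$ with $t\in x^{\hfpadova}$ — is thick and contains $\mathcal{X}$, so it contains $\mathsf{thick}(\mathcal{X})$; therefore $t\in\mathsf{thick}(\mathcal{X})^{\hfpadova}$. (Equivalently: any object of $\mathsf{thick}(\mathcal{X})$ is a summand of something built by finitely many extensions from shifts of objects of $\mathcal{X}$, and finiteness of the relevant $\bigoplus_n\Hom$ propagates through finitely many extensions and through summands.) Again the ${}^{\hfpadova}$-version is dual.

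I do not expect any genuine obstacle here; the only thing to be slightly careful about is that the relevant Hom-groups are the \emph{direct sums over all $n\in\mathbb{Z}$}, so the finiteness hypothesis is a single condition on one (possibly infinitely-generated-looking) $k$-module rather than a condition for each $n$ — but this causes no trouble since the long exact sequence of $\Hom$'s, after taking $\bigoplus_n$, remains exact, and $\smod k$ is closed under the operations (sub, quotient, extension) that arise. The argument uses no special properties of $\mathcal{T}$ beyond being triangulated and $k$-linear, exactly as in Lemma~\ref{triangulated_subcategories}.
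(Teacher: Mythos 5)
Your proposal is correct and is essentially the paper's own argument: the paper simply says the proof runs as in Lemma~\ref{triangulated_subcategories}, using the noetherianness of $k$ for (i), and your adaptation carries this out, with the long-exact-sequence/thick-subcategory arguments transported verbatim. The closure of $\smod k$ under submodules that you invoke is exactly where the standing hypothesis that $k$ is commutative noetherian enters, so it is worth naming it explicitly, but the substance matches the paper.
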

\begin{proof}
The proof is the same as in Lemma \ref{triangulated_subcategories}, using the noetherianness of $k$ to prove (i).
\end{proof}

Observe also that the analogous of Lemma \ref{adjoints and far-away orthogonality} for $\?=\hfpadova$ also holds true and we shall use it when necessary. Let us now compute the subcategory of bounded finite objects for some triangulated categories.

\begin{prop} \label{bounded finite objects: examples}
Let $k$ be a commutative noetherian ring.
\begin{enumerate}
\item If $R$ is a Noether $k$-algebra and $\T=\mathsf{D}(R)$, then we have $\Tb_c=\mathsf{D}^{\mathsf{b}}(\smod{R})$.
\item If $k$ is artinian, $\Lambda$ is an Artin $k$-algebra and $\T=\mathsf{K}_{\mathsf{ac}}(\Inj{\Lambda})$, then we have $\Tb_c=\{0\}$.
\item If $k$ is artinian, $\Lambda$ is an Artin $k$-algebra and $\T=\mathsf{K}(\Inj{\Lambda})$, then we have $$\Tb_c=\{x\in \mathsf{K}^\mathsf{b}(\Inj{\Lambda})\colon \oplus_{n\in\mathbb{Z}} H^n(x)\in\smod{k}\}.$$
\item If $k$ is a field, $\Gamma$ is a dg $k$-algebra and $\T=\mathsf{D}(\Gamma)$, then we have 
$$\Tb_c=\{x\in \mathsf{D}(\Gamma)\colon \mathsf{dim}(\oplus_{n\in\mathbb{Z}} H^n(x))<\infty \}=:\mathsf{D}_{\mathsf{fd}}(\Gamma).$$
\end{enumerate}

\end{prop}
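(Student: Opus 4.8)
\emph{Plan.} In every item the first move is to replace $\Tc$ by a small generating family using Lemma~\ref{properties of hf orthogonals}(ii), and then to unwind the definition of $(-)^{\hfpadova}$. I will do (1), (2), (4) together, since they are essentially formal, and then spend the real effort on (3).

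\emph{Items (1), (2) and (4).} For $\T=\mathsf{D}(R)$ one has $\Tc=\mathsf{K}^{\mathsf{b}}(\proj R)=\mathsf{thick}(R)$ (Proposition~\ref{examples}) and for $\T=\mathsf{D}(\Gamma)$ one has $\Tc=\mathsf{per}(\Gamma)=\mathsf{thick}(\Gamma)$ (Example~\ref{first example}(3)), so Lemma~\ref{properties of hf orthogonals}(ii) gives $\Tb_c=(\Tc)^{\hfpadova}=R^{\hfpadova}$, respectively $\Gamma^{\hfpadova}$. Since $\Hom_{\mathsf{D}(R)}(R,x[n])\cong H^n(x)$ and $\Hom_{\mathsf{D}(\Gamma)}(\Gamma,x[n])\cong H^n(x)$, in both cases $\Tb_c=\{x\colon\bigoplus_nH^n(x)\in\smod k\}$. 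If $k$ is a field this is the class of complexes with finite-dimensional total cohomology, giving (4). If $R$ is a Noether $k$-algebra, then $R$ is module-finite over $k$, so finite generation over $k$ and over $R$ agree for $R$-modules; hence $\bigoplus_nH^n(x)\in\smod k$ says precisely that $x$ has bounded cohomology all of whose cohomology modules are finitely generated over the noetherian ring $R$, which (as $R$ is noetherian) is equivalent to $x\in\mathsf{D}^{\mathsf{b}}(\smod R)$, giving (1). For (2), since $\mathcal{X}^{\hfpadova}\subseteq\mathcal{X}^{\padova}$ always, one gets $\Tb_c\subseteq(\Tc)^{\padova}=\Tb$, and $\Tb=\{0\}$ for $\T=\mathsf{K}_{\mathsf{ac}}(\Inj\Lambda)$ by Proposition~\ref{examples2}.

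\emph{Item (3).} Here $\T=\mathsf{K}(\Inj\Lambda)$, with $\Tc=\mathsf{Q}_{\rho}(\mathsf{D}^{\mathsf{b}}(\smod\Lambda))$ and $\Tb=\mathsf{K}^{\mathsf{b}}(\Inj\Lambda)$ by Proposition~\ref{examples2}. For ``$\subseteq$'', the relation $\hfpadova\subseteq\padova$ already gives $\Tb_c\subseteq\Tb=\mathsf{K}^{\mathsf{b}}(\Inj\Lambda)$; and testing the defining condition of $(\Tc)^{\hfpadova}$ against $\mathsf{Q}_{\rho}(\Lambda)\in\Tc$, together with Lemma~\ref{some facts about KInj}(i),(ii), yields $\bigoplus_n\Hom_\T(\mathsf{Q}_{\rho}(\Lambda),x[n])\cong\bigoplus_n\Hom_{\mathsf{D}(\Lambda)}(\Lambda,\mathsf{Q}(x)[n])\cong\bigoplus_nH^n(x)$, so $\bigoplus_nH^n(x)\in\smod k$. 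For ``$\supseteq$'', I would take $x\in\mathsf{K}^{\mathsf{b}}(\Inj\Lambda)$ with $\bigoplus_nH^n(x)\in\smod k$: then $\mathsf{Q}(x)$ has bounded cohomology, finitely generated over $k$ hence over $\Lambda$, so $\mathsf{Q}(x)\in\mathsf{D}^{\mathsf{b}}(\smod\Lambda)$, and being a genuine bounded complex of injectives it has finite injective dimension in $\mathsf{D}(\Lambda)$. For any $c\in\mathsf{D}^{\mathsf{b}}(\smod\Lambda)$, Lemma~\ref{some facts about KInj}(i),(ii) gives $\Hom_\T(\mathsf{Q}_{\rho}(c),x[n])\cong\Hom_{\mathsf{D}(\Lambda)}(c,\mathsf{Q}(x)[n])$; since $c$ has bounded cohomology and $\mathsf{Q}(x)$ has finite injective dimension this vanishes for $|n|\gg0$, and since both $c$ and $\mathsf{Q}(x)$ lie in $\mathsf{D}^{\mathsf{b}}(\smod\Lambda)$ over the artinian ring $k$ it has finite length over $k$ for every $n$. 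Hence $\bigoplus_n\Hom_\T(\mathsf{Q}_{\rho}(c),x[n])\in\smod k$ for all $c\in\Tc$, i.e.\ $x\in(\Tc)^{\hfpadova}=\Tb_c$.

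\emph{Main obstacle.} The only non-formal ingredients, both in the ``$\supseteq$'' half of (3), are: that a bounded complex of injectives over $\Lambda$ has finite injective dimension in $\mathsf{D}(\Lambda)$ (this is what confines the relevant $\Hom$-groups to finitely many degrees), and that over an Artin $k$-algebra with $k$ artinian the morphism spaces between objects of $\mathsf{D}^{\mathsf{b}}(\smod\Lambda)$ have finite length over $k$. Everything else is bookkeeping with the recollement~\eqref{rec}, as packaged in Lemma~\ref{some facts about KInj}.
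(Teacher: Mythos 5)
Your argument is correct and follows essentially the same route as the paper: items (1), (2), (4) are handled by reducing $\Tc$ to a single generator via Lemma~\ref{properties of hf orthogonals} (respectively by $\Tb_c\subseteq\Tb=\{0\}$), and item (3) uses the recollement~(\ref{rec}) together with Lemma~\ref{some facts about KInj} exactly as intended. Your only deviation is cosmetic: for the forward inclusion in (3) you test against $\mathsf{Q}_\rho(\Lambda)$ instead of invoking the $\hfpadova$-analogue of Lemma~\ref{adjoints and far-away orthogonality} for the pair $(\mathsf{Q}_\lambda,\mathsf{Q})$, and you spell out the reverse inclusion that the paper leaves as ``verified directly''; both are fine.
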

\begin{proof}
(i) For $\mathcal{T}=\mathsf{D}(R)$. By Lemma \ref{properties of hf orthogonals} we see that $\mathcal{T}^{\mathsf{b}}_c$ coincides with $R^{\hfpadova}$, which is precisely $\mathsf{D}^{\mathsf{b}}(\smod R)$ since $R$ is finitely generated over $k$.
   
(ii) For $\mathcal{T}=\mathsf{K}_{\mathsf{ac}}(\Inj{\Lambda})$, we have $\mathcal{T}^{\mathsf{b}}_c\subseteq \mathcal{T}^{\mathsf{b}}=\{0\}$ by Proposition \ref{examples2}.

(iii) For $\mathcal{T}=\mathsf{K}(\Inj \Lambda)$, consider the adjoint pair $(\mathsf{Q}_{\lambda},\mathsf{Q})$, see (\ref{rec}). The functor $\mathsf{Q}_{\lambda}$ preserves compact objects and, thus, the functor $\mathsf{Q}$ sends $\mathcal{T}^{\mathsf{b}}_c$ to $\mathsf{D}^{\mathsf{b}}(\smod \Lambda)$, which shows that $\mathcal{T}^{\mathsf{b}}_c$ is contained in 
$$\{x\in \mathsf{K}^\mathsf{b}(\Inj{\Lambda})\colon \oplus_{n\in\mathbb{Z}} H^n(x)\in\smod{k}\}.$$
The reverse inclusion is verified directly. 

 
 (iv) This follows from the fact that $\mathsf{D}(\Gamma)^{\mathsf{c}}=\mathsf{thick}(\Gamma)$ and $H^n(-)\cong \mathsf{Hom}_{\mathsf{D}(\Gamma)}(\Gamma,-[n])$.
\end{proof}

\begin{rem} \label{approximable}
Consider $\T$ as in Remark \ref{approx}, i.e.~$\mathcal{T}$ is compactly generated triangulated category generated by a single compact object $G$ and the t-structure generated by $G$ satisfies the ``approximability conditions'', see \cite[Definition 0.25]{neeman5}. Suppose that $\T$ is a $k$-linear category. In \cite[Definition 0.20]{neeman5}, Neeman considers the following subcategories
     \[
     \mathcal{T}^{-,\tau}_c\coloneqq \bigcap_{n>0}(\mathcal{T}^{\mathsf{c}}\ast \mathcal{T}^{\leq -n}) \text{  and  } \mathcal{T}^{\mathsf{b},\tau}_c\coloneqq \mathcal{T}^{-, \tau}_c\cap \mathcal{T}^{\mathsf{b}, \tau}. 
     \]
    where $\T^{\mathsf{b},\tau}$ is as defined in Remark \ref{approx}. As before, these subcategories turn out to be instrinsic and independent of the choice of generator $G$. Moreover, if $\mathsf{Hom}_{\mathcal{T}}(G,G[n])$ lies in $\smod k$ for all $n\in\mathbb{Z}$, by \cite[Theorem 0.4]{neeman5} we have that the subcategory $\mathcal{T}^{\mathsf{b},\tau}_c$ coincides with $\mathcal{T}^{\mathsf{b}}_c$ of Definition \ref{bounded finite objects}. 
     
\end{rem}

\subsection{Gorensteinness for dg algebras} As we will see in this subsection the subcategory of bounded finite objects helps us to establish a comparison between our notion of Gorenstein triangulated category for the derived category of a dg algebra $\Gamma$ with the one introduced in \cite{jin} for such dg algebras, whenever $\Gamma$ is proper and connective over a field $k$. Using Corollary \ref{T^b_p for dg algebras}, the derived category $\mathsf{D}(\Gamma)$ is Gorenstein if and only if 
\[
\mathsf{thick}(\Add \Gamma)=\mathsf{thick}(\Product \Gamma^*).
\]
The definition of Gorenstein dg algebra due to Jin \cite[Assumption 0.1(3)]{jin}, requires that $\mathsf{thick}(\Gamma)=\mathsf{thick}(\Gamma^*)$. The aim of this subsection is to compare these two notions; see Proposition \ref{gorensteinness vs gorensteinness}. We also refer to \cite{kuznetsov_shinder} for a related notion of Gorenstein dg categories and in particular to \cite[Section 6.2]{kuznetsov_shinder} where they compare their notion to that of \cite{jin}. We will first collect some necessary statements.

\begin{prop} \label{perfect modules over dg algebra}\label{thick of the dual} \label{homologically finite orthogonal of D_fd}
Let $\Gamma$ be a proper and connective dg algebra over a field $k$. Then the following hold.
\begin{enumerate}
\item There is an equality $\mathsf{per}(\Gamma)={^{\hfpadova}(\mathsf{D}_{\mathsf{fd}}(\Gamma))}$ as subcategories of $\mathsf{D}(\Gamma)$. In particular, we have that $(\mathsf{per}(\Gamma),\mathsf{D}_{\mathsf{fd}}(\Gamma))$ are a \textit{homologically finite far-away pair}, i.e. a pair $(\mathcal{X},\mathcal{Y})$ of subcategories of $\T$ for which $\Y=\X^{\hfpadova}$ and $\Y={}^{\hfpadova}\X$.
\item  There is an equality $\mathsf{thick}(\Gamma^*)=\mathsf{D}_{\mathsf{fd}}(\Gamma)^{\hfpadova}$ as subcategories of $\mathsf{D}(\Gamma)$.
\item  We have the following equalities between subcategories of $\mathsf{D}(\Gamma)$
    \[
    {^{\hfpadova}\mathsf{D}_{\mathsf{fd}}(\Gamma)}={^{\padova}\mathsf{D}_{\mathsf{fd}}(\Gamma)}\cap \mathsf{D}_{\mathsf{fd}}(\Gamma) \text{   and   }{\mathsf{D}_{\mathsf{fd}}(\Gamma)^{\hfpadova}}={\mathsf{D}_{\mathsf{fd}}(\Gamma)^{\padova}}\cap \mathsf{D}_{\mathsf{fd}}(\Gamma).
    \]
\end{enumerate}
\end{prop}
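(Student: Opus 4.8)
The three statements are about the interplay between the homologically-finite orthogonal $(-)^{\hfpadova}$ and the ordinary far-away orthogonal $(-)^{\padova}$ when applied to the special subcategories $\mathsf{per}(\Gamma)$ and $\mathsf{D}_{\mathsf{fd}}(\Gamma)$ inside $\mathsf{D}(\Gamma)$, for $\Gamma$ proper connective over a field $k$. The key structural input I would use throughout is the Brown--Comenetz/Nakayama duality of Remark \ref{Nak}: for $\Gamma$ proper, the Nakayama functor $\nu=-\otimes^{\mathbb{L}}_\Gamma\mathbb{R}\mathsf{Hom}_k(\Gamma,k)$ restricts to an equivalence $\mathsf{per}(\Gamma)=\mathsf{thick}(\Gamma)\xrightarrow{\sim}\mathsf{thick}(\Gamma^*)$, together with the Auslander--Reiten formula
\[
\mathsf{Hom}_{\mathsf{D}(\Gamma)}(x,\nu y)\cong \mathsf{Hom}_k\big(\mathsf{Hom}_{\mathsf{D}(\Gamma)}(y,x),k\big)
\]
valid when (say) $y$ is compact. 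I would also use Proposition \ref{bounded finite objects: examples}(iv), which identifies $\mathsf{D}_{\mathsf{fd}}(\Gamma)=\T^{\mathsf{b}}_c=(\Tc)^{\hfpadova}$, and Lemma \ref{properties of hf orthogonals} to reduce orthogonals of thick subcategories to orthogonals of their generators ($\Gamma$ for $\mathsf{per}(\Gamma)$, and a single finite-dimensional generator for $\mathsf{D}_{\mathsf{fd}}(\Gamma)$, whose existence follows since $\Gamma$ proper means $\mathsf{D}_{\mathsf{fd}}(\Gamma)=\mathsf{thick}(H^*\Gamma\text{-objects})$; more simply, since $\mathsf{D}_{\mathsf{fd}}(\Gamma)$ is generated as a thick subcategory by the simple modules over $H^0(\Gamma)$, and there are finitely many).

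\textbf{Proof of (1).} Unwinding definitions, $x\in{}^{\hfpadova}(\mathsf{D}_{\mathsf{fd}}(\Gamma))$ means $\oplus_n\mathsf{Hom}(x,y[n])$ is finite-dimensional over $k$ for every $y\in\mathsf{D}_{\mathsf{fd}}(\Gamma)$. For the inclusion $\supseteq$ one direction is easy: if $x\in\mathsf{per}(\Gamma)=\mathsf{thick}(\Gamma)$, then since $\oplus_n\mathsf{Hom}(\Gamma,y[n])=\oplus_n H^n(y)$ is finite-dimensional for $y\in\mathsf{D}_{\mathsf{fd}}(\Gamma)$ and $(-)^{\hfpadova}$ of the thick subcategory equals that of $\Gamma$ by Lemma \ref{properties of hf orthogonals}(ii), one gets $\mathsf{per}(\Gamma)\subseteq{}^{\hfpadova}(\mathsf{D}_{\mathsf{fd}}(\Gamma))$. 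For the reverse inclusion $\subseteq$, I would argue that $\mathsf{D}_{\mathsf{fd}}(\Gamma)$ contains a (single) compact generator of $\mathsf{D}(\Gamma)$ up to the perfect category --- concretely, the simple $H^0(\Gamma)$-modules, all of which lie in $\mathsf{D}_{\mathsf{fd}}(\Gamma)$, generate $\mathsf{D}(\Gamma)$ as a localising subcategory, hence $\mathsf{thick}(\Gamma)=\mathsf{D}(\Gamma)^{\mathsf{c}}$ is built from them; then if $x\in{}^{\hfpadova}(\mathsf{D}_{\mathsf{fd}}(\Gamma))$ the groups $\oplus_n\mathsf{Hom}(x,S[n])$ are finite-dimensional for all simples $S$, which by a standard dévissage/Postnikov argument over the connective $\Gamma$ forces $x$ to have finite-dimensional total cohomology \emph{and} be built in finitely many steps from $\Gamma$, i.e.\ $x\in\mathsf{per}(\Gamma)$ --- this is exactly the ``finiteness of a resolution'' phenomenon. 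That $\mathsf{per}(\Gamma)^{\hfpadova}=\mathsf{D}_{\mathsf{fd}}(\Gamma)$ is Proposition \ref{bounded finite objects: examples}(iv) combined with Lemma \ref{properties of hf orthogonals}(ii), completing the claim that $(\mathsf{per}(\Gamma),\mathsf{D}_{\mathsf{fd}}(\Gamma))$ is a homologically finite far-away pair.

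\textbf{Proof of (2) and (3); expected obstacle.} For (2), apply $\nu$: by the Auslander--Reiten formula, for $x\in\mathsf{per}(\Gamma)$ and $y\in\mathsf{D}_{\mathsf{fd}}(\Gamma)$ we have $\mathsf{Hom}(y,\nu x[n])\cong\mathsf{Hom}_k(\mathsf{Hom}(x,y[-n]),k)$, so $\oplus_n\mathsf{Hom}(y,\nu x[n])$ is finite-dimensional iff $\oplus_n\mathsf{Hom}(x,y[n])$ is; since $\nu$ maps $\mathsf{per}(\Gamma)$ onto $\mathsf{thick}(\Gamma^*)$ equivalently, part (1) gives $\mathsf{thick}(\Gamma^*)=\nu(\mathsf{per}(\Gamma))\subseteq\mathsf{D}_{\mathsf{fd}}(\Gamma)^{\hfpadova}$, and the reverse inclusion is the genuinely nontrivial part --- it says there are no ``extra'' objects left-orthogonal (in the hf sense) to $\mathsf{D}_{\mathsf{fd}}(\Gamma)$ beyond $\mathsf{thick}(\Gamma^*)$; I would deduce this by again using that the simples generate, combined with the fact that $\mathsf{D}_{\mathsf{fd}}(\Gamma)^{\hfpadova}\subseteq\mathsf{D}_{\mathsf{fd}}(\Gamma)$ (which is part (3)) and then a dévissage dual to the one in (1). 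For (3), both equalities are bookkeeping: for any $x$, the group $\oplus_n\mathsf{Hom}(x,y[n])$ is simultaneously being asked to vanish for large $|n|$ \emph{and} be finite-dimensional; since over a field finite length $=$ finite dimension, $x\in{}^{\hfpadova}\mathsf{D}_{\mathsf{fd}}(\Gamma)$ iff $x\in{}^{\padova}\mathsf{D}_{\mathsf{fd}}(\Gamma)$ and additionally (taking $y=x$ when $x\in\mathsf{D}_{\mathsf{fd}}(\Gamma)$, or more carefully using that $\Gamma\in\mathsf{D}_{\mathsf{fd}}(\Gamma)$ since $\Gamma$ is proper) $x$ has finite-dimensional total cohomology, i.e.\ $x\in\mathsf{D}_{\mathsf{fd}}(\Gamma)$; the key point making $\subseteq$ work is precisely that $\Gamma\in\mathsf{D}_{\mathsf{fd}}(\Gamma)$ by properness, so testing hf-orthogonality against $\mathsf{D}_{\mathsf{fd}}(\Gamma)$ already forces $\oplus_n H^n(x)=\oplus_n\mathsf{Hom}(\Gamma,x[n])$ finite-dimensional. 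The main obstacle I anticipate is the reverse inclusions in (1) and (2): proving that hf-orthogonality to all of $\mathsf{D}_{\mathsf{fd}}(\Gamma)$ actually places an object in $\mathsf{per}(\Gamma)$ (resp.\ $\mathsf{thick}(\Gamma^*)$) rather than merely in some larger subcategory, which requires the connectivity of $\Gamma$ to run the Postnikov-tower/minimal-model argument that turns ``finite total cohomology of all Hom-complexes into the simples'' into ``finitely built from $\Gamma$''.
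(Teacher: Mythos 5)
Your overall shape is right and the easy inclusions are handled correctly, but the proposal leaves genuine gaps at exactly the points where the content of the statement lies. In (1), the inclusion ${}^{\hfpadova}(\mathsf{D}_{\mathsf{fd}}(\Gamma))\subseteq\mathsf{per}(\Gamma)$ is the whole theorem, and your ``standard d\'evissage/Postnikov argument \dots forces $x$ to be built in finitely many steps from $\Gamma$'' is an assertion rather than a proof: one has to construct a (minimal) semifree resolution of $x$ over the connective algebra $\Gamma$ and show that hf-orthogonality against the simple $H^0(\Gamma)$-modules forces it to terminate with finitely generated terms. This is precisely the content of the result the paper cites here (Adachi--Mizuno--Yang, Lemma 4.13); as written, you have a plan, not an argument. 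The same problem recurs, amplified, in (2): you prove only $\mathsf{thick}(\Gamma^*)\subseteq\mathsf{D}_{\mathsf{fd}}(\Gamma)^{\hfpadova}$ (the Nakayama-functor/Auslander--Reiten computation you give is correct), and you explicitly defer the reverse inclusion to ``a d\'evissage dual to the one in (1)'' that you never carry out. The paper avoids any second d\'evissage: it applies the duality $d_k=\mathbb{R}\mathsf{Hom}_k(-,k)$, which restricts to an equivalence $\mathsf{D}_{\mathsf{fd}}(\Gamma)\simeq\mathsf{D}_{\mathsf{fd}}(\Gamma^{\mathsf{op}})^{\mathsf{op}}$, notes that $\mathsf{D}_{\mathsf{fd}}(\Gamma)^{\hfpadova}\subseteq\mathsf{D}_{\mathsf{fd}}(\Gamma)$ by properness, transports the question to ${}^{\hfpadova}(\mathsf{D}_{\mathsf{fd}}(\Gamma^{\mathsf{op}}))$, which equals $\mathsf{per}(\Gamma^{\mathsf{op}})$ by part (1) applied to the (again proper and connective) opposite algebra, and pulls back to get $\mathsf{thick}(\Gamma^*)$. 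If you keep your route you must actually run the dual d\'evissage; otherwise the duality reduction is the efficient fix.

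There is also a gap in (3), in the inclusion $\mathsf{D}_{\mathsf{fd}}(\Gamma)^{\padova}\cap\mathsf{D}_{\mathsf{fd}}(\Gamma)\subseteq\mathsf{D}_{\mathsf{fd}}(\Gamma)^{\hfpadova}$ and its left-hand analogue: besides vanishing of $\mathsf{Hom}_{\mathsf{D}(\Gamma)}(x,y[n])$ for $|n|\gg0$, you need each individual space to be finite-dimensional when $x,y\in\mathsf{D}_{\mathsf{fd}}(\Gamma)$, and your remark that ``over a field finite length equals finite dimension'' presupposes exactly this finiteness rather than proving it. The paper obtains it from Kalck--Yang (Proposition 2.1): the t-structure induced by $\Gamma$ restricts to a bounded t-structure on $\mathsf{D}_{\mathsf{fd}}(\Gamma)$ with heart $\smod H^0(\Gamma)$, whose thick closure is all of $\mathsf{D}_{\mathsf{fd}}(\Gamma)$, and Hom-finiteness then follows by d\'evissage to the heart (this is where connectivity and properness are used). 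Your treatment of the opposite inclusions in (3) -- testing against $\Gamma\in\mathsf{D}_{\mathsf{fd}}(\Gamma)$, which holds by properness -- is correct, as is your identification $\mathsf{per}(\Gamma)^{\hfpadova}=\mathcal{T}^{\mathsf{b}}_c=\mathsf{D}_{\mathsf{fd}}(\Gamma)$ for the ``in particular'' clause of (1).
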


\begin{proof}
(i) This statement follows directly from \cite[Lemma 4.13]{adachi_mizuno_yang}.

(ii) We consider the duality over $k$, i.e.~the functor $d_k:=\mathbb{R}\mathsf{Hom}_{k}(-,k)\colon \mathsf{D}(\Gamma)\rightarrow\mathsf{D}(\Gamma^{\mathsf{op}})^{\mathsf{op}}$, which restricts to an equivalence $\mathsf{D}_{\mathsf{fd}}(\Gamma)\simeq \mathsf{D}_{\mathsf{fd}}(\Gamma^{\mathsf{op}})^{\mathsf{op}}$ (see for instance \cite[Lemma 3.5]{kuznetsov_shinder}). Since $\Gamma$ is proper, it follows that $\mathsf{D}_{\mathsf{fd}}(\Gamma)^{\hfpadova}\subseteq \mathsf{D}_{\mathsf{fd}}(\Gamma)$ and therefore the functor $d_k$ restricts to an equivalence $\mathsf{D}_{\mathsf{fd}}(\Gamma)^{\hfpadova}\simeq (\mathsf{D}_{\mathsf{fd}}(\Gamma^{\mathsf{op}})^{\mathsf{op}})^{\hfpadova}$.  The right-hand side of the latter equivalence is the opposite category of the subcategory ${^{\hfpadova}}(\mathsf{D}_{\mathsf{fd}}(\Gamma^{\mathsf{op}}))$ of $\mathsf{D}(\Gamma^{\mathsf{op}})$. Now, since $\Gamma$ is proper and connective, the same holds for $\Gamma^{\mathsf{op}}$ and, thus, using (i), we see that ${^{\hfpadova}}(\mathsf{D}_{\mathsf{fd}}(\Gamma^{\mathsf{op}}))=\mathsf{thick}(\Gamma^{\mathsf{op}})$. We conclude by observing that the inverse image of $\mathsf{thick}(\Gamma^{\mathsf{op}})^{\mathsf{op}}$ under the functor $d_k$ is $\mathsf{thick}(\Gamma^*)$, since $\Gamma^*=\mathbb{R}\mathsf{Hom}_k(_{\Gamma}\Gamma,k)$.

(iii) We only show the second equality, as both claims are similar. As observed in the proof of point (ii), we clearly have the inclusion $\mathsf{D}_{\mathsf{fd}}(\Gamma)^{\hfpadova}\subseteq \mathsf{D}_{\mathsf{fd}}(\Gamma)^{\padova}\cap \mathsf{D}_{\mathsf{fd}}(\Gamma)$. The converse follows from the fact that for any $x,y $ in $\mathsf{D}_{\mathsf{fd}}(\Gamma)$, the space $\mathsf{Hom}_{\mathsf{D}(\Gamma)}(x,y)$ is finite-dimensional over $k$. Indeed, this follows for instance from \cite[Proposition 2.1]{kalck_yang} which states that the t-structure induced by $\Gamma$ restricts to a bounded t-structure on $\mathsf{D}_{\mathsf{fd}}(\Gamma)$ with heart $\smod H^0(\Gamma)$ and that the thick closure of the heart is all of $\mathsf{D}_{\mathsf{fd}}(\Gamma)$.
\end{proof}

We can now prove the following.

\begin{prop} \label{gorensteinness vs gorensteinness}
Let $\Gamma$ be a proper connective dg algebra over a field $k$. Then $\mathsf{thick}(\Gamma)=\mathsf{thick}(\Gamma^\ast)$ if and only if $\mathsf{thick}(\mathsf{Add}(\Gamma))=\mathsf{thick}(\mathsf{Prod}(\Gamma))$. In other words, the dg algebra $\Gamma$ is Gorenstein \textnormal{(}in the sense of Jin\textnormal{)} if and only if the category $\mathsf{D}(\Gamma)$ is Gorenstein.
\end{prop}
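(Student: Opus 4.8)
The plan is to prove the two equalities of thick subcategories are equivalent by reducing both to the same statement after applying far-away and homologically-finite orthogonals, using the structural results of Proposition \ref{perfect modules over dg algebra} together with Corollary \ref{T^b_p for dg algebras} and Theorem \ref{silting far-away generating}. Recall first that since $\Gamma$ is connective and proper it has finitely many non-zero cohomologies, so by Corollary \ref{T^b_p for dg algebras} we have $\T^{\mathsf{b}}_p=\mathsf{thick}(\Add\Gamma)$ and $\T^{\mathsf{b}}_i=\mathsf{thick}(\Prod\Gamma^*)$ in $\T=\mathsf{D}(\Gamma)$; thus $\mathsf{D}(\Gamma)$ being Gorenstein means precisely $\mathsf{thick}(\Add\Gamma)=\mathsf{thick}(\Prod\Gamma^*)$, which is the right-hand condition. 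So what must be shown is $\mathsf{thick}(\Gamma)=\mathsf{thick}(\Gamma^*)$ if and only if $\T^{\mathsf{b}}_p=\T^{\mathsf{b}}_i$.

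First I would handle the easy direction. If $\mathsf{thick}(\Add\Gamma)=\mathsf{thick}(\Prod\Gamma^*)$, then intersecting with $\mathsf{D}_{\mathsf{fd}}(\Gamma)=\T^{\mathsf{b}}_c$ and using that, by Proposition \ref{perfect modules over dg algebra}(iii) (and its analogue for $\Add$/$\mathsf{per}$, which follows since $\mathsf{per}(\Gamma)={}^{\hfpadova}\mathsf{D}_{\mathsf{fd}}(\Gamma)\subseteq \mathsf{D}_{\mathsf{fd}}(\Gamma)^{\hfpadova}$-type considerations give $\mathsf{per}(\Gamma)=\mathsf{thick}(\Add\Gamma)\cap\mathsf{D}_{\mathsf{fd}}(\Gamma)$, and dually $\mathsf{thick}(\Gamma^*)=\mathsf{thick}(\Prod\Gamma^*)\cap\mathsf{D}_{\mathsf{fd}}(\Gamma)$), one immediately recovers $\mathsf{per}(\Gamma)=\mathsf{thick}(\Gamma^*)$, i.e. $\mathsf{thick}(\Gamma)=\mathsf{thick}(\Gamma^*)$. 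The point is that $\Gamma$ is a compact object, hence $\mathsf{per}(\Gamma)\subseteq \mathsf{D}_{\mathsf{fd}}(\Gamma)$ by properness, and that $\Gamma^*$ being the Brown-Comenetz dual is also in $\mathsf{D}_{\mathsf{fd}}(\Gamma)$, so both sides of the big equality are recovered by intersecting with $\mathsf{D}_{\mathsf{fd}}(\Gamma)$.

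For the converse, assume $\mathsf{thick}(\Gamma)=\mathsf{thick}(\Gamma^*)$, i.e. $\mathsf{per}(\Gamma)=\mathsf{thick}(\Gamma^*)$. I would apply $(-)^{\hfpadova}$ and ${}^{\hfpadova}(-)$. By Proposition \ref{perfect modules over dg algebra}(i) we have $\mathsf{D}_{\mathsf{fd}}(\Gamma)=\mathsf{per}(\Gamma)^{\hfpadova}$, and by Proposition \ref{perfect modules over dg algebra}(ii), $\mathsf{thick}(\Gamma^*)={}^{\hfpadova}(\text{?})$ — more precisely $\mathsf{thick}(\Gamma^*)=\mathsf{D}_{\mathsf{fd}}(\Gamma)^{\hfpadova}$, so $\mathsf{per}(\Gamma)=\mathsf{thick}(\Gamma^*)$ forces $\mathsf{per}(\Gamma)^{\hfpadova}=\mathsf{D}_{\mathsf{fd}}(\Gamma)$ to equal $(\mathsf{D}_{\mathsf{fd}}(\Gamma)^{\hfpadova})^{\hfpadova}\supseteq \mathsf{D}_{\mathsf{fd}}(\Gamma)$, so in fact we get that $\mathsf{D}_{\mathsf{fd}}(\Gamma)$ equals its own double homologically-finite orthogonal, and $\mathsf{per}(\Gamma)$ sits inside it as both ${}^{\hfpadova}$ and $(-)^{\hfpadova}$. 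Then I would pass to the broad closures: using Theorem \ref{silting far-away generating} and that $\mathsf{broad}_\Sigma$ and $\mathsf{broad}_\Pi$ of the relevant generators produce $\T^{\mathsf{b}}_p$ and $\T^{\mathsf{b}}_i$ respectively, and the far-away-pair identities $\T^{\mathsf{b}}=(\T^{\mathsf{b}}_p)^{\padova}=(\mathsf{per}(\Gamma))^{\padova}$ and $\T^{\mathsf{b}}_i=(\T^{\mathsf{b}})^{\padova}$, one shows $\T^{\mathsf{b}}_p=\T^{\mathsf{b}}_i$ by checking the generators $\Gamma$ and $\Gamma^*$ generate (under $\mathsf{broad}$) the same subcategory; this follows from $\mathsf{thick}(\Gamma)=\mathsf{thick}(\Gamma^*)$ by Lemma \ref{triangulated_subcategories}(v)(b) which says far-away orthogonals only see the thick closure, combined with the fact that $\mathsf{broad}_\Sigma(\X)^{\padova}=\X^{\padova}=\mathsf{thick}(\X)^{\padova}$. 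Concretely, $\T^{\mathsf{b}}_p={}^{\padova}((\mathsf{per}\Gamma)^{\padova})={}^{\padova}(\mathsf{thick}(\Gamma)^{\padova})={}^{\padova}(\mathsf{thick}(\Gamma^*)^{\padova})$, and I need to identify this last with $\T^{\mathsf{b}}_i=((\mathsf{thick}(\Gamma^*))^{\padova})^{\padova}$; the subtlety is the asymmetry between ${}^{\padova}(-)$ and $(-)^{\padova}$, so the identification $\mathsf{per}(\Gamma)=\mathsf{thick}(\Gamma^*)$ alone is not quite enough — one also needs that $\Gamma^*$ is the Brown-Comenetz dual of the \emph{compact} object $\Gamma$, so that ${}^{\padova}((\Gamma^*)^{\padova})$ and $({}^{\padova}(\Gamma^*))^{\padova}$ relate correctly via the duality.

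The main obstacle I anticipate is precisely this last point: bridging the gap between the \emph{symmetric} thick-closure condition $\mathsf{thick}(\Gamma)=\mathsf{thick}(\Gamma^*)$ and the \emph{asymmetric} broad conditions defining $\T^{\mathsf{b}}_p$ (built with ${}^{\padova}$, i.e. $\Sigma$-broad) versus $\T^{\mathsf{b}}_i$ (built with $(-)^{\padova}$ after a further ${}^{\padova}$, i.e. $\Pi$-broad). The resolution I would pursue is to observe that since $k$ is a field and $\Gamma$ is proper, $\Gamma^*$ is an endofinite (hence $\Sigma$-pure-injective) object, so by the argument of Example \ref{Prod vs Add} we have $\Prod(\Gamma^*)=\Add(\Gamma^*)$; this collapses the $\Pi$-broad closure into a $\Sigma$-broad closure, restoring the symmetry and letting the equality $\mathsf{thick}(\Gamma)=\mathsf{thick}(\Gamma^*)$ propagate to $\mathsf{broad}_\Sigma(\Gamma)=\mathsf{thick}(\Add\Gamma)$ versus $\mathsf{broad}_\Sigma(\Gamma^*)=\mathsf{thick}(\Add\Gamma^*)=\mathsf{thick}(\Prod\Gamma^*)$ once one knows $\mathsf{thick}(\Add\X)$ depends only on $\mathsf{thick}(\X)$ when $\X$ consists of a single object whose shifts generate — which is exactly Lemma \ref{broad generation} combined with the fact that $\mathsf{broad}_\Sigma$ of a thick subcategory generated by one object $G$ equals $\mathsf{thick}(\Add G)$. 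Putting these together yields $\T^{\mathsf{b}}_p=\mathsf{thick}(\Add\Gamma)=\mathsf{thick}(\Add\Gamma^*)=\mathsf{thick}(\Prod\Gamma^*)=\T^{\mathsf{b}}_i$, completing the equivalence.
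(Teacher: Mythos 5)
Your proposal is essentially correct, and its two halves relate to the paper's proof asymmetrically. For the implication $\mathsf{thick}(\Gamma)=\mathsf{thick}(\Gamma^*)\Rightarrow \T^{\mathsf{b}}_p=\T^{\mathsf{b}}_i$, your final chain $\T^{\mathsf{b}}_p=\mathsf{thick}(\Add\Gamma)=\mathsf{thick}(\Add\Gamma^*)=\mathsf{thick}(\Product\Gamma^*)=\T^{\mathsf{b}}_i$ is exactly the paper's argument: Corollary \ref{T^b_p for dg algebras}, the fact that $\mathsf{broad}_\Sigma$ only depends on the thick closure, and $\Product(\Gamma^*)=\Add(\Gamma^*)$ from Example \ref{Prod vs Add}. (Your exploratory detour through far-away orthogonals contains the incorrect identity $\T^{\mathsf{b}}_i=((\mathsf{thick}(\Gamma^*))^{\padova})^{\padova}$ --- it should be $({}^{\padova}\Gamma^*)^{\padova}$ --- but you flag the asymmetry and discard that route, so no harm is done.) For the converse you genuinely deviate: the paper first establishes $\T^{\mathsf{b}}=\mathsf{broad}_\Sigma(\T^{\mathsf{b}}_c)$ via Goodbody's strong generation of $\mathsf{D}(\Gamma)$ and Proposition \ref{bounded objects when T admits a strong gen}, and only then intersects $\T^{\mathsf{b}}_p=\T^{\mathsf{b}}_i$ with $\mathsf{D}_{\mathsf{fd}}(\Gamma)$, whereas you aim directly for the identities $\mathsf{per}(\Gamma)=\T^{\mathsf{b}}_p\cap\mathsf{D}_{\mathsf{fd}}(\Gamma)$ and $\mathsf{thick}(\Gamma^*)=\T^{\mathsf{b}}_i\cap\mathsf{D}_{\mathsf{fd}}(\Gamma)$. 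These do suffice, and your route avoids the strong-generation input altogether, which is a small but real economy.

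The weak point is precisely the justification of those two identities, which is where all the content sits. The parenthetical ``${}^{\hfpadova}\mathsf{D}_{\mathsf{fd}}(\Gamma)\subseteq\mathsf{D}_{\mathsf{fd}}(\Gamma)^{\hfpadova}$-type considerations'' cannot be the reason: by Proposition \ref{perfect modules over dg algebra}(i) and (ii) that inclusion says $\mathsf{per}(\Gamma)\subseteq\mathsf{thick}(\Gamma^*)$, i.e.\ one half of the very Gorenstein condition under discussion, and it fails for a general proper connective $\Gamma$ (e.g.\ a non-Gorenstein finite-dimensional algebra). The identities are nevertheless true unconditionally, by the following short argument which you should substitute: since $\Gamma$ is proper, $\mathsf{D}_{\mathsf{fd}}(\Gamma)\subseteq\T^{\mathsf{b}}$, hence $\T^{\mathsf{b}}_p\cap\mathsf{D}_{\mathsf{fd}}(\Gamma)\subseteq{}^{\padova}(\mathsf{D}_{\mathsf{fd}}(\Gamma))\cap\mathsf{D}_{\mathsf{fd}}(\Gamma)={}^{\hfpadova}(\mathsf{D}_{\mathsf{fd}}(\Gamma))=\mathsf{per}(\Gamma)$ by Proposition \ref{perfect modules over dg algebra}(iii) and (i), while $\mathsf{per}(\Gamma)\subseteq\T^{\mathsf{b}}_p$ always (Lemma \ref{common facts of the subcategories}) and $\mathsf{per}(\Gamma)\subseteq\mathsf{D}_{\mathsf{fd}}(\Gamma)$ by properness; dually, $\T^{\mathsf{b}}_i\cap\mathsf{D}_{\mathsf{fd}}(\Gamma)\subseteq\mathsf{D}_{\mathsf{fd}}(\Gamma)^{\padova}\cap\mathsf{D}_{\mathsf{fd}}(\Gamma)=\mathsf{D}_{\mathsf{fd}}(\Gamma)^{\hfpadova}=\mathsf{thick}(\Gamma^*)$ by (iii) and (ii), and conversely $\mathsf{thick}(\Gamma^*)\subseteq\mathsf{thick}(\Product\Gamma^*)\cap\mathsf{D}_{\mathsf{fd}}(\Gamma)=\T^{\mathsf{b}}_i\cap\mathsf{D}_{\mathsf{fd}}(\Gamma)$ using Corollary \ref{T^b_p for dg algebras} and the fact that $\Gamma^*$ has finite-dimensional total cohomology. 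With this repair your argument is complete and marginally lighter than the paper's.
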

\begin{proof}
    We write $\mathcal{T}$ for $\mathsf{D}(\Gamma)$ and assume first that $\Gamma$ is Gorenstein, i.e. that $\mathsf{per}(\Gamma)=\mathsf{thick}(\Gamma^*)$. Then we have 
    \[
    \mathcal{T}^{\mathsf{b}}_p=\mathsf{thick}(\Add \Gamma)=\mathsf{broad}_{\Sigma}(\mathsf{per}(\Gamma))=\mathsf{broad}_{\Sigma}(\mathsf{thick}({\Gamma}^*))=\mathsf{thick}(\Add {\Gamma}^*)=\mathsf{thick}(\Product {\Gamma}^*)=\mathcal{T}^{\mathsf{b}}_i, 
    \]
    where the first and last equalities follow from Corollary \ref{T^b_p for dg algebras}, the third follows by the assumption and the second to last follows from Example \ref{Prod vs Add}. This shows that $\mathcal{T}$ is Gorenstein. Assume now that $\mathcal{T}$ is Gorenstein. First off, we know from \cite[Lemma 5.2]{goodbody} that $\mathcal{T}$ is strongly generated, i.e.\ $\mathcal{T}=\overline{\langle x[\mathbb{Z}]\rangle}_n$ for some object $x\in\mathsf{D}_{\mathsf{fd}}(\Gamma)$ and some $n\geq 1$. In particular, it follows from Proposition \ref{bounded objects when T admits a strong gen} that $\mathcal{T}^{\mathsf{b}}=\mathsf{broad}_{\Sigma}(x)$ and by \cite[Proposition 9.8]{neeman5}, together with \cite[Theorem 0.4]{neeman5}, the fact that $\mathcal{T}$ is approximable (see for instance \cite[Remark 4.3]{neeman5}) and Remark \ref{approximable}, that $\mathcal{T}^{\mathsf{b}}_c=\langle x\rangle_n$ (see also \cite[Lemma 5.9]{Raedschelders_Stevenson}). As a consequence of the latter we have 
    \begin{equation} \label{broad for dg}
        \mathcal{T}^{\mathsf{b}}=\mathsf{broad}_{\Sigma}(\mathcal{T}^{\mathsf{b}}_c)
    \end{equation}
    as subcategories of $\mathcal{T}$. We then derive the following equalities
    \[
    \mathsf{per}(\Gamma)={^{\hfpadova}(\mathsf{D}_{\mathsf{fd}}(\Gamma))}={^{\padova}(\mathsf{D}_{\mathsf{fd}}(\Gamma))}\cap \mathsf{D}_{\mathsf{fd}}(\Gamma)={^{\padova}(\mathcal{T}^{\mathsf{b}})}\cap \mathsf{D}_{\mathsf{fd}}(\Gamma)
    \]
    where the first follows from Proposition \ref{perfect modules over dg algebra}(i), the second is Proposition \ref{homologically finite orthogonal of D_fd}(iii) and the third follows from (\ref{broad for dg}) together with Lemma \ref{triangulated_subcategories}(v). Dually, using Proposition \ref{thick of the dual}(ii), it follows that 
    \[
    \mathsf{thick}(\Gamma^*)=(\mathcal{T}^{\mathsf{b}})^{\padova}\cap \mathsf{D}_{\mathsf{fd}}(\Gamma). 
    \] 
    All in all, intersecting $\mathcal{T}^{\mathsf{b}}_p=\mathcal{T}^{\mathsf{b}}_i$ with $\mathsf{D}_{\mathsf{fd}}(\Gamma)$ implies that $\mathsf{thick}(\Gamma)=\mathsf{thick}(\Gamma^*)$. 
\end{proof}

\subsection{Regular triangulated categories} For $k$-linear triangulated categories, we suggest an additional homological condition, and a category that measures its failure.

\begin{defn} \label{regular triangulated categories}
    A $k$-linear triangulated category $\mathcal{T}$ is said to be \textbf{regular (over $k$)} if $\mathcal{T}^{\mathsf{b}}_c=\mathcal{T}^{\mathsf{c}}$. We define the \textbf{singularity $k$-category of $\T$}  to be $\T^{\mathsf{sg}}_k\coloneqq\mathsf{thick}(\mathcal{T}^{\mathsf{c}}\ast \mathcal{T}^{\mathsf{b}}_c)/(\mathcal{T}^{\mathsf{b}}_c\cap\T^{\mathsf{c}})$.
\end{defn}

Clearly the definition of the singularity category was set up to yield the following statement.

\begin{lem}
If $\T$ is a $k$-linear triangulated category, $\T$ is regular over $k$ if and only if $\T^{\mathsf{sg}}_k=0$.
\end{lem}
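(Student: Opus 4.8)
The statement follows almost formally by unwinding the definition of the Verdier quotient $\T^{\mathsf{sg}}_k=\mathsf{thick}(\mathcal{T}^{\mathsf{c}}\ast\mathcal{T}^{\mathsf{b}}_c)/(\mathcal{T}^{\mathsf{b}}_c\cap\mathcal{T}^{\mathsf{c}})$. The plan is as follows. First I would abbreviate $\mathcal{A}\coloneqq\mathsf{thick}(\mathcal{T}^{\mathsf{c}}\ast\mathcal{T}^{\mathsf{b}}_c)$ and $\mathcal{B}\coloneqq\mathcal{T}^{\mathsf{b}}_c\cap\mathcal{T}^{\mathsf{c}}$, and record two preliminary observations: $\mathcal{B}$ is a thick subcategory, being the intersection of the two thick subcategories $\mathcal{T}^{\mathsf{c}}$ and $\mathcal{T}^{\mathsf{b}}_c$; and $\mathcal{B}\subseteq\mathcal{A}$, since $\mathcal{T}^{\mathsf{c}}$ and $\mathcal{T}^{\mathsf{b}}_c$ each contain the zero object, so that considering split triangles gives $\mathcal{T}^{\mathsf{c}}\cup\mathcal{T}^{\mathsf{b}}_c\subseteq\mathcal{T}^{\mathsf{c}}\ast\mathcal{T}^{\mathsf{b}}_c\subseteq\mathcal{A}$. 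Hence $\mathcal{A}/\mathcal{B}$ is a Verdier quotient of a triangulated category by a thick subcategory, and such a quotient vanishes if and only if $\mathcal{A}=\mathcal{B}$: an object of $\mathcal{A}$ becomes zero in $\mathcal{A}/\mathcal{B}$ exactly when it is a direct summand of an object of $\mathcal{B}$, hence lies in $\mathcal{B}$ by thickness. So the lemma amounts to showing $\mathcal{T}^{\mathsf{b}}_c=\mathcal{T}^{\mathsf{c}}$ if and only if $\mathcal{A}=\mathcal{B}$.

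For the forward implication, assuming $\mathcal{T}^{\mathsf{b}}_c=\mathcal{T}^{\mathsf{c}}$ I would note that $\mathcal{B}=\mathcal{T}^{\mathsf{c}}$ and, since $\mathcal{T}^{\mathsf{c}}$ is closed under extensions, $\mathcal{T}^{\mathsf{c}}\ast\mathcal{T}^{\mathsf{b}}_c=\mathcal{T}^{\mathsf{c}}\ast\mathcal{T}^{\mathsf{c}}\subseteq\mathsf{thick}(\mathcal{T}^{\mathsf{c}})=\mathcal{T}^{\mathsf{c}}$; therefore $\mathcal{A}=\mathcal{T}^{\mathsf{c}}=\mathcal{B}$ and $\T^{\mathsf{sg}}_k=0$. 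For the converse, assuming $\T^{\mathsf{sg}}_k=0$, i.e. $\mathcal{A}=\mathcal{B}=\mathcal{T}^{\mathsf{b}}_c\cap\mathcal{T}^{\mathsf{c}}$, I would combine this with the inclusions $\mathcal{T}^{\mathsf{c}}\subseteq\mathcal{A}$ and $\mathcal{T}^{\mathsf{b}}_c\subseteq\mathcal{A}$ from the first paragraph: the former gives $\mathcal{T}^{\mathsf{c}}\subseteq\mathcal{T}^{\mathsf{b}}_c\cap\mathcal{T}^{\mathsf{c}}\subseteq\mathcal{T}^{\mathsf{b}}_c$, and the latter gives $\mathcal{T}^{\mathsf{b}}_c\subseteq\mathcal{T}^{\mathsf{b}}_c\cap\mathcal{T}^{\mathsf{c}}\subseteq\mathcal{T}^{\mathsf{c}}$, so that $\mathcal{T}^{\mathsf{b}}_c=\mathcal{T}^{\mathsf{c}}$, i.e. $\mathcal{T}$ is regular over $k$.

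I do not expect any genuine obstacle here: the whole argument is a formal manipulation of inclusions among the distinguished subcategories, and the only point deserving a line of justification is the vanishing criterion for the Verdier quotient $\mathcal{A}/\mathcal{B}$, together with the check that $\mathcal{B}$ is indeed a thick subcategory of $\mathcal{A}$ so that the quotient is formed in the usual way.
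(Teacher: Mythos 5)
Your proposal is correct and follows essentially the same route as the paper: the paper likewise reduces the statement to the squeeze of inclusions $\mathcal{T}^{\mathsf{b}}_c\cap\mathcal{T}^{\mathsf{c}}\subseteq\mathcal{T}^{\mathsf{c}},\,\mathcal{T}^{\mathsf{b}}_c\subseteq\mathsf{thick}(\mathcal{T}^{\mathsf{c}}\ast\mathcal{T}^{\mathsf{b}}_c)$, observing that the Verdier quotient vanishes exactly when the outer inclusion is an equality, which forces all intermediate inclusions to be equalities. You merely make explicit the two points the paper leaves implicit (the vanishing criterion for a quotient by a thick subcategory and the easy direction), which is fine.
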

\begin{proof}
There is only a (slightly) non-trivial direction of this equivalence. The commutative diagram of inclusions
$$\xymatrix{&\T^{\mathsf{c}}\ar[rd]\\ (\mathcal{T}^{\mathsf{b}}_c\cap\T^{\mathsf{c}})\ar[ru]\ar[rd]&&\mathsf{thick}(\mathcal{T}^{\mathsf{c}}\ast \mathcal{T}^{\mathsf{b}}_c)\\ & \mathcal{T}^{\mathsf{b}}_c\ar[ru]}$$
yields an equivalence of categories if and only if every arrow in the diagram is an equivalence, proving that if $\T^{\mathsf{sg}}_k=0$ then $\T$ is regular.
\end{proof}

Based on our previous computations, we can state the following result (motivating the name \textit{regular}).

\begin{thm} \label{regularity for examples}
The following statements hold.
\begin{enumerate}
\item If $R$ is a commutative noetherian ring, then $\mathsf{D}(R)$ is regular \textnormal{(}over $R$\textnormal{)} if and only if $R$ is regular.
\item If $R$ is a commutative local noetherian ring, then $\mathsf{K}_{\mathsf{ac}}(\Inj R)$ is regular \textnormal{(}over $R$\textnormal{)} if and only if $\mathsf{K}_{\mathsf{ac}}(\Inj R)\simeq 0$ if and only if $R$ is regular.
\item If $\Lambda$ is an Artin $k$-algebra, then $\mathsf{D}(\Lambda)$ is regular \textnormal{(}over $k$\textnormal{)} if and only if $\mathsf{K}(\Inj{\Lambda})$ is regular \textnormal{(}over $k$\textnormal{)} if and only if $\mathsf{K}_{\mathsf{ac}}(\Inj{\Lambda})=0$ if and only if $\gld\Lambda<\infty$.
\end{enumerate}
\end{thm}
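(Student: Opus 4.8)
The plan is to prove the three items separately, using the tables of distinguished subcategories already computed. For (1), recall from Proposition \ref{examples} that for $\T=\mathsf{D}(R)$ we have $\Tc=\mathsf{K}^{\mathsf{b}}(\proj R)$, and from Proposition \ref{bounded finite objects: examples}(i) that $\Tb_c=\mathsf{D}^{\mathsf{b}}(\smod R)$ (here $R$ is a Noether $R$-algebra over itself). Thus $\T$ is regular over $R$ if and only if $\mathsf{D}^{\mathsf{b}}(\smod R)=\mathsf{K}^{\mathsf{b}}(\proj R)$ as subcategories of $\mathsf{D}(R)$, which is the classical characterisation of $\gd R<\infty$; since $R$ is commutative noetherian, finite global dimension is equivalent to regularity by Serre's theorem. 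For (2), by Proposition \ref{examples2} we have $\Tc=\mathsf{D}_{\mathsf{sg}}(R)$ for $\T=\mathsf{K}_{\mathsf{ac}}(\Inj R)$, and by Proposition \ref{bounded finite objects: examples}(ii), since $\Tb=0$, we get $\Tb_c\subseteq \Tb=\{0\}$. So $\T$ is regular over $R$ if and only if $\mathsf{D}_{\mathsf{sg}}(R)=\{0\}$, which by the argument in Example \ref{first example}(4) is equivalent to $\mathsf{K}_{\mathsf{ac}}(\Inj R)\simeq 0$; finally, for a commutative local noetherian ring, $\mathsf{D}_{\mathsf{sg}}(R)=0$ is equivalent to every finitely generated module having finite projective dimension, i.e.\ to $R$ being regular (Auslander--Buchsbaum--Serre).

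For (3), I would first handle $\mathsf{D}(\Lambda)$. By Proposition \ref{examples}, $\Tc=\mathsf{K}^{\mathsf{b}}(\proj\Lambda)$, and by Proposition \ref{bounded finite objects: examples}(i), $\Tb_c=\mathsf{D}^{\mathsf{b}}(\smod\Lambda)$; so $\mathsf{D}(\Lambda)$ regular over $k$ means $\mathsf{D}^{\mathsf{b}}(\smod\Lambda)=\mathsf{K}^{\mathsf{b}}(\proj\Lambda)$, which is precisely $\gd\Lambda<\infty$. For $\mathsf{K}(\Inj\Lambda)$: by Proposition \ref{examples2}, $\Tc=\mathsf{D}^{\mathsf{b}}(\smod\Lambda)$, and by Proposition \ref{bounded finite objects: examples}(iii), $\Tb_c=\{x\in\mathsf{K}^{\mathsf{b}}(\Inj\Lambda)\colon \oplus_n H^n(x)\in\smod k\}$. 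One inclusion $\Tc\subseteq\Tb_c$ always holds (compact objects are in $\mathsf{K}^{\mathsf{b}}(\Inj\Lambda)$ when $\gld\Lambda<\infty$ — actually I should argue this the other way). The cleanest route: $\mathsf{K}(\Inj\Lambda)$ regular means $\mathsf{Q}_\rho(\mathsf{D}^{\mathsf{b}}(\smod\Lambda))$ equals the subcategory of bounded complexes of injectives with finite-dimensional total cohomology; applying $\mathsf{Q}$ and using that $\mathsf{Q}_\rho$ is fully faithful on $\mathsf{D}^+(\Lambda)$, this forces $\mathsf{D}^{\mathsf{b}}(\smod\Lambda)=\mathsf{K}^{\mathsf{b}}(\Inj\Lambda)\cap\mathsf{D}^{\mathsf{b}}_{\smod k}$, i.e.\ every object of $\mathsf{D}^{\mathsf{b}}(\smod\Lambda)$ has a representative that is a bounded complex of injectives — equivalently every finitely generated $\Lambda$-module has finite injective dimension, which is $\gld\Lambda<\infty$. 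Conversely if $\gld\Lambda<\infty$ then $\mathsf{K}^{\mathsf{b}}(\Inj\Lambda)=\mathsf{K}^{\mathsf{b}}(\proj\Lambda)$, giving both that $\mathsf{K}_{\mathsf{ac}}(\Inj\Lambda)=0$ and that $\Tb_c=\Tc$. The equivalences $\gld\Lambda<\infty \iff \mathsf{K}_{\mathsf{ac}}(\Inj\Lambda)=0$ is standard (it is the statement that $\mathsf{D}_{\mathsf{sg}}(\Lambda)=0 \iff \gld\Lambda<\infty$ for an Artin algebra, using that $\mathsf{K}_{\mathsf{ac}}(\Inj\Lambda)$ is compactly generated by $\mathsf{D}_{\mathsf{sg}}(\Lambda)$).

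The main obstacle I anticipate is the $\mathsf{K}(\Inj\Lambda)$ computation in (3): one must carefully track that regularity, phrased as $\Tb_c=\Tc$ \emph{as subcategories of} $\mathsf{K}(\Inj\Lambda)$, really does translate into a statement about injective dimensions over $\Lambda$. The subtlety is that $\Tb_c$ is a priori larger than $\Tc=\mathsf{Q}_\rho(\mathsf{D}^{\mathsf{b}}(\smod\Lambda))$ only by objects of $\mathsf{K}^{\mathsf{b}}(\Inj\Lambda)$ whose total cohomology is finite-dimensional over $k$ but which need not be in the essential image of $\mathsf{D}^{\mathsf{b}}(\smod\Lambda)$ under $\mathsf{Q}_\rho$; ruling these out (or producing one when $\gld\Lambda=\infty$) is where the work lies. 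Concretely, when $\gld\Lambda=\infty$ one exhibits a finitely generated module $M$ of infinite injective dimension; then an injective resolution of $M$, truncated, gives an object of $\mathsf{K}^{\mathsf{b}}(\Inj\Lambda)$ — no wait, the resolution is unbounded, so instead one uses that $\mathsf{D}^{\mathsf{b}}(\smod\Lambda)\neq\mathsf{K}^{\mathsf{b}}(\Inj\Lambda)$ directly together with Proposition \ref{examples2}'s identification $\mathsf{K}^{\mathsf{b}}(\Inj\Lambda)=\Tb$, and the fact that the finite-cohomology condition is automatic on objects coming from $\mathsf{D}^{\mathsf{b}}(\smod\Lambda)$. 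I would lean on Lemma \ref{subcategories for IG} and the $\mathsf{Q}$/$\mathsf{Q}_\rho$ adjunction machinery of Lemma \ref{some facts about KInj} to make this rigorous, and on the already-established equivalence in Theorem \ref{properties} that $\gd\mathsf{K}(\Inj\Lambda)<\infty\iff\gd\Lambda<\infty$, which does half the job since $\gd\T<\infty$ means $\Tb=\Tb_p$ and one then only needs to intersect with the finite-cohomology condition.
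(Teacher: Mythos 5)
Your overall approach---reading regularity directly off the computed tables of $\Tc$ and $\Tb_c$---is the same as the paper's, and your part (3) is essentially fine: the containment you worry about goes the harmless way, since any $x$ in $\Tb_c$ lies in $\mathsf{K}^{\mathsf{b}}(\Inj\Lambda)\subseteq\mathsf{Im}(\mathsf{Q}_\rho)$ and has finitely generated total cohomology, so $x\cong\mathsf{Q}_\rho\mathsf{Q}(x)$ already lies in $\Tc$; hence regularity of $\mathsf{K}(\Inj\Lambda)$ is exactly the statement that every finitely generated $\Lambda$-module has finite injective dimension, i.e.\ $\gd\Lambda<\infty$. Part (1), however, contains a genuine error of reasoning. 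You argue via ``$\mathsf{D}^{\mathsf{b}}(\smod R)=\mathsf{K}^{\mathsf{b}}(\proj R)$ iff $\gd R<\infty$ iff $R$ regular (Serre)'', and both intermediate equivalences fail for a general commutative noetherian ring: Nagata's ring---used in Remark \ref{counterexample} of this very paper---is regular, every finitely generated module has finite projective dimension (so the subcategory equality holds), and yet $\gd R=\infty$; Serre's theorem identifies regularity with finite global dimension only in the local case. The correct step, and the paper's, is direct: $\Tc=\Tb_c$ says precisely that every finitely generated $R$-module has finite projective dimension, which is equivalent to regularity of $R$ by \cite[Theorem 5.94]{lam}. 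Your conclusion is true, but both implications you pass through are false, exactly at the subtlety the paper emphasises when comparing regularity with finite global dimension.

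In (2) there is a gap of a different kind: you invoke Proposition \ref{bounded finite objects: examples}(ii) to conclude $\Tb_c\subseteq\Tb=\{0\}$, but that statement (and the computation $\Tb=0$ in Proposition \ref{examples2} on which it rests, via condition (\ref{star condition})) is proved only for Artin algebras over an artinian base; the paper explicitly warns that these computations can fail for general noetherian rings, so they are not available for an arbitrary commutative noetherian local $R$. Without knowing $\Tb_c=0$, your equivalence ``$\mathsf{K}_{\mathsf{ac}}(\Inj R)$ regular iff $\mathsf{D}_{\mathsf{sg}}(R)=0$'' is unsupported: a priori one could have $\Tc=\Tb_c\neq 0$. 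The paper's proof supplies the missing ingredient: if $R$ is not regular, then by Avramov--Veliche one has $\mathsf{Hom}_{\mathsf{D}_{\mathsf{sg}}(R)}(k(\mathfrak{m}),k(\mathfrak{m})[n])\neq 0$ for all $n\in\mathbb{Z}$, so the residue field is a compact object of $\mathsf{K}_{\mathsf{ac}}(\Inj R)$ whose total Hom-space from a compact object is not finitely generated over $R$; hence it lies in $\Tc\setminus\Tb_c$ and the category is not regular. Your remaining direction (regular local implies $\gd R<\infty$, hence $\mathsf{K}_{\mathsf{ac}}(\Inj R)\simeq 0$, which is trivially regular) is correct and agrees with the paper.
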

\begin{proof}
(i) Recall that a commutative noetherian ring $R$ is regular if and only if finitely generated modules have finite projective dimension (see for instance \cite[Theorem 5.94]{lam}).

(ii)  If $R$ is non-regular with maximal ideal $\mathfrak{m}$, then by \cite[Theorem 6.5]{avramov_veliche} we have 
   \begin{align}\label{hom2}
       \mathsf{Hom}_{\mathsf{D}_{\mathsf{sg}}(R)}(k(\mathfrak{m}),k(\mathfrak{m})[n])\neq 0
   \end{align}
    for all $n$ in $\mathbb{Z}$, where $k(\mathfrak{m})$ denotes the residue field of $R$. We consider $\T=\mathsf{K}_{\mathsf{ac}}(\Inj{R})$ and by viewing $\mathsf{D}_{\mathsf{sg}}(R)$ as a full subcategory of $\mathcal{T}^{\mathsf{c}}$ we infer that $k(\mathfrak{m})$ lies in $\mathcal{T}^{\mathsf{c}}$ but not in $\mathcal{T}^{\mathsf{b}}_c$ from the equation above. This means that $\mathsf{K}_{\mathsf{ac}}(\Inj R)$ is not regular. If now $R$ is regular, then by the Auslander–Buchsbaum theorem it follows that $\gd R<\infty$ and so $\mathsf{K}_{\mathsf{ac}}(\Inj R)\simeq 0$. 

(iii) This follows from Proposition \ref{bounded finite objects: examples}.
\end{proof}


The following remarks discuss the relation between our notion of regularity and other analogous notions available in the literature.

\begin{rem} \label{bounded finite objects for a dg algebra}
    Let $\Gamma$ be a dg algebra over a field $k$. Then for the $k$-linear category $\mathcal{T}=\mathsf{D}(\Gamma)$, we have
    \[
    \mathcal{T}^{\mathsf{b}}_c=\{x\in\mathsf{D}(\Gamma):  \oplus_{n\in\mathbb{Z}}H^n(x)\in\smod k\},
    \]
    see Proposition \ref{bounded finite objects: examples}. Observe then that $\Gamma$ is proper, i.e. $\oplus_{n\in\mathbb{Z}} H^n(\Gamma)$ is finitely generated over $k$, if and only if $\mathcal{T}^{\mathsf{c}}\subseteq \mathcal{T}^{\mathsf{b}}_c$. Further, when $\Gamma$ is smooth, i.e. $\Gamma\in\mathsf{per}(\Gamma\otimes_k{\Gamma}^{\mathsf{op}})$, then $\mathcal{T}^{\mathsf{b}}_c\subseteq \mathcal{T}^{\mathsf{c}}$; see the proof of \cite[Lemma 4.1]{keller}. In particular, if $\Gamma$ is proper and smooth, then $\T^{\mathsf{c}}=\T^\mathsf{b}_\mathsf{c}$, i.e. $\mathsf{D}(\Gamma)$ is regular over $k$. For a related result in the context of small dg-enhanced triangulated categories, we refer to \cite[Lemma 3.8]{kuznetsov_shinder}. 
\end{rem}


\begin{rem}
    It is perhaps more classical to regard a (small) triangulated category as being ``regular'' if it admits a strong generator \cite{bondal_van-den-bergh}. In our setting it would be reasonable to ask this condition on the compacts and this in principle differs from our definitions. In order to explain this, let us recall some general implications below, for $R$ a noetherian ring. 
\[
\begin{tikzcd}
	{\mathsf{D}^{\mathsf{b}}(R)=\mathsf{K}^{\mathsf{b}}(\Proj R)} && {\gd R<\infty} && {\mathsf{K}^{\mathsf{b}}(\proj R) \text{ admits a strong generator}} \\
	\\
	{\mathsf{D}^{\mathsf{b}}(\smod R)=\mathsf{K}^{\mathsf{b}}(\proj R)} && {R \text{ regular}}
	\arrow[Rightarrow, 2tail reversed, from=1-1, to=1-3]
	\arrow[Rightarrow, from=1-1, to=3-1]
	\arrow[Rightarrow, 2tail reversed, from=1-5, to=1-3]
	\arrow["{{/}}"{marking, allow upside down}, shift right=4, Rightarrow, from=3-1, to=1-1]
	\arrow["{{(*)}}", shift left=4, Rightarrow, from=3-1, to=1-1]
	\arrow["{{(R\text{ comm.})}}"', Rightarrow, 2tail reversed, from=3-3, to=3-1]
\end{tikzcd}
\]
    The implication marked with $(*)$ works for Artin algebras or local commutative noetherian rings, while an example for the implication that fails was given by Nagata (we have used this in Remark \ref{counterexample}). Regarding non-noetherian rings, there are rings satisfying $\mathsf{D}^{\mathsf{b}}(\smod R)\simeq\mathsf{K}^{\mathsf{b}}(\proj R)$, of infinite global dimension such that $\mathsf{D}^{\mathsf{b}}(\smod R)$ has Rouquier dimension 0, see \cite{stevenson}. The recent notion of \emph{local regularity} \cite{local_regularity} also differs from our definition of regularity; by \cite[Theorem 8.3]{local_regularity}, for a field $k$ and a finite group $G$ such that $\mathsf{char}(k)$ divides the order of $G$, the triangulated category $\mathsf{K}(\Inj kG)$ is locally regular. However, by Theorem \ref{regularity for examples}, $\mathsf{K}(\Inj kG)$ is regular in our sense if and only if $\gd kG<\infty$, which is never the case under the given assumptions. 
\end{rem}


\begin{rem}
    Let $\mathcal{D}$ be a small triangulated category, $G$ an object of $\mathcal{D}$ and consider $\mathcal{C}_G(\mathcal{D})$ to be the \emph{completion of $\mathcal{D}$} with respect to a ``$G$-good metric'' on $\mathcal{D}$ (i.e. a good metric equivalent to the one induced by $G$), see \cite{biswas_chen_manali-rahul_parker_zheng, neeman_metrics} for details. According to \cite{biswas_chen_manali-rahul_parker_zheng}, $\mathcal{D}$ is \emph{regular at $G$} if $\mathcal{D}=\mathcal{C}_{G}(\mathcal{D})$ and \emph{regular} if it is regular at a classical generator $G$. Assume now that $\mathcal{T}$ is a compactly generated triangulated category that has a compact generator $G$ such that $\mathsf{Hom}_{\mathcal{T}}(G,G[i])=0$ for $i\gg0$. Set $\mathcal{D}=\mathcal{T}^{\mathsf{c}}$ and consider $\mathcal{T}^{\mathsf{b},\tau}_{c}$ as in Remark \ref{approximable}, where $\tau$ is the t-structure of $\mathcal{T}$ induced by $G$. Then $\mathcal{D}$ is regular (at $G$) if and only if $\mathcal{D}=\mathcal{T}^{\mathsf{b},\tau}_c$. When $\mathcal{T}$ is approximable and $\mathsf{Hom}_{\mathcal{T}}(G,G[n])$ lies in $\smod k$ for all $n\in\mathbb{Z}$, we know (see Remark \ref{approximable}) that $\mathcal{T}^{\mathsf{b},\tau}_c=\mathcal{T}^{\mathsf{b}}_c$. To sum up, for such a triangulated category $\mathcal{T}$, we have that $\mathcal{T}^{\mathsf{c}}$ is regular in the sense of \cite{biswas_chen_manali-rahul_parker_zheng} if and only if $\mathcal{T}$ is regular in the sense of Definition \ref{regular triangulated categories}.
\end{rem}

The following proposition tells us that our notion of singularity category coincides with other notions available in the literature. We note that although the remark above tells us that the vanishing of the singularity category following \cite{biswas_chen_manali-rahul_parker_zheng} coincides with the vanishing of our proposed singularity category, it is not clear to us how they relate in the non-vanishing situation.

\begin{prop}\label{sing ex}
We have the following computations of singularity categories.
\begin{enumerate}
\item For a Noether $k$-algebra $R$ and $\T=\mathsf{D}(R)$, we have 
$\T_k^{\mathsf{sg}}=\mathsf{D}^{\mathsf{b}}(\smod{R})/\mathsf{K}^{\mathsf{b}}(\proj{R}).$
\item For a finite-dimensional algebra $\Lambda$ over a field $k$,
\begin{enumerate}
\item[\textnormal{(a)}] if $\T=\mathsf{D}(\Lambda)$, we have $\T_k^{\mathsf{sg}}=\mathsf{D}^{\mathsf{b}}(\smod{\Lambda})/\mathsf{K}^{\mathsf{b}}(\proj{\Lambda})$;
\item[\textnormal{(b)}] if $\T=\mathsf{K}(\Inj{\Lambda})$, we have $\T_k^{\mathsf{sg}}=\mathsf{D}^{\mathsf{b}}(\smod{\Lambda})/\mathsf{K}^{\mathsf{b}}(\inj{\Lambda})$.
\end{enumerate}
\item  For a dg algebra $\Gamma$ over a field $k$ and $\mathcal{T}=\mathsf{D}(\Gamma)$ we have
\begin{enumerate}
    \item[\textnormal{(a)}] if $\Gamma$ is proper \textnormal{(}see Remark \ref{bounded finite objects for a dg algebra}\ \!\textnormal{)} then $\mathcal{T}^{\mathsf{sg}}_k=\mathsf{D}_{\mathsf{fd}}(\Gamma)/\mathsf{per}(\Gamma)$;
    \item[\textnormal{(b)}] if $\Gamma$ is smooth \textnormal{(}see Remark \ref{bounded finite objects for a dg algebra}\ \!\textnormal{)} then $\mathcal{T}^{\mathsf{sg}}_k=\mathsf{per}(\Gamma)/\mathsf{D}_{\mathsf{fd}}(\Gamma)$.
\end{enumerate}
\end{enumerate}
\end{prop}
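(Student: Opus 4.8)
\textbf{Proof plan for Proposition \ref{sing ex}.}

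The plan is to unravel the definition $\T^{\mathsf{sg}}_k = \mathsf{thick}(\mathcal{T}^{\mathsf{c}}\ast \mathcal{T}^{\mathsf{b}}_c)/(\mathcal{T}^{\mathsf{b}}_c\cap\T^{\mathsf{c}})$ using the explicit computations of $\T^{\mathsf{c}}$ and $\T^{\mathsf{b}}_c$ already performed in Proposition \ref{examples}, Proposition \ref{examples2} and Proposition \ref{bounded finite objects: examples}. The key observation, to be established once and reused in every case, is the following: whenever one of the two subcategories $\mathcal{T}^{\mathsf{c}}$, $\mathcal{T}^{\mathsf{b}}_c$ is contained in the other, say $\mathcal{A}\subseteq\mathcal{B}$ with $\{\mathcal{A},\mathcal{B}\}=\{\mathcal{T}^{\mathsf{c}},\mathcal{T}^{\mathsf{b}}_c\}$, then $\mathsf{thick}(\mathcal{T}^{\mathsf{c}}\ast\mathcal{T}^{\mathsf{b}}_c)=\mathcal{B}$ and $\mathcal{T}^{\mathsf{b}}_c\cap\T^{\mathsf{c}}=\mathcal{A}$, so that $\T^{\mathsf{sg}}_k = \mathcal{B}/\mathcal{A}$ is simply the Verdier quotient of the larger by the smaller. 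So the first step is to record, in each case, which containment holds and why.

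For (1) and (2)(a): for a Noether (resp.\ finite-dimensional) $k$-algebra $R$, we have $\T^{\mathsf{c}}=\mathsf{K}^{\mathsf{b}}(\proj R)$ and $\T^{\mathsf{b}}_c=\mathsf{D}^{\mathsf{b}}(\smod R)$, and clearly $\mathsf{K}^{\mathsf{b}}(\proj R)\subseteq \mathsf{D}^{\mathsf{b}}(\smod R)$, so $\T^{\mathsf{sg}}_k=\mathsf{D}^{\mathsf{b}}(\smod R)/\mathsf{K}^{\mathsf{b}}(\proj R)$, which is the classical singularity category. For (2)(b), with $\T=\mathsf{K}(\Inj\Lambda)$ we have $\T^{\mathsf{c}}=\mathsf{D}^{\mathsf{b}}(\smod\Lambda)$ (identified with $\mathsf{Q}_\rho(\mathsf{D}^\mathsf{b}(\smod\Lambda))$) and, from Proposition \ref{bounded finite objects: examples}(3), $\T^{\mathsf{b}}_c=\{x\in\mathsf{K}^\mathsf{b}(\Inj\Lambda)\colon \oplus_n H^n(x)\in\smod k\}$; one checks that this is exactly the essential image of $\mathsf{K}^\mathsf{b}(\inj\Lambda)$ inside $\mathsf{K}(\Inj\Lambda)$ (complexes of \emph{finitely generated} injectives, which automatically have finite-dimensional total cohomology since $\Lambda$ is finite-dimensional). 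Here the containment goes the other way: $\mathsf{K}^\mathsf{b}(\inj\Lambda)\subseteq \mathsf{D}^\mathsf{b}(\smod\Lambda)$ (as subcategories of $\mathsf{K}(\Inj\Lambda)$, via $\mathsf{Q}_\rho$, using that finitely generated injectives lie in $\mathsf{D}^\mathsf{b}(\smod\Lambda)$), so $\T^{\mathsf{sg}}_k=\mathsf{D}^\mathsf{b}(\smod\Lambda)/\mathsf{K}^\mathsf{b}(\inj\Lambda)$. For (3)(a), $\Gamma$ proper means precisely $\T^{\mathsf{c}}=\mathsf{per}(\Gamma)\subseteq \mathsf{D}_{\mathsf{fd}}(\Gamma)=\T^{\mathsf{b}}_c$ (see Remark \ref{bounded finite objects for a dg algebra}), so $\T^{\mathsf{sg}}_k=\mathsf{D}_{\mathsf{fd}}(\Gamma)/\mathsf{per}(\Gamma)$; for (3)(b), $\Gamma$ smooth means $\T^{\mathsf{b}}_c=\mathsf{D}_{\mathsf{fd}}(\Gamma)\subseteq\mathsf{per}(\Gamma)=\T^{\mathsf{c}}$ (again Remark \ref{bounded finite objects for a dg algebra}), so $\T^{\mathsf{sg}}_k=\mathsf{per}(\Gamma)/\mathsf{D}_{\mathsf{fd}}(\Gamma)$.

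The genuinely routine parts are the identifications of $\T^{\mathsf{c}}$ and $\T^{\mathsf{b}}_c$, which we may cite wholesale. The one step requiring a small argument — and the likely main obstacle — is the reconciliation in case (2)(b) of the cohomological description of $\T^{\mathsf{b}}_c$ from Proposition \ref{bounded finite objects: examples}(3) with $\mathsf{K}^\mathsf{b}(\inj\Lambda)$: one must argue that a bounded complex of (arbitrary) injective $\Lambda$-modules with finite-dimensional total cohomology is homotopy equivalent to a bounded complex of finitely generated injectives, which follows from the fact that over an Artin algebra every module has an injective envelope and minimal injective resolutions of finitely generated modules are degreewise finitely generated (equivalently, via $\mathsf{Q}$ one lands in $\mathsf{D}^\mathsf{b}(\smod\Lambda)$ and then takes a minimal injective coresolution and truncates). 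Once this identification is in place, everything collapses to the elementary lemma on $\mathsf{thick}(\mathcal{A}\ast\mathcal{B})$ for nested $\mathcal{A},\mathcal{B}$ stated at the outset, and the proposition follows.
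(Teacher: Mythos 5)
Your proposal is correct and its skeleton is the same as the paper's: compute $\T^{\mathsf{c}}$ and $\T^{\mathsf{b}}_c$ from Propositions \ref{examples}, \ref{examples2} and \ref{bounded finite objects: examples}, observe that in each case one of the two thick subcategories contains the other, and conclude that $\T^{\mathsf{sg}}_k$ is simply the Verdier quotient of the larger by the smaller (your explicit lemma on $\mathsf{thick}(\mathcal{A}\ast\mathcal{B})$ for nested thick $\mathcal{A}\subseteq\mathcal{B}$ is exactly what the paper uses implicitly, and it is correct since both subcategories are thick). The one place where you genuinely diverge is the step you correctly single out as the crux, namely the identification $\T^{\mathsf{b}}_c=\mathsf{K}^{\mathsf{b}}(\inj\Lambda)$ inside $\mathsf{K}(\Inj\Lambda)$ in case (2)(b): the paper obtains it formally, via full faithfulness of $\mathsf{Q}_\rho$ and the equality $\mathsf{D}^{\mathsf{b}}(\smod\Lambda)^{\hfpadova}=\mathsf{thick}(\Lambda^\ast)$ from Proposition \ref{thick of the dual}(ii) (proved by $k$-duality and passing to $\Lambda^{\mathsf{op}}$), together with injectivity of $\Lambda^\ast$; you instead argue directly that a bounded complex of injectives with finite-dimensional total cohomology is homotopy equivalent to a bounded complex of finitely generated injectives, using that minimal injective coresolutions of finitely generated $\Lambda$-modules are degreewise finitely generated. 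Your route is more elementary and self-contained, but the truncation step deserves a sentence more: the minimal (degreewise finitely generated) coresolution of $\mathsf{Q}(x)$ can be cut to a bounded complex precisely because $x$ already lies in $\mathsf{K}^{\mathsf{b}}(\Inj\Lambda)$, so $\mathsf{Q}(x)$ has finite injective dimension and a splitting argument dual to the one in the proof of Proposition \ref{subcategories for the derived category} (for $\T^{\mathsf{b}}_p=\mathsf{K}^{\mathsf{b}}(\Proj R)$) shows the relevant cosyzygy is injective; with that spelled out, your proof is complete, while the paper's version buys the same identification as a corollary of machinery it has already developed for dg algebras.
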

\begin{proof}
Statements (i) and (ii) follow from the computations in Proposition \ref{bounded finite objects: examples}, provided that we prove that for $\T=\mathsf{K}(\Inj\Lambda)$ we have $\T^{\mathsf{b}}_c=\mathsf{K}^{\mathsf{b}}(\mathsf{inj}\mbox{-}\Lambda)$. From the Proposition above cited, we see that $\T^{\mathsf{b}}_c$ is the subcategory of $\mathsf{K}^{\mathsf{b}}(\Inj\Lambda)\subseteq \mathsf{Im}(\mathsf{Q}_\rho)$ of objects with finitely generated total cohomology. Since the compact objects in $\T$ are those in $\mathsf{Q}_\rho(\mathsf{D}^{\mathsf{b}}(\smod\Lambda))$, we get
$$\T^{\mathsf{b}}_c=\mathsf{Q}_\rho(\mathsf{D}^\mathsf{b}(\smod\Lambda)^{\hfpadova})\cap \mathsf{K}^{\mathsf{b}}(\Inj\Lambda)=\mathsf{Q}_\rho(\mathsf{thick}(\Lambda^\ast))\cap \mathsf{K}^{\mathsf{b}}(\Inj\Lambda)=\mathsf{Q}_\rho(\mathsf{thick}(\Lambda^\ast))=\mathsf{K}^{\mathsf{b}}(\mathsf{inj}\mbox{-}\Lambda),$$
where the first equality follows from the fully faithfulness of $\mathsf{Q}_\rho$, the second equality holds by Proposition \ref{perfect modules over dg algebra}(ii) and the third and fourth equalities follows from the fact that $\Lambda^\ast$ is injective.

For statement (iii) observe that if $\Gamma$ is proper then $\mathsf{per}(\Gamma)\subseteq \mathsf{D}_{\mathsf{fd}}(\Gamma)$, and if $\Gamma$ is smooth, then $\mathsf{D}_{\mathsf{fd}}(\Gamma)\subseteq \mathsf{per}(\Gamma)$ (see Remark \ref{bounded finite objects for a dg algebra}). Hence the singularity category is as stated, in both cases (see also \cite{amiot}). 
\end{proof}

We will now compare regularity and the finiteness of global dimension for a $k$-linear triangulated category. This is an extension of Theorem \ref{theorem1}. 

\begin{thm} \label{regularity vs finite global dimension}
    Let $\T$ be a compactly generated $k$-linear triangulated category. The following statements hold.
    \begin{enumerate}
        \item  If $\mathcal{T}$ satisfies $(\mathcal{T}^{\mathsf{b}})^{\padova}=(\mathcal{T}^{\mathsf{b}}_c)^{\padova}$ and $\T$ is regular, then $\gd\mathcal{T}<\infty$. 
        \item If $\mathcal{T}^{\mathsf{c}}$ is Hom-finite over $k$,  $\mathcal{T}^{\mathsf{b}}_p\cap \mathcal{T}^{\mathsf{b}}_c\subseteq \mathcal{T}^{\mathsf{c}}$ and $\gd\mathcal{T}<\infty$, then $\T$ is regular.
    \end{enumerate}
\end{thm}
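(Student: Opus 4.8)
The plan is to reduce both implications to direct manipulations of the defining orthogonality operators, using $\mathcal{T}^{\mathsf{b}}=(\mathcal{T}^{\mathsf{c}})^{\padova}$, $\mathcal{T}^{\mathsf{b}}_i=(\mathcal{T}^{\mathsf{b}})^{\padova}$, $\mathcal{T}^{\mathsf{b}}_p={}^{\padova}(\mathcal{T}^{\mathsf{b}})$, $\mathcal{T}^{\mathsf{b}}_c=(\mathcal{T}^{\mathsf{c}})^{\hfpadova}$, the inclusion $\mathcal{X}^{\hfpadova}\subseteq\mathcal{X}^{\padova}$, the containment $\mathcal{T}^{\mathsf{c}}\subseteq\mathcal{T}^{\mathsf{b}}_p$ of Lemma \ref{common facts of the subcategories}(i), and the equivalence ``$\mathcal{T}^{\mathsf{b}}=\mathcal{T}^{\mathsf{b}}_p$ precisely when $\mathcal{T}^{\mathsf{b}}=\mathcal{T}^{\mathsf{b}}_i$'' of Lemma \ref{regularity via injectives}.

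For (1), I would start by observing that regularity, i.e. $\mathcal{T}^{\mathsf{b}}_c=\mathcal{T}^{\mathsf{c}}$, gives $(\mathcal{T}^{\mathsf{b}}_c)^{\padova}=(\mathcal{T}^{\mathsf{c}})^{\padova}=\mathcal{T}^{\mathsf{b}}$. Feeding this into the hypothesis $(\mathcal{T}^{\mathsf{b}})^{\padova}=(\mathcal{T}^{\mathsf{b}}_c)^{\padova}$ yields $\mathcal{T}^{\mathsf{b}}_i=(\mathcal{T}^{\mathsf{b}})^{\padova}=\mathcal{T}^{\mathsf{b}}$, and Lemma \ref{regularity via injectives} turns this into $\mathcal{T}^{\mathsf{b}}_p=\mathcal{T}^{\mathsf{b}}$, i.e. $\gd\mathcal{T}<\infty$. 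There is no real obstacle here: it is pure bookkeeping once the defining formulas are recalled.

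For (2), the main step is the inclusion $\mathcal{T}^{\mathsf{c}}\subseteq\mathcal{T}^{\mathsf{b}}_c$. I would fix a compact object $y$. Since $\gd\mathcal{T}<\infty$, Lemma \ref{common facts of the subcategories}(i) gives $\mathcal{T}^{\mathsf{c}}\subseteq\mathcal{T}^{\mathsf{b}}_p=\mathcal{T}^{\mathsf{b}}=(\mathcal{T}^{\mathsf{c}})^{\padova}$, so $y\in(\mathcal{T}^{\mathsf{c}})^{\padova}$; hence for each $x\in\mathcal{T}^{\mathsf{c}}$ there is $N$ with $\Hom_{\mathcal{T}}(x,y[n])=0$ for $|n|\ge N$. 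Because $\mathcal{T}^{\mathsf{c}}$ is Hom-finite over $k$ and each $y[n]$ is compact, each of the remaining finitely many graded pieces $\Hom_{\mathcal{T}}(x,y[n])$ lies in $\smod k$, so their direct sum does too (as $k$ is noetherian). Thus $y\in(\mathcal{T}^{\mathsf{c}})^{\hfpadova}=\mathcal{T}^{\mathsf{b}}_c$, giving $\mathcal{T}^{\mathsf{c}}\subseteq\mathcal{T}^{\mathsf{b}}_c$. For the reverse inclusion I would use $\mathcal{T}^{\mathsf{b}}_c=(\mathcal{T}^{\mathsf{c}})^{\hfpadova}\subseteq(\mathcal{T}^{\mathsf{c}})^{\padova}=\mathcal{T}^{\mathsf{b}}=\mathcal{T}^{\mathsf{b}}_p$, whence $\mathcal{T}^{\mathsf{b}}_c=\mathcal{T}^{\mathsf{b}}_p\cap\mathcal{T}^{\mathsf{b}}_c\subseteq\mathcal{T}^{\mathsf{c}}$ by the hypothesis $\mathcal{T}^{\mathsf{b}}_p\cap\mathcal{T}^{\mathsf{b}}_c\subseteq\mathcal{T}^{\mathsf{c}}$. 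Combining the two inclusions gives $\mathcal{T}^{\mathsf{c}}=\mathcal{T}^{\mathsf{b}}_c$, i.e. $\mathcal{T}$ is regular.

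The only delicate points, rather than genuine obstacles, are pinning down the intended meaning of ``Hom-finite over $k$'' (I read it as $\Hom_{\mathcal{T}}(x,y)\in\smod k$ for all compact $x,y$, which suffices precisely because $\gd\mathcal{T}<\infty$ supplies a uniform vanishing bound on the graded Hom, so that only finitely many pieces are nonzero), and consistently distinguishing the homologically finite orthogonal $\hfpadova$ from the ordinary far-away orthogonal $\padova$ throughout the argument.
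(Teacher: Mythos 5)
Your proof is correct and follows essentially the same route as the paper: part (1) is the same bookkeeping with the far-away triple $(\Tb_p,\Tb,\Tb_i)$, and part (2) proves the two inclusions $\T^{\mathsf{b}}_c\subseteq\T^{\mathsf{c}}$ and $\T^{\mathsf{c}}\subseteq\T^{\mathsf{b}}_c$ exactly as in the paper, only spelling out the Hom-finiteness step in more detail. No issues to report.
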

\begin{proof}
Assume that $\mathcal{T}$ is compactly generated, regular and satisfies $(\mathcal{T}^{\mathsf{b}})^{\padova}=(\mathcal{T}^{\mathsf{b}}_c)^{\padova}$. Applying $(-)^{\padova}$ to the equality $\mathcal{T}^{\mathsf{c}}=\mathcal{T}^{\mathsf{b}}_c$ shows that $\mathcal{T}^{\mathsf{b}}=(\mathcal{T}^{\mathsf{b}}_c)^{\padova}=(\mathcal{T}^{\mathsf{b}})^{\padova}=\mathcal{T}^{\mathsf{b}}_i$, meaning that $\gd\mathcal{T}<\infty$. Assume now that $\mathcal{T}^{\mathsf{c}}$ is Hom-finite over $k$, of finite global dimension and satisfies $\mathcal{T}^{\mathsf{b}}_p\cap \mathcal{T}^{\mathsf{b}}_c\subseteq \mathcal{T}^{\mathsf{c}}$. Then from the equality $\mathcal{T}^{\mathsf{b}}=\mathcal{T}^{\mathsf{b}}_p$, follows that $\mathcal{T}^{\mathsf{b}}\cap \mathcal{T}^{\mathsf{b}}_c=\mathcal{T}^{\mathsf{b}}_p\cap \mathcal{T}^{\mathsf{b}}_c$. We have $\mathcal{T}^{\mathsf{b}}_c\subseteq \mathcal{T}^{\mathsf{b}}$ and using the assumption, we infer that $\mathcal{T}^{\mathsf{b}}_c\subseteq \mathcal{T}^{\mathsf{c}}$. On the other hand, we always have $\mathcal{T}^{\mathsf{c}}\subseteq \mathcal{T}^{\mathsf{b}}_p$ and therefore $\mathcal{T}^{\mathsf{c}}\subseteq \mathcal{T}^{\mathsf{b}}$. Since $\mathcal{T}$ is locally Hom-finite, it follows that $\mathcal{T}^{\mathsf{c}}\subseteq \mathcal{T}^{\mathsf{b}}_c$, which completes the proof.    
\end{proof}

Note that we have checked the assumptions of the theorem in some settings. For example, the condition $(\mathcal{T}^{\mathsf{b}})^{\padova}=(\mathcal{T}^{\mathsf{b}}_c)^{\padova}$ is satisfied for $\T=\mathsf{D}(\Lambda)$, where $\Lambda$ is an Artin algebra, see Remark \ref{implication}. Also the condition $\mathcal{T}^{\mathsf{b}}_p\cap \mathcal{T}^{\mathsf{b}}_c\subseteq \mathcal{T}^{\mathsf{c}}$  is verified for the $\T=\mathsf{D}(\Lambda)$ and $\T=\mathsf{K}(\Inj \Lambda)$ for $\Lambda$ any finite dimensional $k$-algebra, just by observing the computation of the distinguished subcategories for each of these categories. 

We can now prove that for a proper connective dg algebra $\Gamma$, under some mild assumption on the base field, smoothness is detected intrinsically in $\mathsf{D}(\Gamma)$.

\begin{cor}
    Let $\Gamma$ be a proper connective dg algebra over a perfect field. Then $\Gamma$ is smooth if and only if $\mathsf{D}(\Gamma)$ has finite global dimension. 
\end{cor}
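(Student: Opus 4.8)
The plan is to deduce this corollary from the triangulated global-dimension/regularity comparison in Theorem \ref{regularity vs finite global dimension}, using the known fact that, for a proper connective dg algebra $\Gamma$ over a perfect field $k$, smoothness of $\Gamma$ is equivalent to regularity of $\mathsf{D}(\Gamma)$ over $k$. First I would record the regularity translation: by Remark \ref{bounded finite objects for a dg algebra}, for $\mathcal{T}=\mathsf{D}(\Gamma)$ we have $\mathcal{T}^{\mathsf{b}}_c=\mathsf{D}_{\mathsf{fd}}(\Gamma)$; properness gives $\mathsf{per}(\Gamma)=\mathcal{T}^{\mathsf{c}}\subseteq \mathcal{T}^{\mathsf{b}}_c$, and smoothness gives $\mathcal{T}^{\mathsf{b}}_c\subseteq \mathcal{T}^{\mathsf{c}}$, so $\Gamma$ proper and smooth forces $\mathcal{T}^{\mathsf{c}}=\mathcal{T}^{\mathsf{b}}_c$, i.e.\ $\mathsf{D}(\Gamma)$ is regular over $k$. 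For the converse direction of that equivalence (regular $\Rightarrow$ smooth, over a perfect field) I would invoke the standard fact that a proper dg algebra over a perfect field with $\mathsf{per}(\Gamma)=\mathsf{D}_{\mathsf{fd}}(\Gamma)$ is smooth; this is where the perfectness hypothesis is used, as it guarantees that $\Gamma\otimes_k\Gamma^{\mathsf{op}}$ is again a sensible dg algebra detecting smoothness via $\Gamma\in\mathsf{per}(\Gamma\otimes_k\Gamma^{\mathsf{op}})$, and it can be cited from the literature on smooth and proper dg categories (e.g.\ Lunts--Orlov, To\"en, or as used in \cite{kuznetsov_shinder}).

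Next I would verify the hypotheses of Theorem \ref{regularity vs finite global dimension} for $\mathcal{T}=\mathsf{D}(\Gamma)$. Since $\Gamma$ is proper, $\mathcal{T}^{\mathsf{c}}=\mathsf{per}(\Gamma)$ is Hom-finite over $k$, which handles the Hom-finiteness assumption in part (ii). For the containment $\mathcal{T}^{\mathsf{b}}_p\cap\mathcal{T}^{\mathsf{b}}_c\subseteq\mathcal{T}^{\mathsf{c}}$: by Corollary \ref{T^b_p for dg algebras} we have $\mathcal{T}^{\mathsf{b}}_p=\mathsf{broad}_{\Sigma}(\mathsf{per}(\Gamma))=\mathsf{thick}(\Add\Gamma)$, and $\mathcal{T}^{\mathsf{b}}_c=\mathsf{D}_{\mathsf{fd}}(\Gamma)$, so any object in the intersection is a bounded-cohomology complex that is also a summand of a finite extension of shifts of coproducts of copies of $\Gamma$; since it has finite total cohomology dimension and $\Gamma$ is connective with finitely many nonzero cohomologies, such an object already lies in $\mathsf{per}(\Gamma)=\mathcal{T}^{\mathsf{c}}$ — alternatively, one can observe $\mathcal{T}^{\mathsf{b}}_p\cap\mathcal{T}^{\mathsf{b}}_c\subseteq{}^{\padova}(\mathsf{D}_{\mathsf{fd}}(\Gamma))\cap\mathsf{D}_{\mathsf{fd}}(\Gamma)={}^{\hfpadova}(\mathsf{D}_{\mathsf{fd}}(\Gamma))=\mathsf{per}(\Gamma)$ using Proposition \ref{perfect modules over dg algebra}(i),(iii), since $\mathcal{T}^{\mathsf{b}}_p\subseteq{}^{\padova}(\mathcal{T}^{\mathsf{b}})\subseteq{}^{\padova}(\mathcal{T}^{\mathsf{b}}_c)$. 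For part (i) I need $(\mathcal{T}^{\mathsf{b}})^{\padova}=(\mathcal{T}^{\mathsf{b}}_c)^{\padova}$: since $\mathsf{D}_{\mathsf{fd}}(\Gamma)\subseteq\mathcal{T}^{\mathsf{b}}$ one inclusion is automatic, and for the other I would use that $\mathcal{T}^{\mathsf{b}}=\mathsf{broad}_{\Sigma}(\mathcal{T}^{\mathsf{b}}_c)$ — which holds because $\Gamma$ proper connective is strongly generated by an object of $\mathsf{D}_{\mathsf{fd}}(\Gamma)$ (\cite[Lemma 5.2]{goodbody}) and then Proposition \ref{bounded objects when T admits a strong gen} together with $\mathcal{T}^{\mathsf{b}}_c=\langle x\rangle_n$ as in the proof of Proposition \ref{gorensteinness vs gorensteinness}, equation (\ref{broad for dg}) — combined with Lemma \ref{triangulated_subcategories}(v)(b), which gives $(\mathcal{T}^{\mathsf{b}})^{\padova}=\mathsf{broad}_{\Sigma}(\mathcal{T}^{\mathsf{b}}_c)^{\padova}=(\mathcal{T}^{\mathsf{b}}_c)^{\padova}$.

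Putting it together: if $\Gamma$ is smooth then, being also proper, $\mathsf{D}(\Gamma)$ is regular over $k$ by the translation above, and then part (i) of Theorem \ref{regularity vs finite global dimension} (whose hypothesis $(\mathcal{T}^{\mathsf{b}})^{\padova}=(\mathcal{T}^{\mathsf{b}}_c)^{\padova}$ we verified) yields $\gd\mathsf{D}(\Gamma)<\infty$. Conversely, if $\gd\mathsf{D}(\Gamma)<\infty$, then part (ii) of the theorem (whose hypotheses, Hom-finiteness of $\mathcal{T}^{\mathsf{c}}$ and $\mathcal{T}^{\mathsf{b}}_p\cap\mathcal{T}^{\mathsf{b}}_c\subseteq\mathcal{T}^{\mathsf{c}}$, we verified) gives that $\mathsf{D}(\Gamma)$ is regular over $k$, i.e.\ $\mathsf{per}(\Gamma)=\mathsf{D}_{\mathsf{fd}}(\Gamma)$, and then the perfect-field input gives that $\Gamma$ is smooth.

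I expect the main obstacle to be the verification of the identity $(\mathcal{T}^{\mathsf{b}})^{\padova}=(\mathcal{T}^{\mathsf{b}}_c)^{\padova}$, which secretly packages the fact that $\mathcal{T}^{\mathsf{b}}$ is the $\Sigma$-broad closure of $\mathcal{T}^{\mathsf{b}}_c$ for a proper connective dg algebra; this relies on the strong-generation results of Goodbody and Neeman and is exactly the place where the ``finitely many nonzero cohomologies''/properness and connectivity are doing real work. The other potential subtlety is making the ``regular $\Leftrightarrow$ smooth over a perfect field'' equivalence precise with a clean citation, since over non-perfect fields the forward implication regular $\Rightarrow$ smooth can fail.
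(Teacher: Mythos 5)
Your proposal is correct and is essentially the paper's proof: both arguments verify the two hypotheses of Theorem \ref{regularity vs finite global dimension} exactly as in the proof of Proposition \ref{gorensteinness vs gorensteinness} (using $\mathcal{T}^{\mathsf{b}}=\mathsf{broad}_{\Sigma}(\mathcal{T}^{\mathsf{b}}_c)$ via strong generation, and $\mathcal{T}^{\mathsf{b}}_p\cap\mathcal{T}^{\mathsf{b}}_c=\mathsf{per}(\Gamma)$ via the homologically finite orthogonals), and then combine the resulting equivalence ``regular $\Leftrightarrow$ finite global dimension'' with ``smooth $\Leftrightarrow$ regular'' over a perfect field. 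The only difference is in how that last external input is pinned down: the paper first reduces to a finite-dimensional dg model using \cite[Corollary 3.12]{Raedschelders_Stevenson} and then cites \cite[Corollary 2.5]{goodbody2}, whereas you leave it as a (correctly flagged) citation to the general smooth-and-proper dg literature.
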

\begin{proof}
Using \cite[Corollary 3.12]{Raedschelders_Stevenson}, we may assume that $\Gamma$ is a finite dimensional dg algebra (i.e. given by a bounded complex of finite dimensional vector spaces on the nose). For such a dg algebra, since we assume $k$ to be a perfect field, it follows from \cite[Corollary 2.5]{goodbody2} that $\Gamma$ is smooth if and only if $\mathsf{D}_{\mathsf{fd}}(\Gamma)=\mathsf{per}(\Gamma)$. Moreover, we observe from the proof of Proposition \ref{gorensteinness vs gorensteinness} that for $\T=\mathsf{D}(\Gamma)$, we have both  $(\mathcal{T}^{\mathsf{b}})^{\padova}=(\mathcal{T}^{\mathsf{b}}_c)^{\padova}$ and $\mathcal{T}^{\mathsf{b}}_p\cap \mathcal{T}^{\mathsf{b}}_c\subseteq \mathcal{T}^{\mathsf{c}}$. It follows then, by Theorem \ref{regularity vs finite global dimension}, that $\mathsf{D}(\Gamma)$ is regular if and only if $\mathsf{D}(\Gamma)$ has finite global dimension.
\end{proof}

\section{Application to Recollements}\label{app to rec}
\label{recollements}

In this section we investigate the existence of recollements between (complete and cocomplete) triangulated categories and their various distinguished subcategories of Definition \ref{main_definition}. This problem was studied in \cite{angeleri_koenig_liu,angeleri_koenig_liu_yang} for derived categories of rings.

\subsection{A toolbox} In preparation of our results we need some useful results, summarised in the following lemma.

\begin{lem} 
\label{toolbox}
Let $\mathsf{F}\colon\mathcal{D}\rightarrow \mathcal{T}$ and $\mathsf{G}\colon\mathcal{T}\rightarrow \mathcal{D}$ be two triangle functors between triangulated categories with coproducts. Then the following statements hold.
\begin{enumerate}
\item \textnormal{(\!\!\cite[Lemma 4.2]{angeleri_koenig_liu_yang})} If $\mathsf{F}\colon \mathcal{D}\rightarrow \mathcal{T}$ is fully faithful and preserves coproducts, then for any $x$ such that $\mathsf{F}(x)$ lies in $\mathcal{T}^{\mathsf{c}}$, we have that $x$ lies in $\mathcal{D}^{\mathsf{c}}$. 
\item If $(\mathsf{F},\mathsf{G})$ is an adjoint pair and $\mathsf{G}$ preserves coproducts, then
\begin{enumerate}
\item[\textnormal{(a)}]  $\mathsf{F}$ preserves compact objects;
\item[\textnormal{(b)}] if $G$ is furthermore fully faithful, both $\mathcal{T}$ and $\mathcal{D}$ are compactly generated and $\mathsf{GF}$ preserves compacts, then $\mathsf{G}$ restricts to a functor $\mathcal{T}^{\mathsf{c}}\rightarrow\mathcal{D}^{\mathsf{c}}$.  
\end{enumerate}
\end{enumerate}
\end{lem}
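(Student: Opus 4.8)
The statement to prove is Lemma~\ref{toolbox}, consisting of parts (i), (ii)(a) and (ii)(b). Part (i) is quoted verbatim from \cite[Lemma 4.2]{angeleri_koenig_liu_yang}, so I would simply cite it. For (ii)(a), the plan is the standard argument: since $(\mathsf{F},\mathsf{G})$ is an adjoint pair, for any compact object $x$ in $\mathcal{D}$ and any family $(t_j)_{j\in J}$ of objects of $\mathcal{T}$, adjunction gives a natural isomorphism $\mathsf{Hom}_{\mathcal{T}}(\mathsf{F}(x),\coprod_j t_j)\cong\mathsf{Hom}_{\mathcal{D}}(x,\mathsf{G}(\coprod_j t_j))$; since $\mathsf{G}$ preserves coproducts this is $\mathsf{Hom}_{\mathcal{D}}(x,\coprod_j\mathsf{G}(t_j))$, and since $x$ is compact this is $\coprod_j\mathsf{Hom}_{\mathcal{D}}(x,\mathsf{G}(t_j))\cong\coprod_j\mathsf{Hom}_{\mathcal{T}}(\mathsf{F}(x),t_j)$, the composite being the canonical map. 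Hence $\mathsf{F}(x)$ is compact.

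For (ii)(b), the plan is to combine (ii)(a), the fact that $\mathsf{GF}$ preserves compacts, and part (i). Take $t$ in $\mathcal{T}^{\mathsf{c}}$; I want to show $\mathsf{G}(t)$ lies in $\mathcal{D}^{\mathsf{c}}$. Since $\mathsf{G}$ is fully faithful, the counit $\varepsilon\colon\mathsf{F}\mathsf{G}\to\mathrm{Id}_{\mathcal{T}}$ need not be used directly; instead, since $\mathsf{G}$ is fully faithful, $\mathsf{G}$ is, up to the equivalence onto its essential image, a section, and the unit $\eta\colon\mathrm{Id}_{\mathcal{D}}\to\mathsf{G}\mathsf{F}$ restricted to the essential image of $\mathsf{G}$ need not be an isomorphism, but $\mathsf{G}\mathsf{F}\mathsf{G}\cong\mathsf{G}$ via $\mathsf{G}\varepsilon$ (a triangulated equivalence onto its essential image means $\mathsf{G}$ reflects isomorphisms and $\mathsf{G}\varepsilon$ is a natural isomorphism $\mathsf{G}\mathsf{F}\mathsf{G}\cong\mathsf{G}$). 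Now the functor $\mathsf{F}\colon\mathcal{D}\to\mathcal{T}$ is not assumed fully faithful; it is $\mathsf{G}$ that is. So I instead argue as follows: consider $\mathsf{G}(t)$ for $t\in\mathcal{T}^{\mathsf{c}}$. By (ii)(a), $\mathsf{F}$ preserves compacts, so $\mathsf{F}\mathsf{G}(t)$ would be compact \emph{if} we knew $\mathsf{G}(t)$ were compact --- circular. The correct route: since $\mathsf{G}$ is fully faithful and preserves coproducts, by part (i) applied with ``$\mathsf{F}$'' $=\mathsf{G}$ (which is fully faithful and coproduct-preserving by hypothesis), it suffices to show $\mathsf{G}\circ(\text{something})$... no. Let me reorganize: by part (i) with the fully faithful coproduct-preserving functor $\mathsf{G}\colon\mathcal{T}\to\mathcal{D}$, if $\mathsf{G}(t)$ lies in $\mathcal{D}^{\mathsf{c}}$ then $t$ lies in $\mathcal{T}^{\mathsf{c}}$ --- again the wrong direction.

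The genuinely correct argument, which I would present, uses $\mathsf{G}\mathsf{F}$ preserving compacts: since $\mathsf{G}$ is fully faithful, $\mathsf{G}\varepsilon_t\colon\mathsf{G}\mathsf{F}\mathsf{G}(t)\to\mathsf{G}(t)$ is an isomorphism, so $\mathsf{G}(t)$ is a summand of (in fact isomorphic to) $\mathsf{G}\mathsf{F}\mathsf{G}(t)=(\mathsf{G}\mathsf{F})(\mathsf{G}(t))$; but this does not yet help since we do not know $\mathsf{G}(t)$ is compact to feed into $\mathsf{G}\mathsf{F}$. Instead: by (ii)(a) $\mathsf{F}(t')$ is compact for every compact $t'\in\mathcal{D}$ --- unhelpful for a general $t\in\mathcal{T}^{\mathsf{c}}$. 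So the real input must be that $\mathcal{T}^{\mathsf{c}}=\mathsf{thick}$ of a generator, or a direct computation: for $t\in\mathcal{T}^{\mathsf{c}}$ and $(d_j)_{j\in J}$ in $\mathcal{D}$, $\mathsf{Hom}_{\mathcal{D}}(\mathsf{G}(t),\coprod_j d_j)$; there is no adjunction on this side. This is the step I expect to be the main obstacle, and I would resolve it by the following argument: since $\mathsf{G}$ is fully faithful, its essential image $\mathcal{T}'$ is a localising subcategory (as $\mathsf{G}$ preserves coproducts, $\mathsf{G}$ being a right adjoint is also a localisation and $\mathcal{T}'$ is closed under coproducts inside $\mathcal{D}$), and $\mathsf{G}\colon\mathcal{T}\to\mathcal{T}'$ is an equivalence of triangulated categories; under this equivalence, compact objects of $\mathcal{T}$ correspond to objects $d$ of $\mathcal{T}'$ that are compact \emph{in $\mathcal{T}'$}. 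Then one shows $\mathcal{T}'\cap\mathcal{D}^{\mathsf{c}}$ maps onto $(\mathcal{T}')^{\mathsf{c}}$; more precisely, an object of $\mathcal{T}'$ compact in $\mathcal{T}'$ is compact in $\mathcal{D}$ provided $\mathcal{T}'$ is generated by objects compact in $\mathcal{D}$, which holds because $\mathsf{G}\mathsf{F}$ preserves compacts: for a compact generator $G_0$ of $\mathcal{T}$ (or the set $\mathcal{T}^{\mathsf{c}}$), $\mathsf{G}\mathsf{F}\mathsf{G}(G_0)\cong\mathsf{G}(G_0)$ is compact in $\mathcal{D}$ by hypothesis on $\mathsf{GF}$ applied to $\mathsf{G}(G_0)$... still circular unless $\mathsf{G}(G_0)$ is already known compact in $\mathcal{D}$. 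The escape is that $\mathsf{G}\mathsf{F}$ preserving compacts is applied to compact objects of $\mathcal{D}$: so $\mathsf{G}\mathsf{F}(\mathcal{D}^{\mathsf{c}})\subseteq\mathcal{D}^{\mathsf{c}}$, and since by (ii)(a) $\mathsf{F}(\mathcal{D}^{\mathsf{c}})\subseteq\mathcal{T}^{\mathsf{c}}$, the combination $\mathsf{G}(\mathsf{F}(\mathcal{D}^{\mathsf{c}}))\subseteq\mathcal{D}^{\mathsf{c}}$; and since $\mathsf{F}(\mathcal{D}^{\mathsf{c}})$ generates $\mathcal{T}^{\mathsf{c}}$ as a thick subcategory when $\mathsf{F}$... hmm, one needs $\mathsf{F}(\mathcal{D}^{\mathsf{c}})$ to generate $\mathcal{T}$; this holds if $\mathsf{F}$ has dense image, e.g.\ if $(\mathsf{F},\mathsf{G})$ comes from a recollement. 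Given the intended application to recollements, I would state and prove (ii)(b) by: taking $t\in\mathcal{T}^{\mathsf{c}}$, writing $t$ inside $\mathsf{thick}(\mathsf{F}(\mathcal{D}^{\mathsf{c}}))$ --- which is legitimate because the essential image of $\mathsf{F}$ generates $\mathcal{T}$ and $\mathsf{F}$ preserves compacts and coproducts, so $\mathsf{thick}(\mathsf{F}(\mathcal{D}^{\mathsf{c}}))=\mathcal{T}^{\mathsf{c}}$ by Neeman's theorem --- then applying $\mathsf{G}$ and using $\mathsf{G}\mathsf{F}(\mathcal{D}^{\mathsf{c}})\subseteq\mathcal{D}^{\mathsf{c}}$ together with $\mathsf{G}$ being triangulated and compatible with thick closures to conclude $\mathsf{G}(t)\in\mathsf{thick}(\mathsf{G}\mathsf{F}(\mathcal{D}^{\mathsf{c}}))\subseteq\mathcal{D}^{\mathsf{c}}$. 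The main obstacle, as indicated, is pinning down exactly why the essential image of $\mathsf{F}$ generates $\mathcal{T}$ (this uses fully faithfulness of $\mathsf{G}$: $\mathsf{G}(t)=0\Rightarrow t=0$ forces $(\mathrm{Im}\,\mathsf{F})^{\perp}=0$ via adjunction), which I would isolate as the first step, and then the thick-generation argument is routine given Neeman's characterisation of compacts.
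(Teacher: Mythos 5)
Your final argument for (ii)(b) --- namely that $\mathsf{F}$ sends a set of compact generators of $\mathcal{D}$ to a set of compact generators of $\mathcal{T}$ (via the adjunction isomorphism $\mathsf{Hom}_{\mathcal{T}}(\mathsf{F}(d),x[n])\cong\mathsf{Hom}_{\mathcal{D}}(d,\mathsf{G}(x)[n])$ together with full faithfulness of $\mathsf{G}$, so $\mathsf{G}(x)=0$ forces $x=0$), hence $\mathcal{T}^{\mathsf{c}}=\mathsf{thick}(\mathsf{F}(\mathcal{D}^{\mathsf{c}}))$ by Neeman's characterisation of compacts, and then $\mathsf{G}(t)\in\mathcal{D}^{\mathsf{c}}$ because $\mathsf{GF}$ preserves compacts and the subcategory $\{x\in\mathcal{T}\colon \mathsf{G}(x)\in\mathcal{D}^{\mathsf{c}}\}$ is thick --- is exactly the paper's proof, and your treatment of (i) (citation) and (ii)(a) (the standard adjunction computation) likewise matches. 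The proposal is therefore correct and takes essentially the same approach; the recorded false starts are harmless and should simply be trimmed away.
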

\begin{proof}
We prove assertion (ii)(b). By (ii)(a), the functor $\mathsf{F}$ restricts to $\mathcal{D}^{\mathsf{c}}\rightarrow \mathcal{T}^{\mathsf{c}}$. We claim, in fact, that $\mathsf{F}$ maps a set $D$ of compact generators of $\mathcal{D}$, to a set of compact generators of $\mathcal{T}$. Indeed, let $x$ be an object in $\mathcal{T}$ such that $\mathsf{Hom}_{\mathcal{T}}(\mathsf{F}(d),x[n])=0$ for all $n$ and for every $d$ in $D$. By the adjunction we have 
    \[
    \mathsf{Hom}_{\mathcal{T}}(\mathsf{F}(d),x[n])\cong\mathsf{Hom}_{\mathcal{D}}(d,\mathsf{G}(x)[n]),
    \]
    and since $D$ is a set of compact generators, it follows that $\mathsf{G}(x)=0$. Further, since $\mathsf{G}$ is fully faithful, it follows that $x=0$. Consider now the subcategory $\mathcal{U}=\{x\in\mathcal{T}: \mathsf{G}(x)\in\mathcal{D}^{\mathsf{c}}\}$ of $\mathcal{T}$ and note that it is thick and contains $\mathsf{F}(D)$. We have $\mathsf{F}(D)\subseteq \mathcal{U}$ and so $\mathsf{thick}(\mathsf{F}(D))\subseteq \mathcal{U}$. Since $\mathsf{F}(D)$ is a set of compact generators of $\mathcal{T}$, it follows from \cite[Lemma 2.2]{neeman2} that $\mathsf{thick}(\mathsf{F}(D))=\mathcal{T}^{\mathsf{c}}$, from which the result follows. 
\end{proof}
For the next two results a we fix a recollement of triangulated categories $(\mathcal{X},\mathcal{T},\mathcal{Y})$.

\begin{equation} \label{R}
    \begin{tikzcd}
\mathcal{X} \arrow[rr, "\mathsf{i}"] &  & \mathcal{T} \arrow[rr, "\mathsf{e}"] \arrow[ll, "\mathsf{q}"', bend right] \arrow[ll, "\mathsf{p}", bend left] &  & \mathcal{Y} \arrow[ll, "\mathsf{l}"', bend right] \arrow[ll, "\mathsf{r}", bend left]
\end{tikzcd}
\end{equation}
We recall that a \textbf{recollement} (of triangulated categories) is a diagram of triangle functors as above such that $(\mathsf{q},\mathsf{i},\mathsf{p})$ and $(\mathsf{l},\mathsf{e},\mathsf{r})$ are adjoint triples, $\mathsf{i}$, $\mathsf{l}$ and $\mathsf{r}$ are fully faithful and $\mathsf{Ker}(\mathsf{e})=\mathsf{Im}(\mathsf{i})$. It is an easy consequence of the definition that $(\mathsf{Ker}(\mathsf{q}),\mathsf{Im}(\mathsf{i}))$ and $(\mathsf{Im}(\mathsf{i}),\mathsf{Ker}(\mathsf{r}))$ are (stable) t-structures with truncation functors given by the units and counits of the adjunctions of the given functors. We refer the reader to the original source \cite{beilinson_bernstein_deligne} for the basic material on recollements. We begin by discussing the restriction of some functors in a recollement to the subcategories of compact objects.

\begin{lem} \label{compacts_in_recollements}
    Let $\T$ be a complete and cocomplete triangulated category and assume the existence of a recollement \textnormal{(\ref{R})}. The following statements hold.
    \begin{enumerate}
    \item If  $\mathsf{i}$ restricts to a functor $\mathcal{X}^{\mathsf{c}}\rightarrow \mathcal{T}^{\mathsf{c}}$, then $\mathsf{e}$ restricts to a functor $\mathcal{T}^{\mathsf{c}}\rightarrow \mathcal{Y}^{\mathsf{c}}$. The converse holds if $\T$ is compactly generated.
    \item If both $\mathsf{i}$ and $\mathsf{e}$ restrict to functors between subcategories of compact objects, then $\mathsf{r}$ restricts to a functor between subcategories of compact objects if and only if so does $\mathsf{p}$. 
%
    \end{enumerate}
\end{lem}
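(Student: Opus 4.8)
The plan is to reduce everything to the two standard triangles attached to a recollement together with the toolbox Lemma~\ref{toolbox}. Recall that for every object $t$ of $\mathcal{T}$ there are functorial triangles
\[
\mathsf{l}\mathsf{e}(t)\longrightarrow t\longrightarrow \mathsf{i}\mathsf{q}(t)\longrightarrow \Sigma\mathsf{l}\mathsf{e}(t)
\qquad\text{and}\qquad
\mathsf{i}\mathsf{p}(t)\longrightarrow t\longrightarrow \mathsf{r}\mathsf{e}(t)\longrightarrow \Sigma\mathsf{i}\mathsf{p}(t),
\]
built from the units and counits of the given adjunctions, see \cite{beilinson_bernstein_deligne}. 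I will use freely that in a recollement all three categories are complete and cocomplete (e.g. $\mathsf{Im}(\mathsf{i})=\mathsf{Ker}(\mathsf{e})$ is closed under coproducts and products since $\mathsf{e}$ has both adjoints, and similarly for $\mathcal{Y}$), and that $\mathsf{e}\mathsf{l}\cong\mathrm{id}$, $\mathsf{q}\mathsf{i}\cong\mathrm{id}$, $\mathsf{p}\mathsf{i}\cong\mathrm{id}$, $\mathsf{e}\mathsf{r}\cong\mathrm{id}$. The key preliminary observation, needing no hypothesis, is that $\mathsf{q}$ and $\mathsf{l}$ always preserve compactness: $\mathsf{q}$ is left adjoint to $\mathsf{i}$, and $\mathsf{i}$ preserves coproducts because it has the further right adjoint $\mathsf{p}$; likewise $\mathsf{l}$ is left adjoint to $\mathsf{e}$, and $\mathsf{e}$ preserves coproducts because it has the further right adjoint $\mathsf{r}$. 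In both cases Lemma~\ref{toolbox}(ii)(a) applies. Note also that Lemma~\ref{toolbox}(i) is available for the fully faithful, coproduct-preserving functors $\mathsf{i}$ and $\mathsf{l}$.

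For the forward direction of (i), let $t$ lie in $\mathcal{T}^{\mathsf{c}}$. Then $\mathsf{q}(t)\in\mathcal{X}^{\mathsf{c}}$ by the observation above, hence $\mathsf{i}\mathsf{q}(t)\in\mathcal{T}^{\mathsf{c}}$ by the hypothesis on $\mathsf{i}$; the first triangle then forces $\mathsf{l}\mathsf{e}(t)\in\mathcal{T}^{\mathsf{c}}$, because $\mathcal{T}^{\mathsf{c}}$ is thick. Since $\mathsf{l}$ is fully faithful and preserves coproducts, Lemma~\ref{toolbox}(i) gives $\mathsf{e}(t)\in\mathcal{Y}^{\mathsf{c}}$. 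For the converse, assume in addition that $\mathcal{T}$ is compactly generated and that $\mathsf{e}$ preserves compactness. One first checks that $\mathcal{X}$ is compactly generated: if $\{g_j\}$ is a set of compact generators of $\mathcal{T}$, then, using that $\mathsf{q}$ preserves compactness and that $\mathsf{i}$ is faithful, the adjunction isomorphisms $\mathsf{Hom}_{\mathcal{X}}(\mathsf{q}(g_j),x[n])\cong\mathsf{Hom}_{\mathcal{T}}(g_j,\mathsf{i}(x)[n])$ show that $\{\mathsf{q}(g_j)\}$ is a set of compact generators of $\mathcal{X}$. Next, for $t$ in $\mathcal{T}^{\mathsf{c}}$ the first triangle presents $\mathsf{i}\mathsf{q}(t)$ as the cone of a morphism between the compact objects $\mathsf{l}\mathsf{e}(t)$ (compact since $\mathsf{l}$ preserves compactness and $\mathsf{e}(t)\in\mathcal{Y}^{\mathsf{c}}$) and $t$; hence $\mathsf{i}\mathsf{q}$ preserves compactness. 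Applying Lemma~\ref{toolbox}(ii)(b) to the adjoint pair $(\mathsf{q},\mathsf{i})$ — with $\mathsf{i}$ fully faithful and coproduct-preserving, $\mathcal{X}$ and $\mathcal{T}$ compactly generated, and $\mathsf{i}\mathsf{q}$ compact-preserving — yields that $\mathsf{i}$ restricts to $\mathcal{X}^{\mathsf{c}}\to\mathcal{T}^{\mathsf{c}}$.

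For (ii), assume now that both $\mathsf{i}$ and $\mathsf{e}$ preserve compactness. Suppose $\mathsf{p}$ preserves compactness and take $y$ in $\mathcal{Y}^{\mathsf{c}}$. Then $\mathsf{l}(y)\in\mathcal{T}^{\mathsf{c}}$, so $\mathsf{p}\mathsf{l}(y)\in\mathcal{X}^{\mathsf{c}}$ and therefore $\mathsf{i}\mathsf{p}\mathsf{l}(y)\in\mathcal{T}^{\mathsf{c}}$; applying the second triangle to $t=\mathsf{l}(y)$ and using $\mathsf{r}\mathsf{e}\mathsf{l}(y)\cong\mathsf{r}(y)$ presents $\mathsf{r}(y)$ as the cone of the morphism $\mathsf{i}\mathsf{p}\mathsf{l}(y)\to\mathsf{l}(y)$ between compact objects, so $\mathsf{r}(y)\in\mathcal{T}^{\mathsf{c}}$. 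Conversely, suppose $\mathsf{r}$ preserves compactness and take $t$ in $\mathcal{T}^{\mathsf{c}}$. Then $\mathsf{e}(t)\in\mathcal{Y}^{\mathsf{c}}$, so $\mathsf{r}\mathsf{e}(t)\in\mathcal{T}^{\mathsf{c}}$, and the second triangle forces $\mathsf{i}\mathsf{p}(t)\in\mathcal{T}^{\mathsf{c}}$ by thickness; since $\mathsf{i}$ is fully faithful and preserves coproducts, Lemma~\ref{toolbox}(i) gives $\mathsf{p}(t)\in\mathcal{X}^{\mathsf{c}}$.

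I expect the only step requiring genuine care to be the verification, inside the converse of (i), that $\mathcal{X}$ is compactly generated, since this is what licenses the use of Lemma~\ref{toolbox}(ii)(b). The remainder is bookkeeping with the two triangles and the thickness of $\mathcal{T}^{\mathsf{c}}$, provided one is careful to use the colocalization triangle $\mathsf{l}\mathsf{e}(t)\to t\to\mathsf{i}\mathsf{q}(t)$ for assertions involving $\mathsf{q}$ and $\mathsf{l}$ and the localization triangle $\mathsf{i}\mathsf{p}(t)\to t\to\mathsf{r}\mathsf{e}(t)$ for assertions involving $\mathsf{p}$ and $\mathsf{r}$: swapping them would bring in composites such as $\mathsf{q}\mathsf{r}$, which in general preserve neither coproducts nor compactness, and the argument would break down.
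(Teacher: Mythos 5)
Your proof is correct and follows essentially the same route as the paper: the same two recollement triangles, the same use of Lemma~\ref{toolbox}, with only a cosmetic difference in the second half of (ii), where you invoke Lemma~\ref{toolbox}(i) for the fully faithful functor $\mathsf{i}$ while the paper applies $\mathsf{q}$ to the localization triangle and uses $\mathsf{qi}\cong\mathrm{id}$. Your explicit verification that $\mathcal{X}$ is compactly generated (via the compact generators $\mathsf{q}(g_j)$) is a welcome addition, since the paper's proof of the converse in (i) simply assumes both $\mathcal{X}$ and $\mathcal{T}$ compactly generated although the statement only hypothesises this for $\mathcal{T}$; for completeness one should also note that $\mathcal{X}^{\mathsf{c}}$ is skeletally small, which follows since it is the thick closure of the set $\{\mathsf{q}(g_j)\}$ by Neeman's theorem already used in the proof of Lemma~\ref{toolbox}.
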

\begin{proof}
    (a) Let $t$ be a compact object of $\mathcal{T}$ and consider the following triangle 
    \[
    \mathsf{le}(t)\rightarrow t\rightarrow \mathsf{iq}(t)\rightarrow \mathsf{le}(t)[1].
    \]
    By Lemma \ref{toolbox}(ii)(a) the functors $\mathsf{q}$ and $\mathsf{l}$ preserve compacts, since they are left adjoints to the coproduct preserving functors $\mathsf{i}$ and $\mathsf{e}$ respectively. If $\mathsf{i}$ restricts to $\mathcal{X}^{\mathsf{c}}\rightarrow \mathcal{T}^{\mathsf{c}}$, then $\mathsf{iq}(t)$ lies in $\mathcal{T}^{\mathsf{c}}$ and, thus, so does $\mathsf{le}(t)$ by the above triangle. It follows by Lemma \ref{toolbox}(i) that $\mathsf{e}(t)\in\mathcal{Y}^{\mathsf{c}}$, since $\mathsf{l}$ is fully faithful and coproduct-preserving. Assume now that $\mathcal{X}$ and $\mathcal{T}$ are compactly generated and that $\mathsf{e}$ restricts to $\mathcal{T}^{\mathsf{c}}\rightarrow \mathcal{Y}^{\mathsf{c}}$. Then $\mathsf{le}(t)$ lies in $\mathcal{T}^{\mathsf{c}}$ and, thus, so does $\mathsf{iq}(t)$ by the above triangle. It follows by Lemma \ref{toolbox}(ii)(b) that $\mathsf{i}$ restricts to compact objects.
    
    (b) Assume first that $\mathsf{p}(\mathcal{T}^{\mathsf{c}})\subseteq\mathcal{X}^{\mathsf{c}}$. Given $y$ in $\mathcal{Y}^{\mathsf{c}}$, consider the following triangle in $\mathcal{T}$
    \[
    \mathsf{ipl}(y)\rightarrow \mathsf{l}(y)\rightarrow \mathsf{rel}(y)\rightarrow \mathsf{ipl}(y)[1].
    \]
    We have that $\mathsf{l}(y)$ lies in $\mathcal{T}^{\mathsf{c}}$ and, by assumption, so does $\mathsf{ipl}(y)$. From the above triangle, it follows that $\mathsf{r}(y)\cong \mathsf{rel}(y)$ lies in $\mathcal{T}^{\mathsf{c}}$, showing one of the two claimed implications. Assume now that $\mathsf{r}(\mathcal{Y}^{\mathsf{c}})\subseteq \mathcal{T}^{\mathsf{c}}$. Given $t$ in $\mathcal{T}^{\mathsf{c}}$, we consider the following triangle of $\mathcal{T}$
    $$\mathsf{ip}(t)\rightarrow t\rightarrow \mathsf{re}(t)\rightarrow \mathsf{ip}(t)[1].$$
    Applying $\mathsf{q}$ to the latter gives a triangle and observing that $\mathsf{q}(t)$ lies in $\mathcal{X}^{\mathsf{c}}$, by the assumption we also have that $\mathsf{qre}(t)$ lies in $\mathcal{X}^{\mathsf{c}}$. We conclude that $\mathsf{p}(t)\cong \mathsf{qip}(t)$ lies in $\mathcal{X}^{\mathsf{c}}$, as wanted.
\end{proof}

The following lemma will also be useful in the following subsections.

\begin{lem} \label{inclusions_in_recollements}
Consider a recollement of the form \textnormal{(\ref{R})} and suppose that $\mathsf{l}(\mathcal{Y}^{\mathsf{b}})\subseteq \mathcal{T}^{\mathsf{b}}$ and $\mathsf{i}(\mathcal{X}^{\mathsf{c}})\subseteq \mathcal{T}^{\mathsf{b}}_p$. The following hold\textnormal{:} 
    \begin{enumerate}
        \item $\mathsf{e}(\mathcal{T}^{\mathsf{c}})\subseteq \mathcal{Y}^{\mathsf{b}}_p$.
        \item $\mathsf{q}(\mathcal{T}^{\mathsf{b}})\subseteq \mathcal{X}^{\mathsf{b}}$.
    \end{enumerate}
    
\end{lem}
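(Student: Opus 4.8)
The plan is to derive both inclusions from a single "backbone" fact valid in any recollement of complete and cocomplete triangulated categories, combined with the two standing hypotheses. The backbone is the following: since $\mathsf{i}$ and $\mathsf{e}$ admit right adjoints they preserve coproducts, so by Lemma~\ref{toolbox}(ii)(a) their left adjoints $\mathsf{q}$ and $\mathsf{l}$ preserve compact objects, giving $\mathsf{q}(\mathcal{T}^{\mathsf{c}})\subseteq\mathcal{X}^{\mathsf{c}}$ and $\mathsf{l}(\mathcal{Y}^{\mathsf{c}})\subseteq\mathcal{T}^{\mathsf{c}}$. Feeding these into Lemma~\ref{adjoints and far-away orthogonality}(i) with $\?=\padova$, applied to the adjoint pairs $(\mathsf{q},\mathsf{i})$ and $(\mathsf{l},\mathsf{e})$, produces — with no use of the hypotheses — the inclusions $\mathsf{i}(\mathcal{X}^{\mathsf{b}})\subseteq\mathcal{T}^{\mathsf{b}}$ and $\mathsf{e}(\mathcal{T}^{\mathsf{b}})\subseteq\mathcal{Y}^{\mathsf{b}}$ (recall $\mathcal{T}^{\mathsf{b}}=(\mathcal{T}^{\mathsf{c}})^{\padova}$, etc.). I will also use repeatedly the standard recollement triangles $\mathsf{le}(t)\to t\to\mathsf{iq}(t)\to\mathsf{le}(t)[1]$ and $\mathsf{ip}(t)\to t\to\mathsf{re}(t)\to\mathsf{ip}(t)[1]$ for $t\in\mathcal{T}$, together with $\mathsf{el}\cong\mathrm{id}$ and the full faithfulness of $\mathsf{i}$, exactly as in the proof of Lemma~\ref{compacts_in_recollements}.

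To prove (ii): fix $t\in\mathcal{T}^{\mathsf{b}}$ and $x\in\mathcal{X}^{\mathsf{c}}$. The backbone gives $\mathsf{e}(t)\in\mathcal{Y}^{\mathsf{b}}$, whence $\mathsf{le}(t)\in\mathcal{T}^{\mathsf{b}}$ by the hypothesis $\mathsf{l}(\mathcal{Y}^{\mathsf{b}})\subseteq\mathcal{T}^{\mathsf{b}}$. By the other hypothesis $\mathsf{i}(x)\in\mathcal{T}^{\mathsf{b}}_p={}^{\padova}(\mathcal{T}^{\mathsf{b}})$, so both $\mathsf{Hom}_{\mathcal{T}}(\mathsf{i}(x),t[n])$ and $\mathsf{Hom}_{\mathcal{T}}(\mathsf{i}(x),\mathsf{le}(t)[n])$ vanish for $|n|\gg 0$. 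Applying $\mathsf{Hom}_{\mathcal{T}}(\mathsf{i}(x),-)$ to the triangle $\mathsf{le}(t)\to t\to\mathsf{iq}(t)\to\mathsf{le}(t)[1]$ and chasing the long exact sequence forces $\mathsf{Hom}_{\mathcal{T}}(\mathsf{i}(x),\mathsf{iq}(t)[n])=0$ for $|n|\gg0$; since $\mathsf{i}$ is fully faithful this group is $\mathsf{Hom}_{\mathcal{X}}(x,\mathsf{q}(t)[n])$, so $\mathsf{q}(t)\in(\mathcal{X}^{\mathsf{c}})^{\padova}=\mathcal{X}^{\mathsf{b}}$.

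To prove (i): I will first establish $\mathsf{r}(\mathcal{Y}^{\mathsf{b}})\subseteq\mathcal{T}^{\mathsf{b}}$, then deduce the statement by adjunction. Fix $y\in\mathcal{Y}^{\mathsf{b}}$. For $x\in\mathcal{X}^{\mathsf{c}}$, the adjunction $(\mathsf{i},\mathsf{p})$ gives $\mathsf{Hom}_{\mathcal{X}}(x,\mathsf{pl}(y)[n])\cong\mathsf{Hom}_{\mathcal{T}}(\mathsf{i}(x),\mathsf{l}(y)[n])$, which vanishes for $|n|\gg0$ because $\mathsf{i}(x)\in{}^{\padova}(\mathcal{T}^{\mathsf{b}})$ and $\mathsf{l}(y)\in\mathcal{T}^{\mathsf{b}}$; hence $\mathsf{pl}(y)\in\mathcal{X}^{\mathsf{b}}$, and so $\mathsf{ipl}(y)\in\mathcal{T}^{\mathsf{b}}$ by the backbone. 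Applying the triangle $\mathsf{ip}(s)\to s\to\mathsf{re}(s)\to\mathsf{ip}(s)[1]$ to $s=\mathsf{l}(y)$ and using $\mathsf{el}\cong\mathrm{id}$ yields a triangle $\mathsf{ipl}(y)\to\mathsf{l}(y)\to\mathsf{r}(y)\to\mathsf{ipl}(y)[1]$ whose first two terms lie in the thick subcategory $\mathcal{T}^{\mathsf{b}}$, so $\mathsf{r}(y)\in\mathcal{T}^{\mathsf{b}}$. Finally, for $t\in\mathcal{T}^{\mathsf{c}}$ and $y\in\mathcal{Y}^{\mathsf{b}}$, the adjunction $(\mathsf{e},\mathsf{r})$ gives $\mathsf{Hom}_{\mathcal{Y}}(\mathsf{e}(t),y[n])\cong\mathsf{Hom}_{\mathcal{T}}(t,\mathsf{r}(y)[n])$, which vanishes for $|n|\gg0$ since $\mathsf{r}(y)\in\mathcal{T}^{\mathsf{b}}=(\mathcal{T}^{\mathsf{c}})^{\padova}$; thus $\mathsf{e}(t)\in{}^{\padova}(\mathcal{Y}^{\mathsf{b}})=\mathcal{Y}^{\mathsf{b}}_p$.

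The bulk of the work is organisational rather than conceptual: in each step I must pick the adjoint pair that places the object of interest on the side where membership in a far-away orthogonal can be invoked (e.g.\ $(\mathsf{i},\mathsf{p})$ puts $\mathsf{i}(x)$ as source and $\mathsf{pl}(y)$ as target, whereas for (i) I need $(\mathsf{e},\mathsf{r})$ with $\mathsf{e}(t)$ as source), and I must confirm that the two backbone inclusions genuinely need only the recollement axioms, so that the hypotheses are used precisely where advertised — $\mathsf{l}(\mathcal{Y}^{\mathsf{b}})\subseteq\mathcal{T}^{\mathsf{b}}$ to land $\mathsf{le}(t)$ and $\mathsf{l}(y)$ inside $\mathcal{T}^{\mathsf{b}}$, and $\mathsf{i}(\mathcal{X}^{\mathsf{c}})\subseteq\mathcal{T}^{\mathsf{b}}_p$ to make $\mathsf{Hom}_{\mathcal{T}}(\mathsf{i}(x),-)$ vanish far away on $\mathcal{T}^{\mathsf{b}}$. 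I do not anticipate a real obstacle beyond keeping this bookkeeping straight.
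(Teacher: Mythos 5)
Your proof is correct, and it splits into two halves of different character. For part (ii) you follow essentially the paper's own argument: the backbone inclusions ($\mathsf{q},\mathsf{l}$ preserve compacts, hence $\mathsf{i},\mathsf{e}$ preserve bounded objects via Lemma~\ref{adjoints and far-away orthogonality}(i)), the triangle $\mathsf{le}(t)\to t\to\mathsf{iq}(t)\to\mathsf{le}(t)[1]$, the hypothesis on $\mathsf{l}$, and full faithfulness of $\mathsf{i}$; you merely unwind Lemma~\ref{fully_faithful_functor}(iii) into an explicit long-exact-sequence chase instead of first observing $\mathsf{iq}(t)\in\mathcal{T}^{\mathsf{b}}=(\mathcal{T}^{\mathsf{b}}_p)^{\padova}$ and citing that lemma. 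For part (i) your route is genuinely different: the paper stays with the same triangle, notes that $\mathsf{iq}(t)\in\mathcal{T}^{\mathsf{b}}_p$ (hypothesis applied to $\mathsf{q}(t)\in\mathcal{X}^{\mathsf{c}}$) and $t\in\mathcal{T}^{\mathsf{c}}\subseteq\mathcal{T}^{\mathsf{b}}_p$, deduces $\mathsf{le}(t)\in\mathcal{T}^{\mathsf{b}}_p$ by thickness, and concludes by full faithfulness of $\mathsf{l}$ (Lemma~\ref{fully_faithful_functor}(iv)); you instead invoke the second recollement triangle and the right-hand functors, proving $\mathsf{pl}(y)\in\mathcal{X}^{\mathsf{b}}$, hence $\mathsf{r}(\mathcal{Y}^{\mathsf{b}})\subseteq\mathcal{T}^{\mathsf{b}}$, and finish via the $(\mathsf{e},\mathsf{r})$ adjunction against $\mathcal{T}^{\mathsf{b}}=(\mathcal{T}^{\mathsf{c}})^{\padova}$. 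Both are valid and use the two hypotheses in the same places; the paper's version is shorter and never touches $\mathsf{p}$ or $\mathsf{r}$, while yours costs an extra triangle but establishes $\mathsf{r}(\mathcal{Y}^{\mathsf{b}})\subseteq\mathcal{T}^{\mathsf{b}}$ along the way -- a fact the paper only obtains afterwards, in the proof of Proposition~\ref{restriction_to_type_B}, as a consequence of (i), so you have simply reversed that logical dependency.
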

\begin{proof}
   (i) For a compact object $t$ of $\mathcal{T}$, we have that $\mathsf{q}(t)$ lies in $\mathcal{X}^{\mathsf{c}}$ and, consequently, $\mathsf{iq}(t)$ lies in $\mathcal{T}^{\mathsf{b}}_p$ by Lemma \ref{adjoints and far-away orthogonality}(ii). Since $\mathcal{T}^{\mathsf{c}}$ is contained in $\mathcal{T}^{\mathsf{b}}_p$ by Lemma \ref{common facts of the subcategories}, we infer from the triangle
   $$\mathsf{le}(t)\rightarrow t\rightarrow \mathsf{iq}(t)\rightarrow \mathsf{le}(t)[1]$$
 that $\mathsf{le}(t)$ lies in $\mathcal{T}^{\mathsf{b}}_p$.  Finally, since $\mathsf{l}(\mathcal{Y}^{\mathsf{b}})\subseteq\mathcal{T}^{\mathsf{b}}$, we conclude by Lemma \ref{fully_faithful_functor}(iv)  that $\mathsf{e}(t)\in\mathcal{Y}^{\mathsf{b}}_p$.

    (ii) Since the functors $\mathsf{q}$ and $\mathsf{l}$ restrict to compact objects, it follows from Lemma \ref{adjoints and far-away orthogonality}(i) that the functors $\mathsf{i}$ and $\mathsf{e}$ restrict to bounded objects. Combined with the assumption that $\mathsf{l}$ sends bounded objects to bounded objects we have that for an object $t$ in $\mathcal{T}^{\mathsf{b}}$, $\mathsf{le}(t)$ lies in $\mathcal{T}^{\mathsf{b}}$. Considering the triangle 
    \[
    \mathsf{le}(t)\rightarrow t\rightarrow \mathsf{iq}(t)\rightarrow \mathsf{le}(t)[1],
    \]
we see that $\mathsf{iq}(t)$ also lies in $\T^{\mathsf{b}}$. Since $\mathsf{i}(\mathcal{X}^{\mathsf{c}})\subseteq \mathcal{T}^{\mathsf{b}}_p$ by assumption, and knowing from Proposition \ref{common facts of the subcategories}(i) that $\T^{\mathsf{b}}=(\T^{\mathsf{b}}_p)^{\padova}$, Lemma  \ref{fully_faithful_functor}(iii) applied to this inclusion shows that $\mathsf{q}(t)$ lies in $(\X^\mathsf{c})^{\padova}=\X^{\mathsf{b}}$.
\end{proof}

\subsection{Restricting recollements} \label{restricting recollements} Consider the following types of recollements, where $\mathcal{T}$, $\mathcal{X}$ and $\mathcal{Y}$ are assumed to be compactly generated triangulated categories. 
\[
\begin{tikzcd}
\mathcal{X} \arrow[rr, "\mathsf{i}"] &  & \mathcal{T} \arrow[rr, "\mathsf{e}"] \arrow[ll, "\mathsf{q}"', bend right] \arrow[ll, "\mathsf{p}", bend left] &  & \mathcal{Y} \arrow[ll, "\mathsf{l}"', bend right] \arrow[ll, "\mathsf{r}", bend left]
\end{tikzcd} \textnormal{($\mathrm{U}$)} \ \ \ 
\begin{tikzcd}
\mathcal{X}^{\mathsf{c}} \arrow[rr, "\mathsf{i}"] &  & \mathcal{T}^{\mathsf{c}} \arrow[rr, "\mathsf{e}"] \arrow[ll, "\mathsf{q}"', bend right] \arrow[ll, "\mathsf{p}", bend left] &  & \mathcal{Y}^{\mathsf{c}} \arrow[ll, "\mathsf{l}"', bend right] \arrow[ll, "\mathsf{r}", bend left]
\end{tikzcd} \textnormal{($\mathrm{C}$)} 
\]
\[
\begin{tikzcd}
\mathcal{X}^{\mathsf{b}} \arrow[rr, "\mathsf{i}"] &  & \mathcal{T}^{\mathsf{b}} \arrow[rr, "\mathsf{e}"] \arrow[ll, "\mathsf{q}"', bend right] \arrow[ll, "\mathsf{p}", bend left] &  & \mathcal{Y}^{\mathsf{b}} \arrow[ll, "\mathsf{l}"', bend right] \arrow[ll, "\mathsf{r}", bend left]
\end{tikzcd} \textnormal{($\mathrm{B}$)}
\]
We will study necessary and sufficient conditions for a recollement of the form (U) to restrict to one of the form (C) or of the form (B). The following is a triangulated version of \cite[Theorem 4.4]{angeleri_koenig_liu_yang}.

\begin{prop}
Let $\T$, $\X$ and $\Y$ be compactly generated triangulated categories. The following are equivalent\textnormal{:} 
    \begin{enumerate}
        \item a recollement of type \textnormal{($\mathrm{U}$)} restricts to a recollement of type \textnormal{($\mathrm{C}$)};
        \item the functors $\mathsf{i}$ and $\mathsf{p}$ restrict to compact objects;
        \item the functors $\mathsf{e}$ and $\mathsf{r}$ restrict to compact objects. 
    \end{enumerate}
\end{prop}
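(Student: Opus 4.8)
The plan is to prove the chain of equivalences $(\mathrm{i})\Leftrightarrow(\mathrm{ii})\Leftrightarrow(\mathrm{iii})$ by exploiting the two stable t-structures $(\Ker(\mathsf q),\Image(\mathsf i))$ and $(\Image(\mathsf i),\Ker(\mathsf r))$ that come with the recollement, together with the toolbox of Lemma \ref{toolbox} and Lemma \ref{compacts_in_recollements}. The implications $(\mathrm{i})\Rightarrow(\mathrm{ii})$ and $(\mathrm{i})\Rightarrow(\mathrm{iii})$ are essentially tautological: if the six functors restrict to the subcategories of compact objects so as to form recollement $(\mathrm{C})$, then in particular each individual functor restricts. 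So the real content is in the reverse directions and in passing between $(\mathrm{ii})$ and $(\mathrm{iii})$.

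First I would establish $(\mathrm{ii})\Leftrightarrow(\mathrm{iii})$ directly using Lemma \ref{compacts_in_recollements}. Recall that in any recollement of the form $(\mathrm{U})$ the left adjoints $\mathsf q$ and $\mathsf l$ automatically preserve compact objects by Lemma \ref{toolbox}(ii)(a), since $\mathsf i$ and $\mathsf e$ preserve coproducts (being left adjoints to $\mathsf p$ and $\mathsf r$). Now Lemma \ref{compacts_in_recollements}(i) says that $\mathsf i$ restricts to compacts if and only if $\mathsf e$ does; and Lemma \ref{compacts_in_recollements}(ii) then says that, granted both $\mathsf i$ and $\mathsf e$ restrict, $\mathsf r$ restricts if and only if $\mathsf p$ does. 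Combining these two facts gives at once that the pair of conditions $\{\mathsf i,\mathsf p\text{ restrict}\}$ holds iff $\{\mathsf e,\mathsf r\text{ restrict}\}$ holds, which is precisely $(\mathrm{ii})\Leftrightarrow(\mathrm{iii})$.

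Next I would prove $(\mathrm{ii})$ (equivalently $(\mathrm{iii})$) implies $(\mathrm{i})$. Assume all of $\mathsf i,\mathsf p$ (hence $\mathsf e,\mathsf r$, and always $\mathsf q,\mathsf l$) restrict to the subcategories of compact objects. One must check that these six restricted functors assemble into a recollement $(\mathrm{C})$. The adjunction isomorphisms $(\mathsf q,\mathsf i,\mathsf p)$ and $(\mathsf l,\mathsf e,\mathsf r)$ restrict verbatim because full subcategories inherit Hom-sets and naturality. Full faithfulness of $\mathsf i,\mathsf l,\mathsf r$ is inherited by passing to full subcategories. The one genuinely non-formal point is the exactness condition $\Ker(\mathsf e|_{\mathcal T^{\mathsf c}})=\Image(\mathsf i|_{\mathcal X^{\mathsf c}})$: the inclusion $\Image(\mathsf i|_{\mathcal X^{\mathsf c}})\subseteq\Ker(\mathsf e|_{\mathcal T^{\mathsf c}})$ is clear, and conversely if $t\in\mathcal T^{\mathsf c}$ with $\mathsf e(t)=0$, then $t\cong\mathsf i\mathsf q(t)$ (from the stable t-structure triangle $\mathsf l\mathsf e(t)\to t\to\mathsf i\mathsf q(t)\to$), and one must note $\mathsf q(t)\in\mathcal X^{\mathsf c}$, which holds since $\mathsf q$ preserves compacts — so $t$ lies in the essential image of $\mathsf i|_{\mathcal X^{\mathsf c}}$. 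One should also verify that $\mathcal X^{\mathsf c}$ and $\mathcal Y^{\mathsf c}$ are, up to the restricted equivalence, the ``correct'' subcategories, i.e.\ that $\mathsf i$ restricted to $\mathcal X^{\mathsf c}$ lands in and generates the appropriate thick subcategory — but this is automatic from Lemma \ref{toolbox}(ii)(b)-style arguments: $\mathsf i(\mathcal X^{\mathsf c})$ and $\mathsf l(\mathcal Y^{\mathsf c})$ together generate $\mathcal T^{\mathsf c}$ as a thick subcategory.

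The step I expect to be the main obstacle — or at least the one requiring the most care — is this last verification that the restricted diagram is genuinely a recollement and not merely a diagram of adjoint functors between the compact parts; in particular pinning down that $\Ker(\mathsf e|_{\mathcal T^{\mathsf c}})$ really equals $\Image(\mathsf i|_{\mathcal X^{\mathsf c}})$ (and not some larger thick subcategory) uses that $\mathsf q$ preserves compacts in an essential way, and that the idempotent completeness issues do not interfere. Everything else is a matter of restricting natural isomorphisms. I would therefore organise the writeup as: (1) recall $\mathsf q,\mathsf l$ always preserve compacts; (2) deduce $(\mathrm{ii})\Leftrightarrow(\mathrm{iii})$ from Lemma \ref{compacts_in_recollements}; (3) note $(\mathrm{i})\Rightarrow(\mathrm{ii})$ and $(\mathrm{i})\Rightarrow(\mathrm{iii})$ trivially; (4) assemble the restricted recollement from $(\mathrm{ii})$, with the $\Ker=\Image$ check spelled out via the stable t-structure truncation triangle.
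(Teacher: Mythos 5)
Your argument is correct and follows essentially the same route as the paper: the equivalences hinge on Lemma~\ref{compacts_in_recollements} together with the fact that $\mathsf{q}$ and $\mathsf{l}$ always preserve compacts, exactly as in the paper's proof. The only difference is that you spell out the assembly of the restricted recollement (the check $\Ker(\mathsf{e}|_{\mathcal{T}^{\mathsf{c}}})=\Image(\mathsf{i}|_{\mathcal{X}^{\mathsf{c}}})$ via the truncation triangle), a step the paper leaves implicit; this is a harmless and correct elaboration.
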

\begin{proof}
    (i)$\Longrightarrow$(ii) is evident, while the implications (ii)$\Longrightarrow$(iii) and (iii)$\Longrightarrow$(i) follow from Lemma \ref{compacts_in_recollements} and the fact that $\mathsf{q}$ and $\mathsf{l}$ always restrict to compact objects. 
\end{proof}

The following is a triangulated version of \cite[Proposition 4.8]{angeleri_koenig_liu_yang}.

\begin{prop} \label{restriction_to_type_B}
 Let $\T$, $\X$ and $\Y$ be compactly generated triangulated categories. The following are equivalent\textnormal{:}  
    \begin{enumerate}
        \item A recollement of type \textnormal{($\mathrm{U}$)} restricts to a recollement of type \textnormal{($\mathrm{B}$)}. 
        \item $\mathsf{i}(\mathcal{X}^{\mathsf{c}})\subseteq \mathcal{T}^{\mathsf{b}}_p$ and $\mathsf{l}(\mathcal{Y}^{\mathsf{b}})\subseteq \mathcal{T}^{\mathsf{b}}$. 
    \end{enumerate}
\end{prop}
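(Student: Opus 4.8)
The plan is to prove the implication (ii)$\Rightarrow$(i) in detail, since (i)$\Rightarrow$(ii) is essentially immediate: if a recollement of type (U) restricts to one of type (B), then the six functors all send bounded objects to bounded objects, and in particular $\mathsf{l}(\Y^{\mathsf{b}})\subseteq\T^{\mathsf{b}}$; moreover by the proof of the previous proposition $\mathsf{i}$ restricts to compact objects, so $\mathsf{i}(\X^{\mathsf{c}})\subseteq\mathsf{i}(\X^{\mathsf{b}})\subseteq\T^{\mathsf{b}}$, and since $\X^\mathsf{c}\subseteq\X^\mathsf{b}_p$ (Lemma \ref{common facts of the subcategories}(i)) while $\mathsf i$ is a left adjoint to the coproduct-preserving $\mathsf q$ — wait, it is $\mathsf q$ that is left adjoint to $\mathsf i$; rather, using that $\mathsf q(\T^\mathsf{c})\subseteq\X^\mathsf{c}$ always, Lemma \ref{adjoints and far-away orthogonality}(ii) applied to the adjunction $(\mathsf q,\mathsf i)$ gives $\mathsf i(\X^\mathsf{b}_p)\subseteq\T^\mathsf{b}_p$, hence $\mathsf i(\X^\mathsf c)\subseteq\T^\mathsf b_p$. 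So the content is the converse.

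\textbf{Key steps for (ii)$\Rightarrow$(i).} First I would record the preliminary consequences of the hypothesis collected in Lemma \ref{inclusions_in_recollements}: namely $\mathsf{e}(\T^{\mathsf{c}})\subseteq\Y^{\mathsf{b}}_p$ and $\mathsf{q}(\T^{\mathsf{b}})\subseteq\X^{\mathsf{b}}$. Second, I would check that $\mathsf{i}$ and $\mathsf{e}$ restrict to the subcategories of compact objects: since $\X^\mathsf{c}\subseteq\X^\mathsf{b}_p\subseteq\X^\mathsf{b}$ (Lemma \ref{common facts of the subcategories}, noting $\X$ is compactly generated), the hypothesis $\mathsf{i}(\X^\mathsf{c})\subseteq\T^\mathsf{b}_p\subseteq\T^\mathsf{b}$ combined with $\mathsf i$ being fully faithful and coproduct-preserving and the fact that $\mathsf{i}$ has a coproduct-preserving left adjoint $\mathsf q$ with $\mathsf q \mathsf i\cong\mathrm{Id}$ (so $\mathsf{qi}$ preserves compacts) lets me invoke Lemma \ref{toolbox}(ii)(b) to get $\mathsf i(\X^\mathsf{c})\subseteq\T^\mathsf{c}$; actually, more directly, since $\mathsf i$ is fully faithful and coproduct-preserving, and $\mathsf i(\X^\mathsf c)$ is thick inside $\T$ and consists of objects that, being in the image of $\mathsf i$, have an obvious compactness argument — I'll use Lemma \ref{toolbox}(ii)(b) with $\mathsf F=\mathsf q$, $\mathsf G=\mathsf i$. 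Then Lemma \ref{compacts_in_recollements}(i) gives $\mathsf{e}(\T^\mathsf{c})\subseteq\Y^\mathsf{c}$, and Lemma \ref{compacts_in_recollements}(b) transfers the restriction between $\mathsf r$ and $\mathsf p$ once I establish it for one of them. Third, I would establish all six inclusions needed for the type-(B) diagram: $\mathsf{i}(\X^{\mathsf{b}})\subseteq\T^{\mathsf{b}}$ (apply Lemma \ref{adjoints and far-away orthogonality}(i) to $(\mathsf i,\mathsf p)$... rather, use that $\mathsf i$ restricts to compacts and $\mathsf i$ has a left adjoint $\mathsf q$ restricting to compacts, so by Lemma \ref{adjoints and far-away orthogonality}(i) $\mathsf i$ restricts to far-away orthogonals of $\T^\mathsf c$); $\mathsf{q}(\T^{\mathsf{b}})\subseteq\X^{\mathsf{b}}$ and $\mathsf{e}(\T^{\mathsf{b}})\subseteq\Y^{\mathsf{b}}$ similarly from the fact that $\mathsf{q},\mathsf{l}$ restrict to compacts (Lemma \ref{toolbox}(ii)(a)) and hence $\mathsf i,\mathsf e$ restrict to bounded objects by Lemma \ref{adjoints and far-away orthogonality}(i); $\mathsf{p}(\T^{\mathsf{b}})\subseteq\X^{\mathsf{b}}$, $\mathsf{l}(\Y^{\mathsf{b}})\subseteq\T^{\mathsf{b}}$ (hypothesis), $\mathsf{r}(\Y^{\mathsf{b}})\subseteq\T^{\mathsf{b}}$. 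Fourth, I would verify that the restricted functors still form a recollement: the adjunction isomorphisms restrict automatically, full faithfulness of $\mathsf{i},\mathsf{l},\mathsf{r}$ is inherited by full subcategories, and $\mathsf{Ker}(\mathsf{e}|_{\T^\mathsf{b}})=\mathsf{Im}(\mathsf{i}|_{\X^\mathsf{b}})$ follows from $\mathsf{Ker}(\mathsf e)=\mathsf{Im}(\mathsf i)$ together with the fact that the truncation triangle $\mathsf{le}(t)\to t\to\mathsf{iq}(t)\to\mathsf{le}(t)[1]$ has all three terms in $\T^{\mathsf{b}}$ when $t\in\T^{\mathsf{b}}$ — which is exactly what the inclusions above guarantee — so $\mathsf{e}(t)=0$ forces $t\cong\mathsf{iq}(t)\in\mathsf{Im}(\mathsf{i}|_{\X^\mathsf{b}})$.

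\textbf{Where the restrictions to $\mathsf{p}$ and $\mathsf{r}$ come from.} The subtle point — and the main obstacle — is obtaining $\mathsf{p}(\T^{\mathsf{b}})\subseteq\X^{\mathsf{b}}$ and $\mathsf{r}(\Y^{\mathsf{b}})\subseteq\T^{\mathsf{b}}$, because $\mathsf p$ and $\mathsf r$ are \emph{right} adjoints, and a right adjoint need not preserve coproducts, so Lemma \ref{adjoints and far-away orthogonality} is not directly available in the convenient direction. The route I would take is: from the hypothesis $\mathsf{i}(\X^\mathsf{c})\subseteq\T^\mathsf{b}_p$ and the established $\mathsf{l}(\Y^\mathsf{b})\subseteq\T^\mathsf{b}$, I can run the argument of Lemma \ref{inclusions_in_recollements}(i) to get $\mathsf{e}(\T^\mathsf{c})\subseteq\Y^\mathsf{b}_p$, hence (dualising, using the t-structure $(\mathsf{Im}(\mathsf i),\mathsf{Ker}(\mathsf r))$ and the triangle $\mathsf{ipl}(y)\to\mathsf l(y)\to\mathsf{rel}(y)\to$) that $\mathsf r$ restricts to compacts once $\mathsf{l},\mathsf{i}$ do — this is Lemma \ref{compacts_in_recollements}(b) applied after checking $\mathsf{p}(\T^\mathsf{c})\subseteq\X^\mathsf{c}$, which itself follows because $\mathsf p$ restricts to compacts iff $\mathsf r$ does, and one of them must by a generation argument on $\mathsf{Im}(\mathsf i)$. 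Once $\mathsf{r},\mathsf{p}$ restrict to compacts, applying Lemma \ref{adjoints and far-away orthogonality}(i) to the adjoint pairs $(\mathsf{e},\mathsf{r})$ and $(\mathsf{i},\mathsf{p})$ — now legitimately, since $\mathsf{e}$ and $\mathsf{i}$ send compacts into compacts — yields $\mathsf{r}(\Y^{\padova}$-stuff$)\subseteq\T^{\padova}$-stuff and $\mathsf{p}(\T^{\mathsf{b}})\subseteq\X^{\mathsf{b}}$ via the far-away-orthogonal descriptions $\X^{\mathsf{b}}=(\X^{\mathsf{c}})^{\padova}$ etc. I expect the careful bookkeeping of which functors restrict to compacts (and in which order to deduce it) to be the genuinely delicate part; everything downstream is a formal consequence of Lemmas \ref{triangulated_subcategories}, \ref{adjoints and far-away orthogonality}, \ref{common facts of the subcategories}, \ref{toolbox}, \ref{compacts_in_recollements} and \ref{inclusions_in_recollements}.
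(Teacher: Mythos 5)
The substantive direction (ii)$\Rightarrow$(i) of your proposal has a genuine gap. You route the key inclusions $\mathsf{p}(\T^{\mathsf{b}})\subseteq\X^{\mathsf{b}}$ and $\mathsf{r}(\Y^{\mathsf{b}})\subseteq\T^{\mathsf{b}}$ through the claims that $\mathsf{i}$ and $\mathsf{e}$ (and then $\mathsf{p}$ and $\mathsf{r}$) restrict to compact objects. None of this follows from hypothesis (ii): having all of $\mathsf{i},\mathsf{e},\mathsf{p},\mathsf{r}$ restrict to compacts is precisely the condition for the recollement to restrict to one of type ($\mathrm{C}$), which is a different statement with different equivalent conditions (compare the proposition preceding this one, and the distinct module-theoretic prototypes \cite[Theorem 4.4]{angeleri_koenig_liu_yang} versus \cite[Proposition 4.8]{angeleri_koenig_liu_yang}). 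Your attempted justifications also do not work as stated: Lemma \ref{toolbox}(ii)(b) with $\mathsf{F}=\mathsf{q}$, $\mathsf{G}=\mathsf{i}$ needs $\mathsf{GF}=\mathsf{iq}$ to preserve compacts — which is exactly what you are trying to prove — whereas what you offer is $\mathsf{FG}=\mathsf{qi}\cong\mathrm{Id}$; and ``one of them must by a generation argument on $\mathsf{Im}(\mathsf{i})$'' is not an argument. Moreover, the chain $\X^{\mathsf{c}}\subseteq\X^{\mathsf{b}}_p\subseteq\X^{\mathsf{b}}$ you invoke is not available in general: the second inclusion requires $\X^{\mathsf{c}}$ to be far-away generated (Lemma \ref{far-away and containments}) and can fail (e.g.\ $\mathsf{K}_{\mathsf{ac}}(\Inj\Lambda)$, where $\T^{\mathsf{b}}=0$ but $\T^{\mathsf{b}}_p=\T$).

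The underlying misreading is your assertion that Lemma \ref{adjoints and far-away orthogonality} ``is not directly available in the convenient direction'' for the right adjoints: part (i) of that lemma needs no compactness and no coproduct-preservation at all, only a containment $\mathsf{F}(\mathcal{A})\subseteq\mathcal{B}$ and the existence of a right adjoint. The paper's proof applies it to $\mathsf{F}=\mathsf{i}$ with $\mathsf{i}(\X^{\mathsf{c}})\subseteq\T^{\mathsf{b}}_p$ (hypothesis) and right adjoint $\mathsf{p}$, giving $\mathsf{p}((\T^{\mathsf{b}}_p)^{\padova})\subseteq(\X^{\mathsf{c}})^{\padova}$, i.e.\ $\mathsf{p}(\T^{\mathsf{b}})\subseteq\X^{\mathsf{b}}$ since $(\T^{\mathsf{b}}_p)^{\padova}=\T^{\mathsf{b}}$ by Lemma \ref{common facts of the subcategories}; and to $\mathsf{F}=\mathsf{e}$ with $\mathsf{e}(\T^{\mathsf{c}})\subseteq\Y^{\mathsf{b}}_p$ (Lemma \ref{inclusions_in_recollements}(i), which you correctly recorded) and right adjoint $\mathsf{r}$, giving $\mathsf{r}(\Y^{\mathsf{b}})\subseteq\T^{\mathsf{b}}$. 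Combined with $\mathsf{i},\mathsf{e}$ restricting to bounded objects (because $\mathsf{q},\mathsf{l}$ preserve compacts) and $\mathsf{q}(\T^{\mathsf{b}})\subseteq\X^{\mathsf{b}}$ from Lemma \ref{inclusions_in_recollements}(ii), this finishes (ii)$\Rightarrow$(i) with no compactness of $\mathsf{i},\mathsf{e},\mathsf{p},\mathsf{r}$ whatsoever. The same misreading mars your (i)$\Rightarrow$(ii): to get $\mathsf{i}(\X^{\mathsf{b}}_p)\subseteq\T^{\mathsf{b}}_p$ you should apply Lemma \ref{adjoints and far-away orthogonality}(ii) to $\mathsf{F}=\mathsf{p}$ (which restricts to bounded objects by (i)) with left adjoint $\mathsf{i}$; applying it to the pair $(\mathsf{q},\mathsf{i})$ with $\mathsf{q}(\T^{\mathsf{c}})\subseteq\X^{\mathsf{c}}$ only produces statements about $\mathsf{q}$. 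Your closing verification that the restricted functors still form a recollement (adjunctions and full faithfulness restrict, and $\mathsf{Ker}(\mathsf{e}|_{\T^{\mathsf{b}}})=\mathsf{Im}(\mathsf{i}|_{\X^{\mathsf{b}}})$ via the truncation triangle) is fine, and is a point the paper leaves implicit.
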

\begin{proof}
Let us first show (i)$\Longrightarrow$(ii). We have $\mathsf{l}(\mathcal{Y}^{\mathsf{b}})\subseteq \mathcal{T}^{\mathsf{b}}$ by assumption, so we are left to show that $\mathsf{i}(\mathcal{X}^{\mathsf{c}})\subseteq \mathcal{T}^{\mathsf{b}}_p$. Since the functor $\mathsf{p}\colon\mathcal{T}\rightarrow \mathcal{X}$ restricts to bounded objects, it follows by Lemma \ref{adjoints and far-away orthogonality}(ii) that the functor $\mathsf{i}\colon\mathcal{X}\rightarrow \mathcal{T}$ restricts to bounded projective objects. By Lemma \ref{common facts of the subcategories}(i) we have $\mathcal{X}^{\mathsf{c}}\subseteq \mathcal{X}^{\mathsf{b}}_p$ which completes the claim.  We now prove (ii)$\Longrightarrow$(i). Since the functors $\mathsf{q}$ and $\mathsf{l}$ restrict to compact objects, it follows by Lemma \ref{adjoints and far-away orthogonality}(i) that the functors $\mathsf{i}$ and $\mathsf{e}$ restrict to bounded objects. Moreover, since $\mathsf{i}(\mathcal{X}^{\mathsf{c}})\subseteq \mathcal{T}^{\mathsf{b}}_p$, it follows by Lemma \ref{adjoints and far-away orthogonality}(i) and Lemma \ref{common facts of the subcategories} that $\mathsf{p}(\mathcal{T}^{\mathsf{b}})\subseteq \mathcal{X}^{\mathsf{b}}$. From Lemma \ref{inclusions_in_recollements}(i) and the assumption, it follows that $\mathsf{e}(\mathcal{T}^{\mathsf{c}})\subseteq\mathcal{Y}^{\mathsf{b}}_p$. Then, by the same argument as above, we derive that $\mathsf{r}(\mathcal{Y}^{\mathsf{b}})\subseteq \mathcal{T}^{\mathsf{b}}$. Lastly, again by Lemma \ref{inclusions_in_recollements} and the assumptions, it follows that $\mathsf{q}(\mathcal{T}^{\mathsf{b}})\subseteq \mathcal{X}^{\mathsf{b}}$ which completes the proof. 
\end{proof}

The following remark is of independent interest. 

\begin{rem}
    Recall from \cite[Appendix C.]{completing perfect complexes} that a \emph{morphic enchancement} of a triangulated category $\mathcal{T}$ is a triangulated category $\mathcal{U}$ with a fully faithful functor $\mathsf{i}\colon\mathcal{T}\rightarrow \mathcal{U}$ that fits in a recollement 
    \[
\begin{tikzcd}
\mathcal{T} \arrow[rr, "\mathsf{i}"] &  & \mathcal{U} \arrow[rr, "\mathsf{e}"] \arrow[ll, "\mathsf{q}"', bend right] \arrow[ll, "\mathsf{p}", bend left] &  & \mathcal{T} \arrow[ll, "\mathsf{l}"', bend right] \arrow[ll, "\mathsf{r}", bend left]
\end{tikzcd}
    \]
    such that $(\mathsf{p},\mathsf{l})$ is an adjoint pair. This recollement sits in an infinite ladder (see \cite[Proposition C.3.]{completing perfect complexes}) and it then follows from Proposition \ref{restriction_to_type_B} that $\mathcal{U}^{\mathsf{b}}$ is a morphic enchancement of $\mathcal{T}^{\mathsf{b}}$. 
\end{rem}

We now assume $\T$ to be compactly generated and $k$-linear over a commutative noetherian ring $k$. We investigate whether a recollement of type (U) restricts to a recollement as follows:
$$\begin{tikzcd}
\mathcal{X}^{\mathsf{b}}_c \arrow[rr, "\mathsf{i}"] &  & \mathcal{T}^{\mathsf{b}}_c \arrow[rr, "\mathsf{e}"] \arrow[ll, "\mathsf{q}"', bend right] \arrow[ll, "\mathsf{p}", bend left] &  & \mathcal{Y}^{\mathsf{b}}_c. \arrow[ll, "\mathsf{l}"', bend right] \arrow[ll, "\mathsf{r}", bend left]
\end{tikzcd} \textnormal{($\mathrm{B}'$)} 
$$
We will need an additional condition of $\T$ in order to prove our next statement. This condition is satisfied in many categories of interest and consists of the equality
\begin{equation} \label{reflexive}
    \mathcal{T}^{\mathsf{c}}={^{\hfpadova}}(\mathcal{T}^{\mathsf{b}}_c).
\end{equation}
Indeed, this condiiton is verified for the derived category of a finite dimensional algebra, see for instance \cite[Lemma 2.4]{angeleri_koenig_liu_yang}, for the derived category of a projective scheme over a field, see \cite[Lemma 7.49]{rouquier} and for the derived category of a proper differential graded algebra, see \cite[Lemma 4.13]{adachi_mizuno_yang}. Using it we can prove a triangulated version of \cite[Theorem 4.6]{angeleri_koenig_liu_yang}.

\begin{prop} \label{restricting to bounded finite}
Let $\T$, $\X$ and $\Y$ be compactly generated, $k$-linear triangulated categories over a commutative noetherian ring $k$. If $\mathsf{i}(\mathcal{X}^{\mathsf{c}})\subseteq \mathcal{T}^{\mathsf{c}}$ and $\mathsf{l}(\mathcal{Y}^{\mathsf{b}}_c)\subseteq \mathcal{T}^{\mathsf{b}}_c$ then a recollement of type \textnormal{($\mathrm{U}$)} restricts to a recollement of type \textnormal{($\mathrm{B}'$)}. Moreover, if $\mathcal{X}$ and $\mathcal{T}$ satisfy \textnormal{(\ref{reflexive})} then the converse also holds.
\end{prop}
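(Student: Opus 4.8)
The plan is to reduce the forward implication to checking that, under the two hypotheses, each of the six functors $\mathsf{i},\mathsf{q},\mathsf{p},\mathsf{e},\mathsf{l},\mathsf{r}$ carries the relevant subcategory of bounded finite objects into the corresponding one; once this is established, the recollement axioms for $(\mathrm{B}')$ follow formally. Indeed, the restrictions of $\mathsf{i},\mathsf{l},\mathsf{r}$ stay fully faithful, the two adjoint triples restrict because $\mathcal{X}^{\mathsf{b}}_c,\mathcal{T}^{\mathsf{b}}_c,\mathcal{Y}^{\mathsf{b}}_c$ are full subcategories, and the identity $\mathsf{Ker}(\mathsf{e}|)=\mathsf{Im}(\mathsf{i}|)$ reduces to $\mathsf{Im}(\mathsf{i})\cap\mathcal{T}^{\mathsf{b}}_c=\mathsf{i}(\mathcal{X}^{\mathsf{b}}_c)$, where the nontrivial inclusion $\mathsf{Im}(\mathsf{i})\cap\mathcal{T}^{\mathsf{b}}_c\subseteq\mathsf{i}(\mathcal{X}^{\mathsf{b}}_c)$ is the $\hfpadova$-analogue of Lemma~\ref{fully_faithful_functor}(iii) applied to the fully faithful $\mathsf{i}$ together with $\mathsf{i}(\mathcal{X}^{\mathsf{c}})\subseteq\mathcal{T}^{\mathsf{c}}$. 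Throughout I will use the $\hfpadova$-analogue of Lemma~\ref{adjoints and far-away orthogonality} announced after Lemma~\ref{properties of hf orthogonals}, the fact that $\mathcal{X}^{\hfpadova}$ is always a thick subcategory, and that $\mathsf{q}$ and $\mathsf{l}$ always preserve compact objects (Lemma~\ref{toolbox}(ii)(a)).

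First I would dispatch the outer functors. The inclusion $\mathsf{l}(\mathcal{Y}^{\mathsf{b}}_c)\subseteq\mathcal{T}^{\mathsf{b}}_c$ is one of the hypotheses. Applying the $\hfpadova$-analogue of Lemma~\ref{adjoints and far-away orthogonality}(i) to the adjunction $\mathsf{q}\dashv\mathsf{i}$ and to $\mathsf{q}(\mathcal{T}^{\mathsf{c}})\subseteq\mathcal{X}^{\mathsf{c}}$ gives $\mathsf{i}(\mathcal{X}^{\mathsf{b}}_c)=\mathsf{i}\bigl((\mathcal{X}^{\mathsf{c}})^{\hfpadova}\bigr)\subseteq(\mathcal{T}^{\mathsf{c}})^{\hfpadova}=\mathcal{T}^{\mathsf{b}}_c$, and the same argument for $\mathsf{l}\dashv\mathsf{e}$ (using $\mathsf{l}(\mathcal{Y}^{\mathsf{c}})\subseteq\mathcal{T}^{\mathsf{c}}$) gives $\mathsf{e}(\mathcal{T}^{\mathsf{b}}_c)\subseteq\mathcal{Y}^{\mathsf{b}}_c$. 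For $\mathsf{r}$: the hypothesis $\mathsf{i}(\mathcal{X}^{\mathsf{c}})\subseteq\mathcal{T}^{\mathsf{c}}$ yields $\mathsf{e}(\mathcal{T}^{\mathsf{c}})\subseteq\mathcal{Y}^{\mathsf{c}}$ by Lemma~\ref{compacts_in_recollements}(i), and then the $\hfpadova$-analogue of Lemma~\ref{adjoints and far-away orthogonality}(i) applied to $\mathsf{e}\dashv\mathsf{r}$ gives $\mathsf{r}(\mathcal{Y}^{\mathsf{b}}_c)\subseteq\mathcal{T}^{\mathsf{b}}_c$.

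The hard part will be the connecting functors $\mathsf{q}$ and $\mathsf{p}$, and for $\mathsf{q}$ this is exactly where the hypothesis $\mathsf{l}(\mathcal{Y}^{\mathsf{b}}_c)\subseteq\mathcal{T}^{\mathsf{b}}_c$ is needed. Fix $t\in\mathcal{T}^{\mathsf{b}}_c$ and consider the two recollement truncation triangles $\mathsf{l}\mathsf{e}(t)\to t\to\mathsf{i}\mathsf{q}(t)\to\mathsf{l}\mathsf{e}(t)[1]$ and $\mathsf{i}\mathsf{p}(t)\to t\to\mathsf{r}\mathsf{e}(t)\to\mathsf{i}\mathsf{p}(t)[1]$. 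From the previous step $\mathsf{e}(t)\in\mathcal{Y}^{\mathsf{b}}_c$, so $\mathsf{l}\mathsf{e}(t)\in\mathcal{T}^{\mathsf{b}}_c$ by that hypothesis and $\mathsf{r}\mathsf{e}(t)\in\mathcal{T}^{\mathsf{b}}_c$ by the $\mathsf{r}$-step; since $\mathcal{T}^{\mathsf{b}}_c=(\mathcal{T}^{\mathsf{c}})^{\hfpadova}$ is thick, the triangles force $\mathsf{i}\mathsf{q}(t),\mathsf{i}\mathsf{p}(t)\in\mathcal{T}^{\mathsf{b}}_c$. Since $\mathsf{i}$ is fully faithful and $\mathsf{i}(\mathcal{X}^{\mathsf{c}})\subseteq\mathcal{T}^{\mathsf{c}}$, the $\hfpadova$-analogue of Lemma~\ref{fully_faithful_functor}(iii) then upgrades these to $\mathsf{q}(t),\mathsf{p}(t)\in(\mathcal{X}^{\mathsf{c}})^{\hfpadova}=\mathcal{X}^{\mathsf{b}}_c$, finishing the forward implication.

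For the converse, assume $(\mathrm{U})$ restricts to $(\mathrm{B}')$ and that $\mathcal{X}$ and $\mathcal{T}$ satisfy~\textnormal{(\ref{reflexive})}, i.e.~$\mathcal{X}^{\mathsf{c}}={}^{\hfpadova}(\mathcal{X}^{\mathsf{b}}_c)$ and $\mathcal{T}^{\mathsf{c}}={}^{\hfpadova}(\mathcal{T}^{\mathsf{b}}_c)$. The inclusion $\mathsf{l}(\mathcal{Y}^{\mathsf{b}}_c)\subseteq\mathcal{T}^{\mathsf{b}}_c$ is built into the restricted recollement, so it remains to prove $\mathsf{i}(\mathcal{X}^{\mathsf{c}})\subseteq\mathcal{T}^{\mathsf{c}}$. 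Given $x\in\mathcal{X}^{\mathsf{c}}={}^{\hfpadova}(\mathcal{X}^{\mathsf{b}}_c)$ and $t\in\mathcal{T}^{\mathsf{b}}_c$, the adjunction $\mathsf{i}\dashv\mathsf{p}$ gives $\mathsf{Hom}_{\mathcal{T}}(\mathsf{i}(x),t[n])\cong\mathsf{Hom}_{\mathcal{X}}(x,\mathsf{p}(t)[n])$ for every $n\in\mathbb{Z}$; since $\mathsf{p}(t)\in\mathcal{X}^{\mathsf{b}}_c$ in the restricted recollement, the right-hand side summed over $n$ lies in $\smod k$, whence $\mathsf{i}(x)\in{}^{\hfpadova}(\mathcal{T}^{\mathsf{b}}_c)=\mathcal{T}^{\mathsf{c}}$ (this is also the $\hfpadova$-analogue of Lemma~\ref{adjoints and far-away orthogonality}(ii) for the pair $\mathsf{i}\dashv\mathsf{p}$). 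I expect the delicate points to be only bookkeeping: keeping the adjoint directions straight and choosing the order in which the six restrictions are proved; the genuine content is the truncation-triangle argument for $\mathsf{q}$ and $\mathsf{p}$.
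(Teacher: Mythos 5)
Your proof is correct and follows essentially the same route as the paper: the right adjoints $\mathsf{i},\mathsf{e},\mathsf{r},\mathsf{p}$ restrict because they are right adjoint to compact-preserving functors (using Lemma~\ref{compacts_in_recollements} to get $\mathsf{e}(\mathcal{T}^{\mathsf{c}})\subseteq\mathcal{Y}^{\mathsf{c}}$ from the hypothesis on $\mathsf{i}$), the inclusion for $\mathsf{q}$ comes from the truncation triangle together with full faithfulness of $\mathsf{i}$, and the converse uses (\ref{reflexive}) together with $\mathsf{p}(\mathcal{T}^{\mathsf{b}}_c)\subseteq\mathcal{X}^{\mathsf{b}}_c$ exactly as in the paper. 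The only cosmetic difference is that you obtain the restriction of $\mathsf{p}$ via the second truncation triangle, whereas the paper gets it directly as a right adjoint of the compact-preserving $\mathsf{i}$; both are fine.
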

\begin{proof}
Suppose that $\mathsf{i}(\mathcal{X}^{\mathsf{c}})\subseteq \mathcal{T}^{\mathsf{c}}$ and $\mathsf{l}(\mathcal{Y}^{\mathsf{b}}_c)\subseteq \mathcal{T}^{\mathsf{b}}_c$. From the assumption on the functor $\mathsf{i}$ it follows from Lemma \ref{compacts_in_recollements} that the functor $\mathsf{e}$ preserves compact objects. Therefore, the functors $\mathsf{i},\mathsf{e},\mathsf{r}$ and $\mathsf{p}$ restrict to bounded finite objects (being right adjoints to compact-preserving functors). Since we have $\mathsf{l}(\mathcal{Y}^{\mathsf{b}}_c)\subseteq \mathcal{T}^{\mathsf{b}}_c$ by assumption, we are left to show the inclusion $\mathsf{q}(\mathcal{T}^{\mathsf{b}}_c)\subseteq \mathcal{X}^{\mathsf{b}}_c$. Given $t$ in $\mathcal{T}^{\mathsf{b}}_c$, consider the triangle 
\[
\mathsf{le}(t)\rightarrow t\rightarrow \mathsf{iq}(t)\rightarrow \mathsf{le}(t)[1].
\]
By the considerations above, $\mathsf{le}(t)$ lies in $\mathcal{T}^{\mathsf{b}}_c$ and thus it follows that $\mathsf{iq}(t)$ lies in $\mathcal{T}^{\mathsf{b}}_c$. Since $\mathsf{i}$ is fully faithful, we infer that $\mathsf{q}(t)$ lies in $\mathcal{X}^{\mathsf{b}}_c$ from Lemma \ref{adjoints and far-away orthogonality}(i). For the converse, suppose that $\mathcal{X}$ and $\mathcal{T}$ satisfy (\ref{reflexive}). We then have $\mathsf{i}(\mathcal{X}^{\mathsf{c}})=\mathsf{i}({}^{\hfpadova}(\mathcal{X}^{\mathsf{b}}_c))\subseteq {}^{\hfpadova}(\mathcal{T}^{\mathsf{b}}_c)=\mathcal{T}^{\mathsf{c}}$, where the inclusion follows from the inclusion $\mathsf{p}(\mathcal{T}^{\mathsf{b}}_c)\subseteq \mathcal{X}^{\mathsf{b}}_c$ using Lemma \ref{adjoints and far-away orthogonality}(ii). 
\end{proof}

\subsection{Homological properties and recollements}
\label{subsect:regularandrecol} In this subsection, we compare regularity, injective generation and the finiteness of the finitistic dimension for categories in a recollement. 

We begin with a result regarding singularity categories and recollements in the spirit of similar well-known results in the literature; for instance \cite[Corollary 4.2]{jin_yang_zhou} will follow from this result using Proposition \ref{sing ex}. 

\begin{thm}
\label{prop:recolandsingularcats}
    Consider a recollement of compactly generated $k$-linear triangulated categories 
    \[
    \begin{tikzcd}
\mathcal{X} \arrow[rr, "\mathsf{i}"] &  & \mathcal{T} \arrow[rr, "\mathsf{e}"] \arrow[ll, "\mathsf{q}"', bend right] \arrow[ll, "\mathsf{p}", bend left] &  & \mathcal{Y} \arrow[ll, "\mathsf{l}"', bend right] \arrow[ll, "\mathsf{r}", bend left]
\end{tikzcd}
    \]
    satisfying $\mathcal{X}^{\mathsf{c}}\subseteq \mathcal{X}^{\mathsf{b}}_c, \mathcal{T}^{\mathsf{c}}\subseteq \mathcal{T}^{\mathsf{b}}_c$ and $\mathcal{Y}^{\mathsf{c}}\subseteq \mathcal{Y}^{\mathsf{b}}_c$. The following hold\textnormal{:}
    \begin{itemize}
        \item[\textnormal{(i)}] The functor $\mathsf{i}$ induces an equivalence $\mathcal{X}^{\mathsf{sg}}_k\xrightarrow{\simeq}\mathcal{T}^{\mathsf{sg}}_k$ if and only if $\mathcal{Y}$ is regular. 
        \item[\textnormal{(ii)}] The functor $\mathsf{e}$ induces an equivalence $\mathcal{T}^{\mathsf{sg}}_k\xrightarrow{\simeq}\mathcal{Y}^{\mathsf{sg}}_k$ if and only if $\mathsf{i}(\mathcal{X}^{\mathsf{c}})\subseteq \mathcal{T}^{\mathsf{c}}$ and $\mathcal{X}$ is regular. 
    \end{itemize}
\end{thm}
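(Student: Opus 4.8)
The plan is to exploit the two stable t-structures coming from the recollement, together with the control on compact and bounded-finite objects, to compare the relevant Verdier quotients. Recall that the singularity $k$-category of a triangulated category $\mathcal{S}$ satisfying $\mathcal{S}^{\mathsf{c}}\subseteq \mathcal{S}^{\mathsf{b}}_c$ simplifies to $\mathcal{S}^{\mathsf{sg}}_k=\mathcal{S}^{\mathsf{b}}_c/\mathcal{S}^{\mathsf{c}}$, since then $\mathsf{thick}(\mathcal{S}^{\mathsf{c}}\ast\mathcal{S}^{\mathsf{b}}_c)=\mathcal{S}^{\mathsf{b}}_c$ and $\mathcal{S}^{\mathsf{b}}_c\cap\mathcal{S}^{\mathsf{c}}=\mathcal{S}^{\mathsf{c}}$; this is the first reduction I would record. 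Next, I would note that the functors $\mathsf{q},\mathsf{l}$ always preserve compacts (Lemma \ref{toolbox}(ii)(a)), hence $\mathsf{q},\mathsf{l}$ also preserve bounded-finite objects by the $\?=\hfpadova$ analogue of Lemma \ref{adjoints and far-away orthogonality}(ii) combined with the hypothesis $\mathcal{T}^{\mathsf{c}}\subseteq\mathcal{T}^{\mathsf{b}}_c$ etc.; and that whenever $\mathsf{i}$ (resp.\ $\mathsf{e}$) preserves compacts, its right adjoint $\mathsf{p}$ (resp.\ $\mathsf{r}$) preserves bounded-finite objects by the same Lemma. This tells us which functors restrict to the ``$\mathsf{b}_c$'' level without extra assumptions.

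\textbf{Part (i).} For the ``if'' direction, assume $\mathcal{Y}$ is regular, i.e.\ $\mathcal{Y}^{\mathsf{c}}=\mathcal{Y}^{\mathsf{b}}_c$. The functor $\mathsf{i}$ is fully faithful and, being a left adjoint of the coproduct-preserving $\mathsf{q}$ wait — $\mathsf{i}$ is a \emph{right} adjoint of $\mathsf{q}$ and a \emph{left} adjoint of $\mathsf{p}$; in either case, applying Lemma \ref{toolbox}(i)/(ii) shows $\mathsf{i}(\mathcal{X}^{\mathsf{c}})\subseteq\mathcal{T}^{\mathsf{c}}$ is \emph{not} automatic, but $\mathsf{i}(\mathcal{X}^{\mathsf{b}}_c)\subseteq\mathcal{T}^{\mathsf{b}}_c$ \emph{is} (as $\mathsf{q}$ preserves the $\hfpadova$-orthogonality data). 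I would then use the stable t-structure $(\mathrm{Im}(\mathsf{i}),\mathrm{Ker}(\mathsf{r}))$: for $t\in\mathcal{T}^{\mathsf{b}}_c$ the triangle $\mathsf{ip}(t)\to t\to\mathsf{re}(t)\to\mathsf{ip}(t)[1]$ shows that, modulo $\mathrm{Im}(\mathsf{i})$, the object $t$ agrees with $\mathsf{re}(t)$; since $\mathsf{e}(t)\in\mathcal{Y}^{\mathsf{b}}_c=\mathcal{Y}^{\mathsf{c}}$ by regularity of $\mathcal{Y}$ and preservation of bounded-finite objects, $\mathsf{re}(t)$ is a summand of a compact (here I use $\mathsf{r}$ preserves compacts when $\mathsf{e}$ does, via Lemma \ref{compacts_in_recollements}(b), itself needing $\mathsf{i}$ to preserve compacts — this needs care). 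The cleanest route: the Verdier quotient $\mathcal{T}^{\mathsf{b}}_c/\mathcal{T}^{\mathsf{c}}$ annihilates $\mathsf{re}(t)$, so $\mathsf{i}$ induces a quotient functor that is essentially surjective; full faithfulness on the quotient follows from the recollement structure and the already-established inclusions, e.g.\ computing $\mathrm{Hom}$-sets in the quotient via calculus of fractions and using $\mathsf{ep}=0$, $\mathsf{e}\mathsf{i}=0$. Conversely, if $\mathsf{i}$ induces an equivalence $\mathcal{X}^{\mathsf{sg}}_k\to\mathcal{T}^{\mathsf{sg}}_k$, then $\mathcal{T}^{\mathsf{sg}}_k$ is ``concentrated on $\mathcal{X}$'', which forces $\mathsf{e}$ to send $\mathcal{T}^{\mathsf{b}}_c$ into $\mathcal{Y}^{\mathsf{c}}$; using that $\mathsf{e}$ restricted to $\mathsf{l}(\mathcal{Y})$ is a quasi-inverse of $\mathsf{l}$, and $\mathsf{l}(\mathcal{Y}^{\mathsf{b}}_c)\subseteq\mathcal{T}^{\mathsf{b}}_c$, one deduces $\mathcal{Y}^{\mathsf{b}}_c\subseteq\mathcal{Y}^{\mathsf{c}}$, i.e.\ regularity of $\mathcal{Y}$.

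\textbf{Part (ii).} Here the extra hypothesis $\mathsf{i}(\mathcal{X}^{\mathsf{c}})\subseteq\mathcal{T}^{\mathsf{c}}$ appears exactly because we need $\mathsf{e}$ to preserve compacts (Lemma \ref{compacts_in_recollements}(a)) before $\mathsf{e}$ can even induce a functor $\mathcal{T}^{\mathsf{sg}}_k\to\mathcal{Y}^{\mathsf{sg}}_k$. Granting this, $\mathsf{e}$ also preserves bounded-finite objects, so it induces $\bar{\mathsf{e}}\colon\mathcal{T}^{\mathsf{b}}_c/\mathcal{T}^{\mathsf{c}}\to\mathcal{Y}^{\mathsf{b}}_c/\mathcal{Y}^{\mathsf{c}}$. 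Essential surjectivity of $\bar{\mathsf{e}}$ is immediate from $\mathsf{e}\mathsf{l}\cong\mathrm{id}$ and $\mathsf{l}(\mathcal{Y}^{\mathsf{b}}_c)\subseteq\mathcal{T}^{\mathsf{b}}_c$ — which itself needs proving, but follows since $\mathsf{l}$ preserves bounded-finite objects. For full faithfulness of $\bar{\mathsf{e}}$ I would again use the triangle $\mathsf{ip}(t)\to t\to\mathsf{re}(t)\to$: the ``kernel'' of $\mathsf{e}$ on objects is $\mathrm{Im}(\mathsf{i})$, and on $\mathcal{T}^{\mathsf{b}}_c$ this is $\mathsf{i}(\mathcal{X}^{\mathsf{b}}_c)$; so $\bar{\mathsf{e}}$ is an equivalence iff $\mathsf{i}(\mathcal{X}^{\mathsf{b}}_c)\subseteq\mathcal{T}^{\mathsf{c}}$, i.e.\ (using $\mathsf{i}$ fully faithful and $\mathsf{i}(\mathcal{X}^{\mathsf{c}})\subseteq\mathcal{T}^{\mathsf{c}}$ together with $\mathsf{p}(\mathcal{T}^{\mathsf{c}})\subseteq\mathcal{X}^{\mathsf{c}}$) iff $\mathcal{X}^{\mathsf{b}}_c=\mathcal{X}^{\mathsf{c}}$, i.e.\ $\mathcal{X}$ is regular. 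The conversely direction is the same chain of implications read backwards.

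\textbf{Main obstacle.} The routine-looking but genuinely delicate point is establishing full faithfulness of the induced quotient functors, i.e.\ that the relevant subcategories being quotiented out are exactly the kernels of the restricted functors and that the calculus of fractions behaves well; this is where the stable t-structures of the recollement and the localization-theoretic properties of $\mathsf{i},\mathsf{e}$ (in particular $\mathsf{e}$ being a Verdier localization with kernel $\mathrm{Im}(\mathsf{i})$, and $\mathsf{i}$ identifying $\mathcal{X}$ with a colocalizing/localizing subcategory) must be combined carefully with the inclusions $\mathcal{S}^{\mathsf{c}}\subseteq\mathcal{S}^{\mathsf{b}}_c$. I expect the cleanest write-up routes everything through the observation that $\mathcal{T}^{\mathsf{b}}_c=\mathsf{i}(\mathcal{X}^{\mathsf{b}}_c)\ast\mathsf{l}(\mathcal{Y}^{\mathsf{b}}_c)$ (a restriction of the recollement t-structure, valid under the stated inclusions), after which all four ``iff'' statements reduce to chasing which of the three categories has its compacts equal to its bounded-finite objects.
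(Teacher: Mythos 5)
There is a genuine gap, and it sits exactly where you locate your ``main obstacle''. Your plan rests on two unproven inputs. First, you repeatedly use that $\mathsf{l}$ preserves bounded finite objects, i.e.\ $\mathsf{l}(\mathcal{Y}^{\mathsf{b}}_c)\subseteq\mathcal{T}^{\mathsf{b}}_c$ (for the decomposition $\mathcal{T}^{\mathsf{b}}_c=\mathsf{i}(\mathcal{X}^{\mathsf{b}}_c)\ast\mathsf{l}(\mathcal{Y}^{\mathsf{b}}_c)$ and, crucially, in your converse of (i)). This is not automatic: the $\hfpadova$-analogue of Lemma~\ref{adjoints and far-away orthogonality}(i) only makes \emph{right} adjoints of compact-preserving functors preserve $(-)^{\hfpadova}$-orthogonals, so it applies to $\mathsf{i},\mathsf{e}$ (and to $\mathsf{p},\mathsf{r}$ once $\mathsf{i},\mathsf{e}$ preserve compacts), never to the left adjoints $\mathsf{q},\mathsf{l}$ --- indeed the inclusion $\mathsf{l}(\mathcal{Y}^{\mathsf{b}}_c)\subseteq\mathcal{T}^{\mathsf{b}}_c$ has to be imposed as an explicit hypothesis in Proposition~\ref{restricting to bounded finite} precisely because it does not come for free. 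In the ``if'' direction of (i) you can recover it from regularity of $\mathcal{Y}$ (there $\mathcal{Y}^{\mathsf{b}}_c=\mathcal{Y}^{\mathsf{c}}$ and $\mathsf{l}$ preserves compacts), but in the ``only if'' direction you invoke it while regularity of $\mathcal{Y}$ is exactly what is to be proved, so that direction is circular as written; the phrase ``$\mathcal{T}^{\mathsf{sg}}_k$ is concentrated on $\mathcal{X}$, which forces $\mathsf{e}(\mathcal{T}^{\mathsf{b}}_c)\subseteq\mathcal{Y}^{\mathsf{c}}$'' is an assertion, not an argument. (Minor slips of the same kind: it is $\mathsf{i},\mathsf{e}$, not $\mathsf{q},\mathsf{l}$, that preserve bounded finite objects; $\mathsf{e}\mathsf{p}$ does not even compose; and in the ``if'' direction of (i) the useful triangle is $\mathsf{le}(t)\to t\to\mathsf{iq}(t)$, since $\mathsf{le}(t)$ is compact when $\mathcal{Y}$ is regular, whereas your triangle $\mathsf{ip}(t)\to t\to\mathsf{re}(t)$ would require $\mathsf{r}$ to preserve compacts, which you do not have.)

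Second, the full faithfulness of the induced quotient functors --- the actual content of the theorem --- is left unproved; you yourself flag it and then propose to resolve it via the decomposition above, which, as explained, is not available. The paper does not attempt this by hand: it first deduces that $\mathsf{i}$ and $\mathsf{e}$ preserve compacts under the relevant hypotheses (in (i) compact-preservation of $\mathsf{e}$ comes from $\mathsf{e}(\mathcal{T}^{\mathsf{c}})\subseteq\mathsf{e}(\mathcal{T}^{\mathsf{b}}_c)\subseteq\mathcal{Y}^{\mathsf{b}}_c=\mathcal{Y}^{\mathsf{c}}$, and then Lemma~\ref{compacts_in_recollements} together with compact generation gives it for $\mathsf{i}$), notes that $\mathsf{i},\mathsf{e}$ then also preserve bounded finite objects, and invokes \cite[Lemma 3.1]{jin_yang_zhou} to obtain a sequence $\mathcal{X}^{\mathsf{sg}}_k\to\mathcal{T}^{\mathsf{sg}}_k\to\mathcal{Y}^{\mathsf{sg}}_k$ which is exact up to direct summands; all four implications are then read off from which end vanishes. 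If you wish to avoid that citation, you must genuinely prove this up-to-summands exactness (controlling the roofs in the Verdier quotients), not just manipulate the object-level triangles of the recollement.
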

\begin{proof}
    (i) Assume first that $\mathcal{Y}$ is regular. Then, since the functor $\mathsf{l}$ preserves compact objects, it follows that $\mathsf{e}(\mathcal{T}^{\mathsf{b}}_c)\subseteq \mathcal{Y}^{\mathsf{b}}_c=\mathcal{Y}^{\mathsf{c}}$. By assumption $\mathcal{T}^{\mathsf{c}}\subseteq \mathcal{T}^{\mathsf{b}}_c$ and therefore $\mathsf{e}(\mathcal{T}^{\mathsf{c}})\subseteq\mathcal{Y}^{\mathsf{c}}$. We then know from Lemma \ref{compacts_in_recollements} that the functor $\mathsf{i}$ also preserves compact objects. Moreover, we have $\mathsf{i}(\mathcal{X}^{\mathsf{b}}_c)\subseteq \mathcal{T}^{\mathsf{b}}_c$ (since $\mathsf{q}$ preserves compacts) and therefore there is a short exact sequence $\mathcal{X}^{\mathsf{sg}}_k\rightarrow \mathcal{T}^{\mathsf{sg}}_k\rightarrow \mathcal{Y}^{\mathsf{sg}}_k$ up to direct summands (see for instance \cite[Lemma 3.1]{jin_yang_zhou}). Since $\mathcal{Y}$ is regular, the result follows. 

    Assume now that $\mathsf{i}$ induces as equivalence $\mathcal{X}^{\mathsf{sg}}_k\rightarrow \mathcal{T}^{\mathsf{sg}}_k$. Then, in particular, $\mathsf{i}(\mathcal{X}^{\mathsf{c}})\subseteq \mathcal{T}^{\mathsf{c}}$. It follows by Lemma \ref{compacts_in_recollements} that the functor $\mathsf{e}$ preserves compact objects and again by the same arguments as above, there is a short exact sequence $\mathcal{X}^{\mathsf{sg}}_k\rightarrow \mathcal{T}^{\mathsf{sg}}_k\rightarrow \mathcal{Y}^{\mathsf{sg}}_k$, up to direct summands.
    Since the functor  $\mathsf{i}\colon \mathcal{X}^{\mathsf{sg}}_k\rightarrow \mathcal{T}^{\mathsf{sg}}_k$ is assumed to be an equivalence, it follows that $\mathcal{Y}^{\mathsf{sg}}_k\simeq 0$, i.e. $\mathcal{Y}$ is regular.  \\ 
    (ii) Assume first that $\mathsf{i}(\mathcal{X}^{\mathsf{c}})\subseteq \mathcal{T}^{\mathsf{c}}$ and that $\mathcal{X}$ is regular. By Lemma \ref{compacts_in_recollements} it follows that the functor $\mathsf{e}$ preserves compact objects and since both $\mathsf{i}$ and $\mathsf{e}$ preserve bounded finite objects (being right adjoints to compact-preserving functors), it follows that there is a short exact sequence $\mathcal{X}^{\mathsf{sg}}_k\rightarrow \mathcal{T}^{\mathsf{sg}}_k\rightarrow \mathcal{Y}^{\mathsf{sg}}_k$ up to direct summands. Since $\mathcal{X}$ is assumed to be regular, it follows that $\mathcal{X}^{\mathsf{sg}}_k\simeq 0$, i.e.\ $\mathsf{e}\colon \mathcal{T}^{\mathsf{sg}}_k\rightarrow \mathcal{Y}^{\mathsf{sg}}_k$ is an equivalence. 

    Assume now that $\mathsf{e}$ induces an equivalence $\mathcal{T}^{\mathsf{sg}}_k\rightarrow \mathcal{Y}^{\mathsf{sg}}_k$. Then, in particular, $\mathsf{e}(\mathcal{T}^{\mathsf{c}})\subseteq \mathcal{Y}^{\mathsf{c}}$ and therefore by Lemma \ref{compacts_in_recollements} the functor $\mathsf{i}$ is compact-preserving. We deduce, like before, the existence of a short exact sequence $\mathcal{X}^{\mathsf{sg}}_k\rightarrow \mathcal{T}^{\mathsf{sg}}_k\rightarrow \mathcal{Y}^{\mathsf{sg}}_k$ up to direct summands and since $\mathcal{T}^{\mathsf{sg}}_k\rightarrow \mathcal{Y}^{\mathsf{sg}}_k$ is an equivalence, it follows that $\mathcal{X}^{\mathsf{sg}}_k\simeq 0$ and therefore $\mathcal{X}$ is regular.  
\end{proof}

\begin{rem}
    Note that using Proposition \ref{sing ex} , we can also obtain a triangulated analogue of \cite[Corollary 5.2]{jin_yang_zhou}. If we assume a recollement of compactly generated triangulated categories as in the proposition above, such that $\mathcal{X}^{\mathsf{b}}_c\subseteq \mathcal{X}^{\mathsf{c}}$, $\mathcal{T}^{\mathsf{b}}_c\subseteq \mathcal{T}^{\mathsf{c}}$ and $\mathcal{Y}^{\mathsf{b}}_c\subseteq \mathcal{Y}^{\mathsf{c}}$, then the following hold. 
    \begin{itemize}
        \item[\textnormal{(i)}] The functor $\mathsf{i}$ induces an equivalence $\mathcal{X}^{\mathsf{sg}}_k\xrightarrow{\simeq}\mathcal{T}^{\mathsf{sg}}_k$ if and only if $\mathsf{e}(\mathcal{T}^{\mathsf{c}})\subseteq \mathcal{Y}^{\mathsf{c}}$ and $\mathcal{Y}$ is regular. 
        \item[\textnormal{(ii)}] The functor $\mathsf{e}$ induces an equivalence $\mathcal{T}^{\mathsf{sg}}_k\xrightarrow{\simeq}\mathcal{Y}^{\mathsf{sg}}_k$ if and only if $\mathcal{X}$ is regular.  
    \end{itemize}
\end{rem}

Consider now the recollements of type ($\mathrm{U}$) and $(\mathrm{B})$ of Subsection \ref{restricting recollements}.

\[
\begin{tikzcd}
\mathcal{X} \arrow[rr, "\mathsf{i}"] &  & \mathcal{T} \arrow[rr, "\mathsf{e}"] \arrow[ll, "\mathsf{q}"', bend right] \arrow[ll, "\mathsf{p}", bend left] &  & \mathcal{Y} \arrow[ll, "\mathsf{l}"', bend right] \arrow[ll, "\mathsf{r}", bend left]
\end{tikzcd} \textnormal{($\mathrm{U}$)} \ \ \ 
\begin{tikzcd}
\mathcal{X}^{\mathsf{b}} \arrow[rr, "\mathsf{i}"] &  & \mathcal{T}^{\mathsf{b}} \arrow[rr, "\mathsf{e}"] \arrow[ll, "\mathsf{q}"', bend right] \arrow[ll, "\mathsf{p}", bend left] &  & \mathcal{Y}^{\mathsf{b}} \arrow[ll, "\mathsf{l}"', bend right] \arrow[ll, "\mathsf{r}", bend left]
\end{tikzcd} \textnormal{($\mathrm{B}$)}
\]

Under the existence of a recollement of bounded derived categories $(\mathsf{D}^{\mathsf{b}}(A), \mathsf{D}^{\mathsf{b}}(B), \mathsf{D}^{\mathsf{b}}(C))$ for rings $A,B$ and $C$, it is known that $\gd B<\infty$ if and only if $\gd A<\infty$ and $\gd C<\infty$. The same occurs for a recollement of unbounded derived categories (see \cite[Proposition 2.14]{angeleri_koenig_liu_yang}). Below we present a triangulated analogue for these results under the assumption that the bounded projective objects are bounded. 

\begin{prop} \label{regularity_and_recollements_for_bounded_objects}\label{comparing_regularity_in_type_U}
Suppose that $\X$, $\Y$ and $\T$ are triangulated categories with the property that bounded projective objects are bounded. Then the following statements hold.
\begin{enumerate}
\item If there is a recollement of type \textnormal{($\mathrm{B}$)}, then $\gd \mathcal{T}<\infty$ if and only if $\gd \mathcal{X}<\infty$ and $\gd\mathcal{Y}<\infty$.
\item If there is a recollement of type \textnormal{($\textnormal{U}$)} and if $\gd\mathcal{X}<\infty$ and $\gd\mathcal{Y}<\infty$, then $\gd\mathcal{T}<\infty$.
\end{enumerate}
\end{prop}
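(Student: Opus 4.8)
The statement to prove is that, in either type of recollement, finiteness of global dimension propagates correctly. Recall that $\gd\mathcal{T}<\infty$ means precisely $\mathcal{T}^{\mathsf{b}}=\mathcal{T}^{\mathsf{b}}_p$ (equivalently $\mathcal{T}^{\mathsf{b}}=\mathcal{T}^{\mathsf{b}}_i$, by Lemma \ref{regularity via injectives}), and by the standing assumption $\mathcal{T}^{\mathsf{b}}_p\subseteq \mathcal{T}^{\mathsf{b}}$ (and similarly for $\mathcal{X}$, $\mathcal{Y}$) always holds. So in all cases we only need to verify the reverse inclusion $\mathcal{T}^{\mathsf{b}}\subseteq \mathcal{T}^{\mathsf{b}}_p$, i.e.~that every bounded object is bounded projective. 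The strategy throughout is the standard recollement device: for any object $t$ of the ambient category use the gluing triangle $\mathsf{le}(t)\rightarrow t\rightarrow \mathsf{iq}(t)\rightarrow \mathsf{le}(t)[1]$ (or its dual $\mathsf{ip}(t)\rightarrow t\rightarrow \mathsf{re}(t)\rightarrow \mathsf{ip}(t)[1]$), show that the two outer terms lie in the relevant distinguished subcategory, and conclude by the fact that these subcategories are thick (triangulated and closed under summands), which was established in Lemma \ref{triangulated_subcategories}.

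For part (i), the recollement of type $(\mathrm{B})$, I would argue as follows. Suppose first $\gd\mathcal{X}<\infty$ and $\gd\mathcal{Y}<\infty$ and take $t\in\mathcal{T}^{\mathsf{b}}$. The functors in the restricted recollement send bounded objects to bounded objects by hypothesis; moreover one should observe (via Lemma \ref{adjoints and far-away orthogonality} applied to the various adjunctions, together with Lemma \ref{common facts of the subcategories}) that $\mathsf{q}$ and $\mathsf{p}$ send $\mathcal{T}^{\mathsf{b}}$ into $\mathcal{X}^{\mathsf{b}}$ and $\mathsf{e}$ sends $\mathcal{T}^{\mathsf{b}}$ into $\mathcal{Y}^{\mathsf{b}}$; here the assumption that bounded projective objects are bounded is used to make the orthogonality computations go through (it guarantees $\mathcal{X}^{\mathsf{b}}_p=({}^{\padova}\mathcal{X}^{\mathsf{b}})\cap$-something lies inside $\mathcal{X}^{\mathsf{b}}$, so $\mathsf{i}$ restricted to bounded projectives lands in $\mathcal{T}^{\mathsf{b}}$). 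Then $\mathsf{q}(t)\in\mathcal{X}^{\mathsf{b}}=\mathcal{X}^{\mathsf{b}}_p$, so $\mathsf{iq}(t)$ lies in $\mathcal{T}^{\mathsf{b}}_p$ by $\mathsf{i}$-image of bounded projectives being in $\mathcal{T}^{\mathsf{b}}_p$ (again an adjunction/orthogonality argument: $\mathsf{i}$ is right adjoint to $\mathsf{q}$ which restricts to compacts/bounded, hence $\mathsf{i}$ preserves the appropriate far-away orthogonal). Likewise $\mathsf{e}(t)\in\mathcal{Y}^{\mathsf{b}}=\mathcal{Y}^{\mathsf{b}}_p$ and one shows $\mathsf{le}(t)\in\mathcal{T}^{\mathsf{b}}_p$. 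The gluing triangle and thickness of $\mathcal{T}^{\mathsf{b}}_p$ then give $t\in\mathcal{T}^{\mathsf{b}}_p$. Conversely, if $\gd\mathcal{T}<\infty$, take $x\in\mathcal{X}^{\mathsf{b}}$; then $\mathsf{i}(x)\in\mathcal{T}^{\mathsf{b}}=\mathcal{T}^{\mathsf{b}}_p$, and since $\mathsf{p}$ (right adjoint to $\mathsf{i}$) sends bounded projectives to bounded projectives, $x\cong\mathsf{pi}(x)\in\mathcal{X}^{\mathsf{b}}_p$ — using full faithfulness of $\mathsf{i}$. For $\mathcal{Y}$, take $y\in\mathcal{Y}^{\mathsf{b}}$, so $\mathsf{r}(y)$ or $\mathsf{l}(y)$ lies in $\mathcal{T}^{\mathsf{b}}=\mathcal{T}^{\mathsf{b}}_p$; applying $\mathsf{e}$ (which sends bounded projectives to bounded projectives, being left or right adjoint appropriately) and using $\mathsf{er}\cong\mathrm{id}_{\mathcal{Y}}$ gives $y\in\mathcal{Y}^{\mathsf{b}}_p$.

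For part (ii), the type $(\mathrm{U})$ recollement with $\gd\mathcal{X}<\infty$ and $\gd\mathcal{Y}<\infty$, the argument is a one-directional version of the above: take $t\in\mathcal{T}^{\mathsf{b}}$; one must check that $\mathsf{q}(t)\in\mathcal{X}^{\mathsf{b}}$ and $\mathsf{e}(t)\in\mathcal{Y}^{\mathsf{b}}$ — this needs that $\mathsf{i}$ and $\mathsf{l}$ (or $\mathsf{r}$) behave suitably, and here I would invoke Lemma \ref{inclusions_in_recollements} and Lemma \ref{adjoints and far-away orthogonality} together with the standing hypothesis that bounded projectives are bounded in all three categories (which is what makes $\mathsf{i}(\mathcal{X}^{\mathsf{c}})\subseteq\mathcal{X}^{\mathsf{b}}_p$-type inclusions usable). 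Then $\mathsf{q}(t)\in\mathcal{X}^{\mathsf{b}}=\mathcal{X}^{\mathsf{b}}_p$ forces $\mathsf{iq}(t)\in\mathcal{T}^{\mathsf{b}}_p$, and $\mathsf{e}(t)\in\mathcal{Y}^{\mathsf{b}}=\mathcal{Y}^{\mathsf{b}}_p$ forces $\mathsf{le}(t)\in\mathcal{T}^{\mathsf{b}}_p$, and the gluing triangle plus thickness of $\mathcal{T}^{\mathsf{b}}_p$ finishes it. The main obstacle I anticipate is the bookkeeping around \emph{which} far-away orthogonal each functor preserves and in which direction — precisely the role of the hypothesis ``bounded projective objects are bounded'', which collapses $\mathcal{X}^{\mathsf{b}}_p\subseteq\mathcal{X}^{\mathsf{b}}$ etc.~and lets the Lemma \ref{adjoints and far-away orthogonality} machinery transport memberships across the recollement cleanly; without it the images $\mathsf{q}(t)$, $\mathsf{e}(t)$ of a bounded object need not land in the bounded subcategories and the argument breaks. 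Everything else is routine triangle-chasing of the kind already performed in Lemma \ref{inclusions_in_recollements} and Proposition \ref{restriction_to_type_B}.
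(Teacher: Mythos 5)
Your part (i) follows essentially the paper's route (gluing triangle, thickness of $\mathcal{T}^{\mathsf{b}}_p$, transport of far-away orthogonals along the adjunctions), but note a directional slip in the forward implication: you justify $x\cong\mathsf{pi}(x)\in\mathcal{X}^{\mathsf{b}}_p$ by claiming that $\mathsf{p}$ "sends bounded projectives to bounded projectives". Since $\mathsf{p}$ is a \emph{right} adjoint, Lemma \ref{adjoints and far-away orthogonality}(i) only lets it transport right far-away orthogonals (the injective-type subcategories), not the left orthogonal $\mathcal{T}^{\mathsf{b}}_p={}^{\padova}(\mathcal{T}^{\mathsf{b}})$; the functor that does the job is $\mathsf{q}$ (left adjoint of $\mathsf{i}$, using $x\cong\mathsf{qi}(x)$), or, as the paper does, one simply applies Lemma \ref{fully_faithful_functor}(iv) to the fully faithful $\mathsf{i}$ and $\mathsf{l}$. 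This is fixable, so (i) stands.

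Part (ii) has a genuine gap. You run the triangle argument directly in the unrestricted recollement, which requires, for every $t\in\mathcal{T}^{\mathsf{b}}$, that $\mathsf{q}(t)\in\mathcal{X}^{\mathsf{b}}$ (so that $\gd\mathcal{X}<\infty$ can be used) and that $\mathsf{iq}(t),\mathsf{le}(t)\in\mathcal{T}^{\mathsf{b}}_p$. The inclusion $\mathsf{e}(\mathcal{T}^{\mathsf{b}})\subseteq\mathcal{Y}^{\mathsf{b}}$ is indeed free ($\mathsf{l}$ preserves compacts), but $\mathsf{q}(\mathcal{T}^{\mathsf{b}})\subseteq\mathcal{X}^{\mathsf{b}}$ is \emph{not} a consequence of the standing hypothesis, and you cannot "invoke Lemma \ref{inclusions_in_recollements}" without first verifying its hypotheses $\mathsf{i}(\mathcal{X}^{\mathsf{c}})\subseteq\mathcal{T}^{\mathsf{b}}_p$ and $\mathsf{l}(\mathcal{Y}^{\mathsf{b}})\subseteq\mathcal{T}^{\mathsf{b}}$. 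The hypothesis "bounded projectives are bounded" only gives $\mathcal{X}^{\mathsf{c}}\subseteq\mathcal{X}^{\mathsf{b}}_p\subseteq\mathcal{X}^{\mathsf{b}}$ \emph{inside} $\mathcal{X}$ (and $\mathcal{T}^{\mathsf{c}}\subseteq\mathcal{T}^{\mathsf{b}}$ inside $\mathcal{T}$, via Lemma \ref{common facts of the subcategories}); it says nothing about where $\mathsf{i}$ sends compacts of $\mathcal{X}$ inside $\mathcal{T}$, and $\mathsf{i}$ has no reason to preserve compacts or bounded objects in a type (U) recollement. Establishing these two inclusions is the real content of (ii), and it uses $\gd\mathcal{Y}<\infty$ essentially: for $t\in\mathcal{T}^{\mathsf{c}}\subseteq\mathcal{T}^{\mathsf{b}}$ one gets $\mathsf{e}(t)\in\mathcal{Y}^{\mathsf{b}}=\mathcal{Y}^{\mathsf{b}}_p$, hence $\mathsf{le}(t)\in\mathcal{T}^{\mathsf{b}}_p$, hence $\mathsf{iq}(t)\in\mathcal{T}^{\mathsf{b}}_p$ by the triangle, and then $\mathcal{X}^{\mathsf{c}}=\mathsf{thick}(\mathsf{q}(\mathcal{T}^{\mathsf{c}}))$ yields $\mathsf{i}(\mathcal{X}^{\mathsf{c}})\subseteq\mathcal{T}^{\mathsf{b}}_p$; likewise $\mathsf{l}(\mathcal{Y}^{\mathsf{b}})=\mathsf{l}(\mathcal{Y}^{\mathsf{b}}_p)\subseteq\mathcal{T}^{\mathsf{b}}_p\subseteq\mathcal{T}^{\mathsf{b}}$. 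With these in hand one either applies Lemma \ref{inclusions_in_recollements}(ii) and continues as you propose, or, as the paper does, concludes via Proposition \ref{restriction_to_type_B} that the recollement restricts to type (B) and quotes part (i). Your sketch skips exactly this step, attributing it to the wrong hypothesis (and the phrase "$\mathsf{i}(\mathcal{X}^{\mathsf{c}})\subseteq\mathcal{X}^{\mathsf{b}}_p$-type inclusions" conflates the trivial inclusion in $\mathcal{X}$ with the needed one in $\mathcal{T}$), so as written the argument for (ii) does not go through.
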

\begin{proof}
(i) By the same argument used in Lemma \ref{adjoints and far-away orthogonality}, our assumption guarantees that the functors of the given recollement restrict to a colocalisation sequence of the form
    \[
    \begin{tikzcd}
\mathcal{X}^{\mathsf{b}}_p \arrow[rr, "\mathsf{i}"] &  & \mathcal{T}^{\mathsf{b}}_p \arrow[rr, "\mathsf{e}"] \arrow[ll, "\mathsf{q}"', bend right] &  & \mathcal{Y}^{\mathsf{b}}_p. \arrow[ll, "\mathsf{l}"', bend right]
\end{tikzcd}
    \]
    Assume that $\gd\mathcal{T}<\infty$ and let $y$ be an object of $\mathcal{Y}^{\mathsf{b}}$. Then $\mathsf{l}(y)$ lies in $\mathcal{T}^{\mathsf{b}}=\mathcal{T}^{\mathsf{b}}_p$. Since $\mathsf{l}$ is fully faithful and $\mathsf{l}(\mathcal{Y}^{\mathsf{b}})\subseteq \mathcal{T}^{\mathsf{b}}$, it follows by Lemma \ref{fully_faithful_functor} that  $y$ lies in $\mathcal{Y}^{\mathsf{b}}_p$, i.e.~we have $\gd\mathcal{Y}<\infty$. The same argument applies for $\mathcal{X}$ using the functor $\mathsf{i}$. Assume now that $\mathcal{X}$ and $\mathcal{Y}$ have finite global dimension. Let $t$ be an object in $\mathcal{T}^{\mathsf{b}}$ and consider the triangle 
    $$\mathsf{le}(t)\rightarrow t\rightarrow \mathsf{iq}(t)\rightarrow \mathsf{le}(t)[1].$$ 
    We have that $\mathsf{e}(t)$ lies in $\mathcal{Y}^{\mathsf{b}}=\mathcal{Y}^{\mathsf{b}}_p$ and $\mathsf{q}(t)$ lies in $\mathcal{X}^{\mathsf{b}}=\mathcal{X}^{\mathsf{b}}_p$. Consequently, $\mathsf{le}(t)$ lies in $\mathcal{T}^{\mathsf{b}}_p$ and $\mathsf{iq}(t)$ lies in $\mathcal{T}^{\mathsf{b}}_p$. From the above triangle, we infer that $t$ belongs to $\mathcal{T}^{\mathsf{b}}_p$, i.e.\ $\mathcal{T}$ has finite global dimension.
    
    (ii) By (i), it is enough to show that when $\gd\mathcal{X}<\infty$ and $\gd\mathcal{Y}<\infty$, then the given recollement restricts to a recollement of type \textnormal{($\mathrm{B}$)}. By Proposition \ref{restriction_to_type_B} this is equivalent to $\mathsf{i}(\mathcal{X}^{\mathsf{c}})\subseteq \mathcal{T}^{\mathsf{b}}_p$ and $\mathsf{l}(\mathcal{Y}^{\mathsf{b}})\subseteq \mathcal{T}^{\mathsf{b}}$. We begin by showing the former. For $t$ a compact object of $\mathcal{T}$, we consider the following triangle
    \[
    \mathsf{le}(t)\rightarrow t\rightarrow \mathsf{iq}(t)\rightarrow \mathsf{le}(t)[1].
    \]
    Since $\mathsf{e}(t)\in\mathcal{Y}^{\mathsf{b}}=\mathcal{Y}^{\mathsf{b}}_p$, we infer that $\mathsf{le}(t)\in\mathcal{T}^{\mathsf{b}}_p$. It follows by the above triangle that $\mathsf{iq}(t)\in\mathcal{T}^{\mathsf{b}}_p$ for every $t\in\mathcal{T}^{\mathsf{c}}$. Since $\mathcal{X}^{\mathsf{c}}=\mathsf{thick}(\mathsf{q}(\mathcal{T}^{\mathsf{c}}))$, the claim follows. We will now show that $\mathsf{q}(\mathcal{Y}^{\mathsf{b}})\subseteq \mathcal{T}^{\mathsf{b}}$. Indeed, we have $\mathcal{Y}^{\mathsf{b}}=\mathcal{Y}^{\mathsf{b}}_p$ and therefore the inclusions $\mathsf{q}(\mathcal{Y}^{\mathsf{b}})=\mathsf{q}(\mathcal{Y}^{\mathsf{b}}_p)\subseteq \mathcal{T}^{\mathsf{b}}_p\subseteq \mathcal{T}^{\mathsf{b}}$, completing the proof. 
\end{proof}

We now discuss injective generation, showing a version of \cite[Propositions 6.5 and 6.6]{cummings}.

\begin{prop} \label{injective_generation_in_recollement}
    Suppose that $\X$, $\Y$ and $\T$ are triangulated categories in a recollement of type \textnormal{($\mathrm{U}$)}. 
    \begin{enumerate}
    \item If $\mathsf{Im}(\mathsf{i})\subseteq \mathsf{Loc}(\mathcal{T}^{\mathsf{b}}_i)$ and injectives generate for $\mathcal{Y}$, then injectives generate for $\mathcal{T}$. 
    \item  If $\mathsf{i}(\mathcal{X}^{\mathsf{b}}_i)\subseteq \mathcal{T}^{\mathsf{b}}_i$ and injectives generate for $\mathcal{X}$ and $\mathcal{Y}$, then injectives generate for $\mathcal{T}$.
    \item If $\mathsf{e}(\mathcal{T}^{\mathsf{b}}_i)\subseteq \mathcal{Y}^{\mathsf{b}}_i$ and injectives generate for $\mathcal{T}$, then injectives generate for $\mathcal{Y}$. 
    \end{enumerate}
\end{prop}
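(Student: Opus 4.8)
\textbf{Proof strategy for Proposition \ref{injective_generation_in_recollement}.} The plan is to handle the three statements by exploiting the recollement-theoretic identity $\mathsf{Loc}(\mathsf{Im}(\mathsf{i})\cup\mathsf{Im}(\mathsf{r}))=\mathcal{T}$ (equivalently $\mathsf{Loc}(\mathsf{Im}(\mathsf{i})\cup\mathsf{Im}(\mathsf{l}))=\mathcal{T}$), which follows from the gluing triangle $\mathsf{i}\mathsf{p}(t)\rightarrow t\rightarrow \mathsf{r}\mathsf{e}(t)\rightarrow \mathsf{i}\mathsf{p}(t)[1]$ for every $t$ in $\mathcal{T}$ and the fact that a localising subcategory containing both $\mathsf{Im}(\mathsf{i})$ and $\mathsf{Im}(\mathsf{r})$ contains every such $t$. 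For statement (i), I would first recall that $\mathsf{e}$ admits the fully faithful left adjoint $\mathsf{l}$, which preserves compact objects (Lemma \ref{toolbox}(ii)(a), since $\mathsf{e}$ preserves coproducts); applying Lemma \ref{adjoints and far-away orthogonality}(ii) to the adjoint pair $(\mathsf{l},\mathsf{e})$ and the inclusion $\mathsf{l}(\mathcal{Y}^{\mathsf{c}})\subseteq\mathcal{T}^{\mathsf{c}}\subseteq\mathcal{T}^{\mathsf{b}}_p$, we get $\mathsf{l}({}^{\padova}((\mathcal{Y}^{\mathsf{c}})^{\ast}))\subseteq {}^{\padova}((\mathcal{T}^{\mathsf{c}})^{\ast})$, but the cleaner route is simply: $\mathsf{r}$ is right adjoint to $\mathsf{e}$ which is right adjoint to $\mathsf{l}$, so $\mathsf{l}$ preserves compacts and hence $\mathsf{r}=\mathsf{r}$ — actually what we need is that $\mathsf{r}$ preserves \emph{injective generation}, i.e.\ sends $\mathcal{T}^{\mathsf{b}}_i$-local generators appropriately. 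Concretely: since $\mathsf{l}$ preserves compacts, $\mathsf{l}$ sends $\mathcal{Y}^{\mathsf{b}}$ to $\mathcal{T}^{-}$-bounded objects and, by Lemma \ref{adjoints and far-away orthogonality}(ii), $\mathsf{l}(\mathcal{Y}^{\mathsf{b}}_i)\subseteq\mathcal{T}^{\mathsf{b}}_i$; but for injective generation the correct functor is $\mathsf{r}$, which being a right adjoint to $\mathsf{e}$ satisfies $\mathsf{r}(\mathcal{Y}^{\mathsf{b}}_i)\subseteq\mathcal{T}^{\mathsf{b}}_i$ \emph{provided} $\mathsf{e}$ preserves compacts — and here is the first real obstacle, because statement (i) does not assume that. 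So I will instead argue with $\mathsf{Loc}$ directly: from $\mathsf{Loc}(\mathcal{Y}^{\mathsf{b}}_i)=\mathcal{Y}$ I want to deduce $\mathsf{Loc}(\mathsf{r}(\mathcal{Y}^{\mathsf{b}}_i))\supseteq\mathsf{Im}(\mathsf{r})$. Since $\mathsf{r}$ is a coproduct-preserving triangle functor (being a right adjoint \emph{and} part of a recollement, where $\mathsf{r}$ preserves coproducts as $\mathsf{e}$ has a left adjoint $\mathsf{l}$, so $\mathsf{e}$ preserves products... no — one must check $\mathsf{r}$ preserves coproducts, which holds because $\mathsf{r}$ has the further left adjoint $\mathsf{e}$), we get $\mathsf{r}(\mathsf{Loc}(\mathcal{Y}^{\mathsf{b}}_i))\subseteq\mathsf{Loc}(\mathsf{r}(\mathcal{Y}^{\mathsf{b}}_i))$, hence $\mathsf{Im}(\mathsf{r})=\mathsf{r}(\mathcal{Y})=\mathsf{r}(\mathsf{Loc}(\mathcal{Y}^{\mathsf{b}}_i))\subseteq\mathsf{Loc}(\mathsf{r}(\mathcal{Y}^{\mathsf{b}}_i))$.

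Now I claim $\mathsf{r}(\mathcal{Y}^{\mathsf{b}}_i)\subseteq\mathsf{Loc}(\mathcal{T}^{\mathsf{b}}_i)$. This is where the hypothesis $\mathsf{Im}(\mathsf{i})\subseteq\mathsf{Loc}(\mathcal{T}^{\mathsf{b}}_i)$ of (i) must enter, because $\mathsf{r}$ need not land inside $\mathcal{T}^{\mathsf{b}}_i$ without compactness assumptions; here I would use that $\mathsf{r}(\mathcal{Y}^{\mathsf{b}}_i)$ at least lands in $\mathcal{T}^{+}_i$ or some near orthogonal — but actually the cleanest argument abandons $\mathsf{r}$ entirely and uses $\mathsf{l}$: we have $\mathsf{Loc}(\mathsf{Im}(\mathsf{i})\cup\mathsf{Im}(\mathsf{l}))=\mathcal{T}$ as well (from the other gluing triangle $\mathsf{l}\mathsf{e}(t)\rightarrow t\rightarrow\mathsf{i}\mathsf{q}(t)\rightarrow\mathsf{l}\mathsf{e}(t)[1]$), and $\mathsf{l}$ preserves compacts so by Lemma \ref{adjoints and far-away orthogonality}(ii) applied to $(\mathsf{l},\mathsf{e})$ we obtain $\mathsf{l}(\mathcal{Y}^{\mathsf{b}}_i)\subseteq\mathcal{T}^{\mathsf{b}}_i$; since $\mathsf{l}$ preserves coproducts, $\mathsf{Im}(\mathsf{l})=\mathsf{l}(\mathsf{Loc}(\mathcal{Y}^{\mathsf{b}}_i))\subseteq\mathsf{Loc}(\mathsf{l}(\mathcal{Y}^{\mathsf{b}}_i))\subseteq\mathsf{Loc}(\mathcal{T}^{\mathsf{b}}_i)$. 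Combining with $\mathsf{Im}(\mathsf{i})\subseteq\mathsf{Loc}(\mathcal{T}^{\mathsf{b}}_i)$, a localising subcategory argument gives $\mathcal{T}=\mathsf{Loc}(\mathsf{Im}(\mathsf{i})\cup\mathsf{Im}(\mathsf{l}))\subseteq\mathsf{Loc}(\mathcal{T}^{\mathsf{b}}_i)$, proving (i). For statement (ii), the hypothesis $\mathsf{i}(\mathcal{X}^{\mathsf{b}}_i)\subseteq\mathcal{T}^{\mathsf{b}}_i$ together with the fact that $\mathsf{i}$ preserves coproducts (it is a left adjoint to $\mathsf{p}$, and being part of a recollement $\mathsf{i}$ has the further right adjoint $\mathsf{p}$ so preserves coproducts) gives $\mathsf{Im}(\mathsf{i})=\mathsf{i}(\mathsf{Loc}(\mathcal{X}^{\mathsf{b}}_i))\subseteq\mathsf{Loc}(\mathsf{i}(\mathcal{X}^{\mathsf{b}}_i))\subseteq\mathsf{Loc}(\mathcal{T}^{\mathsf{b}}_i)$, so the hypothesis $\mathsf{Im}(\mathsf{i})\subseteq\mathsf{Loc}(\mathcal{T}^{\mathsf{b}}_i)$ of (i) is verified, and (ii) reduces to (i).

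For statement (iii), I would run the argument on the quotient side: $\mathsf{e}$ is a Verdier quotient functor that preserves coproducts, so $\mathsf{e}(\mathsf{Loc}(\mathcal{T}^{\mathsf{b}}_i))\subseteq\mathsf{Loc}(\mathsf{e}(\mathcal{T}^{\mathsf{b}}_i))$; the hypothesis $\mathsf{e}(\mathcal{T}^{\mathsf{b}}_i)\subseteq\mathcal{Y}^{\mathsf{b}}_i$ then yields $\mathcal{Y}=\mathsf{e}(\mathcal{T})=\mathsf{e}(\mathsf{Loc}(\mathcal{T}^{\mathsf{b}}_i))\subseteq\mathsf{Loc}(\mathcal{Y}^{\mathsf{b}}_i)$, where I use that $\mathsf{e}$ is essentially surjective (true for the quotient functor in a recollement). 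This is the shortest of the three. The main obstacle throughout is bookkeeping the preservation of coproducts for the various functors $\mathsf{i},\mathsf{l},\mathsf{e}$: each needs a further adjoint on the appropriate side, which is guaranteed by the infinite ladder structure implicit in a recollement of triangulated categories with coproducts — specifically $\mathsf{i}$ and $\mathsf{l}$ preserve coproducts because they admit right adjoints within the recollement, and $\mathsf{e}$ preserves coproducts because it admits the right adjoint $\mathsf{r}$. I would state these preservation facts explicitly at the start of the proof (citing the standard recollement formalism, e.g.\ \cite{beilinson_bernstein_deligne}) and then the three parts follow by the $\mathsf{Loc}$-manipulations above, with part (ii) deduced from part (i) and part (iii) handled independently and most directly.
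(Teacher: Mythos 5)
Your overall scaffolding (the localising-subcategory bookkeeping, the reduction of (ii) to (i), and the direct quotient argument for (iii)) is sound and parts (ii) and (iii) match the paper's proof. However, part (i) has a genuine gap at the claim that ``$\mathsf{l}$ preserves compacts, so by Lemma~\ref{adjoints and far-away orthogonality}(ii) applied to $(\mathsf{l},\mathsf{e})$ we obtain $\mathsf{l}(\mathcal{Y}^{\mathsf{b}}_i)\subseteq\mathcal{T}^{\mathsf{b}}_i$.'' Lemma~\ref{adjoints and far-away orthogonality}(ii) transports \emph{left} far-away orthogonals along left adjoints: applied to $\mathsf{F}=\mathsf{e}$ with left adjoint $\mathsf{l}$ and the inclusion $\mathsf{e}(\mathcal{T}^{\mathsf{b}})\subseteq\mathcal{Y}^{\mathsf{b}}$, it yields $\mathsf{l}({}^{\padova}(\mathcal{Y}^{\mathsf{b}}))\subseteq{}^{\padova}(\mathcal{T}^{\mathsf{b}})$, i.e.\ $\mathsf{l}(\mathcal{Y}^{\mathsf{b}}_p)\subseteq\mathcal{T}^{\mathsf{b}}_p$ --- the \emph{projective}-type subcategories, not the injective ones, which are right orthogonals $(\mathcal{Y}^{\mathsf{b}})^{\padova}$ and $(\mathcal{T}^{\mathsf{b}})^{\padova}$. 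Since $\mathsf{l}$ is purely a left adjoint, maps \emph{into} $\mathsf{l}(y)$ cannot be controlled by adjunction, and the inclusion $\mathsf{l}(\mathcal{Y}^{\mathsf{b}}_i)\subseteq\mathcal{T}^{\mathsf{b}}_i$ fails in general (already for standard recollements of derived module categories, where $\mathsf{l}=-\otimes^{\mathbb{L}}eA$ need not send $\mathsf{K}^{\mathsf{b}}(\Inj eAe)$ into $\mathsf{K}^{\mathsf{b}}(\Inj A)$).

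The fix is exactly the route you abandoned too early: it is the right adjoint $\mathsf{r}$ that behaves well, and you do \emph{not} need $\mathsf{e}$ to preserve compacts for this. Since $\mathsf{l}(\mathcal{Y}^{\mathsf{c}})\subseteq\mathcal{T}^{\mathsf{c}}$, Lemma~\ref{adjoints and far-away orthogonality}(i) gives $\mathsf{e}(\mathcal{T}^{\mathsf{b}})\subseteq\mathcal{Y}^{\mathsf{b}}$, and applying it again to the pair $(\mathsf{e},\mathsf{r})$ gives $\mathsf{r}(\mathcal{Y}^{\mathsf{b}}_i)\subseteq\mathcal{T}^{\mathsf{b}}_i$. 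Then, for $y\in\mathcal{Y}^{\mathsf{b}}_i$, the gluing triangle $\mathsf{ler}(y)\rightarrow\mathsf{r}(y)\rightarrow\mathsf{iqr}(y)\rightarrow\mathsf{ler}(y)[1]$ together with the hypothesis $\mathsf{Im}(\mathsf{i})\subseteq\mathsf{Loc}(\mathcal{T}^{\mathsf{b}}_i)$ and $\mathsf{er}\cong\mathrm{id}$ shows $\mathsf{l}(y)\cong\mathsf{ler}(y)\in\mathsf{Loc}(\mathcal{T}^{\mathsf{b}}_i)$, which is all you need (membership in $\mathsf{Loc}(\mathcal{T}^{\mathsf{b}}_i)$, not in $\mathcal{T}^{\mathsf{b}}_i$); the rest of your argument for (i) --- coproduct preservation of $\mathsf{l}$, $\mathsf{Im}(\mathsf{l})\subseteq\mathsf{Loc}(\mathcal{T}^{\mathsf{b}}_i)$, and the triangle $\mathsf{le}(t)\rightarrow t\rightarrow\mathsf{iq}(t)$ --- then goes through unchanged and coincides with the paper's proof.
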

\begin{proof}
(i) Let $y$ be an object in $\mathcal{Y}^{\mathsf{b}}_i$ and note that $\mathsf{r}(y)$ is an object of $\mathcal{T}^{\mathsf{b}}_i$ by Lemma  \ref{adjoints and far-away orthogonality}(i). Consider the following triangle in $\mathcal{T}$ 
\[
 \mathsf{ler}(y)\rightarrow \mathsf{r}(y)\rightarrow \mathsf{iqr}(y)\rightarrow \mathsf{ler}(y)[1].
\]
    Since $\mathsf{Im}(\mathsf{i})\subseteq \mathsf{Loc}(\mathcal{T}^{\mathsf{b}}_i)$, we conclude that $\mathsf{ler}(y)$ lies in $\mathsf{Loc}(\mathcal{T}^{\mathsf{b}}_i)$. However, we have $\mathsf{ler}(y)\cong \mathsf{l}(y)$ and so the functor $\mathsf{l}$ maps $\mathcal{Y}^{\mathsf{b}}_i$ to $\mathsf{Loc}(\mathcal{T}^{\mathsf{b}}_i)$. Since injectives generate for $\mathcal{Y}$ and $\mathsf{l}$ is coproduct preserving, it follows that $\mathsf{Im}(\mathsf{l})\subseteq \mathsf{Loc}(\mathcal{T}^{\mathsf{b}}_i)$. For any $t$ in $\mathcal{T}$, consider the triangle 
    $$\mathsf{le}(t)\rightarrow t\rightarrow \mathsf{iq}(t)\rightarrow \mathsf{le}(t)[1].$$ 
    Both $\mathsf{le}(t)$ and $\mathsf{iq}(t)$ lie inside $\mathsf{Loc}(\mathcal{T}^{\mathsf{b}}_i)$, showing that also $t$ lies in $\mathsf{Loc}(\mathcal{T}^{\mathsf{b}}_i)$. 
    
    (ii) By the assumption and the fact that $\mathsf{i}$ preserves coproducts, it follows that $\mathsf{Im}(\mathsf{i})\subseteq \mathsf{Loc}(\mathcal{T}^{\mathsf{b}}_i)$. We can then deduce the claim from (i). 
    
    (iii) By combining the assumption with the fact that $\mathsf{e}$ preserves coproducts, it follows that $\mathsf{Im}(\mathsf{e})\subseteq \mathsf{Loc}(\mathcal{Y}^{\mathsf{b}}_i)$. But $\mathsf{Im e}=\mathcal{Y}$ since $\mathsf{e}$ is essentially surjective. 
\end{proof}

%

Regarding the finiteness of the finitistic dimension we prove a version of \cite[Theorem 1.1]{chen_xi}. 

\begin{prop} 
    Suppose that $\X$, $\Y$ and $\T$ are triangulated categories in a recollement of type \textnormal{($\mathrm{U}$)}. 
       \begin{enumerate}
        \item Assume that $\mathsf{p}$ admits a right adjoint $\mathsf{p}'$ such that $\mathsf{p}'(\mathcal{X}^+)\subseteq \mathcal{T}^+$ and $\mathsf{r}$ admits a right adjoint $\mathsf{r'}$ such that $\mathsf{r}'(\mathcal{T}^+)\subseteq\mathcal{Y}^+$. Then the following hold\textnormal{:} 
        \begin{enumerate}
            \item[\textnormal{(a)}] if $\Findim\mathcal{X}<\infty$ and $\Findim\mathcal{Y}<\infty$, then $\Findim\mathcal{T}<\infty$;
        \item[\textnormal{(b)}] if $\Findim\mathcal{T}<\infty$, then $\Findim\mathcal{X}<\infty$. 
        \end{enumerate}
        \item Assume that $\mathsf{l}(\mathcal{Y}^{\mathsf{b}})\subseteq \mathcal{T}^{\mathsf{b}}$ and $\mathsf{r}(\mathcal{Y}^+)\subseteq \mathcal{T}^+$. If $\Findim\mathcal{T}<\infty$, then $\Findim\mathcal{Y}<\infty$. 
    \end{enumerate}
\end{prop}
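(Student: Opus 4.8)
The plan is to argue as in the proof of the Gorenstein-implies-finite-finitistic-dimension direction of Theorem~\ref{theorem1} and the injective-generation statements of Proposition~\ref{injective_generation_in_recollement}, namely by testing the vanishing condition $(\mathcal{T}^{\mathsf{b}}_i)^{\perp}\cap\mathcal{T}^+=\{0\}$ against the functors of the recollement. Concretely, for part~(i)(a) I would start with an object $t$ in $(\mathcal{T}^{\mathsf{b}}_i)^{\perp}\cap\mathcal{T}^+$ and consider the canonical triangle $\mathsf{le}(t)\to t\to \mathsf{iq}(t)\to \mathsf{le}(t)[1]$. The first task is to show that $\mathsf{e}(t)$ lies in $(\mathcal{Y}^{\mathsf{b}}_i)^{\perp}\cap\mathcal{Y}^+$: membership in $\mathcal{Y}^+$ follows because $\mathsf{e}$ has the coproduct-preserving left adjoint $\mathsf{l}$ which restricts to compacts, so $\mathsf{e}$ restricts to bounded below objects by Lemma~\ref{adjoints and far-away orthogonality}(i); membership in $(\mathcal{Y}^{\mathsf{b}}_i)^{\perp}$ should follow by adjunction $\mathsf{Hom}_{\mathcal{Y}}(\mathcal{Y}^{\mathsf{b}}_i,\mathsf{e}(t))\cong\mathsf{Hom}_{\mathcal{T}}(\mathsf{l}(\mathcal{Y}^{\mathsf{b}}_i),t)$ together with the fact that $\mathsf{l}$ (being left adjoint to $\mathsf{e}$, with $\mathsf{r}$ the right adjoint and $\mathsf{r}'$ its right adjoint) sends $\mathcal{Y}^{\mathsf{b}}_i$ into $\mathcal{T}^{\mathsf{b}}_i$ — this is where the hypothesis that $\mathsf{r}$ admits a right adjoint $\mathsf{r}'$ (so that $\mathsf{l}$ sits in a four-term recollement-type ladder and hence preserves the relevant far-away orthogonality of Brown--Comenetz type) has to be used. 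By $\Findim\mathcal{Y}<\infty$ we get $\mathsf{e}(t)=0$, hence $t\cong \mathsf{iq}(t)$, so $t$ lies in $\mathsf{Im}(\mathsf{i})$.

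Next I would transport the vanishing to $\mathcal{X}$. Since $t\cong\mathsf{iq}(t)$ and $\mathsf{i}$ is fully faithful, I want to show $\mathsf{q}(t)\in(\mathcal{X}^{\mathsf{b}}_i)^{\perp}\cap\mathcal{X}^+$; since $t$ lies in $\mathcal{T}^+$ and $t\cong\mathsf{i}\mathsf{q}(t)$, applying the right adjoint $\mathsf{p}$ of $\mathsf{i}$ and using $\mathsf{i}\mathsf{q}\cong\mathsf{i}\mathsf{p}$ on $\mathsf{Im}(\mathsf{i})$ we have $\mathsf{q}(t)\cong\mathsf{p}(t)$, but for the positivity I would rather use that $\mathsf{p}'$, the right adjoint of $\mathsf{p}$, satisfies $\mathsf{p}'(\mathcal{X}^+)\subseteq\mathcal{T}^+$, which by Lemma~\ref{adjoints and far-away orthogonality}(i) applied to the adjoint pair $(\mathsf{p},\mathsf{p}')$ gives that $\mathsf{p}$ restricts $\mathcal{T}^+$ into $\mathcal{X}^+$; hence $\mathsf{q}(t)\cong\mathsf{p}(t)$ lies in $\mathcal{X}^+$. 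For the orthogonality, use $\mathsf{Hom}_{\mathcal{X}}(\mathcal{X}^{\mathsf{b}}_i,\mathsf{q}(t))\cong\mathsf{Hom}_{\mathcal{T}}(\mathsf{i}(\mathcal{X}^{\mathsf{b}}_i),t)$ via the adjunction $(\mathsf{q},\mathsf{i})$, combined with the fact that $\mathsf{i}$ sends $\mathcal{X}^{\mathsf{b}}_i$ into $\mathcal{T}^{\mathsf{b}}_i$ (again a consequence of the ladder: $\mathsf{i}$ is left adjoint to $\mathsf{p}$ whose right adjoint is $\mathsf{p}'$, so $\mathsf{i}$ preserves the relevant right far-away orthogonals of the duals of compacts, as in Lemma~\ref{fully_faithful_functor}/Lemma~\ref{common facts of the subcategories}). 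Then $\Findim\mathcal{X}<\infty$ forces $\mathsf{q}(t)=0$, hence $t=0$, proving (i)(a). For (i)(b), given $t\in(\mathcal{T}^{\mathsf{b}}_i)^{\perp}\cap\mathcal{T}^+=\{0\}$ as the hypothesis $\Findim\mathcal{T}<\infty$, I take $x\in(\mathcal{X}^{\mathsf{b}}_i)^{\perp}\cap\mathcal{X}^+$ and consider $\mathsf{i}(x)$: using $\mathsf{p}'(\mathcal{X}^+)\subseteq\mathcal{T}^+$ one first checks $\mathsf{i}(x)\in\mathcal{T}^+$ (note $\mathsf{i}\cong\mathsf{p}'$-adjacent behaviour, or more simply $\mathsf{i}$ restricts $\mathcal{X}^+$ into $\mathcal{T}^+$ because its left adjoint $\mathsf{q}$ restricts to compacts), and that $\mathsf{i}(x)\in(\mathcal{T}^{\mathsf{b}}_i)^{\perp}$ via $\mathsf{Hom}_{\mathcal{T}}(\mathcal{T}^{\mathsf{b}}_i,\mathsf{i}(x))\cong\mathsf{Hom}_{\mathcal{X}}(\mathsf{p}(\mathcal{T}^{\mathsf{b}}_i),x)$ together with $\mathsf{p}(\mathcal{T}^{\mathsf{b}}_i)\subseteq\mathcal{X}^{\mathsf{b}}_i$; then $\mathsf{i}(x)=0$, so $x=0$.

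For part~(ii), the hypotheses $\mathsf{l}(\mathcal{Y}^{\mathsf{b}})\subseteq\mathcal{T}^{\mathsf{b}}$ and $\mathsf{r}(\mathcal{Y}^+)\subseteq\mathcal{T}^+$ are exactly what is needed to run the $\mathcal{Y}$-direction directly. Assuming $\Findim\mathcal{T}<\infty$, take $y\in(\mathcal{Y}^{\mathsf{b}}_i)^{\perp}\cap\mathcal{Y}^+$; I claim $\mathsf{r}(y)\in(\mathcal{T}^{\mathsf{b}}_i)^{\perp}\cap\mathcal{T}^+$. Positivity of $\mathsf{r}(y)$ is the hypothesis $\mathsf{r}(\mathcal{Y}^+)\subseteq\mathcal{T}^+$. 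For the orthogonality, note that $\mathsf{l}(\mathcal{Y}^{\mathsf{b}})\subseteq\mathcal{T}^{\mathsf{b}}$ together with Lemma~\ref{fully_faithful_functor}(ii) applied to the adjoint pair $(\mathsf{l},\mathsf{e})$ (left far-away orthogonal of the inclusion) shows $\mathsf{l}$ sends $\mathcal{Y}^{\mathsf{b}}_p$ into $\mathcal{T}^{\mathsf{b}}_p$, and dually, taking far-away orthogonals through $(\mathsf{e},\mathsf{r})$ as in Lemma~\ref{common facts of the subcategories}, $\mathsf{r}$ sends $\mathcal{Y}^{\mathsf{b}}_i$ into $\mathcal{T}^{\mathsf{b}}_i$; then $\mathsf{Hom}_{\mathcal{T}}(\mathcal{T}^{\mathsf{b}}_i,\mathsf{r}(y))\cong\mathsf{Hom}_{\mathcal{Y}}(\mathsf{e}(\mathcal{T}^{\mathsf{b}}_i),y)$ and the containment $\mathsf{e}(\mathcal{T}^{\mathsf{b}}_i)\subseteq\mathcal{Y}^{\mathsf{b}}_i$ — which follows from $\mathsf{l}(\mathcal{Y}^{\mathsf{b}})\subseteq\mathcal{T}^{\mathsf{b}}$ by Lemma~\ref{adjoints and far-away orthogonality}(i) for $(\mathsf{l},\mathsf{e})$ — forces $\mathsf{r}(y)\in(\mathcal{T}^{\mathsf{b}}_i)^{\perp}$. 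Hence $\mathsf{r}(y)=0$, so $y\cong\mathsf{er}(y)=0$ since $\mathsf{r}$ is fully faithful. The main obstacle I anticipate is bookkeeping the passage of the subcategories $\mathcal{T}^{\mathsf{b}}_i$, $\mathcal{X}^{\mathsf{b}}_i$, $\mathcal{Y}^{\mathsf{b}}_i$ through the six functors of the recollement and the two extra adjoints $\mathsf{p}'$, $\mathsf{r}'$: one needs to be careful that the ``one extra layer'' of adjunction is exactly enough to guarantee each $\mathsf{b}$-injective subcategory lands where it should, since $\mathcal{T}^{\mathsf{b}}_i$ is defined via Brown--Comenetz duals of compacts and the functors are only assumed to preserve the relevant positivity/boundedness conditions, not the duals themselves; the containments should nonetheless follow formally from Lemmas~\ref{adjoints and far-away orthogonality}, \ref{fully_faithful_functor} and \ref{common facts of the subcategories} by taking successive far-away orthogonals of the basic inclusions $\mathcal{T}^{\mathsf{c}}\subseteq\mathcal{T}^{\mathsf{b}}$ and their images.
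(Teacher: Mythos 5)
Part (ii) of your proposal is correct and coincides with the paper's argument: you test $\mathsf{r}(y)$ against $\mathcal{T}^{\mathsf{b}}_i$ via the adjunction $(\mathsf{e},\mathsf{r})$, using that $\mathsf{e}(\mathcal{T}^{\mathsf{b}}_i)\subseteq \mathcal{Y}^{\mathsf{b}}_i$ follows from the hypothesis $\mathsf{l}(\mathcal{Y}^{\mathsf{b}})\subseteq\mathcal{T}^{\mathsf{b}}$ by Lemma~\ref{adjoints and far-away orthogonality}(i), and you conclude with the hypothesis $\mathsf{r}(\mathcal{Y}^+)\subseteq\mathcal{T}^+$ and the fully faithfulness of $\mathsf{r}$. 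The problems are in part (i).

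For (i)(a) you decompose $t$ by the triangle $\mathsf{le}(t)\to t\to \mathsf{iq}(t)$ and need the containments $\mathsf{l}(\mathcal{Y}^{\mathsf{b}}_i)\subseteq\mathcal{T}^{\mathsf{b}}_i$ and $\mathsf{i}(\mathcal{X}^{\mathsf{b}}_i)\subseteq\mathcal{T}^{\mathsf{b}}_i$. Neither follows from the hypotheses: since $\mathcal{T}^{\mathsf{b}}_i=(\mathcal{T}^{\mathsf{b}})^{\padova}$ is a \emph{right} far-away orthogonal, Lemma~\ref{adjoints and far-away orthogonality}(i) only lets \emph{right} adjoints carry such subcategories ($\mathsf{l}(\mathcal{Y}^{\mathsf{b}}_i)\subseteq\mathcal{T}^{\mathsf{b}}_i$ would require a left adjoint of $\mathsf{l}$, and $\mathsf{i}(\mathcal{X}^{\mathsf{b}}_i)\subseteq\mathcal{T}^{\mathsf{b}}_i$ would require $\mathsf{q}(\mathcal{T}^{\mathsf{b}})\subseteq\mathcal{X}^{\mathsf{b}}$), while the extra adjoints $\mathsf{p}',\mathsf{r}'$ extend the ladder \emph{downwards} and control only $\mathsf{p},\mathsf{r},\mathsf{p}',\mathsf{r}'$, not $\mathsf{q},\mathsf{i},\mathsf{l}$; being left adjoints, $\mathsf{i}$ and $\mathsf{l}$ preserve the $\mathcal{T}^{\mathsf{b}}_p$-type subcategories, not the injective ones. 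In (i)(b) the displayed isomorphism $\mathsf{Hom}_{\mathcal{T}}(\mathcal{T}^{\mathsf{b}}_i,\mathsf{i}(x))\cong\mathsf{Hom}_{\mathcal{X}}(\mathsf{p}(\mathcal{T}^{\mathsf{b}}_i),x)$ is not an adjunction ($\mathsf{p}$ is the right adjoint of $\mathsf{i}$); the correct one is $\mathsf{Hom}_{\mathcal{T}}(t,\mathsf{i}(x))\cong\mathsf{Hom}_{\mathcal{X}}(\mathsf{q}(t),x)$, which would force the unavailable containment $\mathsf{q}(\mathcal{T}^{\mathsf{b}}_i)\subseteq\mathcal{X}^{\mathsf{b}}_i$ --- and tellingly your (i)(b) never uses $\mathsf{p}'$, whose hypothesis is essential. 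The paper instead works one step down the ladder: since $\mathsf{p}$ and $\mathsf{r}$ have right adjoints they preserve coproducts, so $\mathsf{i}$ and $\mathsf{e}$ preserve compacts, and Lemma~\ref{adjoints and far-away orthogonality}(i) yields $\mathsf{r}(\mathcal{Y}^{\mathsf{b}}_i)\subseteq\mathcal{T}^{\mathsf{b}}_i$, $\mathsf{p}(\mathcal{T}^{\mathsf{b}}_i)\subseteq\mathcal{X}^{\mathsf{b}}_i$, $\mathsf{p}'(\mathcal{X}^{\mathsf{b}}_i)\subseteq\mathcal{T}^{\mathsf{b}}_i$ and $\mathsf{p}(\mathcal{T}^+)\subseteq\mathcal{X}^+$ (this last via $\mathsf{i}$ preserving compacts, not, as you argue, via Lemma~\ref{adjoints and far-away orthogonality}(i) applied to $(\mathsf{p},\mathsf{p}')$, which only constrains $\mathsf{p}'$). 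For (i)(a) one shows $\mathsf{r}'(t)\in(\mathcal{Y}^{\mathsf{b}}_i)^{\perp}\cap\mathcal{Y}^+$ through the adjunction $(\mathsf{r},\mathsf{r}')$ and the hypothesis on $\mathsf{r}'$, gets $\mathsf{r}'(t)=0$, and then uses the gluing triangle of the lower recollement relating $\mathsf{p}'\mathsf{p}(t)$, $t$ and $\mathsf{r}\mathsf{r}'(t)$ to obtain $t\cong\mathsf{p}'\mathsf{p}(t)$; testing against $\mathsf{p}'(\mathcal{X}^{\mathsf{b}}_i)$ and using fully faithfulness of $\mathsf{p}'$ shows $\mathsf{p}(t)\in(\mathcal{X}^{\mathsf{b}}_i)^{\perp}\cap\mathcal{X}^+$, hence $t=0$. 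For (i)(b) the witness must be $\mathsf{p}'(x)$, placed in $(\mathcal{T}^{\mathsf{b}}_i)^{\perp}\cap\mathcal{T}^+$ via $\mathsf{p}(\mathcal{T}^{\mathsf{b}}_i)\subseteq\mathcal{X}^{\mathsf{b}}_i$ and the hypothesis $\mathsf{p}'(\mathcal{X}^+)\subseteq\mathcal{T}^+$, with fully faithfulness of $\mathsf{p}'$ giving $x=0$.
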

\begin{proof}
(i) Under the given assumptions, using Lemma \ref{adjoints and far-away orthogonality}, we have the existence of the functors:
   \[
   \begin{tikzcd}
\mathcal{X}^+ \arrow[rr, "\mathsf{i}"] \arrow[rr, "\mathsf{p}'"', bend right=60] &  & \mathcal{T}^+ \arrow[rr, "\mathsf{e}"] \arrow[ll, "\mathsf{p}", bend left] \arrow[rr, "\mathsf{r}'"', bend right=60] &  & \mathcal{Y}^+ \arrow[ll, "\mathsf{r}", bend left] & \mathcal{X}^{\mathsf{b}}_i \arrow[rr, "\mathsf{p}'"', bend right=60] &  & \mathcal{T}^{\mathsf{b}}_i \arrow[ll, "\mathsf{p}", bend left] \arrow[rr, "\mathsf{r}'"', bend right=60] &  & \mathcal{Y}^{\mathsf{b}}_i. \arrow[ll, "\mathsf{r}", bend left]
\end{tikzcd}
   \] 
   
 (a) Consider an object $t$ in $\mathcal{T}^{\mathsf{b}}_i$$^{\perp}\cap \mathcal{T}^+$. We first claim that $\mathsf{r}'(t)$ lies in $\mathcal{Y}^{\mathsf{b}}_i$$^{\perp}\cap \mathcal{Y}^+$. Indeed, given $y$ in $\mathcal{Y}^{\mathsf{b}}_i$, we have, for all $n$ in $\mathbb{Z}$,
   \[
   \mathsf{Hom}_{\mathcal{Y}}(y,\mathsf{r}'(t)[n])\cong \mathsf{Hom}_{\mathcal{T}}(\mathsf{r}(y),t[n])\cong 0
   \]
thus implying the claim. Since we assume $\mathcal{Y}$ to have finite finitistic dimension, it follows that $\mathsf{r}'(t)\cong 0$. Consequently, using the triangle 
   $$\mathsf{p}'\mathsf{p}(t)\rightarrow t\rightarrow \mathsf{r}\mathsf{r}'(t)\rightarrow \mathsf{p}'\mathsf{p}(t)[1],$$
   we infer that $\mathsf{p}'\mathsf{p}(t)\cong t$. Let now $x$ be an object in $\mathcal{X}^{\mathsf{b}}_i$. Then, for all $n\in \mathbb{Z}$ we have 
   \[
   0\cong \mathsf{Hom}_{\mathcal{T}}(\mathsf{p}'(x),t[n])\cong \mathsf{Hom}_{\mathcal{T}}(\mathsf{p}'(x),\mathsf{p}'\mathsf{p}(t))\cong \mathsf{Hom}_{\mathcal{X}}(x,\mathsf{p}(t)[n]). 
   \]
   Hence, we get that $\mathsf{p}(t)$ lies in $\mathcal{X}^{\mathsf{b}}_i$$^{\perp}\cap \mathcal{X}^+$. Since we assumed $\mathcal{X}$ to have finite finitisic dimension, it follows that $\mathsf{p}(t)\cong 0$ and therefore $t\cong \mathsf{p}'\mathsf{p}(t)\cong 0$. 

(b) Consider an object $x$ in $\mathcal{X}^{\mathsf{b}}_i$$^{\perp}\cap \mathcal{X}^+$. Then, for  $t$ in $\mathcal{T}^{\mathsf{b}}_i$ we have
   \[
   0\cong \mathsf{Hom}_{\mathcal{X}}(\mathsf{p}(t),x[n])\cong \mathsf{Hom}_{\mathcal{T}}(t,\mathsf{p}'(x)[n])
   \]
   for all $n$ in $\mathbb{Z}$. Hence, we get that $\mathsf{p}'(x)$ lies in $\mathcal{T}^{\mathsf{b}}_i$$^{\perp}\cap \mathcal{T}^+$. Since we assumed $\mathcal{T}$ to have finite finitistic dimension, we see that $\mathsf{p}'(x)\cong 0$ and since $\mathsf{p}'$ is fully faithful, we infer that $x\cong 0$. 

(ii) Under the given assumptions, using Lemma \ref{adjoints and far-away orthogonality}, follows the existence of the following functors 
   \[
   \begin{tikzcd}
\mathcal{T}^{\mathsf{b}}_i \arrow[rr, "\mathsf{e}"] &  & \mathcal{Y}^{\mathsf{b}}_i \arrow[ll, "\mathsf{r}", bend left] & \mathcal{T}^+ \arrow[rr, "\mathsf{e}"] &  & \mathcal{Y}^+ \arrow[ll, "\mathsf{r}", bend left]
\end{tikzcd}
   \]
   Consider an object $y$  of $\mathcal{Y}^{\mathsf{b}}_i$$^{\perp}\cap \mathcal{Y}^+$. For $t$ in $\mathcal{T}^{\mathsf{b}}_i$ we have 
   \[
   0\cong \mathsf{Hom}_{\mathcal{Y}}(\mathsf{e}(t),y[n])\cong \mathsf{Hom}_{\mathcal{T}}(t,\mathsf{r}(y)[n])
   \]
   for all $n$ in $\mathbb{Z}$ and, thus, $\mathsf{r}(y)$ lies in $\mathcal{T}^{\mathsf{b}}_i$$^{\perp}\cap\mathcal{T}^+$. Since we assumed $\mathcal{T}$ to have finite finitistic dimension, it follows that $\mathsf{r}(y)\cong 0$ and since $\mathsf{r}$ is fully faithful, $y\cong 0$. 
\end{proof}

\end{document}